\newtheorem{assumption}{Assumption}
\newtheorem{theorem}{Theorem}[section]
\newtheorem{corollary}{Corollary}[section]
\newtheorem{lemma}[theorem]{Lemma}
\newtheorem{remark}{Remark}[section]
\theoremstyle{definition}
\newtheorem{definition}{Definition}[section]
\newcommand{\tu}{\tilde{u}}
\newcommand{\om}{{\Omega_{0}}}
\newcommand{\R}{{\mathbb{R}}}
\newcommand{\bydef}{\stackrel{\mbox{\tiny\textnormal{\raisebox{0ex}[0ex][0ex]{def}}}}{=}}
\newcommand\subsubsubsection{\@startsection{paragraph}{4}{\z@}{-2.5ex\@plus -1ex \@minus -.25ex}{1.25ex \@plus .25ex}{\normalfont\normalsize\bfseries}}
\newcommand\subsubsubsubsection{\@startsection{subparagraph}{5}{\z@}{-2.5ex\@plus -1ex \@minus -.25ex}{1.25ex \@plus .25ex}{\normalfont\normalsize\bfseries}}
\title{Proving symmetry of localized solutions and application to dihedral patterns in the planar Swift-Hohenberg PDE}
\author{
Dominic Blanco
\footnote{McGill University, Department of Mathematics and Statistics, 805 Sherbrooke Street West, Montreal, QC, H3A 0B9, Canada. {\tt dominic.blanco@mail.mcgill.ca}}
\and
Matthieu Cadiot
\footnote{CMAP, CNRS, Ecole polytechnique, Institut Polytechnique de Paris, 91120 Palaiseau, France. {\tt matthieu.cadiot@polytechnique.edu}}
}
\begin{document}

\maketitle
\begin{abstract}
    In this article, we extend the framework developed in \cite{unbounded_domain_cadiot} to allow for rigorous proofs of existence of smooth, localized solutions in semi-linear partial differential equations possessing both space and non-space group symmetries. We demonstrate our approach on the Swift-Hohenberg model. In particular, for a given symmetry group $\mathcal{G}$, we construct a natural Hilbert space $H^l_{\mathcal{G}}$ containing only functions with $\mathcal{G}$-symmetry. In this space, products and differential operators are well-defined allowing for the study of autonomous semi-linear PDEs. Depending on the properties of $\mathcal{G}$, we derive a Newton-Kantorovich approach based on the construction of an approximate inverse around an approximate solution, $u_0$. More specifically, combining a meticulous analysis and computer-assisted techniques, the Newton-Kantorovich approach is validated thanks to the computation of some explicit bounds. The strategy for constructing $u_0$, the approximate inverse, and the computation of these bounds will depend on the properties of $\mathcal{G}$ and its maximal square lattice space subgroup, $\mathcal{H}$. More specifically, we consider three cases: $\mathcal{G}$ is a space group which can be represented on the square lattice, $\mathcal{G}$ is not a space group which can be represented on the square lattice and the symmetry of $\mathcal{H}$ isolates the solution, and where $\mathcal{G}$ is not a space group which can be represented on the square lattice and the symmetry of $\mathcal{H}$ does not isolate the solution. We demonstrate the methodology on the 2D Swift-Hohenberg PDE by proving the existence of various dihedral localized patterns. The algorithmic details to perform the computer-assisted proofs can be found on Github \cite{julia_cadiot_blanco_symmetry}.
    \end{abstract}

\begin{center}
{\bf \small Key words.} 
{ \small Localized stationary planar patterns, Swift-Hohenberg,  Symmetry groups, Dihedral symmetry, Computer-Assisted Proofs}
\end{center}
\section{Introduction}
In this paper, we develop a methodology for constructively proving the existence of symmetric solutions to PDEs. More specifically, we will focus on localized solutions on $\R^m$, that is solutions vanishing at infinity. This will include solutions possessing both space and non-space group symmetries. By space group (or crystallographic group) symmetry, we mean symmetries which combine the translational symmetry of a lattice together with other elemens such as directional flips, rotation, and screw axes (cf. \cite{JB_symmetries_1}). A complete presentation of all 230 space group and their properties is available at \cite{crys}.  We will  illustrate such an approach in the case of localized solutions in the planar Swift-Hohenberg PDE, for which we establish dihedral symmetries. In this work, we  consider  a class of autonomous semilinear PDEs of the form
\begin{align}\label{eq : f(u)=0 on Hl_G}
     &\mathbb{F}(u) = 0, ~~ \text{ where } \mathbb{F}(u) = \mathbb{L}u + \mathbb{G}(u)\\
     \lim_{|x| \to \infty} &u(x) = 0,~\text{and}~u: \mathbb{R}^m \to \mathbb{R} ~ \text{is}~\mathcal{G}\text{-symmetric}.
\end{align}
In the above, $\mathcal{G}$ is a symmetry group, $\mathbb{L}$ is a linear differential operator with constant coefficients, $\mathbb{G}$ is a polynomial operator of order $N_{\mathbb{G}} \in \mathbb{N}$  where $N_{\mathbb{G}} \geq 2$. That is, we can decompose it as a finite sum 
 \vspace{-.3cm}
\begin{equation}\label{def: G and j}
     \mathbb{G}(u) \bydef \displaystyle\sum_{i = 2}^{N_{\mathbb{G}}}\mathbb{G}_i(u) 
     \vspace{-.2cm}
\end{equation}
where $\mathbb{G}_i$ is a sum of monomials of degree $i$ in the components of $u$ and some of its partial derivatives.
In particular, for $i \in \{2,\dots,N_{\mathbb{G}}\}$, $\mathbb{G}_i$ can be decomposed as follows
\vspace{-.2cm}
\[
\mathbb{G}_i(u) \bydef  \mathbb{G}_{i,1}u \dots \mathbb{G}_{i,i}u
\vspace{-.2cm}
\]
 where $\mathbb{G}_{i,j}$ is a linear differential operator with constant coefficients for all  $j \in \{1, \dots, i\}$. $\mathbb{G}(u)$ depends on the derivatives of $u$ but we only write the dependency in $u$ for simplification. 
 \par One model that fits the form of \eqref{eq : f(u)=0 on Hl_G} is the Swift-Hohenberg equation (SH). We wish to prove the existence, local uniqueness, and symmetry of localized stationary patterns in SH. By localized pattern, we mean a stationary solution $u = u(x)$ where $u(x) \to 0$ as $|x| \to \infty$. The SH equation is defined as
\begin{align}\label{eq : swift_hohenberg}
    u_t = -((I_d + \Delta)^2 u + \mu u + \nu_1 u^2 + \nu_2 u^3), ~~ u = u(x,t), ~~x \in \mathbb{R}^2
\end{align}
where $\mu > 0$ and $(\nu_1,\nu_2) \in \mathbb{R}^2$. Note that the sign of $\mu$ is essential in this paper while $\nu_1$ and $\nu_2$ can be chosen freely. Localized patterns in SH have been well documented, particularly the existence of dihedral patterns in 2D (cf. \cite{jason_review_paper,sh_cadiot,jason_spot_paper,jason_ring_paper,sandstede_paper,hexagon2021lloyd,hexagon2008,squareSakaguchi_1997})
By dihedral, we mean the pattern possesses the symmetry of the dihedral group $D_j$ with group presentation 
\begin{align}
    D_j = < r,s ~ | ~ r^n = s^2 = 1, ~ rs = sr^{-1} >,\label{def : Dj presentation}
\end{align}
as defined similarly in \cite{dummit_algebra}. We denote by $D_j$ the symmetry group of the $j$-gon. The group comprises of rotations by $\frac{2\pi k}{n}$ for $k = 0,\dots,n-1$ and reflections. Various numerical and analytical studies of SH have led to a deeper understanding of dihedral planar patterns such as hexagonal \cite{hexagon2021lloyd,hexagon2008} and square \cite{squareSakaguchi_1997} patterns. More generally, the works of \cite{jason_review_paper}, \cite{jason_spot_paper}, and \cite{jason_ring_paper} demonstrate an approach to numerically compute dihedral solutions. By using radial coordinates, the authors provide a method to construct approximate solutions using a Galerkin projection starting with $\mu$ small. By then performing continuation, one can obtain localized planar patterns of various dihedral symmetries. These can form both spot patterns (see \cite{jason_spot_paper}) and ring patterns (see \cite{jason_ring_paper}). Proofs in SH have been obtained before. For instance, under certain hypotheses, \cite{ladder2009Beck} obtains proofs of existence of some patterns. Specifically, in \cite{ladder2009Beck,radial2019Bramburger,radial2009,hexagon2021lloyd,hexagon2008, existence2019sandstede, stability1997Mielke}, for $\mu$ small, several existence results have been obtained. In particular, by using a bifurcation argument, a proof of existence of branches of patterns with $\mu \in (0, \mu^*)$, for some $\mu^*>0$. This argument relies on the implicit function theorem or various fixed-point theorems. The restriction on the value of $\mu$ was removed in \cite{olivier_radial}. In the latter, the authors present a method for rigorously proving radially symmetric solutions that does not depend on the value of $\mu$. Using the radially symmetric ansatz,  the PDE is transformed into an ordinary differential equation. The approach then relies on  a rigorous enclosure of the center stable manifold by using a Lyapunov-Perron operator. This allows one to solve a boundary value problem on $(0,\infty)$. However, this method is restricted to radially symmetric solutions. In this paper, we will explore the rich variety of dihedral patterns which naturally arises in the SH PDE.

 In \cite{sh_cadiot}, the authors obtained rigorous proofs of three localized patterns in SH that are not radially symmetric. Furthermore, this work removed the restrictions on the value $\mu$. Indeed, their approach only requires $\mu > 0$ so that the linear part of the equation is invertible (see Assumption \ref{ass:A(1)}).  The work of the authors of \cite{sh_cadiot} is based on a previous work by the same authors, \cite{unbounded_domain_cadiot}. The authors provide a general method for proving the existence and local uniqueness of localized solutions to semilinear autonomous PDEs on $\mathbb{R}^m$. The approach is based on Fourier series and describes the necessary tools to construct an approximate solution, $u_0$, and an approximate inverse $\mathbb{A}$ of the linearization $D\mathbb{F}(u_0)$ about $u_0$. Then, by using a Newton-Kantorovich approach, one can prove the existence of a true solution to \eqref{eq : f(u)=0 on Hl_G} in a vicinity of $u_0$, using the Banach fixed point theorem. This involves the computation of specific bounds, which are established rigorously in a computer-assisted manner using the arithmetic on intervals  \cite{julia_interval}. An application to the 1D Kawahara equation was provided in the same paper. In \cite{sh_cadiot}, the method of \cite{unbounded_domain_cadiot} was applied to 2D SH. Additionally, the authors were able to construct $D_2$ symmetric solutions using invariant functions under reflections about the $x$ and $y$ axis. \\
 The method was then extended further by the authors of \cite{gs_cadiot_blanco}. In \cite{unbounded_domain_cadiot}, the method was presented for scalar PDEs. In \cite{gs_cadiot_blanco}, the authors generalized the approach to systems of PDEs. The approach was demonstrated on the 2D Gray-Scott system of equations where, along with the proof of the solution, a proof of $D_4$-symmetry was obtained. The results are presented in Theorems 6.1, 6.2, 6.3, and 6.4 of the aforementioned paper. In \cite{whitham_cadiot}, the method of \cite{unbounded_domain_cadiot} was extended to non-local equations. The approach was used to treat the existence and stability of solitary waves in the capillary-gravity Whitham equation. The approach needed to be modified to accommodate the Fourier multiplier operator, which is non local in this case. This allowed for the proof of multiple even solitary waves, as well as their spectral stability.
 
The use of symmetry in computer-assisted proofs has already been well-documented in previous works. Two of the works we would like to mention are \cite{jp_saddle_node} and \cite{cyclic_sym}. In \cite{jp_saddle_node}, the authors developed rigorous approaches for verifying different bifurcations. The approach is computer assisted, and relies on translating the desired bifurcation into the zeros of an augmented system. One of their applications was for proving $\mathbb{Z}_2$ symmetry breaking pitchfork bifurcations where $\mathbb{Z}_2$ is the symmetry group
\begin{align}
    \mathbb{Z}_2 \bydef <g ~ | ~ g^2 = 1>\label{def : Z2group}
\end{align}
defined similarly as in \cite{dummit_algebra}. This work was then extended in \cite{cyclic_sym}, where the symmetry breaking pitchfork bifurcation was generated by a cyclic group symmetry. In particular, the authors showed that the solution $u$ had the symmetry $u(x) = -u(x + \frac{1}{n})$ for all $x \in \mathbb{R}$ and some $n \in \mathbb{N}$ whereas the eigenfunction, $\varphi$, did not possess this symmetry. The authors needed to develop the theoretical foundation for rigorous verification of a pitchfork bifurcation generated by a cyclic symmetry group. The authors then reformulate this result as the zeros of an augmented system, similar to what was done in \cite{jp_saddle_node}. Then, by using a computer-assisted approach, the authors verify a zero of this map providing the result. While the methods were presented for the diblock copolymer model, the authors remark that their approach can likely be generalized to other parabolic PDEs.

\par The authors of \cite{whitham_cadiot,gs_cadiot_blanco,unbounded_domain_cadiot} and \cite{sh_cadiot} were often able to validate some symmetry of the localized patterns they proved. More specifically, this was achieved by constructing an approximate solution $u_0$ thanks to a symmetric Fourier series. Translating the symmetry on the Fourier coefficients then provides a natural computer-assisted method to create symmetry. In fact, such techniques on symmetric Fourier coefficients have been deeply developed in \cite{JB_symmetries_1} and \cite{JB_symmetries_2}. More specifically, the authors describe a way to build a Fourier series that possesses the desired symmetry.  The methods developed in \cite{JB_symmetries_1} and \cite{JB_symmetries_2} are applicable to any space group in any dimension. The authors use an algorithm to compute a reduced set of Fourier indices under the symmetry of a space group. Furthermore, the method allows one to determine the specific relations amongst the coefficients. In particular, it relies on computing a set which contains one element from each orbit (see \cite{gallianalgebra} for a definition) of the given space group. Such a set is called a fundamental domain, denoted $J_{\mathrm{dom}}(\mathcal{G})$. To demonstrate the useful of $J_{\mathrm{dom}}(\mathcal{G})$, suppose $u$ is a Fourier series with Fourier coefficients $(u_n)_{n \in \mathbb{Z}^m}$. If an index, $n$, is in the fundamental domain, then the other elements in its orbit (in the sense of group action) will have the same value. That is, for any $g \in \mathcal{G}$, $u_n = u_{g \cdot n}$ where $\cdot$ denotes the group action. This relation already reduces the number of Fourier indices one must store. Additionally, the authors of \cite{JB_symmetries_1} and \cite{JB_symmetries_2} compute what they refer to as the trivial set. The trivial set, denoted $J_{\mathrm{triv}}(\mathcal{G})$ contains all Fourier indices which have values of $0$. That is, if $u_n = 0$ for some $n$, then this $n$ is in the trivial set. Then, the authors define the set 
\begin{align}
    J_{\mathrm{sym}}(\mathcal{G}) \bydef \mathbb{Z}^m \setminus J_{\mathrm{triv}}(\mathcal{G}).
\end{align}
The set $J_{\mathrm{sym}}(\mathcal{G})$ contains all coefficients not in the trivial set. Since the trivial set contains all the coefficients which are $0$, $J_{\mathrm{sym}}(\mathcal{G})$ contains all those coefficients which are non-zero. Once $J_{\mathrm{dom}}(\mathcal{G})$ and $J_{\mathrm{sym}}(\mathcal{G})$ are obtained, one can define the reduced set of Fourier coefficients
 \begin{align}
     J_{\mathrm{red}}(\mathcal{G}) \bydef J_{\mathrm{dom}}(\mathcal{G}) \cap J_{\mathrm{sym}}(\mathcal{G}).\label{def : reduced set}
 \end{align}
 Indeed, the relations determined by $J_{\mathrm{dom}}(\mathcal{G})$ and $J_{\mathrm{triv}}(\mathcal{G})$ allow one to write a Fourier series only featuring indices in $J_{\mathrm{red}}(\mathcal{G})$. Such a Fourier series will enforce the needed relations numerically. This allows one to build an approximate solution that is necessarily of the symmetry enforced. The authors then demonstrate their approach on the Ohta-Kawasaki Problem in 3D. In \cite{JB_symmetries_1}, the authors obtain rigorous proofs of existence and local uniqueness of periodic solutions to 3D Ohta-Kawasaki with a SG229 and SG230 symmetry. We refer the interested reader to \cite{crys,JB_symmetries_1} for a definition of these groups and their properties. In \cite{JB_symmetries_2}, they generalize the approach and allow for one to attempt rigorous proofs for any space group symmetry. Since the rigorous approach used relies on a contraction argument, it can be shown that the true solution also has the symmetry of the approximate solution. The method was also applied in \cite{van2021spontaneous}, where symmetries on the range of the function were also considered. The same algorithm can be applied and the computational details are outlined in Sections 5 and 6.

 
\begin{figure}[H]
\centering
 \begin{minipage}{.33\linewidth}
  \centering\epsfig{figure=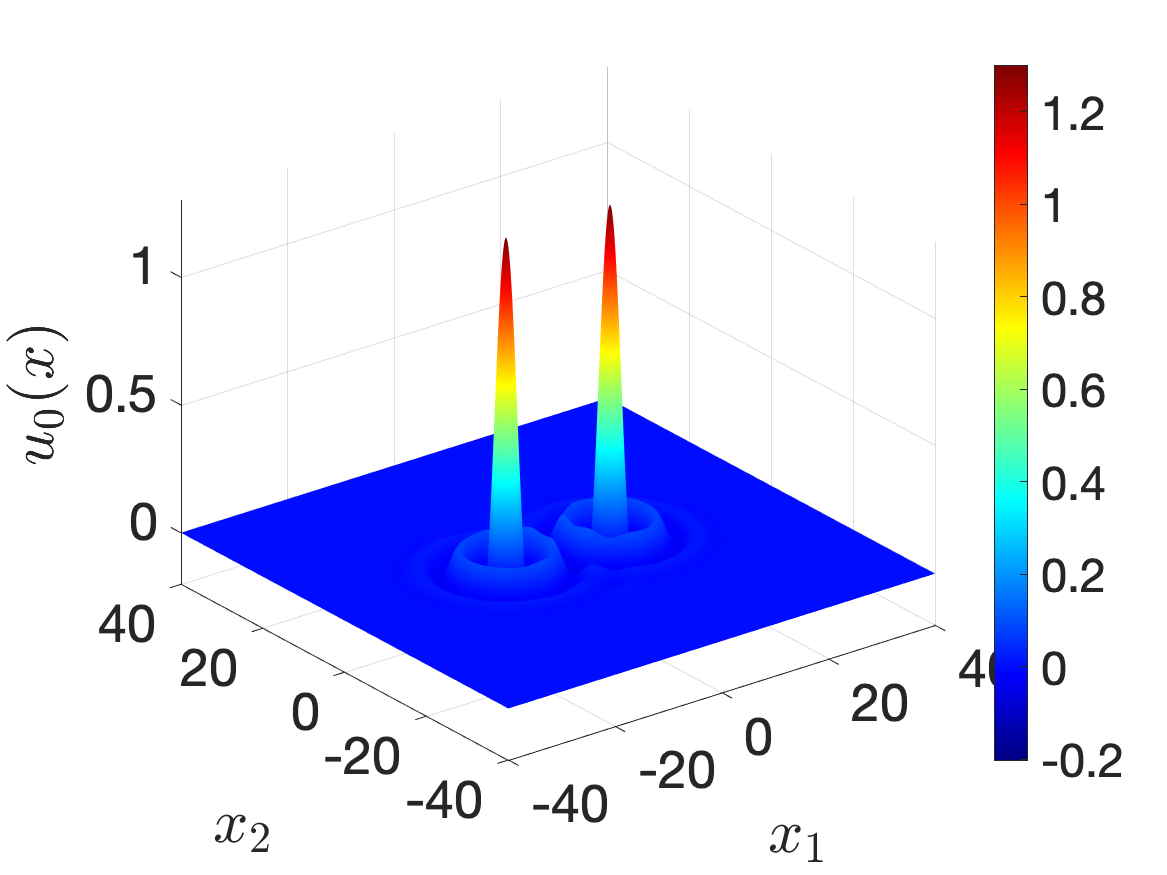,width=\linewidth}
  \end{minipage}%
 \begin{minipage}{.33\linewidth}
  \centering\epsfig{figure=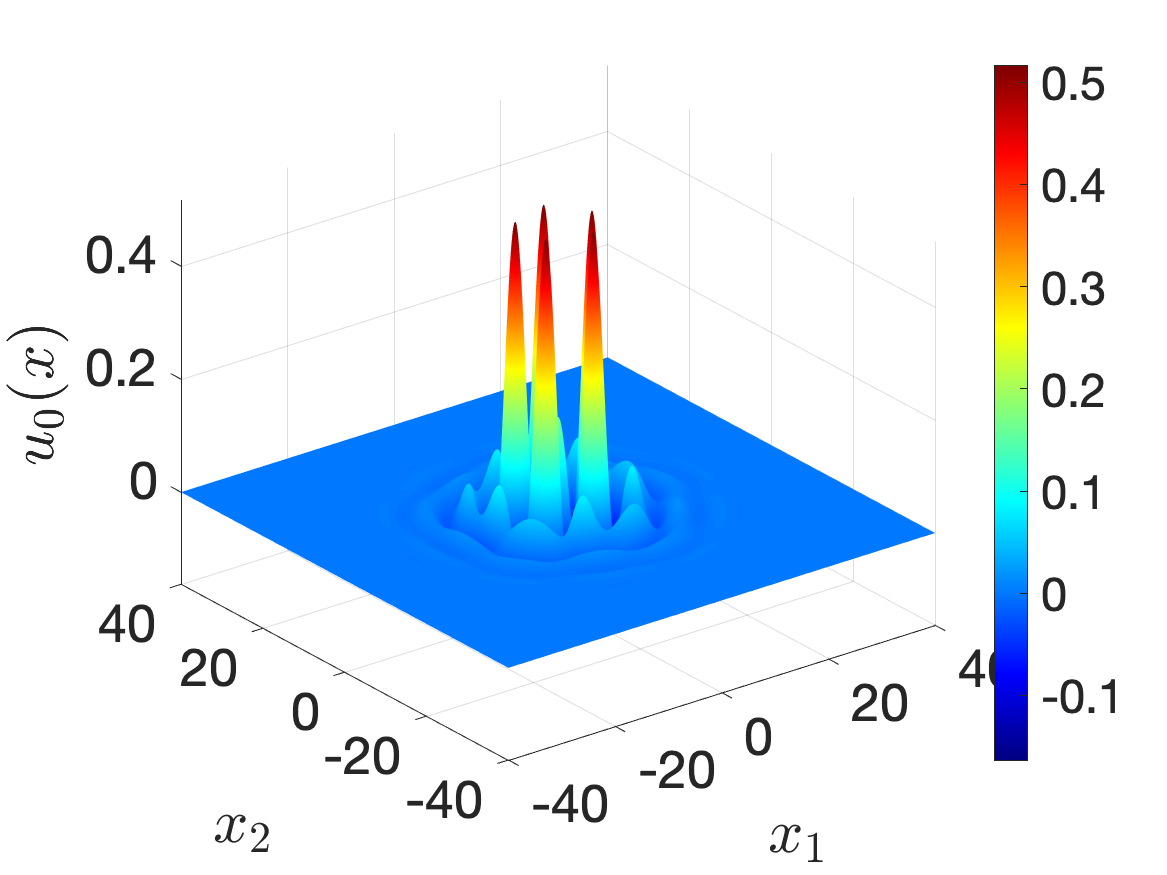,width=\linewidth}
 \end{minipage} %
 \begin{minipage}{.33\linewidth}
  \centering\epsfig{figure=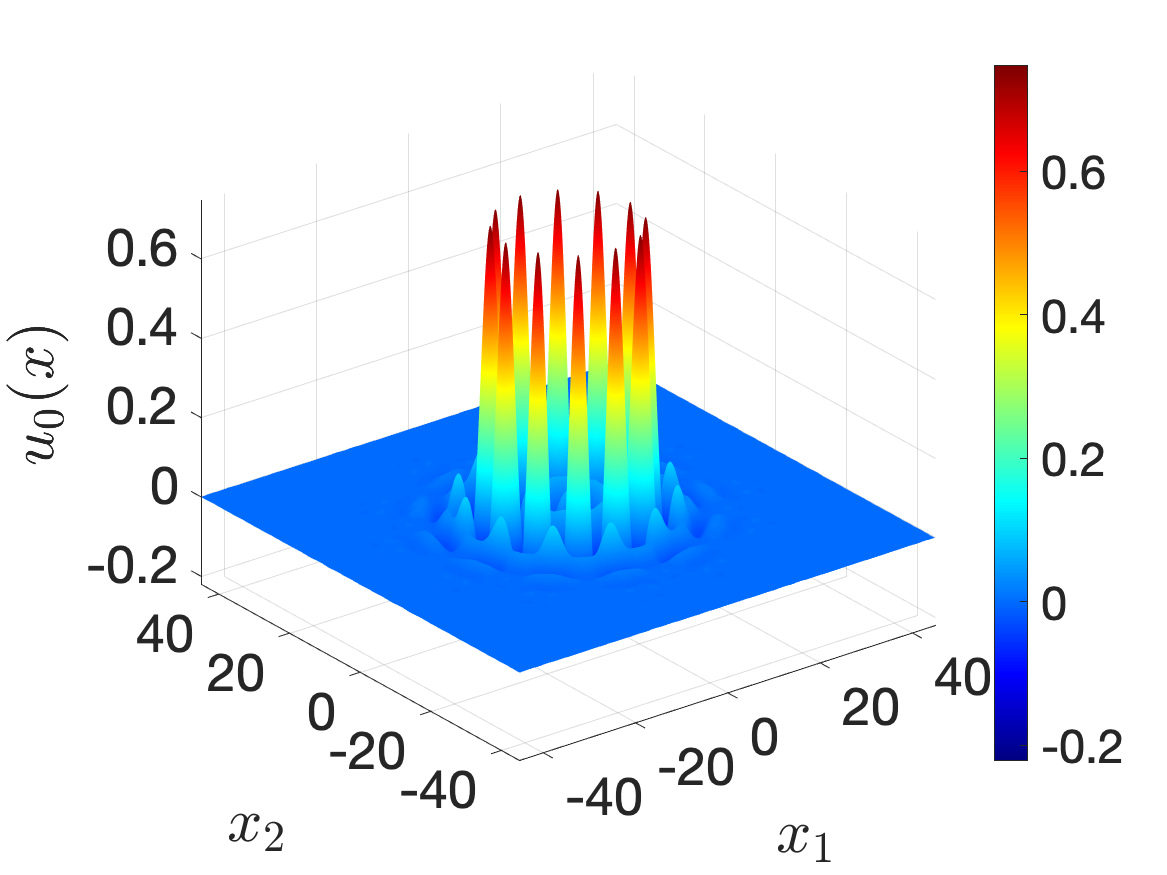,width=\linewidth}
 \end{minipage} 
 \caption{Plot of $u_0$ of an approximation of a $D_2$ Pattern (L), a $D_5$ Pattern (C) and a $D_{12}$ Pattern (R) in the planar Swift Hohenberg Equation.}
 \end{figure}\label{eq : fig_intro}%

As the approach of \cite{unbounded_domain_cadiot} for proving the existence of localized solutions on $\R^m$ relies on approximate objects defined thanks to Fourier series, our goal is to use the work of \cite{JB_symmetries_1} and \cite{JB_symmetries_2} (developed in the case of periodic solutions) in order to validate symmetries of localized solutions. Indeed, we  construct a $\mathcal{G}$-symmetric approximate solution $u_0$ and prove the existence of an actual solution to \eqref{eq : f(u)=0 on Hl_G} in a vicinity of $u_0$ thanks to a Newton-Kantorovich approach (similarly as what was achieved in \cite{unbounded_domain_cadiot}). Such an approach also requires the construction of an approximate inverse $\mathbb{A} : L^2_{\mathcal{G}} \to H^l_{\mathcal{G}}$ for $D\mathbb{F}(u_0)$ (where the symmetric spaces $L^2_{\mathcal{G}} $ and $ H^l_{\mathcal{G}}$ are defined in \eqref{def : L2 and Hl G}). Doing so, we prove the existence of multiple localized solutions, satisfying dihedral symmetries (cf. Figure \ref{eq : fig_intro} for instance).

The usage of the work developed in \cite{JB_symmetries_1} and \cite{JB_symmetries_2} is not direct and requires a meticulous adaptation. First, recall that \cite{JB_symmetries_1} and \cite{JB_symmetries_2} were developed for the study of periodic solutions, whereas we are interested in localized solutions on $\R^m$. However, as pointed out above, we aim at constructing an approximate solution $u_0$ thanks to Fourier series, which will rely on the aforementioned papers. More specifically, given an hypercube $\om = (-d,d)^m$
\begin{equation}
   u_0(x) = \mathbb{1}_{\om}(x) \sum_{n \in J_{\mathrm{red}}(\mathcal{G})} u_n \sum_{k \in \mathrm{orb}_{\mathcal{G}}(n)} e^{2\pi i \tilde{k} \cdot x}
\end{equation}
where $J_{\mathrm{red}}(\mathcal{G})$ is defined as in \eqref{def : reduced set} and $\mathrm{orb}_{\mathcal{G}}(n)$
is the orbit of $n \in \mathbb{Z}^m$ in $\mathcal{G}$ (cf. \cite{gallianalgebra}). More specifically, we define the orbit as \begin{align}
    \mathrm{orb}_{\mathcal{G}}(n) = \{g \cdot n, \ \mathrm{for \ all} \ g \in \mathcal{G}\}.\label{def : orbit}
\end{align}
Now, we must draw a distinction between the symmetric functions on $\R^m$ which we can construct thanks to Fourier series, and the one that cannot. Firstly, since our goal will be to build a compactly supported function on $\mathbb{R}^m$, we must exclude symmetries which do not respect this, such as translations. In particular, the symmetry must be compatible with a compactly supported localized state. Focusing now on 2D, the distinction {will also rely} on the tiling of the plane. Indeed, since regular squares tile the plane, we are able to construct $D_2$ and $D_4$ symmetric and compactly supported functions  on $\R^2$ thanks to Fourier series (cf. Section \ref{sec : HisG}). This is achieved using the map $\gamma^\dagger_{\mathcal{G}}$ defined in \eqref{def : gamma dagger}). For symmetries which we cannot express as Fourier series on the square lattice along with those which do not lead to a tiling of the plane (i.e. non-space group symmetries), we need to consider two cases. First, if $\mathcal{G}$ possesses $D_2$ or $D_4$ as a subgroup, we can use such a subgroup to represent the solution, and add in a second time the additional symmetries using a well chosen average (cf. Section \ref{sec : H isolate}). On the other hand, if   $\mathcal{G}$ does not possess $D_2$ or $D_4$ as a subgroup, we still use an average over the elements of the group in order to ensure symmetry, but now require the use of \emph{unfolding parameters} to isolate the solution (cf. Section \ref{sec : H nonisolate}). A variant of Lagrange multipliers, unfolding parameters are a useful tool for computer assisted proofs when one needs to isolate a solution. We refer the interested reader to \cite{jay_unfold1,jay_unfold4,jay_unfold3,jay_unfold2} for previous works using unfolding parameters and a computer-assisted approach. For each case, we describe the associated computer-assisted strategy for validating the  symmetry under study. As an application, we prove the existence of various dihedral localized solutions in the 2D Swift-Hohenberg PDE (cf. Section \ref{sec : proof of solutions}). In particular, we prove the existence of novel symmetric patterns, such as a $D_5$ symmetric solution. Although being intensively  observed numerically (cf.  \cite{jason_review_paper},\cite{jason_spot_paper}, \cite{jason_ring_paper} and \cite{hexagon2008} for instance), this is the first proof of such symmetric solutions away from perturbative regions.

At this point, we provide some notations and assumptions characterizing \eqref{eq : f(u)=0 on Hl_G}, which will allow to describe our approach.

\subsection{Notations and assumptions}

 Our goal is to apply a Newton-Kantorovich approach to rigorously prove the existence, local uniqueness, and symmetry of localized solutions to \eqref{eq : f(u)=0 on Hl_G}. Our approach relies on an application of the Banach fixed point theorem. In particular, it requires a meticulous choice of function spaces for the definition of $\mathbb{F}$, given in \eqref{eq : f(u)=0 on Hl_G}. For that purpose, we follow the set-up introduced in \cite{unbounded_domain_cadiot}, and we recall the required associated assumptions. First, we introduce relevant notation for the study of localized solutions.

We use the Lebesgue notation $L^2 = L^2(\R^m)$ or $L^2(\Omega_0)$ for a bounded domain $\om$.  More generally, $L^p = L^p(\R)$ is the usual $p$ Lebesgue space associated to its  norm $\| \cdot \|_{p}$, and $L^2$ is associated to its inner product $(\cdot,\cdot)_2$. Given a Banach space $X$,  denote by $\mathcal{B}(X)$ the space of bounded linear operators on $X$ and given $\mathbb{B} \in \mathcal{B}(L^2)$, denote by $\mathbb{B}^*$ the adjoint of $\mathbb{B}$ in $L^2.$

Denote by  $\mathcal{F} :L^2 \to L^2$ the \textit{Fourier transform} operator and
write $\mathcal{F}(f) \bydef \hat{f}$, where $\hat{f}$ is defined as 
\begin{align}\label{def : Fourier transform}
    \hat{f}(\xi) \bydef \int_{\R^m}f(x)e^{-i2\pi x\cdot \xi}dx
\end{align}
for all $\xi \in \R^m$. Similarly, the \textit{inverse Fourier transform} operator is expressed as $\mathcal{F}^{-1}$ and defined as $\mathcal{F}^{-1}(f)(x) \bydef \int_{\R^m}f(\xi)e^{i2\pi x\cdot \xi}d\xi$. In particular, recall the classical Plancherel's identity
\begin{equation}\label{eq : plancherel definition}
    \|f\|_2 = \|\hat{f}\|_2, \quad \text{for all }f \in L^2.
\end{equation}
Finally, for $f_1,f_2$, the continuous convolution of $f_1$ and $f_2$ is represented by $f_1*f_2$, and defined as 
\begin{equation*}
    (f_1*f_2)(x) = \int_{\mathbb{R}^m} f_1(x-y) f_2(y) dy, ~  \text{ for all } x \in \R^m.
\end{equation*}

Now, we introduce the required assumption characterizing \eqref{eq : f(u)=0 on Hl_G}. Additional context for such assumptions is detailed in \cite{unbounded_domain_cadiot}.

\begin{assumption}\label{ass:A(1)}
Assume that the Fourier transform of the linear operator $\mathbb{L}$ is given by 
\begin{equation} \label{eq:assumption1_on_L}
\mathcal{F}\big(\mathbb{L}u\big)(\xi) = l(\xi) \hat{u}(\xi), \quad \text{for all } u\in L^2,
\end{equation}
where $l(\cdot)$ is a polynomial in $\xi$. Moreover, assume that
\begin{equation} \label{eq:l>0}
|l(\xi)| >0, \qquad \text{for all } \xi \in \mathbb{R}^m.
\end{equation}
\end{assumption}

\begin{assumption}\label{ass : LinvG in L1}
For all $2 \leq i \leq N_{\mathbb{G}}$, $k \in J_i$ and $1 \leq p \leq i$, assume that there exists a polynomial $g_{i,p} : \R^m \to \mathbb{C}$ such that
\vspace{-.2cm}
\[
\mathcal{F}\bigg(\mathbb{G}_{i,p} u\bigg)(\xi) = g_{i,p}(\xi)\hat{u}(\xi),
\quad \text{for all } \xi \in \mathbb{R}^m, \text{ and  assume that } ~\frac{g_{i,p}}{l} \in L^1.
\vspace{-.2cm}
\]
\end{assumption}
Assumption \ref{ass:A(1)} is equivalent to assuming $\mathbb{L}$ is invertible whereas Assumption \ref{ass : LinvG in L1} is stronger than semilinearity as it assumes additional decay for each $g_{i,p}.$ Using Assumption \ref{ass:A(1)}, we define the Hilbert space  of function $H^l$ as 
\begin{align}
    H^l \bydef \biggl\{ u \in L^2 \ : \ \int_{\mathbb{R}^m} |\hat{u}(\xi)|^2 |l(\xi)|^2 d\xi < \infty\biggl\}\label{def : Hl}
\end{align}associated to its natural inner product $(\cdot,\cdot)_l$ and norm defined as
\begin{align}
    (u,v)_l \bydef \int_{\mathbb{R}^m} \hat{u}(\xi) \overline{\hat{v}(\xi)} |l(\xi)|^2 d\xi ~ \text{and} ~ \|u\|_{l} \bydef \|\mathbb{L} u\|_{2}.
\end{align}
In particular, note that $\mathbb{L} : H^l \to L^2(\R^m)$ is an isometric isomorphism and \cite{unbounded_domain_cadiot} provides that that $\mathbb{G} : H^l \to  H^1(\R^m)$ is smooth (under Assumption \ref{ass : LinvG in L1}). Consequently, we transform \eqref{eq : f(u)=0 on Hl_G} as a zero finding problem $\mathbb{F}(u) = 0$ with $u \in H^l$. We refine  $\mathbb{F}$ based on the properties of the group $\mathcal{G}$.

In fact, since we are interested about $\mathcal{G}$-symmetric functions, we define the following subspaces
\begin{align}
    &L^2_{\mathcal{G}} \bydef \{u \in L^2, ~\mathrm{such}~\mathrm{that}~ u(gx) = u(x), \mathrm{for}~\mathrm{all}~ g \in \mathcal{G},x \in \mathbb{R}^2\} \\
    &H^l_{\mathcal{G}} \bydef \{u \in H^l,~\mathrm{such}~ \mathrm{that}~ u(gx) = u(x), \mathrm{for}~\mathrm{all}~ g \in \mathcal{G}, x \in \mathbb{R}^2\}.\label{def : L2 and Hl G}
\end{align}
To study $\mathcal{G}$-symmetric solutions, we require our  zero finding problem $\mathbb{F}$ to be invariant under such a symmetry (so that it can naturally be defined on $H^l_{\mathcal{G}} \to L^2_{\mathcal{G}}$). This leads to the following.
\begin{assumption}\label{ass : L_G invariant}
 Assume that the restrictions $\mathbb{L} : H^l_{\mathcal{G}} \to L^2_{\mathcal{G}}$ and $\mathbb{G} : H^l_{\mathcal{G}} \to L^2_{\mathcal{G}}$ are well-defined.   
\end{assumption}
Assumption \eqref{ass : L_G invariant} ensures that the range of $\mathbb{L}$ and $\mathbb{G}$ are also of $\mathcal{G}$-symmetry. In particular, we have that $\mathbb{F} : H^l_{\mathcal{G}} \to L^2_{\mathcal{G}}$ is well-defined and  we transform \eqref{eq : f(u)=0 on Hl_G} as
\begin{align}\label{eq : zero finding on H l G}
    \mathbb{F}(u) = 0, ~~\text{ where } u \in H^l_{\mathcal{G}}.
\end{align}
\begin{remark}
Assumption \ref{ass : L_G invariant} is presented partially for necessity and partially for simplicity. In the case that $\mathbb{L}$ and $\mathbb{G}$ map to different symmetries, the approach would not work except for trivial solutions. Hence, there is nothing to consider. It is possible that $\mathbb{L}$ and $\mathbb{G}$ could map to the same different symmetry. For example, suppose $\mathbb{L} u, \mathbb{G}(u) \in L^2_{\mathcal{K}}$ for some other symmetry group $\mathcal{K} \neq \mathcal{G}$. One could possibly generalize to this case, but it would require additional analysis. We choose not present this for simplicity.
\end{remark}
The rest of the paper is organized as follows. First, we recall in Section \ref{sec : symmetric fourier coefficients} some result from \cite{JB_symmetries_1} and \cite{JB_symmetries_2} about symmetric Fourier coefficients. In particular, we describe how they can be applied in the case of localized functions. In Section \ref{sec : strategies for computer assisted proofs}, we present adapted Newton-Kantorovich approaches depending on the symmetry $\mathcal{G}$ to establish. For each case, we present how to construct the required symmetric approximate objects. In Section \ref{sec : computing the bounds}, we provide the technical details for the application of Section \ref{sec : strategies for computer assisted proofs} in the case of the 2D Swift-Hohenberg PDE and the proof of dihedral localized solutions. Such details are illustrated in Section \ref{sec : proof of solutions}, in which we present constructive proofs of existence of multiple dihedral localized patterns. 
\section{Symmetric Fourier coefficients}\label{sec : symmetric fourier coefficients}
As mentioned in the introduction, our approach heavily relies on symmetric Fourier series. From that perspective, we introduce related notations and results from \cite{JB_symmetries_1} and \cite{JB_symmetries_2}, which we present in the context of \eqref{eq : f(u)=0 on Hl_G}.

 To begin, we define $\Omega_0 \bydef (-d,d)^m$  where $0<d <\infty$.  Then, we define  $$\tilde{n} = (\tilde{n}_1,\tilde{n}_2,\tilde{n}_3,\dots,\tilde{n}_m)\bydef \left( \frac{n_1}{2d},\frac{n_2}{2d} ,\frac{n_3}{2d},\dots,\frac{n_m}{2d}\right) \in \mathbb{R}^m$$ for all $(n_1,\dots,n_m) \in \mathbb{Z}^m$. Similarly as in the continuous case, we want to restrict to Fourier series representing $\mathcal{G}$-symmetric functions. If $\mathcal{G}$ is a space group, we can use the following lemma.
\begin{lemma}\label{lem : G-fourier series}
Let $u$ be a Fourier series of the form 
\begin{equation}\label{usual_fourier_series}
    u(x) = \sum_{n \in \mathbb{Z}^m} u_n e^{2\pi i  \tilde{n} \cdot x}.
\end{equation}
Let $\mathcal{G}$ be a space group on the square lattice. Therefore, $\mathcal{G}$ has a unitary representation of the form
\begin{align}
    gx = \mathcal{A}x + b
\end{align}
where $\mathcal{A} \in M_{m \times m}(\mathbb{R})$ and $b \in [0,1]^m$. Next, define
\begin{align}
    \beta_g(n) \bydef \mathcal{A} n,~~\alpha_g(n) \bydef \exp(2\pi i \beta_g(n) \cdot b).
\end{align}
Then, it follows that $u(x) = u(gx)$ for all $g \in \mathcal{G}$ and $x \in \mathbb{R}^m$ if and only if $ \alpha_g(n) u_{\beta_g(n)} = u_n$. In this case, we say that $u$ has a $\mathcal{G}$-Fourier series representation.
\end{lemma}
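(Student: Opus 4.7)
The plan is to reduce the claimed equivalence to an equality of Fourier coefficients, via a direct reindexing of the sum. Both implications will then collapse into a single application of uniqueness.

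First, I would substitute the series \eqref{usual_fourier_series} directly into $u(gx)$ and expand using linearity of the inner product:
\begin{equation*}
u(gx) = \sum_{n \in \mathbb{Z}^m} u_n \exp\!\bigl(2\pi i \tilde{n} \cdot (\mathcal{A}x + b)\bigr) = \sum_{n \in \mathbb{Z}^m} u_n \, \exp\!\bigl(2\pi i (\mathcal{A}^T \tilde{n}) \cdot x\bigr) \, \exp\!\bigl(2\pi i \tilde{n} \cdot b\bigr).
\end{equation*}

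Next, I would reindex so the spatial exponentials take the canonical form $e^{2\pi i \tilde{k}\cdot x}$. Since $\mathcal{G}$ is a space group on the square lattice, its point part $\mathcal{A}$ is an orthogonal integer matrix, so both $\mathcal{A}$ and $\mathcal{A}^T = \mathcal{A}^{-1}$ restrict to bijections of $\mathbb{Z}^m$. Performing the change of index $n = \mathcal{A}k$ therefore gives $\mathcal{A}^T \tilde{n} = \tilde{k}$ and, with the paper's normalization identifying the translation $b$ with the convention in $\alpha_g$, the phase $\exp(2\pi i \tilde{n}\cdot b)$ becomes $\alpha_g(k) = \exp(2\pi i \beta_g(k)\cdot b)$. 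The series rearranges as
\begin{equation*}
u(gx) = \sum_{k \in \mathbb{Z}^m} \alpha_g(k) \, u_{\beta_g(k)} \, e^{2\pi i \tilde{k} \cdot x}.
\end{equation*}

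The conclusion then follows by comparison with $u(x) = \sum_{k} u_k \, e^{2\pi i \tilde{k}\cdot x}$ and the uniqueness of Fourier coefficients on $\Omega_0$: the identity $u(gx) = u(x)$ holds for every $x$ if and only if $\alpha_g(k)\, u_{\beta_g(k)} = u_k$ for every $k \in \mathbb{Z}^m$, simultaneously delivering both implications.

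No deep ingredient is required; the only non-trivial point is bookkeeping: confirming that $n \mapsto \mathcal{A}n$ is a bijection of $\mathbb{Z}^m$ (which is forced by $\mathcal{G}$ being a space group on the square lattice) and correctly tracking how the translational part of $g$ produces the phase $\alpha_g(k)$. I expect this reconciliation between the normalization of $\tilde{n}$ and that of the translation vector $b$ to be the main (mild) source of care; once it is in place, the statement is a clean consequence of uniqueness of Fourier series.
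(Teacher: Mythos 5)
The paper does not supply its own argument for this lemma; it simply cites \cite{JB_symmetries_2}. Your direct computation is the standard proof and is logically sound: substitute $gx = \mathcal{A}x + b$ into the series, transpose $\mathcal{A}$ across the inner product, reindex via $n = \mathcal{A}k$ (legitimate because the point part of a square-lattice space group is an orthogonal integer matrix, hence a bijection of $\mathbb{Z}^m$), and invoke uniqueness of Fourier coefficients — noting that $u(g\,\cdot)$ is again $2d$-periodic because $\mathcal{A}$ maps the lattice to itself, so comparison on $\Omega_0$ suffices.

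One point deserves more than the hand-wave you give it. After the reindexing, the phase that emerges is $\exp(2\pi i\, \tilde{n}\cdot b) = \exp\!\bigl(2\pi i\, (\mathcal{A}k \cdot b)/(2d)\bigr)$, while the lemma defines $\alpha_g(k) = \exp(2\pi i\, \mathcal{A}k\cdot b)$; these agree only if $2d = 1$ or if $b\in[0,1]^m$ is understood in lattice units so that the actual translation is $2d\,b$. The second reading is clearly what the lemma intends, but saying ``identifying the translation $b$ with the convention in $\alpha_g$'' leaves the reader to guess. Make it explicit: the affine action is $gx = \mathcal{A}x + 2d\,b$ (equivalently, redefine $\tilde b = b/(2d)$ throughout), under which the computed phase is exactly $\alpha_g(k)$ and the rest of the argument goes through verbatim. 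This is a defect of the lemma's statement rather than of your proof, but a careful write-up should record which convention resolves it.
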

\begin{proof}
The proof can be found in \cite{JB_symmetries_2}. 
\end{proof}
Lemma \ref{lem : G-fourier series} provides a correspondence between symmetry of functions and symmetry of the sequence coefficients. This lemma also has an immediate corollary.
\begin{corollary}\label{corr : reduced_set}
Let $u$ be a $\mathcal{G}$-Fourier series where $\mathcal{G}$ is a space group on the square lattice. 
Let $\mathrm{orb}_{\mathcal{G}}(n)$ be defined as in \eqref{def : orbit} 
Let $J_{\mathrm{red}}(\mathcal{G})$ be defined as in \eqref{def : reduced set}. Then, the $\mathcal{G}$-Fourier series of $u$ can be written with indices in $J_{\mathrm{red}}(\mathcal{G})$. More specifically,
\begin{align}
    u(x) = \sum_{n \in J_{\mathrm{red}}(\mathcal{G})} u_n \sum_{k \in \mathrm{orb}_{\mathcal{G}}(n)} e^{2\pi i \tilde{k} \cdot x}.\label{def : H_fourier series}
\end{align}
\end{corollary}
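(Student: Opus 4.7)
The plan is to obtain the reduced representation directly from Lemma \ref{lem : G-fourier series} by partitioning $\mathbb{Z}^m$ into $\mathcal{G}$-orbits and collecting terms. First, starting from the standard expansion $u(x) = \sum_{n \in \mathbb{Z}^m} u_n e^{2\pi i \tilde{n} \cdot x}$, I would note that the maps $n \mapsto \beta_g(n) = \mathcal{A}n$ induce an equivalence relation on $\mathbb{Z}^m$, so the lattice splits as a disjoint union of orbits $\mathrm{orb}_{\mathcal{G}}(n)$. This lets me rewrite the single sum over $\mathbb{Z}^m$ as an outer sum over a complete system of orbit representatives and an inner sum over each orbit.

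Next, I would invoke Lemma \ref{lem : G-fourier series} to relate all coefficients inside a fixed orbit: the relation $\alpha_g(n) u_{\beta_g(n)} = u_n$ means that $u_k$ for $k \in \mathrm{orb}_{\mathcal{G}}(n)$ is determined (up to the phase $\alpha_g$) by $u_n$. In the point-group setting targeted by the corollary the translational part $b$ is zero, so $\alpha_g(n) \equiv 1$ and $u_k = u_n$ for every $k \in \mathrm{orb}_{\mathcal{G}}(n)$. Consequently the inner sum factors and the coefficient $u_n$ pulls outside $\sum_{k \in \mathrm{orb}_{\mathcal{G}}(n)} e^{2\pi i \tilde{k} \cdot x}$, which is exactly the form appearing in \eqref{def : H_fourier series}.

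To reduce the set of representatives further, I would use the trivial and fundamental-domain sets from the introduction: on any orbit meeting $J_{\mathrm{triv}}(\mathcal{G})$ the stabilizer relations force $u_n = 0$, so these orbits contribute nothing and the outer index may be restricted to $J_{\mathrm{sym}}(\mathcal{G})$. Since $J_{\mathrm{dom}}(\mathcal{G})$ is by construction a fundamental domain, selecting representatives inside $J_{\mathrm{dom}}(\mathcal{G}) \cap J_{\mathrm{sym}}(\mathcal{G}) = J_{\mathrm{red}}(\mathcal{G})$ picks exactly one representative from each non-trivial orbit. Substituting into the regrouped expansion yields precisely the claimed formula.

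The main obstacle is a bookkeeping one rather than an analytic one: one must verify that $J_{\mathrm{dom}}(\mathcal{G})$ really intersects every non-trivial orbit in exactly one point, and that the reordering of the Fourier series is legitimate. The latter is justified because the original series converges in $L^2(\Omega_0)$ and, after removing the trivial orbits, the regrouping amounts to a reindexing of terms over a disjoint union of orbits, each of which is finite when $\mathcal{G}$ is a finite point group acting on the square lattice; the former is precisely the defining property of a fundamental domain as established in \cite{JB_symmetries_1,JB_symmetries_2}. Once these set-theoretic points are in place, \eqref{def : H_fourier series} is a direct substitution.
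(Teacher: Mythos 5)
Your proposal is correct and reconstructs what is essentially the argument that the paper delegates to \cite{JB_symmetries_2}: partition $\mathbb{Z}^m$ into $\mathcal{G}$-orbits, use the relation $\alpha_g(n)u_{\beta_g(n)}=u_n$ from Lemma~\ref{lem : G-fourier series} to identify coefficients within an orbit, discard trivial orbits, and reindex over $J_{\mathrm{red}}(\mathcal{G})$, with unconditional $L^2$-convergence justifying the regrouping. You also correctly flag the one subtlety hidden in the statement: the factored form \eqref{def : H_fourier series}, with $u_n$ pulled outside the inner sum and no phases, presupposes $\alpha_g\equiv 1$, i.e.\ a point group ($b=0$); for a general space group with glides, each term of the inner sum would need to carry the phase $\alpha_g(n)^{-1}$, which is indeed how the trivial set arises when the phase relations are inconsistent. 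Since the groups actually used in the paper ($\mathbb{Z}_2\times\mathbb{Z}_1$, $D_2$, $D_4$) are all point groups, your restriction is consistent with the intended application.
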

\begin{proof}
The proof can be found in \cite{JB_symmetries_2}.
\end{proof}
Essentially, $J_{\mathrm{red}}(\mathcal{G})$ contains all coefficients necessary to compute a $\mathcal{G}$-Fourier series, which, in the case of invariance under a symmetry group, is often a strict subset of $\mathbb{Z}^m$. Therefore, we  restrict the indexing of $\mathcal{G}$-symmetric functions to $J_{\mathrm{red}}(\mathcal{G})$
and construct the full series by symmetry if needed. Let $\ell^p_{\mathcal{G}}$ denote the following Banach space
{\small\begin{align}    \ell^p_{\mathcal{G}} \bydef \left\{U = (u_n)_{n \in J_{\mathrm{red}}(\mathcal{G})}: ~ \|U\|_p \bydef \left( \sum_{n \in J_{\mathrm{red}}(\mathcal{G})} \alpha_n|u_n|^p\right)^\frac{1}{p} < \infty \right\}, \text{ where }(\alpha_n)_{n \in J_{\mathrm{red}}(\mathcal{G})} \bydef |\mathrm{orb}_{\mathcal{G}}(n)|.
\end{align}}
Note that $\ell^p_{\mathcal{G}}$ possesses the same sequences as the usual $p$ Lebesgue space for sequences indexed on $J_{\mathrm{red}}(\mathcal{G})$. For the special case $p=2$, $\ell^2_{\mathcal{G}}$ is an Hilbert space on sequences indexed on $J_{\mathrm{red}}(\mathcal{G})$ and we denote $(\cdot,\cdot)_2$ its inner product given by
\[
(U,V)_2 \bydef \sum_{n \in J_{\mathrm{red}}(\mathcal{G})} \alpha_n u_n \overline{v_n}
\]
for all $U = (u_n)_{n \in J_{\mathrm{red}}(\mathcal{G})}, V = (v_n)_{n \in J_{\mathrm{red}}(\mathcal{G})} \in \ell^2_{\mathcal{G}}.$ Moreover, for a bounded operator $K : \ell^2_{\mathcal{G}} \to \ell^2_{\mathcal{G}}$, $K^*$ denotes the adjoint of $K$ in $\ell^2_{\mathcal{G}}$.  

Now, the differential operators $\mathbb{L}$ and $\mathbb{G}$ have Fourier coefficients equivalents when defined on $\mathcal{G}$-symmetric Fourier series.  The linear differential operator $\mathbb{L}$ has a Fourier coefficients representation $L$, which is an infinite diagonal matrix with coefficients $\left(l(\tilde{n})\right)_{n\in J_{\mathrm{red}}(\mathcal{G})}$ on the diagonal where 
$l$ is the symbol of $\mathbb{L}$ given in \eqref{ass:A(1)}. Similarly, $\mathbb{G}$ has a Fourier coefficients representation $G$, for which products of functions are turned into discrete convolutions for sequences. Given Fourier coefficients $U = (U_n)_{n \in \mathbb{Z}^m},  V= (V_n)_{n \in \mathbb{Z}^m}$ corresponding to the usual exponential Fourier series expansion, we define the discrete convolution as 
\[
(U*V)_n = \sum_{k \in \mathbb{Z}^m} U_{n-k}V_k.
\]
In the case of symmetric sequences $\ell^p_\mathcal{G}$, we still denote $U*V$ the discrete convolution representing the product of two functions for consistency.  We also recall that Young's convolution inequality is applicable and 
\begin{align}\label{eq : youngs inequality}
    \|U*V\|_2 \leq \|U\|_2\|V\|_1
\end{align}
for all $U \in \ell^2_{\mathcal{G}}$ and all $V \in \ell^1_{\mathcal{G}}$. In fact, given $U \in \ell^1(J_{\mathrm{red}}(\mathcal{G}))$, this above allows to  define
\begin{align}\label{def : discrete conv operator}
    \mathbb{U} : \ell^2(J_{\mathrm{red}}(\mathcal{G})) &\to \ell^2(J_{\mathrm{red}}(\mathcal{G})) \\
    V &\mapsto  U * V
\end{align}
 the discrete convolution operator associated to $U$.

\subsection{Transition between Fourier coefficients and symmetric functions}
Now that we introduced the required notations to characterize symmetric Fourier coefficients, we describe how to construct functions in $L^2_\mathcal{G}$. For this purpose, we introduce the map
$\gamma_{\mathcal{G}} : L^2_{\mathcal{G}} \to \ell^2_{\mathcal{G}}$, given as
\begin{equation}\label{def : gamma}
    \left(\gamma_{\mathcal{G}}(u)\right)_n \bydef  \frac{1}{|\om|}\int_{\om} u(x) e^{-2\pi i \tilde{n}\cdot x}dx
\end{equation}
for all $n \in J_{\mathrm{red}}(\mathcal{G})$. Similarly, we define $\gamma_{\mathcal{G}}^\dagger : \ell^2_{\mathcal{G}} \to L^2_{\mathcal{G}}$ as 
\begin{equation}\label{def : gamma dagger}
    \gamma_{\mathcal{G}}^\dagger(U)(x) \bydef \mathbb{1}_{\om}(x) \sum_{n \in J_{\mathrm{red}}(\mathcal{G})} u_n \sum_{k \in \mathrm{orb}_{\mathcal{G}}(n)} e^{2\pi i \tilde{k} \cdot x}
\end{equation}
for all $x = (x_1,x_2) \in \R^2$ and all $U =\left(u_n\right)_{n \in J_{\mathrm{red}}(\mathcal{G})} \in \ell^2_{\mathcal{G}}$, where $\mathbb{1}_{\om}$ is the characteristic function on $\om$. Given $u \in L^2_{\mathcal{G}}$, $\gamma_{\mathcal{G}}(u)$ represents the Fourier coefficients indexed on $J_{\mathrm{red}}(\mathcal{G})$ of the restriction of $u$ on $\om$. Conversely, given a sequence $U\in \ell^2_{\mathcal{G}}$, $\gamma_{\mathcal{G}}^{\dagger}\left(U\right)$  is the function representation of $U$ in $L^2_{\mathcal{G}}.$ In particular, notice that $\gamma_{\mathcal{G}}^\dagger\left(U\right)(x) =0$ for all $x \notin \om.$  In particular, the map $\gamma_{\mathcal{G}}^\dagger$ provides a direct path for constructing functions in $L^2_\mathcal{G}$ thanks to sequences in  $\ell^2_{\mathcal{G}}.$

Then, recalling similar notations from \cite{unbounded_domain_cadiot}
\begin{align}
    L^2_{\mathcal{G},\om} \bydef \left\{u \in L^2_{\mathcal{G}} : \text{supp}(u) \subset \overline{\om} \right\},~
   H_{\mathcal{G},\om}^l \bydef \left\{u \in H^l_{\mathcal{G}} : \text{supp}(u) \subset \overline{\om} \right\}.
\end{align}
Moreover, recall that $\mathcal{B}(L^2_{\mathcal{G}})$ (respectively $\mathcal{B}(\ell^2_{\mathcal{G}})$) denotes the space of bounded linear operators on $L^2_{\mathcal{G}}$ (respectively $\ell^2_{\mathcal{G}}$) and denote by $\mathcal{B}_{\om}(L^2_{\mathcal{G}})$ the following subspace of $\mathcal{B}(L^2_{\mathcal{G}})$
\begin{equation}\label{def : Bomega}
    \mathcal{B}_{\om}(L^2_{\mathcal{G}}) \bydef \{\mathbb{K}_{\om} \in \mathcal{B}(L^2_{\mathcal{G}}) :  \mathbb{K}_{\om} = \mathbb{1}_{\om}\mathbb{K}_{\om} \mathbb{1}_{\om}\}.
\end{equation}
Finally, define $\Gamma_{\mathcal{G}} : \mathcal{B}(L^2_{\mathcal{G}}) \to \mathcal{B}(\ell^2_{\mathcal{G}})$ and $\Gamma_{\mathcal{G}}^\dagger : \mathcal{B}(\ell^2_{\mathcal{G}}) \to \mathcal{B}(L^2_{\mathcal{G}})$ as follows
\begin{equation}\label{def : Gamma and Gamma dagger}
    \Gamma_{\mathcal{G}}(\mathbb{K}) \bydef \gamma_{\mathcal{G}} \mathbb{K} \gamma_{\mathcal{G}}^\dagger ~~ \text{ and } ~~  \Gamma_{\mathcal{G}}^\dagger(K) \bydef \gamma_{\mathcal{G}}^\dagger {K} \gamma_{\mathcal{G}} 
\end{equation}
for all $\mathbb{K} \in \mathcal{B}(L^2_{\mathcal{G}})$ and all $K \in \mathcal{B}(\ell^2_{\mathcal{G}}).$

The maps defined above in \eqref{def : gamma}, \eqref{def : gamma dagger} and \eqref{def : Gamma and Gamma dagger} are fundamental in our analysis as they allow to pass from the problem on $\R^m$ to the one in $\ell^2_{\mathcal{G}}$ and vice-versa. Furthermore, using Parseval's identity, we obtain natural isometric isomorphisms, when restricted to the relevant spaces.
\begin{lemma}\label{lem : gamma and Gamma properties}
    The map $\sqrt{|\om|} \gamma_{\mathcal{G}} : L^2_{\mathcal{G},\om} \to \ell^2_{\mathcal{G}}$ (respectively $\Gamma_{\mathcal{G}} : \mathcal{B}_\om(L^2_{\mathcal{G}}) \to \mathcal{B}(\ell^2_{\mathcal{G}})$) is an isometric isomorphism whose inverse is given by $\frac{1}{\sqrt{|\om|}} \gamma_{\mathcal{G}}^\dagger : \ell^2_{\mathcal{G}} \to L^2_{\mathcal{G},\om}$ (respectively $\Gamma_{\mathcal{G}}^\dagger :   \mathcal{B}(\ell^2_{\mathcal{G}}) \to \mathcal{B}_\om(L^2_{\mathcal{G}})$). In particular,
    \begin{align}\label{eq : parseval's identity}
        \|u\|_2 = \sqrt{\om}\|U\|_2 \text{ and } \|\mathbb{K}\|_2 = \|K\|_2
    \end{align}
    for all $u \in L^2_{\mathcal{G},\om}$ and $\mathbb{K}\in \mathcal{B}_\om(L^2_{\mathcal{G}})$, and where $U \bydef \gamma_{\mathcal{G}}(u)$ and ${K} \bydef \Gamma_{\mathcal{G}}(\mathbb{K})$.
\end{lemma}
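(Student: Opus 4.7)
The plan is to combine Parseval's theorem on the hypercube $\om$ with the symmetric Fourier series representation provided by Corollary \ref{corr : reduced_set}. The key observation is that, for a $\mathcal{G}$-symmetric sequence, the relation $\alpha_g(n)u_{\beta_g(n)} = u_n$ from Lemma \ref{lem : G-fourier series} forces $|u_k| = |u_n|$ whenever $k \in \mathrm{orb}_{\mathcal{G}}(n)$ (since $|\alpha_g(n)| = 1$), and $J_{\mathrm{triv}}(\mathcal{G})$ is a union of entire orbits. Therefore, summing $|u_n|^2$ over $\mathbb{Z}^m$ collapses to summing $\alpha_n |u_n|^2$ over the reduced set $J_{\mathrm{red}}(\mathcal{G})$.

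First, I would verify the bijectivity of $\gamma_{\mathcal{G}}$ and $\gamma_{\mathcal{G}}^\dagger$ as mutual inverses on the relevant subspaces. For $u \in L^2_{\mathcal{G},\om}$, its Fourier series on $\om$ converges to $u$ in $L^2(\om)$, and Corollary \ref{corr : reduced_set} rewrites it exactly in the form \eqref{def : H_fourier series}; combined with $\mathrm{supp}(u) \subset \overline{\om}$, this yields $\gamma_{\mathcal{G}}^\dagger(\gamma_{\mathcal{G}}(u)) = u$. Conversely, for $U \in \ell^2_{\mathcal{G}}$, a direct computation using the orthogonality relation $\int_{\om} e^{2\pi i(\tilde k - \tilde n)\cdot x} dx = |\om|\delta_{k,n}$ gives
\begin{equation*}
    \bigl(\gamma_{\mathcal{G}}(\gamma_{\mathcal{G}}^\dagger U)\bigr)_n = \frac{1}{|\om|}\sum_{m \in J_{\mathrm{red}}(\mathcal{G})} u_m \sum_{k \in \mathrm{orb}_{\mathcal{G}}(m)} \int_{\om} e^{2\pi i(\tilde k - \tilde n)\cdot x} dx = u_n,
\end{equation*}
because distinct elements of $J_{\mathrm{red}}(\mathcal{G})$ lie in disjoint orbits, so the only surviving term corresponds to $m = n$ and $k = n$.

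Next, I would establish the norm identity. Since $u \in L^2_{\mathcal{G},\om}$ has support in $\overline{\om}$, Parseval's theorem on $\om$ applied to the classical Fourier series \eqref{usual_fourier_series} gives $\|u\|_2^2 = |\om|\sum_{n \in \mathbb{Z}^m}|u_n|^2$. Grouping indices by orbits of $\mathcal{G}$ and using that orbits in $J_{\mathrm{triv}}(\mathcal{G})$ contribute zero while each non-trivial orbit contains $|\mathrm{orb}_{\mathcal{G}}(n)| = \alpha_n$ indices of equal modulus, the sum collapses to $\sum_{n \in J_{\mathrm{red}}(\mathcal{G})} \alpha_n|u_n|^2 = \|U\|_2^2$, which yields $\|u\|_2 = \sqrt{|\om|}\,\|U\|_2$ and shows $\sqrt{|\om|}\gamma_{\mathcal{G}}$ is an isometric isomorphism with inverse $\frac{1}{\sqrt{|\om|}}\gamma_{\mathcal{G}}^\dagger$.

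Finally, the operator statement follows from the scalar case. Given $\mathbb{K} \in \mathcal{B}_{\om}(L^2_{\mathcal{G}})$, the identity $\mathbb{K} = \mathbb{1}_{\om}\mathbb{K}\mathbb{1}_{\om}$ ensures $\mathbb{K}$ maps $L^2_{\mathcal{G}}$ into $L^2_{\mathcal{G},\om}$, so the composition $\Gamma_{\mathcal{G}}^\dagger \Gamma_{\mathcal{G}}(\mathbb{K}) = \gamma_{\mathcal{G}}^\dagger \gamma_{\mathcal{G}} \mathbb{K} \gamma_{\mathcal{G}}^\dagger \gamma_{\mathcal{G}}$ equals $\mathbb{K}$ after applying the scalar identities, and likewise $\Gamma_{\mathcal{G}}\Gamma_{\mathcal{G}}^\dagger = \mathrm{id}$. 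For the norm, applying the scalar isometry twice,
\begin{equation*}
    \|\Gamma_{\mathcal{G}}(\mathbb{K})U\|_2 = \|\gamma_{\mathcal{G}}\mathbb{K}\gamma_{\mathcal{G}}^\dagger U\|_2 = \tfrac{1}{\sqrt{|\om|}}\|\mathbb{K}\gamma_{\mathcal{G}}^\dagger U\|_2, \qquad \|U\|_2 = \tfrac{1}{\sqrt{|\om|}}\|\gamma_{\mathcal{G}}^\dagger U\|_2,
\end{equation*}
so the $\sqrt{|\om|}$ factors cancel and one recovers $\|\mathbb{K}\|_2 = \|K\|_2$ by taking suprema. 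The only delicate point, and the main obstacle, is the bookkeeping ensuring that the decomposition of $\mathbb{Z}^m$ into orbits together with $J_{\mathrm{triv}}(\mathcal{G})$ really gives $\sum_{n\in\mathbb{Z}^m}|u_n|^2 = \sum_{n\in J_{\mathrm{red}}(\mathcal{G})}\alpha_n|u_n|^2$; this is guaranteed by Lemma \ref{lem : G-fourier series} and the description of $J_{\mathrm{red}}(\mathcal{G})$ in \eqref{def : reduced set}.
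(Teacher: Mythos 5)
Your proposal is correct and follows the same route as the paper, which simply invokes Parseval's identity together with the non-symmetric version of the statement from \cite{unbounded_domain_cadiot}. You unpack precisely this combination: partitioning $\mathbb{Z}^m$ into $\mathcal{G}$-orbits, using $|\alpha_g(n)| = 1$ to conclude that $|u_k|$ is constant on each orbit and that $J_{\mathrm{triv}}(\mathcal{G})$ is a union of orbits, and observing that the weight $\alpha_n = |\mathrm{orb}_{\mathcal{G}}(n)|$ in the definition of $\ell^2_{\mathcal{G}}$ is exactly the normalization that makes Parseval's identity collapse onto the reduced index set; the orthogonality computation for $\gamma_{\mathcal{G}}\gamma_{\mathcal{G}}^\dagger = \mathrm{id}$ and the operator statement obtained by conjugation are the standard complements.
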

\begin{proof}
    The proof relies on Parseval's identity and Lemma 3.2 from \cite{unbounded_domain_cadiot}.
\end{proof}

The above lemma not only provides a one-to-one correspondence between the elements in $L^2_{\mathcal{G},\om}$ (respectively $\mathcal{B}_\om(L^2_{\mathcal{G}})$) and the ones in $\ell^2_{\mathcal{G}}$ (respectively $\mathcal{B}(\ell^2_{G})$) but it also provides an identity on norms. 
\par Now all of the theory we have introduced in this section relies on $\mathcal{G}$ being a space group. In general, this is not always the case. As we wish to use the theory of this section, we introduce a definition.
\begin{definition}[\bf Maximal Square Lattice Space Subgroup]\label{def : maximal square lattice space subgroup}
Let $\mathcal{H}$ be a symmetry group. We say that $\mathcal{H}$ is a \emph{maximal square lattice space subgroup} of $\mathcal{G}$ if
\begin{itemize}
    \item $\mathcal{H}$ is a subgroup of $\mathcal{G}$
    \item $\mathcal{H}$ is a space group
    \item  $\mathcal{H}$-symmetric functions can be represented on the square lattice
    \item $|\mathcal{K}| \leq |\mathcal{H}|$ where $\mathcal{K}$ is any square lattice space subgroup of $\mathcal{G}$.
\end{itemize}
\end{definition}
For example, suppose $\mathcal{G} = D_8$. Some space subgroups of $\mathcal{G}$ include $ \mathbb{Z}_2 \times \mathbb{Z}_1, D_2,D_4$, and more. Recall that $D_j$ is defined as in \eqref{def : Dj presentation} and we define $\mathbb{Z}_2 \times \mathbb{Z}_1$ as the direct product (see \cite{dummit_algebra,gallianalgebra} for a definition) of the groups $\mathbb{Z}_2$ (see \eqref{def : Z2group} for a definition) and the trivial group $\mathbb{Z}_1$. This group comprises of even symmetry in the first component and no symmetry in the second. It is clear that the square lattice space subgroup of maximal order is $\mathcal{H} = D_4$. As another example, consider $\mathcal{G} = D_{10}$. The largest subgroup of $D_{10}$ is $D_5$; however, $D_5$ is not a space group as pentagons do not tile the plane. Therefore, we must choose a smaller subgroup, in this case, $\mathcal{H} = D_2$ is the largest in order. Finally, consider $\mathcal{H} = D_{12}$. Note that $D_6$ is a space subgroup of $D_{12}$; however, we cannot represent $D_6$-symmetric functions on the square lattice. Hence, we must choose a smaller subgroup, in this case $\mathcal{H} = D_4$.   
\par For the remainder of this paper, we will refer to $\mathcal{H}$ as the maximal square lattice space subgroup of $\mathcal{G}$. This means that the results of this section will always apply to $\mathcal{H}$ since it is a space group, and hence why we will use $\mathcal{H}$ frequently in the rest of this paper. Why we chose $\mathcal{H}$ to be the square lattice space subgroup of maximal order will become clear in Section \ref{sec : H isolate}. 
\section{Three cases and their strategies for performing computer assisted proofs}\label{sec : strategies for computer assisted proofs}
In this section, we will discuss three separate cases based on the properties of $\mathcal{G}$ and its maximal square lattice space subgroup $\mathcal{H}$. For each of these cases, we will first introduce the main theorem we wish to apply. Following this, our goal will be to construct the necessary objects to apply this theorem. In fact, a part of the construction will be numerical and require rigorous numerics. For that matter, given $\mathcal{N} \in \mathbb{N}$, we introduce projection operators  $\pi^{\mathcal{N}}$ and $\pi_{\mathcal{N}}$ defined as
 \begin{align}
 \nonumber
    (\pi^{\mathcal{N}}(V))_n  =  \begin{cases}
          v_n,  & n \in I^{\mathcal{N}} \\
              0, &n \notin I^{\mathcal{N}}
    \end{cases} ~~ \text{ and } ~~
     (\pi_{\mathcal{N}}(V))_n  =  \begin{cases}
          0,  & n \in I^{\mathcal{N}} \\
              v_n, &n \notin I^{\mathcal{N}}
    \end{cases}
 \end{align}
    where $I^{\mathcal{N}} \bydef \{n \in J_{\mathrm{red}}(\mathcal{G}), ~ n_1,n_2 \leq \mathcal{N}\}$
    for all $V = (v_n)_{n \in  J_{\mathrm{red}}(\mathcal{G})} \in \ell^2(J_{\mathrm{red}}(\mathcal{G})).$ In particular, given $V \in \ell^2(J_{\mathrm{red}}(\mathcal{G}))$, $\pi^\mathcal{N} V$ can be represented by a vector as it contains $\mathcal{N}$ nonzero coefficients. Similarly, given an operator $B \in \mathcal{B}(\ell^2(J_{\mathrm{red}}(\mathcal{G})))$, $\pi^\mathcal{N}B\pi^\mathcal{N}$ can be represented by a matrix.
\subsection{\texorpdfstring{$\mathcal{H} = \mathcal{G}$}{HisG}}\label{sec : HisG}
In the case that $\mathcal{H} = \mathcal{G}$, then $\mathcal{G}$ is a space group. Now, as mentioned in the introduction, our approach relies on the construction of two things. The first is an approximate solution to \eqref{eq : zero finding on H l G}, which we will denote $u_0 \in H^l_{\mathcal{G}}$ with compact support on $\om$. The second is an approximate inverse to $D\mathbb{F}(u_0)$, which we will denote $\mathbb{A} : L^2_{\mathcal{G}} \to H^l_{\mathcal{G}}$. Once we have these subjects, we can derive a Newton-Kantorovich type Theorem in order to prove our results. We will begin with the construction of the approximate solution, $u_0$. Following this, we will build the operator $\mathbb{A}$, the approximate inverse. Finally, we will state the Theorem we wish to apply.
\subsubsection{Construction of \texorpdfstring{$u_0$}{u0}}\label{sec : construction of u0 gen}
Since $\mathcal{G}$ is a space group, we can build $u_0$ using Fourier series. Indeed, given a numerical size of truncation $N_0 \in \mathbb{N}$ for our sequences, we suppose that we have access to a numerical candidate $U_0 \in \ell^2(J_{\mathrm{red}}(\mathcal{G}))$ such that $U_0 = \pi^{N_0} U_0$ (that is $U_0$ is a trigonometric polynomial of order $N_0$). Then, we define $u_0 \bydef \gamma^\dagger_\mathcal{G}(U_0) \in L^2_\mathcal{G}$ its function representation on $\R^m$. However, we do not readily have that $u_0$ is smooth on $\R^m$. That is, $u_0 \in H^l_\mathcal{G}$ is not guaranteed. In order to tackle this issue, we ensure that the trace of $u_0$ on $ \Omega_0$ is null using a projection of its Fourier coefficients $U_0$. The resulting function becomes smooth by construction. Such an analysis has been developed  in Section 4 of \cite{unbounded_domain_cadiot} and we illustrate it in Section \ref{sec : computing the bounds} below. In particular, we obtain that $u_0$ is defined via its Fourier coefficients $U_0$ as  
\begin{align}\label{def : construction of u0}
 u_0 \bydef \gamma_{\mathcal{G}}^\dagger(U_0) \in H^l_\mathcal{G}, ~~~~ \text{ where } U_0 = \pi^{N_0} U_0.
 \end{align}
With $u_0$ now constructed, let us discuss the operator $\mathbb{A}$.
\subsubsection{The Operator \texorpdfstring{$\mathbb{A}$}{A}}\label{sec : operator A first}
In this section, we focus our attention on the construction of $\mathbb{A} : L^2_{\mathcal{G}} \to H^l_{\mathcal{G}}$. Specifically, we recall the construction exposed in Section 3 from \cite{unbounded_domain_cadiot}.
Let $N \in \mathbb{N}$ be the numerical truncation size for our matrices. 
We begin by numerically computing an approximate inverse for $\pi^N DF(U_0)L^{-1}\pi^N$ using floating point arithmetic. We denote this approximation by $B^N$. $B^N$ is built as a matrix that we naturally extend to a bounded linear operator on $\ell^2_{\mathcal{G}}$ such that $B^N = \pi^N B^N \pi^N$. With $B^N$ defined, we are now able to define the bounded linear operator $\mathbb{B} : L^2_{\mathcal{G}} \to L^2_{\mathcal{G}}$ as
\begin{equation}\label{def : the operator B}
    \mathbb{B} \bydef \mathbb{1}_{\mathbb{R}^m \setminus \om} + \Gamma_{\mathcal{G}}^\dagger(\pi_N + B^N).
\end{equation}
Then, by using $\mathbb{B}$, we can  define the operator $\mathbb{A} : L^2_{\mathcal{G}} \to H^l_{\mathcal{G}}$ as 
\begin{equation}\label{def : the operator A}
    \mathbb{A} \bydef \mathbb{L}^{-1} \mathbb{B}.
\end{equation}
We refer the interested reader to the Section 3 of \cite{unbounded_domain_cadiot} for the justification of such a construction. 
The well definedness of $\mathbb{A}$ is obtained by using the fact that  $\mathbb{L}$ is an isometric isomorphism between $H^l_{\mathcal{G}}$ and $L^2_{\mathcal{G}}$ (cf. \eqref{def : L2 and Hl G}). Moreover, $\mathbb{A}$ is completely determined by $B^N$, which is chosen numerically. This implies that we can use interval arithmetic to perform rigorous computations involving $\mathbb{A}$. In particular, using Lemma \ref{lem : gamma and Gamma properties}, we have 
\begin{equation}\label{eq : equality norm A and BN}
\|\mathbb{A}\|_{2,l} = \|\mathbb{B}\|_2 = \max\left\{1,\|B^N\|_2\right\}.
\end{equation}
In practice, if  $\mathcal{Z}_1$ defined in Theorem \ref{th: radii polynomial} satisfies $\mathcal{Z}_1 <1$, then $\|I_d - \mathbb{A}D\mathbb{F}(u_0)\|_{l}<1$.
From there, Theorem 3.5 in \cite{unbounded_domain_cadiot} provides that both $\mathbb{A} : L^2_{\mathcal{G}} \to H^l_{\mathcal{G}}$ and $D\mathbb{F}(u_0) : H^l_{\mathcal{G}} \to L^2_{\mathcal{G}}$ have a bounded inverse, which, in such a case, justifies that $\mathbb{A}$ can be considered as an approximate inverse of $D\mathbb{F}(u_0)$. With $u_0$ and $\mathbb{A}$ now available, let us state the main theorem we wish to apply.
\subsubsection{Newton-Kantorovich Approach in the case \texorpdfstring{$\mathcal{H} = \mathcal{G}$}{HisG}}
In this section, we introduce the main theorem we wish to apply for the case $\mathcal{H} = \mathcal{G}$. We introduce the following fixed point operator $\mathbb{T} : \overline{B_r(u_0)} \to \overline{B_r(u_0)}$ given by
\begin{align}
\mathbb{T}(u) \bydef u - \mathbb{A}\mathbb{F}(u).\label{def : T approx}
\end{align}
We want to prove there exists an $r > 0$ such that $\mathbb{T}$ is well-defined and a contraction. In order to determine a possible value for $r>0$ that would provide the contraction, we wish to use a Radii-Polynomial theorem. In particular, we build $u_0 \in H^l_{\mathcal{G}}$ as in Section \ref{sec : construction of u0 gen}, $\mathbb{A} : L_{\mathcal{G}}^2 \to H^l_{\mathcal{G}}$ as in Section \ref{def : the operator A}, and the bounds $\mathcal{Y}_0, \mathcal{Z}_1 >0$ and  $\mathcal{Z}_2 : (0, \infty) \to [0,\infty)$ in such a way that the hypotheses of the following theorem are satisfied.
\begin{theorem}\label{th: radii polynomial}
Let $\mathbb{A} : L_{\mathcal{G}}^2 \to H_{\mathcal{G}}^l$ be a bounded linear operator. Moreover, let $\mathcal{Y}_0, \mathcal{Z}_1$ be non-negative constants and let  $\mathcal{Z}_2 : (0, \infty) \to [0,\infty)$ be a non-negative function  such that
  \begin{align}\label{eq: definition Y0 Z1 Z2}
    \|\mathbb{A}\mathbb{F}(u_0)\|_l \leq &\mathcal{Y}_0\\
    \|I_d - \mathbb{A}D\mathbb{F}(u_0)\|_{l} \leq &\mathcal{Z}_1\\
    \|\mathbb{A}\left({D}\mathbb{F}(h) - D\mathbb{F}(u_0)\right)\|_l \leq &\mathcal{Z}_2(r)\|h - u_0\|_{l}, ~~ \text{for all } h \in B_r(u_0)
\end{align}  
If there exists $r>0$ such that
\begin{equation}\label{condition radii polynomial}
    \frac{1}{2}\mathcal{Z}_2(r)r^2 - (1-\mathcal{Z}_1)r + \mathcal{Y}_0 <0, \ and \ \mathcal{Z}_1 + \mathcal{Z}_2(r)r < 1 
 \end{equation}
then there exists a unique $\tilde{u} \in \overline{B_r(u_0)} \subset H^l_{\mathcal{G}}$ such that $\mathbb{F}(\tilde{u})=0$, where $B_r(u_0)$ is the open ball of $H^l_{\mathcal{G}}$ centered at $u_0$. 
\end{theorem}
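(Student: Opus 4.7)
The plan is to apply the Banach fixed point theorem to the map $\mathbb{T}(u) \bydef u - \mathbb{A}\mathbb{F}(u)$ defined on the closed ball $\overline{B_r(u_0)} \subset H^l_{\mathcal{G}}$, and then to use the invertibility of $\mathbb{A}$ to convert the resulting unique fixed point into a unique zero of $\mathbb{F}$. The two inequalities in \eqref{condition radii polynomial} correspond precisely to the self-mapping and contraction hypotheses, respectively.

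To verify the self-mapping property, I would use the fundamental theorem of calculus to write, for $u \in \overline{B_r(u_0)}$,
\begin{equation*}
\mathbb{T}(u) - u_0 = -\mathbb{A}\mathbb{F}(u_0) + \bigl(I_d - \mathbb{A}D\mathbb{F}(u_0)\bigr)(u - u_0) - \mathbb{A}\int_0^1 \bigl(D\mathbb{F}(u_0 + s(u-u_0)) - D\mathbb{F}(u_0)\bigr)(u-u_0)\, ds.
\end{equation*}
Taking the $H^l_{\mathcal{G}}$-norm and applying the three bounds $\mathcal{Y}_0$, $\mathcal{Z}_1$, and $\mathcal{Z}_2$ (using $\|u_0 + s(u-u_0) - u_0\|_l \leq sr$ inside the integral) would yield the estimate $\|\mathbb{T}(u) - u_0\|_l \leq \mathcal{Y}_0 + \mathcal{Z}_1 r + \tfrac{1}{2}\mathcal{Z}_2(r) r^2$, where the factor $\tfrac{1}{2}$ comes from $\int_0^1 s\, ds$. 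The first inequality of \eqref{condition radii polynomial} is exactly the statement that this upper bound is strictly less than $r$, so $\mathbb{T}(\overline{B_r(u_0)}) \subset \overline{B_r(u_0)}$.

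Next, I would establish the contraction property by an analogous decomposition: for $u,v \in \overline{B_r(u_0)}$,
\begin{equation*}
\mathbb{T}(u) - \mathbb{T}(v) = \bigl(I_d - \mathbb{A}D\mathbb{F}(u_0)\bigr)(u-v) - \mathbb{A}\int_0^1 \bigl(D\mathbb{F}(v + s(u-v)) - D\mathbb{F}(u_0)\bigr)(u-v)\, ds.
\end{equation*}
By convexity, $v + s(u-v) \in \overline{B_r(u_0)}$ for every $s \in [0,1]$, so $\|v + s(u-v) - u_0\|_l \leq r$. Applying the $\mathcal{Z}_1$ and $\mathcal{Z}_2$ bounds yields $\|\mathbb{T}(u) - \mathbb{T}(v)\|_l \leq (\mathcal{Z}_1 + \mathcal{Z}_2(r) r)\|u-v\|_l$, and the second inequality of \eqref{condition radii polynomial} makes this Lipschitz constant strictly less than $1$. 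Banach's fixed point theorem then produces a unique $\tilde{u} \in \overline{B_r(u_0)}$ satisfying $\mathbb{T}(\tilde{u}) = \tilde{u}$, equivalently $\mathbb{A}\mathbb{F}(\tilde{u}) = 0$.

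The only step requiring an external ingredient is the passage from $\mathbb{A}\mathbb{F}(\tilde{u}) = 0$ to $\mathbb{F}(\tilde{u}) = 0$, and this is where I expect the main subtlety to lie. Since the second inequality of \eqref{condition radii polynomial} forces $\mathcal{Z}_1 < 1$, Theorem 3.5 of \cite{unbounded_domain_cadiot} applies and provides that $\mathbb{A} : L^2_{\mathcal{G}} \to H^l_{\mathcal{G}}$ is a bounded bijection, hence injective, so $\mathbb{F}(\tilde{u}) = 0$. Beyond this invertibility step, the only other point to verify carefully is that every operator manipulation stays inside the symmetric spaces; this is guaranteed by Assumption \ref{ass : L_G invariant} together with the construction of $\mathbb{A}$ through $\gamma_{\mathcal{G}}$ and $\gamma_{\mathcal{G}}^\dagger$, which ensures the uniqueness assertion holds within $H^l_{\mathcal{G}}$ rather than the larger space $H^l$.
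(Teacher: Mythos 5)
Your proof is correct and takes essentially the same approach as the paper's cited source \cite{van2021spontaneous}: the standard Banach fixed point argument on $\mathbb{T}(u) = u - \mathbb{A}\mathbb{F}(u)$, with the self-mapping and contraction estimates derived from the two decompositions you wrote and the $\tfrac{1}{2}$ coming from $\int_0^1 s\, ds$. You also correctly identify the one genuinely non-routine step---passing from $\mathbb{A}\mathbb{F}(\tilde u)=0$ to $\mathbb{F}(\tilde u)=0$---and resolve it via injectivity of $\mathbb{A}$ under $\mathcal{Z}_1<1$, which is precisely how the paper handles it in its discussion surrounding \eqref{eq : equality norm A and BN} by invoking Theorem 3.5 of \cite{unbounded_domain_cadiot}.
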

\begin{proof}
The proof can be found in \cite{van2021spontaneous}.
\end{proof}
By using Theorem \ref{th: radii polynomial}, we are able to perform computer assisted proofs in the case $\mathcal{H} = \mathcal{G}$. Let us now move to the next case.
\subsection{When \texorpdfstring{$\mathcal{H}< \mathcal{G}$}{HleqG} but \texorpdfstring{$\mathcal{H}$}{H} isolates the solution}\label{sec : H isolate}
In this section, we consider the case where $\mathcal{H} < \mathcal{G}$, but we assume that the symmetry of $\mathcal{H}$ allows to isolate the solution. 
Indeed, the set of solutions of equations on $\R^m$ might possess natural translation and rotation invariances (as it is the case in the planar Swift-Hohenberg PDE presented in Section \ref{sec : computing the bounds}). In that sense, solutions are not isolated; however, when constraining the solutions to symmetries, the solutions might become isolated since the translations and rotations are moduled out.  In this section, we assume that $\mathcal{H}$ possesses enough symmetries to isolate a given solution, from the rest of the solution set. 
\par For the approximate solution, we cannot use the construction described in Section \ref{sec : construction of u0 gen} readily since we would get $u_0 \in H^l_\mathcal{H}$ but not necessarily $u_0 \in H^l_\mathcal{G}$. We present in the next section our strategy to tackle this difficulty along with the choice for the operator $\mathbb{A}$.
\subsubsection{Construction of \texorpdfstring{$w_0$}{w0} and \texorpdfstring{$\mathbb{A}$}{A}}\label{sec : construction of w_0}
\par 
Assume that we have access to an approximate solution $u_0 \in H^l_{\mathcal{H}}$ of the form \eqref{def : construction of u0}. We want to construct an approximate solution $w_0 \in H^l_\mathcal{G}$ such that $w_0$ remains close to $u_0$ (in $H^l$-norm say) and so that the construction of $w_0$ is compatible with the use of rigorous numerics. That is, the construction has to involve quantifiable objects. Under such objectives, we make use of the bijectivity of the group action and define $w_0$ as
\begin{equation}\label{v_D_m_fourier}
    w_0 \bydef \frac{1}{|\mathcal{G}|} \sum_{g \in \mathcal{G}} g \cdot u_0,
\end{equation}
then $w_0$ is necessarily $\mathcal{G}$-symmetric. This simple construction is actually quite useful in our computer-assisted analysis since it provides explicit formulas for our required computations. We illustrate its usage in Section \ref{sec : computing the bounds}.

Concerning the construction of $\mathbb{A}$, recall that $\mathbb{A}$ is chosen as an approximate inverse for $D\mathbb{F}(u_0) : H^l_{\mathcal{H}} \to L^2_{\mathcal{H}}$, and it is obtained using the strategy described in Section \ref{sec : operator A first}.  In particular, if $u_0$ is a good approximate solution, and is close to be $\mathcal{G}$-invariant (in the sense that $\|u_0-w_0\|_l$ is small), then $\mathbb{A}$ will be a good approximate inverse for $D\mathbb{F}(w_0) : H^l_{\mathcal{G}} \to L^2_{\mathcal{G}}$ as well. Therefore, we still build $\mathbb{A}$ using the strategy of Section \ref{sec : operator A first}, and we quantify such an approximation in Section \ref{sec : HleqG H isolates bounds}.
\subsubsection{Newton-Kantorovich Approach in the case \texorpdfstring{$\mathcal{H} < \mathcal{G}$}{HleqG} and \texorpdfstring{$\mathcal{H}$}{H} isolates the solution}
We now wish to derive a a Radii-Polynomial Theorem for the case $\mathcal{H} < \mathcal{G}$, but $\mathcal{H}$ isolates the solution. Before stating this theorem, we define \begin{equation}\label{def : G_tilde}
    \tilde{\mathbb{G}}(u_0) \bydef \mathbb{G}(w_0) - \frac{1}{|\mathcal{G}|} \sum_{g \in \mathcal{G}} g \mathbb{G}(u_0).
    \end{equation}
In particular, $\mathbb{G}(w_0)$ can be written as a sum of the elements of $g$ applied to $\mathbb{G}(u_0)$ plus an extra term we denote by $\tilde{\mathbb{G}}(u_0)$. This term represents the fact that $u_0$ is only $\mathcal{H}$ (and not $\mathcal{G}$) symmetric.
\par We want to prove, as in \cite{unbounded_domain_cadiot}, that there exists $r>0$ such that $\mathbb{T} : \overline{B_r(w_0)} \to \overline{B_r(w_0)}$ defined as
\begin{align}
\mathbb{T}(w) := w - D\mathbb{F}(w_0)^{-1}\mathbb{F}(w)\label{T_exact}
\end{align}
is well defined and is a contraction. Note that we use the exact inverse, $D\mathbb{F}(w_0)^{-1}$ (assuming it exists) in the definition of \eqref{T_exact}. This is done so that $\mathbb{T}(w)$ remains $\mathcal{G}$-symmetric. Indeed, since the construction of $\mathbb{A}$ performed in Section \ref{sec : operator A first} does not satisfy the $\mathcal{G}$-symmetry, we cannot define $\mathbb{T}$ as in \eqref{def : T approx} as it will not map $\overline{B_r(w_0)}$ into itself. By using the exact inverse, we will prove that $\mathbb{T}$ does map $\overline{B_r(w_0)}$ into itself along with the fact that it is a contraction in the following theorem.
\begin{theorem}\label{th: radii polynomial s}
Let $\mathcal{H} \leq \mathcal{G}$ be the square lattice space subgroup of $\mathcal{G}$ of maximal order. Let $\mathbb{A} : L^2_{\mathcal{H}} \to H^l_{\mathcal{H}}$ and let $\mathcal{Y}_0,\mathcal{Y}_s, \mathcal{Z}_1,\mathcal{Z}_s$ be non-negative constants and let $\mathcal{Z}_2 : [0,\infty) \to (0,\infty)$ be a non-negative function such that for all $r>0$
  \begin{align}\label{eq: definition Y0 Z1 Z2 s}
    \|\mathbb{A}\mathbb{F}(u_0)\|_l \leq &\mathcal{Y}_0\\
    \|\mathbb{A}\tilde{\mathbb{G}}(u_0)\|_{l} \leq & \mathcal{Y}_s \\
    \|I_d - \mathbb{A}D\mathbb{F}(u_0)\|_{l} \leq &\mathcal{Z}_1\\
    \|\mathbb{A}(D\mathbb{G}(w_0) - D\mathbb{G}(u_0))\|_{l} \leq & \mathcal{Z}_s \\
    \|\mathbb{A}\left({D}\mathbb{F}(w_0) - D\mathbb{F}(h)\right)\|_l \leq &\mathcal{Z}_2(r)\|h-w_0\|_l, ~~ \text{for all } h \in B_r(w_0)
\end{align}  
If there exists $r>0$ such that
\begin{equation}\label{condition radii polynomial s}
   \frac{1}{2}\mathcal{Z}_2(r)r^2 - (1-\mathcal{Z}_1 - \mathcal{Z}_s)r + \mathcal{Y}_0 + \mathcal{Y}_s <0, ~\text{and}~ \mathcal{Z}_2(r)r < 1-\mathcal{Z}_1 - \mathcal{Z}_s,
 \end{equation}
then there exists a unique $\tilde{w} \in \overline{B_r(w_0)} \subset H^l_{\mathcal{G}}$ such that $\mathbb{F}(\tilde{w})=0$, where $\overline{B_r(w_0)}$ is the closed ball of $H^l_{\mathcal{G}}$ centered at $w_0$ and of radius $r$.
\end{theorem}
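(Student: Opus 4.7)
The plan is to apply the Banach fixed-point theorem to $\mathbb{T}$ from \eqref{T_exact} on the closed ball $\overline{B_r(w_0)} \subset H^l_{\mathcal{G}}$. Because $\mathbb{T}$ is built from the exact inverse $D\mathbb{F}(w_0)^{-1}$ rather than $\mathbb{A}$, the first task is to establish existence and a usable bound for $D\mathbb{F}(w_0)^{-1} : L^2_{\mathcal{G}} \to H^l_{\mathcal{G}}$. Since $\mathbb{L}$ is linear, $D\mathbb{F}(u_0) - D\mathbb{F}(w_0) = D\mathbb{G}(u_0) - D\mathbb{G}(w_0)$, so the triangle inequality together with the bounds on $\mathcal{Z}_1$ and $\mathcal{Z}_s$ yields
\[
\|I_d - \mathbb{A}D\mathbb{F}(w_0)\|_l \leq \mathcal{Z}_1 + \mathcal{Z}_s,
\]
which is strictly less than one by the second inequality of \eqref{condition radii polynomial s}. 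A Neumann series then produces $(\mathbb{A}D\mathbb{F}(w_0))^{-1}$ on $H^l_{\mathcal{H}}$ with norm at most $(1-\mathcal{Z}_1-\mathcal{Z}_s)^{-1}$, and composition with $\mathbb{A}^{-1}$ (available since $\mathbb{A} : L^2_{\mathcal{H}} \to H^l_{\mathcal{H}}$ is an isomorphism thanks to \eqref{eq : equality norm A and BN} once $B^N$ is checked invertible) delivers $D\mathbb{F}(w_0)^{-1}$ on $L^2_{\mathcal{H}}$. To restrict this inverse to $L^2_{\mathcal{G}} \to H^l_{\mathcal{G}}$, I use $\mathcal{G}$-equivariance of $D\mathbb{F}(w_0)$ (a consequence of $w_0 \in H^l_{\mathcal{G}}$ and Assumption \ref{ass : L_G invariant}): if $y \in L^2_{\mathcal{G}}$ and $x \bydef D\mathbb{F}(w_0)^{-1}y$, then $g\cdot x$ solves the same equation for every $g \in \mathcal{G}$, so uniqueness forces $g\cdot x = x$.

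The self-map step is where the bounds $\mathcal{Y}_0$ and $\mathcal{Y}_s$ enter, through the key decomposition
\[
\mathbb{F}(w_0) = \frac{1}{|\mathcal{G}|}\sum_{g\in\mathcal{G}} g\cdot \mathbb{F}(u_0) + \tilde{\mathbb{G}}(u_0),
\]
obtained from \eqref{v_D_m_fourier}, the commutation of $\mathbb{L}$ with the $\mathcal{G}$-action, and \eqref{def : G_tilde}. Since $D\mathbb{F}(w_0)^{-1}$ also commutes with every $g \in \mathcal{G}$ and each $g$ acts as an isometry on $H^l$, this gives
\[
\|D\mathbb{F}(w_0)^{-1}\mathbb{F}(w_0)\|_l \leq \|D\mathbb{F}(w_0)^{-1}\mathbb{F}(u_0)\|_l + \|D\mathbb{F}(w_0)^{-1}\tilde{\mathbb{G}}(u_0)\|_l \leq \frac{\mathcal{Y}_0+\mathcal{Y}_s}{1-\mathcal{Z}_1-\mathcal{Z}_s},
\]
after writing $D\mathbb{F}(w_0)^{-1} = (\mathbb{A}D\mathbb{F}(w_0))^{-1}\mathbb{A}$ and invoking the $\mathcal{Y}_0,\mathcal{Y}_s$ bounds. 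Combining this with a Taylor-with-integral-remainder expansion of $\mathbb{F}(w)$ about $w_0$ and the Lipschitz bound involving $\mathcal{Z}_2(r)$ applied at $h = w_0 + s(w-w_0)$ (noting $\|h-w_0\|_l \leq sr$) yields
\[
\|\mathbb{T}(w)-w_0\|_l \leq \frac{\mathcal{Y}_0 + \mathcal{Y}_s + \tfrac12\mathcal{Z}_2(r)r^2}{1-\mathcal{Z}_1-\mathcal{Z}_s},
\]
which is at most $r$ precisely by the first inequality of \eqref{condition radii polynomial s}.

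The contraction estimate proceeds analogously: expanding $\mathbb{F}(w_1) - \mathbb{F}(w_2) = \int_0^1 D\mathbb{F}(w_2 + t(w_1-w_2))(w_1-w_2)\,dt$, subtracting $D\mathbb{F}(w_0)(w_1-w_2)$, and applying bound \eqref{eq: definition Y0 Z1 Z2 s} for $\mathcal{Z}_2(r)$ gives $\|\mathbb{T}(w_1)-\mathbb{T}(w_2)\|_l \leq \tfrac{\mathcal{Z}_2(r)r}{1-\mathcal{Z}_1-\mathcal{Z}_s}\|w_1-w_2\|_l$, a contraction by the second inequality of \eqref{condition radii polynomial s}. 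Banach's fixed-point theorem on the complete metric space $\overline{B_r(w_0)} \subset H^l_{\mathcal{G}}$ then delivers a unique $\tilde{w}$, and since $D\mathbb{F}(w_0)^{-1}$ is a bijection, $\mathbb{T}(\tilde{w})=\tilde{w}$ is equivalent to $\mathbb{F}(\tilde{w})=0$. I expect the self-map estimate to be the main obstacle: one must both recognize the group-average identity for $\mathbb{F}(w_0)$ and leverage the $\mathcal{G}$-equivariance of $D\mathbb{F}(w_0)^{-1}$ in order to convert bounds formulated at the only-$\mathcal{H}$-symmetric object $u_0$ (where $\mathbb{A}$ is adapted) into bounds at the genuinely $\mathcal{G}$-symmetric $w_0$, which is exactly what forces the appearance of the extra term $\mathcal{Y}_s$ and the replacement of $1-\mathcal{Z}_1$ by $1-\mathcal{Z}_1-\mathcal{Z}_s$ relative to Theorem \ref{th: radii polynomial}.
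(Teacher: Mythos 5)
Your proof follows essentially the same route as the paper's (which adapts Theorem 2.15 of the reference used there): the triangle inequality to obtain $\mathcal{Z}_1 + \mathcal{Z}_s$, a Neumann series for invertibility of $D\mathbb{F}(w_0)$ on $H^l_\mathcal{H}$, the group-average decomposition $\mathbb{F}(w_0) = \frac{1}{|\mathcal{G}|}\sum_{g} g\mathbb{F}(u_0) + \tilde{\mathbb{G}}(u_0)$ combined with $\mathcal{G}$-equivariance of $D\mathbb{F}(w_0)^{-1}$ and $\mathcal{G}$-isometry of the $l$-norm to produce the $\mathcal{Y}_0 + \mathcal{Y}_s$ bound, and Banach's fixed-point theorem applied to $\mathbb{T}$ from \eqref{T_exact}.

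One caveat on your restriction step. When you argue ``$g\cdot x$ solves the same equation for every $g \in \mathcal{G}$, so uniqueness forces $g\cdot x = x$,'' the invocation of uniqueness is on the space $H^l_\mathcal{H}$, the only space on which you have inverted $D\mathbb{F}(w_0)$ — but $g\cdot x$ is $g\mathcal{H}g^{-1}$-symmetric, not $\mathcal{H}$-symmetric, unless $\mathcal{H}$ is normal in $\mathcal{G}$ (which fails in cases of interest here, e.g.\ $D_2 \leq D_6$). The clean version of the argument averages rather than conjugates: set $\bar{x} \bydef \frac{1}{|\mathcal{G}|}\sum_{g \in \mathcal{G}} g\cdot x$, note $\bar{x} \in H^l_\mathcal{G} \subset H^l_\mathcal{H}$, and use $\mathcal{G}$-equivariance of $D\mathbb{F}(w_0)$ together with $\mathcal{G}$-invariance of $y$ to get $D\mathbb{F}(w_0)\bar{x} = y$; uniqueness on $H^l_\mathcal{H}$ then gives $\bar{x} = x$, hence $x \in H^l_\mathcal{G}$. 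This is precisely the averaging trick you already deploy when producing the $\mathcal{Y}_0 + \mathcal{Y}_s$ estimate, so it requires no new ingredient; it just needs to be stated at this earlier point as well. Aside from this, the argument is correct and matches the paper's.
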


\begin{proof}
Our proof is based on that of Theorem 2.15 in \cite{van2021spontaneous}. Firstly, observe that
\begin{align}
    \|I_d - \mathbb{A}D\mathbb{F}(w_0)\|_{l} \leq \|I_d - \mathbb{A}D\mathbb{F}(u_0)\|_{l} + \|\mathbb{A}(D\mathbb{G}(w_0) - D\mathbb{G}(u_0))\|_{l} \leq \mathcal{Z}_1 + \mathcal{Z}_s.
\end{align}
Since we assume that $\mathcal{Z}_1 + \mathcal{Z}_s < 1$ and that $\mathbb{A}$ is invertible, a Neumann series argument implies that and $D\mathbb{F}(w_0) : H^l_{\mathcal{H}} \to L^2_{\mathcal{H}}$ has a bounded inverse. We denote by $D\mathbb{F}(w_0)^{-1} : L^2_{\mathcal{H}} \to H^l_{\mathcal{H}}$ the inverse of $D\mathbb{F}(w_0)$. Using Assumption \ref{ass : L_G invariant}, recall that we can also restrict $\mathbb{F}$ as  $\mathbb{F}: H^l_{\mathcal{G}} \to L^2_{\mathcal{G}}$. Then, it follows by definition that $D\mathbb{F}(w_0)^{-1} : L^2_{\mathcal{G}} \to H^l_{\mathcal{G}}$ is well-defined.
Next, observe that
{\small\begin{align}
    \|D\mathbb{F}(w_0)^{-1}\|_{\mathcal{B}(L^2_{\mathcal{G}},H^l_{\mathcal{G}})} = \sup_{y \in L^2_{\mathcal{G}}} \frac{\|D\mathbb{F}(w_0)^{-1} y\|_{H^l_{\mathcal{G}}}}{\|y\|_{L^2_{\mathcal{G}}}} \leq \sup_{y \in L^2_{\gamma_{\mathcal{H}}}} \frac{\|D\mathbb{F}(w_0)^{-1} y\|_{H^l_{\mathcal{H}}}}{\|y\|_{L^2_{\mathcal{H}}}} = \|D\mathbb{F}(w_0)^{-1}\|_{\mathcal{B}(L^2_{\mathcal{H}},H^l_\mathcal{H})}
\end{align}}
where the second to last step follows from the fact that $L^2_{\mathcal{G}} \subset L^2_{\mathcal{H}}$ since $\mathcal{H} < \mathcal{G}$.
\par We now estimate for $r > 0$ and $w \in \overline{B_r(w_0)}$,
{\small\begin{align}
    \|\mathbb{T}(w) - w_0\|_{l} \leq \|\mathbb{T}(w) - \mathbb{T}(w_0)\|_{l} + \|\mathbb{T}(w_0) - w_0\|_{l} \leq \frac{1}{1-\mathcal{Z}_1 - \mathcal{Z}_s} \frac{1}{2} \mathcal{Z}_2(r) r^2 + \|D\mathbb{F}(w_0)^{-1} \mathbb{F}(w_0)\|_{l}\label{first_breakage}
\end{align}}
where the last step followed from the proof of Theorem 2.15 in \cite{van2021spontaneous}. Next, using Assumption \ref{ass : L_G invariant}, we have that $g\mathbb{L}w = \mathbb{L} gw$ for all $w \in H^l_{\mathcal{G}}$. In particular, combining \eqref{def : G_tilde} and the fact that $w_0 = \frac{1}{|\mathcal{G}|}\sum_{g \in \mathcal{G}} u_0$, observe that
\begin{align}
    \|D\mathbb{F}(w_0)^{-1}\mathbb{F}(w_0)\|_{l} &= \left\|D\mathbb{F}(w_0)^{-1} \left(\frac{1}{|\mathcal{G}|}\sum_{g \in \mathcal{G}} g\mathbb{F}(u_0) + \tilde{\mathbb{G}}(u_0)\right)\right\|_{l} \\
    &\leq \frac{1}{|\mathcal{G}|}\left\|D\mathbb{F}(w_0)^{-1}\sum_{g \in \mathcal{G}} g\mathbb{F}(u_0)\right\|_{l} + \|D\mathbb{F}(w_0)^{-1} \tilde{\mathbb{G}}(u_0)\|_{l}\label{Y_0_step1}
\end{align}
Now, since $D\mathbb{F}(w_0)^{-1}$ is invariant under the action of $\mathcal{G}$, we obtain
\begin{align}
    \frac{1}{|\mathcal{G}|}\left\|D\mathbb{F}(w_0)^{-1}\sum_{g \in \mathcal{G}} g\mathbb{F}(u_0)\right\|_{l} &= \frac{1}{|\mathcal{G}|}\left\|\sum_{g \in \mathcal{G}} gD\mathbb{F}(w_0)^{-1}\mathbb{F}(u_0)\right\|_{l},
\end{align}
and since the $l$-norm is invariant under the action of $\mathcal{G}$ thanks to Assumption \ref{ass : L_G invariant}, we obtain
\begin{align}
    \frac{1}{|\mathcal{G}|}\left\|\sum_{g \in \mathcal{G}} gD\mathbb{F}(w_0)^{-1}\mathbb{F}(u_0)\right\|_{l}  \leq \frac{1}{|\mathcal{G}|}\sum_{g \in \mathcal{G}} \left\| gD\mathbb{F}(w_0)^{-1}\mathbb{F}(u_0)\right\|_{l} =   \|D\mathbb{F}(w_0)^{-1} \mathbb{F}(u_0)\|_{l}.\label{Y_0_step2}
\end{align}
Returning to \eqref{Y_0_step1}, we use \eqref{Y_0_step2}  to obtain
{\small\begin{align}
    \|D\mathbb{F}(w_0)^{-1} \mathbb{F}(w_0)\|_{l} \leq \|D\mathbb{F}(w_0)^{-1} \mathbb{F}(u_0)\|_{l} + \|D\mathbb{F}(w_0)^{-1} \tilde{\mathbb{G}}(u_0)\|_{l} 
    &\leq \frac{1}{1-\mathcal{Z}_1 - \mathcal{Z}_s}\left(\|\mathbb{A}\mathbb{F}(u_0)\|_{l} + \|\mathbb{A}\tilde{\mathbb{G}}(u_0)\|_{l}\right) \\
    &\leq \frac{1}{1-\mathcal{Z}_1 - \mathcal{Z}_s}(\mathcal{Y}_0 + \mathcal{Y}_s).
\end{align}}
Returning to \eqref{first_breakage}, we obtain
\begin{align}
    \|\mathbb{T}(w) - w_0\|_{l} \leq \frac{1}{1-\mathcal{Z}_1 - \mathcal{Z}_s} \left(\frac{1}{2} \mathcal{Z}_2(r) r^2 + \mathcal{Y}_0 + \mathcal{Y}_s\right).
\end{align}
Hence, $\mathbb{T}$ maps $\overline{B_r(w_0)}$ to itself. Next, we show that $\mathbb{T}$ is a contraction. In particular, we have
\begin{align}
    \|D\mathbb{T}(w)\|_{\mathcal{B}(H^l_{\mathcal{G}},H^l_{\mathcal{G}})} = \|I_d - D\mathbb{F}(w_0)^{-1}\mathbb{F}(w)\|_{\mathcal{B}(H^l_{\mathcal{G}},H^l_{\mathcal{G}})} 
    &= \|D\mathbb{F}(w_0)^{-1} (D\mathbb{F}(w_0) - D\mathbb{F}(w))\|_{{\mathcal{B}(H^l_{\mathcal{G}},H^l_{\mathcal{G}})}} \\
    &\hspace{-0.5cm}\leq \frac{1}{1-\mathcal{Z}_1 - \mathcal{Z}_s} \|\mathbb{A}(D\mathbb{F}(w_0) - D\mathbb{F}(w))\|_{\mathcal{B}(H^l_{\mathcal{H}},H^l_{\mathcal{H}})} \\
    &\hspace{-0.5cm}\leq \frac{\mathcal{Z}_2(r) r}{1-\mathcal{Z}_1 - \mathcal{Z}_s}.
\end{align}
Since \eqref{condition radii polynomial s} is satisfied, we obtain
\begin{align}
    \|D\mathbb{T}(w)\|_{\mathcal{B}(H^l_{\mathcal{G}},H^l_{\mathcal{G}})} \leq \frac{\mathcal{Z}_2(r) r}{1-\mathcal{Z}_1 - \mathcal{Z}_s} < 1
\end{align}
as desired. Using the Banach fixed point theorem, we obtain the existence of a unique fixed pointed $\tilde{w}$ of $\mathbb{T}$ in $\overline{B_r(w_0)} \subset H^l_\mathcal{G}$.
\end{proof} 
With Theorem \ref{th: radii polynomial s}, we are able to perform computer assisted proofs in the case $\mathcal{H} < \mathcal{G}$, but $\mathcal{H}$ isolates the solution. Let us now move to the final case.
\subsection{When \texorpdfstring{$\mathcal{H}$}{H} does not isolate the solution}\label{sec : oddprimes}
In this section, we assume that the symmetries of $\mathcal{H}$ are not sufficient to isolate the solution (in comparison with the previous section). In such a case, we expect $D\mathbb{F}(u_0) : H^l_\mathcal{H} \to L^2_\mathcal{H}$ to have a kernel (due to translation or rotation invariance) and the framework proposed with the use of Theorem \ref{th: radii polynomial} cannot apply. Indeed, a contraction argument relies of the invertibility of the Jacobian at the solution. In order to eliminate this natural kernel, we modify our zero finding map $\mathbb{F}$.

One possible choice is to append extra equations in $\mathbb{F}$ to isolate the solution. This results in an unbalanced system, which can be overcome by the addition of an \emph{unfolding parameters}. In general, choosing extra equations with adequate unfolding parameters is a non-trivial task. Some applications in computer assisted proofs using unfolding parameters can be found in \cite{jay_unfold1}, \cite{jay_unfold4}, \cite{jay_unfold3}, and \cite{jay_unfold2}.  In particular, this choice is extremely problem-dependent and has to be handled in the case to case scenario. For that matter, we restrict ourselves to the case $m=2$ (that is we consider solutions on the plane $\R^2$) where $\mathcal{G} = D_j$ is a dihedral group and where $j$ is an odd prime. Moreover, we assume that the solution set is invariant under $SE(2)$, the group of Euclidean symmetries on the plane. That is solutions are invariant under all possible combinations of  translations and rotations.
In this case, the maximal square lattice space subgroup is $\mathcal{H} = \mathbb{Z}_2 \times \mathbb{Z}_1$. In fact,  the lack of isolation stems from the translation invariance of the set of solutions. In other terms, $\mathcal{H}$ does not allow to mode out the translation invariance in the $x_2$-direction. 
\subsubsection{The map \texorpdfstring{$\mathbb{F}$}{Fbar}}
\par Suppose that $\tu \in H^l_{\mathbb{Z}_2 \times \mathbb{Z}_1}$ solves \eqref{eq : f(u)=0 on Hl_G}. Then, using Proposition 2.5 of \cite{unbounded_domain_cadiot}, we have that $\tilde{u}$ is infinitely differentiable. Moreover,  note that $\partial_{x_2} \tu \in H^l_{\mathbb{Z}_2 \times \mathbb{Z}_1}$ and 
\begin{align*}
   0 =  \partial_{x_2}\mathbb{F}(\tu) = D\mathbb{F}(\tu) \partial_{x_2} \tu,
\end{align*}
so $\partial_{x_2}\tu \in Ker(D\mathbb{F}(\tu))$. In particular, $\partial_{x_2} \tu \in H^l_{\mathbb{Z}_2 \times \mathbb{Z}_1}$ arises from the translation invariance in $x_2$ which remains in $H^l_{\mathbb{Z}_2 \times \mathbb{Z}_1}$.  Our goal is to use this information in order to construct a Lagrange multiplier, which will leave the set of solutions unchanged.

We denote $u_0 = \gamma^\dagger_{\mathbb{Z}_2 \times \mathbb{Z}_1}(U_0) \in H^l_{\mathbb{Z}_2 \times \mathbb{Z}_1}$ our approximate solution constructed as in Section \ref{sec : construction of u0 gen}. Recalling that $\mathcal{G} = D_j$ in this section, we denote $w_0 = \frac{1}{j} \sum_{g \in D_j} g u_0$ our ${D}_j$-invariant approximate solution. Now, let us fix $\rho>0$ to be a scaling factor. $\rho$ is chosen numerically in order to optimize the computation of the bounds that will be defined later (see Theorem \ref{th: radii polynomial augmented}). Then, we add an unfolding parameter, which we denote by $\beta$, to \eqref{eq : f(u)=0 on Hl_G} resulting in
\begin{align}
    \mathbb{f}(\beta,w) \bydef \mathbb{L}w + \mathbb{G}(w) + \beta \frac{\partial_{x_2} w_0}{\rho}\label{f_unfold}
\end{align}
 Similarly as a Lagrange multiplier, the term $\beta \frac{\partial_{x_2} w_0}{\rho}$ will allow to desingularize the Jacobian of $\mathbb{f}$ at a solution.
 In order to balance \eqref{f_unfold}, we need to add an extra equation for fixing a value of $\beta$. In particular, such an equation should help ups conclude that $\beta = 0$ whenever we also have $\mathbb{f}(\beta,w)=0$. 
Then, we add the equation $\frac{1}{\rho}(\mathbb{L}(w - w_0),\partial_{x_2}w_0)_2$, resulting in the augmented system
\begin{align}
    \mathbb{F}(\beta,w) \bydef \begin{bmatrix}
        \frac{1}{\rho}(\mathbb{L}(w - w_0),\partial_{x_2}w_0)_2 \\
        \mathbb{f}(\beta,w)
    \end{bmatrix}.\label{f_unfold_augmented}
\end{align}
In particular, the added equation will allow to project in an orthogonal direction with respect to $\partial_{x_2}w_0$. We still denote $\mathbb{F}$ our zero finding problem of interest, for consistency of the notation in our Newton-Kantorovich approaches.

Now, we define the spaces on which we define $\mathbb{F}$. We introduce the Banach space $X$ as
\begin{align}
    X \bydef \left\{w \in L^2, ~ w = v + \sigma \partial_{x_2} w_0, ~\text{for}~ \sigma \in \mathbb{R}, v \in L^2_{D_j}\right\}. 
\end{align}
Next, motivated by the definitions in Section 5 of \cite{unbounded_domain_cadiot}, we define the spaces $H_1 \bydef \mathbb{R} \times H^l_{D_j}$ and  $H_2 \bydef \mathbb{R} \times X$ with their associated norms
\begin{align}
     \|(\beta,w)\|_{H_1} \bydef (|\beta|^2 + \|w\|_{l}^2)^{\frac{1}{2}}~\text{and}~\|(\beta,w)\|_{H_2} \bydef (|\beta|^2 + \|w\|_{2}^2)^{\frac{1}{2}}.
\end{align}
Furthermore, define $X_1 \bydef \mathbb{R} \times X^l_{\mathbb{Z}_2 \times \mathbb{Z}_1}$ and $X_2 \bydef \mathbb{R} \times \ell^2_{\mathbb{Z}_2 \times \mathbb{Z}_1}$ with their associated norms
\begin{align}
    \|(\beta,W)\|_{X_1} \bydef (|\beta|^2 + |\om| \|W\|_{l}^2)^{\frac{1}{2}}~~\text{and}~~\|(\beta,W)\|_{X_2} \bydef (|\beta|^2 + |\om| \|W\|_{2}^2)^{\frac{1}{2}}.
\end{align}
We prove in the next lemma that $H_1$ and $H_2$ are suitable spaces for the definition of $\mathbb{F}$.
\begin{lemma}\label{lem : space_F_bar}
Let $\mathbb{F}$ be given as in \eqref{f_unfold_augmented}. Then, $\mathbb{F}: H_1 \to H_2$ is well-defined and smooth.
\end{lemma}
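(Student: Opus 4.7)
The plan is to verify in turn that the first component of $\mathbb{F}(\beta, w)$ is a well-defined real number, that the second component lies in $X$, and that the resulting map is smooth as a function of $(\beta, w) \in H_1$. The work mostly amounts to bookkeeping using the functional-analytic properties of $\mathbb{L}$ and $\mathbb{G}$ that have already been established in \cite{unbounded_domain_cadiot}.

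First, I would observe that $w_0 \in H^l_{D_j}$ is built in Section \ref{sec : construction of u0 gen} (following Section 4 of \cite{unbounded_domain_cadiot}) in such a way that it is smooth and compactly supported in $\overline{\Omega_0}$. Hence $\partial_{x_2} w_0$ is a smooth compactly supported function and, in particular, $\partial_{x_2} w_0 \in L^2$. For any $w \in H^l_{D_j}$, we then have $\mathbb{L}(w - w_0) \in L^2$ with $\|\mathbb{L}(w - w_0)\|_2 = \|w - w_0\|_l$, so Cauchy--Schwarz yields $|(\mathbb{L}(w - w_0), \partial_{x_2} w_0)_2| \leq \|w - w_0\|_l \, \|\partial_{x_2} w_0\|_2 < \infty$. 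This shows that the scalar component of $\mathbb{F}$ is well-defined, depends linearly (hence smoothly) on $w$, and is constant in $\beta$.

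Next, I would apply Assumption \ref{ass : L_G invariant} with $\mathcal{G} = D_j$ to obtain that $w \mapsto \mathbb{L}w + \mathbb{G}(w)$ sends $H^l_{D_j}$ into $L^2_{D_j}$. Setting $v = \mathbb{L}w + \mathbb{G}(w) \in L^2_{D_j}$ and $\sigma = \beta/\rho \in \mathbb{R}$, the second component of $\mathbb{F}(\beta, w)$ reads $v + \sigma\, \partial_{x_2} w_0$, which is precisely the form required for membership in $X$. Smoothness of this component would then follow from three ingredients that are already at our disposal: $\mathbb{L}$ is bounded linear from $H^l_{D_j}$ to $L^2_{D_j}$ (it is even an isometric isomorphism); $\mathbb{G} : H^l \to H^1$ is smooth under Assumption \ref{ass : LinvG in L1} by \cite{unbounded_domain_cadiot}, and restricts to a smooth map $H^l_{D_j} \to L^2_{D_j}$ thanks to Assumption \ref{ass : L_G invariant} and the fact that $L^2_{D_j}$ is a closed subspace of $L^2$; and $\beta \mapsto (\beta/\rho)\, \partial_{x_2} w_0$ is linear bounded from $\mathbb{R}$ into $L^2$.

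The main obstacle, which is really more of a verification than a genuine difficulty, is to confirm that the construction of $u_0$ (and hence of $w_0$) described in Section \ref{sec : construction of u0 gen} indeed produces a smooth, compactly supported function, so that $\partial_{x_2} w_0$ makes sense as an element of $L^2$ and the inner product defining the scalar component is finite. Once that point is acknowledged, well-definedness and smoothness of $\mathbb{F} : H_1 \to H_2$ reduce to standard applications of the isometric property of $\mathbb{L}$, the polynomial smoothness of $\mathbb{G}$ recalled from \cite{unbounded_domain_cadiot}, and the definition of $X$.
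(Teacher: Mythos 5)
Your proposal is correct and takes essentially the same approach as the paper: use Assumption \ref{ass : L_G invariant} to place $\mathbb{L}w + \mathbb{G}(w)$ in $L^2_{D_j}$, observe that adding $\beta\,\partial_{x_2}w_0/\rho$ lands in $X$, and invoke the smoothness of $\mathbb{G}$ established in \cite{unbounded_domain_cadiot}. You are slightly more thorough than the paper in explicitly verifying (via Cauchy--Schwarz) that the scalar component $\frac{1}{\rho}(\mathbb{L}(w-w_0),\partial_{x_2}w_0)_2$ is finite, a step the paper leaves implicit.
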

\begin{proof}
Let $w \in H^l_{D_j}$, then it follows that $\mathbb{L}w + \mathbb{G}(w) \in L^2_{D_j}$. Therefore, $\mathbb{L}w + \mathbb{G}(w) + \beta \partial_{x_2} w_0 \in X$. The smoothness of $\mathbb{F}$ is a direct consequence of Lemma 2.4 in \cite{unbounded_domain_cadiot}.
\end{proof}
The aforementioned lemma shows that we can consider $\mathbb{F}$ as a map from $H_1$ to $H_2$. In particular, since $(0,w_0) \in H_1$, we obtain that $D\mathbb{F}(0,w_0) : H_1 \to H_2$ is a bounded linear operator. \par Now, we need to justify the introduction of the unfolding parameter $\beta$ and our specific choice for the extra equation. In particular, we need to prove that if $\tilde{x} = (\tilde{\beta},\tilde{w})$ is a solution to \eqref{f_unfold}, then $\tilde{\beta} = 0$ necessarily holds. This is necessary to ensure that the solution we prove is a solution to \eqref{eq : swift_hohenberg}. Before stating this result, we establish a preliminary lemma. Given $\theta \in \R$, define $R_\theta$ as the rotation operator by an angle $\theta.$ For simplicity, we denote $R_\theta x$, the rotation of $x \in \R^2$ by $\theta$ and $R_\theta u : x \to u(R_\theta x)$ the rotation operator acting on functions in $L^2$. Then, we obtain the following.
\begin{lemma}\label{lem : average 0 partial_y}
Recall that $w_0 = \frac{1}{j} \sum_{g \in D_j} u_0 \in H^l_{D_j}$ and that $u_0 = \gamma^\dagger_{\mathbb{Z}_2 \times \mathbb{Z}_1}(U_0) \in H^l_{\mathbb{Z}_2 \times \mathbb{Z}_1}$. Then, 
\begin{align}
    \frac{1}{j} \sum_{k = 0}^{j-1} R_{\frac{2\pi k}{j}} \partial_{x_2} w_0 = 0.
\end{align}
Moreover, if $w_0 \neq 0$, then $\partial_{x_2} w_0$ is not $D_j$-invariant.
\end{lemma}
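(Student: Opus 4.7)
The plan is to exploit the $D_j$-invariance of $w_0$ by differentiating the identity $w_0 \circ R_{\theta_k} = w_0$ (with $\theta_k \bydef 2\pi k/j$) and then averaging the resulting identity over $k = 0, \dots, j-1$. The underlying cancellation will come from the vanishing of the geometric sum $\sum_{k=0}^{j-1} e^{2\pi i k/j} = 0$, which holds because $j \geq 3$ is an odd prime.

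First, I would establish how the gradient transforms under rotations. Since $w_0 \circ R_{\theta_k} = w_0$, the chain rule gives $R_{\theta_k}^T (\nabla w_0)(R_{\theta_k} x) = \nabla w_0(x)$, i.e.\ $(\nabla w_0) \circ R_{\theta_k} = R_{\theta_k}\, \nabla w_0$. Reading off the second component and using the paper's convention $R_\theta u(x) = u(R_\theta x)$ yields
\begin{align}
(R_{\theta_k} \partial_{x_2} w_0)(x) \;=\; \sin(\theta_k)\, \partial_{x_1} w_0(x) \;+\; \cos(\theta_k)\, \partial_{x_2} w_0(x).
\end{align}

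Next, I would average this equality over $k = 0, \dots, j-1$. On the right-hand side, the scalar coefficients $\sum_{k=0}^{j-1} \sin\theta_k$ and $\sum_{k=0}^{j-1} \cos\theta_k$ are the imaginary and real parts of the vanishing geometric sum and therefore both equal zero, which gives the desired identity $\frac{1}{j}\sum_{k=0}^{j-1} R_{\theta_k} \partial_{x_2} w_0 = 0$.

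For the moreover claim, I would argue by contradiction. If $\partial_{x_2} w_0$ were $D_j$-invariant, then $R_{\theta_k} \partial_{x_2} w_0 = \partial_{x_2} w_0$ for every $k$, so the average in the first identity collapses to $\partial_{x_2} w_0$ itself, forcing $\partial_{x_2} w_0 = 0$. Since $w_0 \in H^l_{D_j} \subset L^2(\R^2)$, and any $L^2(\R^2)$ function depending only on $x_1$ must vanish a.e., this would give $w_0 = 0$, contradicting the hypothesis. The only step that requires a little care is the chain-rule identity for $(\nabla w_0) \circ R_{\theta_k}$, with attention to signs so that the sine/cosine identity appears in the correct orientation; beyond that, no serious obstacle is anticipated.
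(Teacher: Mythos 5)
Your proof is correct, and the route is genuinely different from the paper's. The paper writes $w_0$ out explicitly as an average of rotated Fourier series, differentiates the Fourier representation term by term, averages, reindexes the double sum, and isolates the vanishing roots-of-unity sum $\sum_{k=0}^{j-1}\cos(\tfrac{2\pi k}{j}) = \sum_{k=0}^{j-1}\sin(\tfrac{2\pi k}{j}) = 0$. You obtain the same cancellation by differentiating the invariance identity $w_0\circ R_{\theta_k}=w_0$ via the chain rule, which produces the clean transformation law $(R_{\theta_k}\partial_{x_2}w_0) = \sin\theta_k\,\partial_{x_1}w_0 + \cos\theta_k\,\partial_{x_2}w_0$ before averaging. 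Your approach is more conceptual and sidesteps the Fourier bookkeeping entirely; it also applies verbatim to any sufficiently smooth $D_j$-invariant function, with no reference to the specific Fourier construction of $w_0$. The paper's computation has the advantage of staying in the language of Fourier coefficients that the rest of the computer-assisted analysis manipulates, but for this particular lemma your argument is shorter and cleaner. Two small remarks: the vanishing of $\sum_{k=0}^{j-1} e^{2\pi i k/j}$ holds for every integer $j\geq 2$, not just odd primes, so that condition is not needed here; and your handling of the ``moreover'' claim (from $\partial_{x_2}w_0 = 0$ to $w_0=0$ via $L^2$-integrability in $x_2$) actually fills in a step that the paper glosses over.
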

\begin{proof}
Let $x \in \bigcup_{k = 0}^{j-1} R_{\frac{2\pi k}{k}} \Omega_0$. Then, by definition of $w_0$ we have 
\begin{align}
    \partial_{x_2} w_0(x) =  \frac{i}{j} \sum_{k = 0}^{j-1} \sum_{n \in \mathbb{Z}^2} \left(\tilde{n}_2 \cos\left(\frac{2\pi k}{j}\right) -\tilde{n}_1 \sin\left(\frac{2\pi k}{j}\right)\right) u_n e^{i \tilde{n} \cdot R_{\frac{2\pi k}{j}} x}.
\end{align}
Let us now consider the $D_j$-average of $\partial_{x_2} w_0$. In particular, using the change of index $s = k + p$, we get
\begin{align}
    \frac{1}{j} \sum_{p = 0}^{j-1} \partial_{x_2} w_0(R_{\frac{2\pi p}{j}}x) 
    &= \frac{i}{j^2} \sum_{k = 0}^{j-1} \sum_{p = 0}^{j-1} \sum_{n \in \mathbb{Z}^2} \left(\tilde{n}_2 \cos\left(\frac{2\pi k}{j}\right) -\tilde{n}_1 \sin\left(\frac{2\pi k}{j}\right)\right) u_n e^{i \tilde{n} \cdot R_{\frac{2\pi (k+p)}{j}} x} \\
    &=\frac{i}{j^2} \sum_{k = 0}^{j-1} \sum_{s = k}^{j - 1 + k} \sum_{n \in \mathbb{Z}^2} \left(\tilde{n}_2 \cos\left(\frac{2\pi k}{j}\right) -\tilde{n}_1 \sin\left(\frac{2\pi k}{j}\right)\right) u_n e^{i \tilde{n} \cdot R_{\frac{2\pi s}{j}} x} \\
    &=\frac{i}{j^2} \sum_{k = 0}^{j-1} \sum_{s = 0}^{j - 1} \sum_{n \in \mathbb{Z}^2} \left(\tilde{n}_2 \cos\left(\frac{2\pi k}{j}\right) -\tilde{n}_1 \sin\left(\frac{2\pi k}{j}\right)\right) u_n e^{i \tilde{n} \cdot R_{\frac{2\pi s}{j}} x} \\
    &= \frac{i}{j^2}  \sum_{s = 0}^{j - 1} \sum_{n \in \mathbb{Z}^2} u_n e^{i \tilde{n} \cdot R_{\frac{2\pi s}{j}} x} \left[\sum_{k = 0}^{j-1} \left(\tilde{n}_2 \cos\left(\frac{2\pi k}{j}\right) -\tilde{n}_1 \sin\left(\frac{2\pi k}{j}\right)\right) \right] \\
    &= \frac{i}{j^2}  \sum_{s = 0}^{j - 1} \sum_{n \in \mathbb{Z}^2} u_n e^{i \tilde{n} \cdot R_{\frac{2\pi s}{j}} x} \left[\tilde{n}_2\sum_{k = 0}^{j-1}  \cos\left(\frac{2\pi k}{j}\right) -\tilde{n}_1 \sum_{k = 0}^{j-1} \sin\left(\frac{2\pi k}{j}\right) \right].
\end{align}
By definition of the roots of unity, it follows that 
\begin{align} 
\sum_{k = 0}^{j-1} \cos\left(\frac{2\pi k}{j}\right) = \sum_{k = 0}^{j-1} \sin\left(\frac{2\pi k}{j}\right) = 0.
\end{align}
Hence, we obtain that
\begin{align}
    \frac{1}{j} \sum_{p = 0}^{j-1} \partial_{x_2} w_0(R_{\frac{2\pi p}{j}}) = 0.\label{showing average partial is 0}
\end{align}
In order to prove the second conclusion, suppose by way of contradiction that $\partial_{x_2} w_0$ is $D_j$-invariant. It follows that
\begin{align}
    \frac{1}{j}\sum_{p = 0}^{j-1} R_{\frac{2\pi p}{j}} \partial_{x_2} w_0  = \partial_{x_2} w_0.
\end{align}
But \eqref{showing average partial is 0} yields that $\frac{1}{j}\sum_{p = 0}^{j-1} R_{\frac{2\pi p}{j}} \partial_{x_2} w_0  = 0$. Hence, we obtain that $\partial_{x_2} w_0 = 0$, and since $w_0 \in H^l_{D_j}$, it implies that $w_0=0$. This is a contradiction and allows to conclude the proof.
\end{proof}
In order to be able to use Lemma \ref{lem : average 0 partial_y} to our case, we simply need to verify that $ w_0 \neq 0$, which can be done by verifying that $w_0(b) = \frac{1}{j}\sum_{k = 0}^{j-1}u_0(R_{\frac{2\pi k}{j}} b) \neq 0$ for some $b \in \bigcup_{k = 0}^{j-1} R_{\frac{2\pi k}{j}} \om$. We can do this by rigorously evaluating the Fourier series thanks to rigorous numerics (cf. \cite{julia_cadiot_blanco_symmetry}). Supposing that $w_0 \neq 0$, we obtain that following result.
\begin{lemma}\label{beta_is_0}
Suppose that $w_0 \neq 0$ and suppose  $(\tilde{\beta},\tilde{w}) \in H_1$ is a zero of $\mathbb{F}$ given in \eqref{f_unfold}. Then, $\tilde{\beta} = 0$. In particular $\mathbb{L}\tilde{w} + \mathbb{G}(\tilde{w}) = 0$.
\end{lemma}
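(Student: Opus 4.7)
The plan is to exploit the $D_j$-invariance of the zero finding problem against the explicit non-$D_j$-invariance of $\partial_{x_2} w_0$ recorded in Lemma \ref{lem : average 0 partial_y}. The whole argument is a one-line contradiction once the symmetry bookkeeping is set up, so I expect no real obstacle here; the substantive work was already absorbed into the preceding lemma.

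First, I would unpack what it means for $(\tilde\beta,\tilde w)\in H_1 = \mathbb{R}\times H^l_{D_j}$ to annihilate $\mathbb{F}$. The second component of \eqref{f_unfold_augmented} reads
\[
\mathbb{L}\tilde w + \mathbb{G}(\tilde w) \;=\; -\frac{\tilde\beta}{\rho}\,\partial_{x_2} w_0.
\]
Since $\tilde w \in H^l_{D_j}$, Assumption \ref{ass : L_G invariant} applied to $\mathbb{L}$ and $\mathbb{G}$ guarantees that the left-hand side lies in $L^2_{D_j}$, i.e.\ is $D_j$-invariant. Consequently, the function $\tilde\beta\,\partial_{x_2}w_0$ must be $D_j$-invariant as well.

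The contradiction step is then immediate. Assume for a contradiction that $\tilde\beta\neq 0$. Dividing through by $\tilde\beta/\rho$ would force $\partial_{x_2} w_0$ itself to be $D_j$-invariant. However, the hypothesis $w_0\neq 0$ combined with the second conclusion of Lemma \ref{lem : average 0 partial_y} states precisely that $\partial_{x_2} w_0$ is not $D_j$-invariant, a contradiction. Hence $\tilde\beta=0$, and substituting back into the displayed identity yields $\mathbb{L}\tilde w + \mathbb{G}(\tilde w) = 0$, which is the desired conclusion that $\tilde w$ solves the original problem \eqref{eq : f(u)=0 on Hl_G}.

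As a sanity check, note that the first component of $\mathbb{F}$, namely $\rho^{-1}(\mathbb{L}(\tilde w - w_0),\partial_{x_2}w_0)_2$, is not needed for this particular statement; its role is rather to balance the augmented system and to help provide invertibility of the Jacobian $D\mathbb{F}(0,w_0)$ in the Newton--Kantorovich step that follows. An alternative phrasing of the same argument would average the equation over the rotation subgroup of $D_j$ and invoke part (a) of Lemma \ref{lem : average 0 partial_y} to kill the $\partial_{x_2}w_0$ term, then subtract to isolate $\tilde\beta\,\partial_{x_2}w_0=0$; I prefer the contradiction version since it is shorter and isolates exactly which hypothesis ($w_0\neq 0$) is being used.
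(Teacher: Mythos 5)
Your proof is correct and matches the paper's own argument essentially line for line: derive $\mathbb{L}\tilde w + \mathbb{G}(\tilde w) = -\tilde\beta\,\partial_{x_2}w_0/\rho$, observe the left side is $D_j$-invariant, and conclude $\tilde\beta=0$ from the non-invariance of $\partial_{x_2}w_0$ established in Lemma \ref{lem : average 0 partial_y}. (Note the paper's proof as written cites Lemma \ref{beta_is_0} itself at this step, which is evidently a typo for Lemma \ref{lem : average 0 partial_y}; you have it right.)
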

\begin{proof}
Since $(\tilde{\beta},\tilde{w})$ is a zero of $\mathbb{F}$, we have
\begin{align}
    \mathbb{L}\tilde{w} + \mathbb{G}(\tilde{w}) = -\tilde{\beta} \frac{\partial_{x_2} w_0}{\rho}.\label{step_in_proof_beta_0}
\end{align}
Using that $\tilde{w} \in H^l_{D_j}$, it follows that $\mathbb{L}\tilde{w} + \mathbb{G}(\tilde{w}) \in L^2_{D_j}$. 
Hence,  we obtain that $\tilde{\beta} \frac{\partial_{x_2} w_0}{\rho}$ is $D_j$-invariant. Since $w_0 \neq 0$ and $w_0 \in H^l_{D_j}$ by construction, Lemma \ref{beta_is_0} implies that we have $\tilde{\beta} =0.$
\end{proof}
With Lemma \ref{beta_is_0}  available, we have shown that the zeros of \eqref{f_unfold} provide zeros of  \eqref{eq : f(u)=0 on Hl_G} in $H^l_{D_j}$. In particular, we will consider $x_0 \bydef (0,w_0)$ our approximate solution. At this step, we have chosen $\mathbb{F},$ and $x_0 = (0,w_0)$. It remains to choose the operator $\mathbb{A}$, which will be the focus on the next section.
\subsubsection{Construction of \texorpdfstring{$\mathbb{A}$}{Abar}}
We wish to build $\mathbb{A} : \mathbb{R} \times L^2_{\mathbb{Z}_2\times \mathbb{Z}_1} \to \mathbb{R} \times H^l_{\mathbb{Z}_2 \times \mathbb{Z}_1}$ approximating the inverse of $D\mathbb{F}(0,w_0)$. To begin, given $u \in L^2$ and $U \in \ell^2$, define $u^*$ and $U^*$ as the duals in $L^2$ and $\ell^2$ respectively of $u$ and $U$. More specifically, we have 
\begin{align}
    u^*(v) \bydef (u,v)_2, ~~ U^*(V) \bydef (U,V)_2 ~~~~ \text{ for all } v \in L^2 \text{ and } V \in \ell^2.
    \label{dual definition}
\end{align}
 Moreover, we slightly abuse notation and, given $u \in L^2$ and $U \in \ell^2$, we denote $u^* \mathbb{L} : H^l \to \mathbb{R}$ and $U^* L : X^l \to \mathbb{R}$ the following linear operators
\begin{align}\label{def : dual compose L}
    u^* \mathbb{L}v \bydef (u, \mathbb{L}v)_2, ~~ U^* LV \bydef (U, LV)_2  ~~~~ \text{ for all } v \in L^2 \text{ and } V \in \ell^2.
\end{align}
Using the above notation, we notice that 
\begin{align}
    D\mathbb{F}(0,w_0) = \begin{bmatrix}
        0 & \left(\frac{\partial_{x_2}w_0}{\rho}\right)^* \mathbb{L} \\
        \frac{\partial_{x_2} w_0}{\rho} & D\mathbb{f}(w)
    \end{bmatrix}
\end{align}
where $w_0$ is defined  as $w_0 = \frac{1}{|D_j|}\sum_{g \in D_j} gu_0$. In practice, we expect $u_0$ to be a good approximation of $w_0$, in the sense that $u_0$ is "almost" $D_j$-invariant and $\|u_0-w_0\|_l$ is small. In fact, $u_0$ is simplest to work with since it possesses a Fourier coefficients representation $U_0 = \gamma_{\mathbb{Z}_2\times \mathbb{Z}_1}(u_0)$. Motivated by this, we define the auxiliary operator $\mathbb{Q}: \mathbb{R} \times H^l_{\mathbb{Z}_2 \times \mathbb{Z}_1} \to \mathbb{R} \times L^2_{\mathbb{Z}_2 \times \mathbb{Z}_1}$  as follows
\begin{align}
    \mathbb{Q}(0,u) \bydef \begin{bmatrix}
        0 & \left(\frac{\partial_{x_2}u_0}{\rho}\right)^* \mathbb{L} \\
        \frac{\partial_{x_2} u_0}{\rho} & D\mathbb{f}(u)
    \end{bmatrix}~~\text{and}~~Q(0,U) \bydef \begin{bmatrix}
        0 & \left(\frac{\partial_{x_2} U_0}{\rho}\right)^* \mathbb{L} \\
        \frac{\partial_{x_2} U_0}{\rho} & L + DG(U)
    \end{bmatrix}.\label{def : Q}
\end{align}
Intuitively, $\mathbb{Q}(0,u_0)$ approximates $D\mathbb{F}(0,w_0)$ and we can control the difference $\mathbb{Q}(0,u_0) - D\mathbb{F}(0,w_0)$ using $\|w_0 -u_0\|_{H^l}$ (cf Lemma \ref{lem : Z_1_extra}).

We now define $B^N: \mathbb{R} \times \ell^2_{\mathbb{Z}_2 \times \mathbb{Z}_1} \to \mathbb{R} \times \ell^2_{\mathbb{Z}_2 \times \mathbb{Z}_1}$ as a finite dimensional operator satisfying
\begin{align}
    B^N \bydef \begin{bmatrix}
        1 & 0 \\
        0 & \pi^N
    \end{bmatrix} B^N \begin{bmatrix}
        1 & 0 \\
        0 & \pi^N
    \end{bmatrix}.
\end{align}
Note that the above yields that $B^N$ can be represented by a matrix.
In fact, we choose $B^N$ to be a numerical approximate inverse to $\pi^N Q(0,U_0)^{-1} L^{-1}\pi^N$ and then follow the construction performed in Section 5 of \cite{unbounded_domain_cadiot}. More specifically, we define $\mathbb{B}: \mathbb{R} \times L^2_{\mathbb{Z}_2 \times \mathbb{Z}_1} \to \mathbb{R} \times L^2_{\mathbb{Z}_2 \times \mathbb{Z}_1}$ as
\begin{align}\label{eq : operator B extra equation}
    \mathbb{B} \bydef \begin{bmatrix}
        0 & 0 \\
        0 & \mathbb{1}_{\mathbb{R}^2 \setminus \om} \end{bmatrix}
        +
        \begin{bmatrix} 1 & 0 \\
        0 & \gamma_{\mathbb{Z}_2 \times \mathbb{Z}_1}^\dagger \end{bmatrix}\left(\begin{bmatrix}
        0 & 0 \\
        0 & \pi_N\end{bmatrix} + B^N\right)\begin{bmatrix}
            1 & 0 \\
            0 & \gamma_{\mathbb{Z}_2 \times \mathbb{Z}_1}
        \end{bmatrix}.
\end{align}
Then, by using $\mathbb{B}$, we can define the operator $\mathbb{A} : \mathbb{R} \times L^2_{\mathbb{Z}_2 \times \mathbb{Z}_1} \to \mathbb{R} \times H^l_{\mathbb{Z}_2 \times \mathbb{Z}_1}$ as
\begin{align}
    \mathbb{A} \bydef \begin{bmatrix}
        1 & 0 \\
        0 & \mathbb{L}^{-1}
    \end{bmatrix} \mathbb{B}.
\end{align}
Using that $\begin{bmatrix}
        1 & 0 \\
        0 & \mathbb{L}^{-1}
    \end{bmatrix}$ acts as an isometric isomorphism between $\R \times L^2_{\mathbb{Z}_2 \times \mathbb{Z}_1} \to \R \times H^l_{\mathbb{Z}_2 \times \mathbb{Z}_1}$, we obtain that $\mathbb{A} : \mathbb{R} \times L^2_{\mathbb{Z}_2 \times \mathbb{Z}_1} \to \mathbb{R} \times H^l_{\mathbb{Z}_2 \times \mathbb{Z}_1}$ is a well-defined bounded linear operator. Such a construction will be justified in the next section. In particular, we will provide the Radii-Polynomial Theorem for this case, of which we can compute the explicit bounds as the justification (see Section \ref{sec : Z1 nonisolate} for instance).
\subsubsection{Newton-Kantorovich Approach in the case \texorpdfstring{$\mathcal{H} < \mathcal{G}$}{HleqG} and \texorpdfstring{$\mathcal{H}$}{H} does not isolate the solution}
Before stating the Radii Polynomial Theorem in this case, we need to take care of a small technicality. In order to derive the $\mathcal{Y}_s$ bound, as done in Corollary \ref{th: radii polynomial s}, we need to verify that $D\mathbb{F}(0,w_0)^{-1}$ is invariant under the rotation operator. In general, this is false for the spaces under which $D\mathbb{F}(0,w_0)^{-1}$ is defined. In the specific case we need though, we can recover a sufficient result. Before stating this result, define the operator 
\begin{align}\overline{R}_{\theta} \bydef \begin{bmatrix}
    0 & 0 \\
    0 & R_{\theta}
\end{bmatrix}\label{def : R_theta_bar}
\end{align}
We now state an  essential lemma in our analysis. 
\begin{lemma}\label{lem : DF_bar_commutes}
Let $u \in  L^2_{\mathbb{Z}_2 \times \mathbb{Z}_1}$. Let $\overline{R}_{\theta}$ be defined as in \eqref{def : R_theta_bar} and consider the restriction of $D\mathbb{F}(0,w_0)^{-1} : \{0\} \times L^2_{\mathbb{Z}_2 \times \mathbb{Z}_1} \to H_1$. Then, 
\begin{align}
    \frac{1}{m}\sum_{k = 0}^{m-1} \overline{R}_{\theta}D\mathbb{F}(0,w_0)^{-1} \begin{bmatrix}
        0 \\ u
    \end{bmatrix} = D\mathbb{F}(0,w_0)^{-1} \sum_{k = 0}^{m-1}\overline{R}_{\frac{2\pi k}{m}} \begin{bmatrix}
        0 \\ u
    \end{bmatrix}. 
\end{align}
\end{lemma}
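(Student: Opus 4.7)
My plan is to reduce the claim to a direct verification by applying $D\mathbb{F}(0,w_0)$ to an explicit candidate preimage and then invoking the uniqueness of the inverse. Writing $(b, v) \bydef D\mathbb{F}(0,w_0)^{-1}(0, u) \in \mathbb{R} \times H^l_{D_j}$ and using the block structure of $\overline{R}_\theta$ from \eqref{def : R_theta_bar}, which annihilates the scalar component, the left-hand side collapses to $(0, \tfrac{1}{m}Sv)$ with $S \bydef \sum_{k=0}^{m-1} R_{\frac{2\pi k}{m}}$. Thus it suffices to show that $D\mathbb{F}(0,w_0)(0, \tfrac{1}{m}Sv)$ coincides with the right-hand side, namely $(0, \tfrac{1}{m}Su)$ (up to a factor of $m$, which I address in the final paragraph).

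The three structural ingredients that drive the calculation are: (i) $\mathbb{L}$ is a constant-coefficient operator, so $[\mathbb{L}, R_\theta] = 0$ and hence $[\mathbb{L}, S] = 0$; (ii) since $w_0 \in H^l_{D_j}$, the multiplication operator $D\mathbb{G}(w_0)$ commutes with $R_{2\pi k/j}$, so $[D\mathbb{f}(w_0), S] = 0$ in the relevant case $m = j$; and (iii) Lemma \ref{lem : average 0 partial_y} gives $S\partial_{x_2}w_0 = 0$. Applying $D\mathbb{F}(0,w_0)$ to $(0, \tfrac{1}{m}Sv)$, the first component reads $\tfrac{1}{m\rho}(\partial_{x_2}w_0, \mathbb{L}Sv)_2$; after transferring $S$ onto the left factor via (i) and $S^* = S$ (which follows from $R_\theta^* = R_{-\theta}$ upon summing over all rotations), this equals $\tfrac{1}{m\rho}(S\partial_{x_2}w_0, \mathbb{L}v)_2$ and vanishes by (iii). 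The second component becomes $\tfrac{1}{m}SD\mathbb{f}(w_0)v$ via (ii); substituting $D\mathbb{f}(w_0)v = u - b\partial_{x_2}w_0/\rho$ from the defining equation for $(b,v)$ and invoking (iii) once more yields $\tfrac{Su}{m}$, completing the verification.

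The main obstacle is the delicate interplay between the symmetries of $w_0$ and the averaging order $m$: items (ii) and (iii) both rely on $w_0$ being invariant under rotations by $2\pi/m$, which in the setting of this section corresponds to $m = j$ (the dihedral order), and the commutation arguments must be justified on $\mathbb{R} \times H^l_{D_j}$ to ensure that the candidate preimage lies in the domain of $D\mathbb{F}(0,w_0)$. I will also pay attention to the factor-of-$\tfrac{1}{m}$ asymmetry in the stated identity: my computation produces the natural relation with a matching $\tfrac{1}{m}$ on both sides, which reconciles with the statement either through an implicit factor on the right or by noting that $Sv = jv$ on $H^l_{D_j}$ when $m = j$, so that the outer average reduces to the identity.
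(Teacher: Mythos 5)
Your proof is correct and follows essentially the same route as the paper's: write $(b,v)=D\mathbb{F}(0,w_0)^{-1}(0,u)$, apply $D\mathbb{F}(0,w_0)$ to the rotation-averaged preimage, use that $\mathbb{L}$ and $\mathbb{L}+D\mathbb{G}(w_0)$ commute with $R_{2\pi k/j}$ (the latter because $w_0$ is $D_j$-invariant), and invoke Lemma~\ref{lem : average 0 partial_y} to kill the $\partial_{x_2}w_0$ terms; your substitution $D\mathbb{f}(w_0)v=u-b\,\partial_{x_2}w_0/\rho$ is a minor stylistic shortcut where the paper instead shows both sides reduce to the common expression $\sum_k R_{2\pi k/j}(\mathbb{L}+D\mathbb{G}(w_0))v$. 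Your observation that the $\tfrac{1}{m}$ appears only on the left is a genuine normalization inconsistency in the statement as printed (the paper's own proof produces the identity without any $\tfrac{1}{m}$), but your fallback explanation that ``$Sv=jv$ on $H^l_{D_j}$'' would not rescue the formula as written, since $v$ is the preimage of a general $u\in L^2_{\mathbb{Z}_2\times\mathbb{Z}_1}$ and is not assumed $D_j$-invariant; the resolution is simply that the factor should match on both sides (or be dropped), as your computation correctly delivers.
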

\begin{proof}
Suppose that
\begin{align}
    D\mathbb{F}(0,w_0)^{-1} \begin{bmatrix}
        0 \\ u
    \end{bmatrix} = \begin{bmatrix}
        \sigma \\ v
    \end{bmatrix}
\end{align}
for some $(\sigma,v) \in H_1$. Then, observe that
\begin{align}
    \begin{bmatrix}
        0 \\ u
    \end{bmatrix} = D\mathbb{F}(0,w_0) \begin{bmatrix}
        \sigma \\ v
    \end{bmatrix} = \begin{bmatrix}
        \left(\frac{\partial_{x_2}w_0}{\rho}\right)^* \mathbb{L} v \\
        \sigma \frac{\partial_{x_2} w_0}{\rho} + (\mathbb{L} + D\mathbb{G}(w_0)) v
    \end{bmatrix}.\label{DF_inverse_multiplication}
\end{align}
Now, consider 
\begin{align}
    D\mathbb{F}(0,w_0) \sum_{k = 0}^{j-1}\overline{R}_{\frac{2\pi k}{j}}\begin{bmatrix}
        \sigma \\ v
    \end{bmatrix} &= \begin{bmatrix}
        \left(\frac{\partial_{x_2}w_0}{\rho}\right)^* \mathbb{L} \sum_{k = 0}^{j-1} R_{\frac{2\pi k}{j}} v \\
        (\mathbb{L} + D\mathbb{G}(w_0))\sum_{k = 0}^{j-1} R_{\frac{2\pi k}{j}}v
    \end{bmatrix}.
\end{align}
Since $\mathbb{L}$ and $\mathbb{L} + D\mathbb{G}(w_0)$ are rotationally invariant, we obtain
\begin{align}
    \begin{bmatrix}
        \left(\frac{\partial_{x_2}w_0}{\rho}\right)^*  \sum_{k = 0}^{j-1} R_{\frac{2\pi k}{j}} \mathbb{L}v \\
        (\mathbb{L} + D\mathbb{G}(w_0))\sum_{k = 0}^{j-1} R_{\frac{2\pi k}{j}}v
    \end{bmatrix} = \begin{bmatrix}
        \frac{1}{\rho}\sum_{k = 0}^{j-1} \left(\partial_{x_2}w_0\right)^* R_{\frac{2\pi k}{j}}\mathbb{L}  v \\
        \sum_{k = 0}^{j-1} R_{\frac{2\pi k}{j}}(\mathbb{L} + D\mathbb{G}(w_0))v
    \end{bmatrix} 
    &= \begin{bmatrix}
        \frac{1}{\rho}\sum_{k = 0}^{j-1} (R_{\frac{2\pi k}{j}}^*\partial_{x_2} w_0)^* \mathbb{L}  v \\
        \sum_{k = 0}^{j-1} R_{\frac{2\pi k}{j}}(\mathbb{L} + D\mathbb{G}(w_0))v
    \end{bmatrix} \\
    &= \begin{bmatrix}
        \frac{1}{\rho}\sum_{k = 0}^{j-1} (R_{\frac{2\pi k}{j}}^*\partial_{x_2} w_0)^* \mathbb{L}  v \\
        \sum_{k = 0}^{j-1} R_{\frac{2\pi k}{j}}(\mathbb{L} + D\mathbb{G}(w_0))v
    \end{bmatrix}. 
    \end{align}
Since we are averaging all the rotations, it follows that
\begin{align}
    \begin{bmatrix}
        \frac{1}{\rho}\sum_{k = 0}^{j-1} (R_{\frac{2\pi k}{j}}^*\partial_{x_2} w_0)^* \mathbb{L}  v \\
        \sum_{k = 0}^{j-1} R_{\frac{2\pi k}{j}}(\mathbb{L} + D\mathbb{G}(w_0))v
    \end{bmatrix}  &= \begin{bmatrix}
        \frac{1}{\rho}\sum_{k = 0}^{j-1} (R_{\frac{2\pi k}{j}}\partial_{x_2} w_0)^* \mathbb{L}  v \\
        \sum_{k = 0}^{j-1} R_{\frac{2\pi k}{j}}(\mathbb{L} + D\mathbb{G}(w_0))v
    \end{bmatrix}=\begin{bmatrix}
        \frac{1}{\rho}\left(\sum_{k = 0}^{j-1} R_{\frac{2\pi k}{j}}\partial_{x_2} w_0\right)^* \mathbb{L}  v \\
        \sum_{k = 0}^{j-1} R_{\frac{2\pi k}{j}}(\mathbb{L} + D\mathbb{G}(w_0))v
    \end{bmatrix}. 
\end{align}
Now, by Lemma \ref{lem : average 0 partial_y}, specifically using \eqref{showing average partial is 0}, we obtain
\begin{align}
    \begin{bmatrix}
        \frac{1}{\rho}\left(\sum_{k = 0}^{j-1} R_{\frac{2\pi k}{j}}\partial_{x_2} w_0\right)^* \mathbb{L}  v \\
        \sum_{k = 0}^{j-1} R_{\frac{2\pi k}{j}}(\mathbb{L} + D\mathbb{G}(w_0))v
    \end{bmatrix} &= \begin{bmatrix}
        0\\
        \sum_{k = 0}^{j-1} R_{\frac{2\pi k}{j}}(\mathbb{L} + D\mathbb{G}(w_0))v
    \end{bmatrix}.\label{first_way}
\end{align}
 Additionally, observe that
{\footnotesize\begin{align}
    \sum_{k = 0}^{j-1} \overline{R}_{\frac{2\pi k}{j}} D\mathbb{F}(0,w_0) \begin{bmatrix}
        \sigma \\ v
    \end{bmatrix} &= \begin{bmatrix}
        0 \\
        \frac{\sigma}{\rho} \sum_{k = 0}^{j-1} R_{\frac{2\pi k}{j}}\partial_{x_2} w_0 + R_{\frac{2\pi k}{j}} (\mathbb{L} + D\mathbb{G}(w_0)) v
    \end{bmatrix}= \begin{bmatrix}
        0 \\
     \sum_{k = 0}^{j-1}R_{\frac{2\pi k}{j}} (\mathbb{L} + D\mathbb{G}(w_0)) v
    \end{bmatrix}\label{second_way}
\end{align}}
where the last step followed from Lemma \ref{lem : average 0 partial_y}. Noticing that \eqref{first_way} and \eqref{second_way} are equivalent, we have obtained the desired result.
\end{proof}
With Theorem  \ref{lem : DF_bar_commutes} available, we have shown that we can commute $\overline{R}_{\theta}$ with $D\mathbb{F}(0,w_0)^{-1}$ provided that $D\mathbb{F}(0,w_0)^{-1}$ is applied to an element of $\{0\} \times L^2_{\mathbb{Z}_2 \times \mathbb{Z}_1}$. Recall the definition of $\mathbb{Q}$ in \eqref{def : Q}. Choose $\mathbb{F}$ as in \eqref{f_unfold_augmented}$, x_0 \bydef (0,w_0),\mathbb{A} \approx D\mathbb{F}(0,w_0)^{-1}$ (assuming $D\mathbb{F}(0,w_0)^{-1}$ exists). We now define the fixed point operator, $\mathbb{T}$ as
\begin{align}
    \mathbb{T}(x) = x - D\mathbb{F}(0,w_0)^{-1} \mathbb{F}(x).
\end{align}
Therefore, we are now ready to state the Radii Polynomial Theorem for this case. 
\begin{theorem}\label{th: radii polynomial augmented}
Let $m > 0$ be an odd integer other than $3$. Let $\mathbb{A} : \mathbb{R} \times L^2_{\mathbb{Z}_2 \times \mathbb{Z}_1} \to \mathbb{R} \times H^l_{\mathbb{Z}_2 \times \mathbb{Z}_1}$ and let $\mathcal{Y}_0,\mathcal{Y}_s, \mathcal{Z}_1,\mathcal{Z}_s$ be non-negative constants and let $\mathcal{Z}_2(r): [0,\infty) \to (0,\infty)$ be a non-negative function such that for all $r>0$
  \begin{align}\label{eq: definition Y0 Z1 Z2 augmented}
    \left\|\mathbb{A}\begin{bmatrix}
        0 \\ \mathbb{f}(0,u_0)
    \end{bmatrix}\right\|_{H_1} \leq &\mathcal{Y}_0\\
    \left\|\mathbb{A}\begin{bmatrix}
        0 \\
        \tilde{\mathbb{G}}(u_0)
    \end{bmatrix}\right\|_{H_1} \leq &\mathcal{Y}_s\\
    \|I_d - \mathbb{A}\mathbb{Q}(0,u_0)\|_{H_1} \leq &\mathcal{Z}_1\\
    \|\mathbb{A}(\mathbb{Q}(0,u_0) - D\mathbb{F}(0,w_0))\|_{H_1} \leq &\mathcal{Z}_s \\
    \|\mathbb{A}\left(D\mathbb{F}(0,w_0) - D\mathbb{F}(h)\right)\|_{H_1} \leq &\mathcal{Z}_2(r)\|h-x_0\|_{H_1}, ~~ \text{for all } h \in B_r(0,w_0)
\end{align}  
If there exists $r>0$ such that \eqref{condition radii polynomial s} is satisfied, then there exists a unique $(0,\tilde{w}) \in \overline{B_r(0,w_0)} \subset H_1$ such that $\mathbb{F}(0,\tilde{w})=0$, where $\overline{B_r(0,w_0)}$ is the closed ball of $H_1$ centered at $(0,w_0)$ and of radius $r$.
\end{theorem}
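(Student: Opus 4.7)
The plan is to mirror the proof of Theorem \ref{th: radii polynomial s}, but on the augmented space $H_1$, with Lemma \ref{lem : DF_bar_commutes} replacing the simple rotational invariance argument that worked in the $\mathcal{H}$-isolating case. Define $\mathbb{T}(x) \bydef x - D\mathbb{F}(0,w_0)^{-1}\mathbb{F}(x)$ on a closed ball around $x_0 \bydef (0,w_0)$ and apply the Banach fixed-point theorem. The proof decomposes naturally into (i) invertibility of $D\mathbb{F}(0,w_0)$, (ii) the self-map estimate on $\overline{B_r(x_0)}$, (iii) the contraction estimate, and (iv) using Lemma \ref{beta_is_0} to conclude $\tilde\beta = 0$.

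For invertibility, I write $I_d - \mathbb{A} D\mathbb{F}(0,w_0) = (I_d - \mathbb{A}\mathbb{Q}(0,u_0)) + \mathbb{A}(\mathbb{Q}(0,u_0) - D\mathbb{F}(0,w_0))$ and invoke the bounds $\mathcal{Z}_1$ and $\mathcal{Z}_s$ to obtain $\|I_d - \mathbb{A} D\mathbb{F}(0,w_0)\|_{H_1} \leq \mathcal{Z}_1 + \mathcal{Z}_s < 1$ by \eqref{condition radii polynomial s}. A Neumann series argument, combined with the fact that $\mathbb{A}$ admits a right inverse built from $\mathbb{L}$ and the operator in \eqref{eq : operator B extra equation}, yields invertibility of $D\mathbb{F}(0,w_0) : H_1 \to \mathbb{R} \times L^2_{\mathbb{Z}_2 \times \mathbb{Z}_1}$ together with $\|(\mathbb{A} D\mathbb{F}(0,w_0))^{-1}\|_{\mathcal{B}(H_1)} \leq (1 - \mathcal{Z}_1 - \mathcal{Z}_s)^{-1}$. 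As in Theorem \ref{th: radii polynomial s}, restricting the domain to $\mathbb{R} \times L^2_{D_j}$ preserves this operator-norm bound since $L^2_{D_j} \subset L^2_{\mathbb{Z}_2 \times \mathbb{Z}_1}$.

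For $x \in \overline{B_r(x_0)}$, I split $\|\mathbb{T}(x) - x_0\|_{H_1} \leq \|\mathbb{T}(x) - \mathbb{T}(x_0)\|_{H_1} + \|\mathbb{T}(x_0) - x_0\|_{H_1}$ and bound the first summand by $\tfrac{1}{2}(1-\mathcal{Z}_1-\mathcal{Z}_s)^{-1}\mathcal{Z}_2(r) r^2$ via the standard Taylor-remainder estimate, using $\mathcal{Z}_2(r)$. The crucial term is $\|\mathbb{T}(x_0) - x_0\|_{H_1} = \|D\mathbb{F}(0,w_0)^{-1} \mathbb{F}(0,w_0)\|_{H_1}$. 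Since $\mathbb{F}(0,w_0) = (0,\mathbb{L}w_0 + \mathbb{G}(w_0))^T$, and since $u_0$ is invariant under the reflection generating $\mathbb{Z}_2 \times \mathbb{Z}_1$, the $D_j$-average defining $w_0$ collapses to an average over the rotations $R_{2\pi k/j}$; combining this with the definition of $\tilde{\mathbb{G}}(u_0)$ in \eqref{def : G_tilde} gives $\mathbb{L}w_0 + \mathbb{G}(w_0) = \tfrac{1}{j}\sum_{k=0}^{j-1} R_{2\pi k/j} \mathbb{f}(0,u_0) + \tilde{\mathbb{G}}(u_0)$. The key step is to apply Lemma \ref{lem : DF_bar_commutes} to commute $D\mathbb{F}(0,w_0)^{-1}$ past the rotation average, which is legitimate because $\mathbb{f}(0,u_0)$ and $\tilde{\mathbb{G}}(u_0)$ both lie in $L^2_{\mathbb{Z}_2 \times \mathbb{Z}_1}$ and are paired with a zero first component. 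Since each $\overline{R}_\theta$ is an isometry on $H_1$, the triangle inequality then collapses the rotation sum, and the identity $D\mathbb{F}(0,w_0)^{-1} = (\mathbb{A} D\mathbb{F}(0,w_0))^{-1}\mathbb{A}$ together with the $\mathcal{Y}_0$ and $\mathcal{Y}_s$ bounds yields $\|\mathbb{T}(x_0) - x_0\|_{H_1} \leq (\mathcal{Y}_0 + \mathcal{Y}_s)/(1-\mathcal{Z}_1-\mathcal{Z}_s)$. The first inequality of \eqref{condition radii polynomial s} then forces $\|\mathbb{T}(x) - x_0\|_{H_1} \leq r$.

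For the contraction property, I bound $\|D\mathbb{T}(x)\|_{\mathcal{B}(H_1)} = \|D\mathbb{F}(0,w_0)^{-1}(D\mathbb{F}(0,w_0) - D\mathbb{F}(x))\|_{\mathcal{B}(H_1)} \leq \mathcal{Z}_2(r)r/(1-\mathcal{Z}_1-\mathcal{Z}_s) < 1$ via the second inequality of \eqref{condition radii polynomial s}. The Banach fixed-point theorem then delivers a unique zero $(\tilde\beta,\tilde w) \in \overline{B_r(x_0)} \subset H_1$ of $\mathbb{F}$, and Lemma \ref{beta_is_0} (applicable once $w_0 \neq 0$ is verified numerically at one rigorously evaluated point) forces $\tilde\beta = 0$. \emph{The main obstacle is the rotation-averaging step in the $\mathcal{Y}_0$ estimate}: it hinges on Lemma \ref{lem : DF_bar_commutes} and the cancellation in Lemma \ref{lem : average 0 partial_y}, and on the observation that the reflection part of the $D_j$-action acts trivially on the $\mathbb{Z}_2 \times \mathbb{Z}_1$-symmetric $u_0$, so only the rotation subgroup contributes to the decomposition.
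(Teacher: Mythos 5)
Your proposal is correct and follows essentially the same route as the paper's own proof: you establish invertibility of $D\mathbb{F}(0,w_0)$ via the $\mathcal{Z}_1 + \mathcal{Z}_s < 1$ Neumann-series argument, decompose $\mathbb{L}w_0 + \mathbb{G}(w_0)$ into the rotation-averaged $\mathbb{f}(0,u_0)$ plus $\tilde{\mathbb{G}}(u_0)$ (exploiting that the reflections in $D_j$ act trivially on the $\mathbb{Z}_2\times\mathbb{Z}_1$-symmetric $u_0$), and invoke Lemma \ref{lem : DF_bar_commutes} to slide $D\mathbb{F}(0,w_0)^{-1}$ past the rotation sum before collapsing it with the triangle inequality. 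Two small remarks: $\overline{R}_\theta$ is not an isometry on $H_1$ (it annihilates the first component, so $\|\overline{R}_\theta(\sigma,v)\|_{H_1}=\|v\|_l\leq\|(\sigma,v)\|_{H_1}$), but the inequality goes the right way so your estimate still closes; and you make explicit the final appeal to Lemma \ref{beta_is_0} to force $\tilde\beta=0$, a step the paper's proof leaves implicit but which is indeed needed to justify the form $(0,\tilde{w})$ asserted in the statement.
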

\begin{proof}
 Proceeding similarly to how we did in Theorem \ref{th: radii polynomial s}. Observe that
\begin{align}
    \|I_d - \mathbb{A} D\mathbb{F}(0,w_0)\|_{H_1} \leq \|I_d - \mathbb{A}\mathbb{Q}(0,u_0)\|_{H_1} + \|\mathbb{A}(\mathbb{Q}(0,u_0) - D\mathbb{F}(0,w_0))\|_{H_1} \leq \mathcal{Z}_1 + \mathcal{Z}_s.
\end{align}
Since we assumed $ \mathcal{Z}_1 + \mathcal{Z}_s < 1$, this shows that $D\mathbb{F}(0,w_0)^{-1}$ exists. The derivation of $\mathcal{Z}_2$ is unchanged from Theorem \ref{th: radii polynomial s}. Next, examine
\begin{align}
    \|D\mathbb{F}(0,w_0)^{-1} \mathbb{F}(0,w_0)\|_{H_1} &= \left\|D\mathbb{F}(0,w_0)^{-1} \begin{bmatrix}
        0 \\
        \frac{1}{j}\sum_{k = 0}^{j-1} R_{\frac{2\pi k}{j}} \mathbb{f}(0,u_0) + \tilde{\mathbb{G}}(u_0)
    \end{bmatrix}\right\|_{H_1} \\
    &\hspace{-1cm}\leq \frac{1}{j}\left\|D\mathbb{F}(0,w_0)^{-1} \sum_{k = 0}^{j-1} \overline{R}_{\frac{2\pi k}{j}} \begin{bmatrix}
        0 \\
         \mathbb{f}(0,u_0)
    \end{bmatrix}\right\|_{H_1} + \left\|D\mathbb{F}(0,w_0)^{-1} \begin{bmatrix}
        0 \\
        \tilde{\mathbb{G}}(u_0)
    \end{bmatrix}\right\|_{H_1}.\label{Step_Y_0_Y_s_bar}
\end{align}
Now, using Lemma \ref{lem : DF_bar_commutes}, we have
{\footnotesize\begin{align}
    \frac{1}{j}\left\|D\mathbb{F}(0,w_0)^{-1} \sum_{k = 0}^{j-1} \overline{R}_{\frac{2\pi k}{j}} \begin{bmatrix}
        0 \\
         \mathbb{f}(0,u_0)
    \end{bmatrix}\right\|_{H_1} = \frac{1}{j}\left\| \sum_{k = 0}^{j-1} \overline{R}_{\frac{2\pi k}{j}}D\mathbb{F}(0,w_0)^{-1} \begin{bmatrix}
        0 \\
         \mathbb{f}(0,u_0)
    \end{bmatrix}\right\|_{H_1} \leq \left\|D\mathbb{F}(0,w_0)^{-1} \begin{bmatrix}
        0 \\ \mathbb{f}(0,u_0)
    \end{bmatrix}\right\|_{H_1}
\end{align}}
which allows us to return to \eqref{Step_Y_0_Y_s_bar} to get
\begin{align}
    \|D\mathbb{F}(0,w_0)^{-1} \mathbb{F}(0,w_0)\|_{H_1} &\leq \left\|D\mathbb{F}(0,w_0)^{-1} \begin{bmatrix}
        0 \\ \mathbb{f}(0,u_0)
    \end{bmatrix}\right\|_{H_1} + \left\|D\mathbb{F}(0,w_0)^{-1} \begin{bmatrix}
        0 \\
        \tilde{\mathbb{G}}(u_0)
    \end{bmatrix}\right\|_{H_1} \\
    &\leq \frac{1}{1-\mathcal{Z}_1 - \mathcal{Z}_s}\left(\left\|\mathbb{A} \begin{bmatrix}
        0 \\ \mathbb{f}(0,u_0)
    \end{bmatrix}\right\|_{H_1} + \left\|\mathbb{A} \begin{bmatrix}
        0 \\
        \tilde{\mathbb{G}}(u_0)
    \end{bmatrix}\right\|_{H_1}\right) \\
    &\leq \frac{1}{1-\mathcal{Z}_1 - \mathcal{Z}_s} (\mathcal{Y}_0 + \mathcal{Y}_s).
\end{align}
We now conclude that $\mathbb{T}$ maps $\overline{B_r(0,w_0)}$ to itself. The fact that $\mathbb{T}$ is a contraction follows by using the same steps as those of Theorem \ref{th: radii polynomial s}. This concludes the proof.
\end{proof}
\section{Computing the Bounds}\label{sec : computing the bounds}
In the previous section, we provided details on the construction of the objects required for each Radii Polynomial Theorem based on the properties of $\mathcal{G}$ and $\mathcal{H}$. Now, we demonstrate that such choices are justified by the fact that the bounds of Theorems \ref{th: radii polynomial}, \ref{th: radii polynomial s}, and \ref{th: radii polynomial augmented} can be computed explicitly, combining a meticulous analysis and rigorous numerics.
 We illustrate  the computations of these bounds in the case of the 2D Swift-Hohenberg (SH) equation, \eqref{eq : swift_hohenberg} and in the case of  dihedral symmetries. By dihedral symmetry, we mean the symmetry group $D_j$. We denote by $D_j$ the symmetry group of the $j$-gon. 
 
 Recall the planar Swift-Hohenberg (SH) equation defined in \eqref{eq : swift_hohenberg}. Since we focus on stationary localized solutions, the problem we wish to solve is
\begin{align}\label{eq : sh localized 2D}
    \mathbb{F}(u) = (I_d + \Delta)^2 u + \mu u + \nu_1 u^2 + \nu_2 u^3 = 0, ~~ u = u(x), ~~ x \in \mathbb{R}^2 \text{ and } \lim_{|x| \to \infty} u(x) = 0,
\end{align}
where $\mu > 0$ and $(\nu_1,\nu_2) \in \mathbb{R}^2$. In this case, we have
\begin{align}
    \mathbb{L} \bydef (I_d + \Delta)^2 + \mu, ~~ \mathbb{G}(u) \bydef \nu_1 u^2 + \nu_2 u^3.\label{def : L and G for SH}
\end{align}
As stated in the introduction to this manuscript, SH is known to exhibit a wide variety of localized patterns. In particular, \cite{olivier_radial} provided a rigorous proof of existence and local uniqueness of a radially symmetric solution to \eqref{eq : sh localized 2D}. The authors of \cite{jason_spot_paper}, \cite{jason_ring_paper}, and \cite{jason_review_paper} show 2D patterns with dihedral symmetry. Some of these patterns were proven by the authors of \cite{sh_cadiot} using the approach we wish to build upon in this paper. In \cite{sh_cadiot}, only $D_2$-symmetry was proven; however, the authors conjecture that the patterns proven in Theorems 4.1, 4.2, and 4.3 have $D_4, D_6,$ and $D_8$-symmetry respectively. In this section, we provide the necessary tools to verify this conjecture along with the ability to prove other localized solutions to \eqref{eq : sh localized 2D} with different dihedral symmetries. 

Specifically, given $j \in \mathbb{N}$ and $j \geq 2$, our goal is to investigate the existence of $D_j$ symmetric solutions to \eqref{eq : sh localized 2D}. In particular, using the approach presented in Section \ref{sec : strategies for computer assisted proofs}, we aim at proving the existence of solutions in $H^l_{\mathcal{G}}$, where $\mathcal{G} \bydef D_j$ in this section.  For a given value of $j$, we construct a  maximal square lattice space subgroup  $\mathcal{H}$, as defined in Definition \ref{def : maximal square lattice space subgroup}. The following lemma provides the classification of $\mathcal{H}$ for a given $\mathcal{G} = D_j$.
\begin{lemma}\label{lem : H classification}
Let $\mathcal{G} = D_j$. Let $j_0$ be the largest number in the set $\{2,4\}$ which divides $j$. Moreover, we define $\mathcal{H}$ as follows
\begin{align}
    \mathcal{H} = \begin{cases}
        D_j & j = 2,4 \\
        D_{j_0} & j ~\text{is not a prime number nor in } ~ \{2,4\} \\
        \mathbb{Z}_2 \times \mathbb{Z}_1 & j ~\text{is a prime number other than} ~ \{2\}.
    \end{cases}
\end{align}
Then, $\mathcal{H}$ is a maximal square lattice space group of $\mathcal{G}$, in the sense of Definition \ref{def : maximal square lattice space subgroup}.
\end{lemma}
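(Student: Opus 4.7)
The plan is to verify the four conditions of Definition \ref{def : maximal square lattice space subgroup} case by case, using the fact (discussed after Definition \ref{def : maximal square lattice space subgroup}) that among the dihedral groups only $D_2$ and $D_4$ admit a representation on the square lattice, while $D_j$ for $j \notin \{2,4\}$ corresponds to hexagonal or more exotic lattices that do not tile $\R^2$ as squares. The case $j \in \{2,4\}$ is trivial: $\mathcal{H} = \mathcal{G} = D_j$ is itself a square-lattice space group, and maximality is immediate since $\mathcal{H} = \mathcal{G}$.

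For the case where $j$ is a prime different from $2$, the divisors of $j$ are only $1$ and $j$, so the only subgroups of $D_j$ are $\mathbb{Z}_1$, the reflection subgroups of order two (all isomorphic to $\mathbb{Z}_2 \times \mathbb{Z}_1$ after a suitable choice of coordinates), $\mathbb{Z}_j$, and $D_j$ itself. Since $j \neq 2,4$, neither $\mathbb{Z}_j$ nor $D_j$ can be realized as space groups on the square lattice. Hence the largest square-lattice space subgroup has order two and is $\mathbb{Z}_2 \times \mathbb{Z}_1$, confirming the choice. First I would list the subgroups of $D_j$ explicitly using Lagrange's theorem, then eliminate the rotational ones by noting that nontrivial rotational invariance on the square lattice forces a rotation by $\pi/2$ or $\pi$, i.e.\ a cyclic order dividing $4$.

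For the remaining case where $j$ is composite and not in $\{2,4\}$, I would rely on the subgroup structure $D_{j_0} \leq D_j$ whenever $j_0 \mid j$, which follows by restricting rotations and reflections of the $j$-gon to those of a nested $j_0$-gon. By construction $j_0 \in \{2,4\}$ so $D_{j_0}$ is a square-lattice space group. For maximality, I would argue that any square-lattice space subgroup $\mathcal{K} \leq D_j$ must have a rotational part whose order divides $\gcd(j, 4)$: indeed, rotations in $\mathcal{K}$ must both lie in $D_j$ (hence be of order dividing $j$) and stabilize the square lattice (hence be of order dividing $4$). Moreover, $\mathcal{K}$ is a dihedral or cyclic group, so $|\mathcal{K}| \leq 2\gcd(j,4) = 2j_0 = |D_{j_0}|$. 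This establishes the bound $|\mathcal{K}| \leq |\mathcal{H}|$.

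The main obstacle, and the step I would dwell on most carefully, is the maximality claim, specifically arguing that a rotational symmetry in a square-lattice space group must have order $1$, $2$, or $4$. This is the crystallographic restriction theorem in dimension two specialized to the square lattice; I would prove it directly by writing a rotation as $\mathcal{A} \in \mathrm{SL}_2(\mathbb{Z})$ (up to change of basis for the lattice) and observing that $\mathrm{tr}(\mathcal{A}) = 2\cos\theta \in \mathbb{Z}$ forces $\theta \in \{0, \pm \pi/2, \pi\}$. The rest of the verification (that $\mathcal{H}$ is a subgroup, that its symmetric functions admit a Fourier series representation on the square lattice, and the easy direction of the order bound) is routine once this crystallographic restriction is in hand.
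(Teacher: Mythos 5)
The paper itself offers essentially no proof of this lemma, deferring to ``fundamental properties of dihedral groups'' and a textbook citation. Your proposal actually supplies an argument, organized case-by-case around Definition \ref{def : maximal square lattice space subgroup}, which is the right structure and more informative than what appears in the paper. The subgroup-existence halves (e.g.\ $D_{j_0}\leq D_j$ when $j_0\mid j$, the subgroup lattice of $D_j$ for $j$ prime via Lagrange) are correct and in the spirit of the cited reference.

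There is, however, a genuine gap in the crystallographic restriction step, which is precisely the step you yourself flag as the crux. The trace argument you propose, namely writing a rotation as $\mathcal{A}\in\mathrm{SL}_2(\mathbb{Z})$ up to change of lattice basis and using $\mathrm{tr}(\mathcal{A})=2\cos\theta\in\mathbb{Z}$, only yields $\cos\theta\in\{0,\pm\tfrac12,\pm1\}$, i.e.\ $\theta\in\{0,\pm\tfrac{\pi}{3},\pm\tfrac{\pi}{2},\pm\tfrac{2\pi}{3},\pi\}$ and hence rotation orders $1,2,3,4,6$. This is the general two-dimensional crystallographic restriction; it does \emph{not} exclude orders $3$ and $6$, which is exactly what you must exclude to conclude that the allowed orders on the square lattice are $1,2,4$. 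Worse, the phrase ``up to change of basis for the lattice'' is what causes the leak: after an $\mathrm{SL}_2(\mathbb{Z})$ change of basis you can no longer tell a square lattice from a hexagonal one. The correct argument for the square lattice is to work in the standard Euclidean basis where the rotation is $R_\theta=\begin{pmatrix}\cos\theta & -\sin\theta\\ \sin\theta & \cos\theta\end{pmatrix}$: preserving $\mathbb{Z}^2$ forces all four entries, hence $\cos\theta$ and $\sin\theta$, to be integers, so $\theta\in\{0,\tfrac{\pi}{2},\pi,\tfrac{3\pi}{2}\}$ and the rotational order divides $4$. With this replacement, the rest of your argument, including the bound $|\mathcal{K}|\leq 2\gcd(j,4)=|D_{j_0}|$, goes through. (As a separate, minor remark not attributable to you: the lemma statement leaves $j_0$ undefined for odd composite $j$, e.g.\ $j=9$; your proposal inherits that ambiguity silently, though $\gcd(j,4)=1$ does give the right answer $\mathbb{Z}_2\times\mathbb{Z}_1$ there if one sets $D_1\cong\mathbb{Z}_2$.)
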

\begin{proof}
The proof follows from fundamental properties of dihedral groups (cf. \cite{gallianalgebra}).
\end{proof}
With the above result available, and following Section \ref{sec : strategies for computer assisted proofs}, we adapt our approach depending on the relationship between $\mathcal{H}$ and $\mathcal{G}$. Before presenting our analysis in each case, we expose the construction of the approximate solution $u_0 \in H^l_{\mathcal{H}}$.

When following the steps of Section \ref{sec : construction of u0 gen}, one comes to the point where they need to obtain the approximate solution thanks to its Fourier coefficients representation $u_0 = \gamma^\dagger_\mathcal{H}(U_0)$, where $U_0 = \pi^{N_0}U_0$ as in \eqref{def : construction of u0}.
We construct an initial guess of the form
\begin{equation}\label{initial_guess_for_newton}
    \bar{u}_0(x) = \alpha \sum_{k = 0}^{j-1} \mathrm{sech}(\beta |R_{\frac{2\pi k}{j}}x|)
\end{equation}
for all $x \in \R^2$, where $|x| = \sqrt{|x_1|^2 + |x_2|^2} $ and $(\alpha,\beta) \in \R^2$ are parameters. In particular, we place $j$ hyperbolic secant functions in a ring to obtain a $D_j$-symmetric guess. Then, we compute  a finite truncation of the  Fourier coefficients of $\bar{u}_0$ on $\om$, which we denote as $\bar{U}_0$. In particular $\bar{U}_0 = \pi^{N_0} \bar{U}_0$, that is $\bar{U}_0$ is a vector. Playing with the parameters $\alpha$ and $\beta$, $\bar{U}_0$ serves as an initial guess for a Newton method. Indeed, we refine the sequence $\bar{U}_0$ thanks to a Newton method on the Galerkin projection of size $N_0$ for the zero finding problem $F(U)= 0$.  Finally, we still denote the refined vector as $\bar{U}_0$, and  we represent $\bar{U}_0$ in $\mathcal{H}$ symmetric sequences, that is $\bar{U}_0 \in \ell^2_\mathcal{H}$. Our approximate solution $\bar{u}_0$ becomes $\bar{u}_0  = \gamma^\dagger_\mathcal{H}(\bar{U}_0) \in L^2_\mathcal{H}$.

Now, note that $\overline{u}_0$ is not necessarily smooth, that is  $\overline{u}_0 \notin H^l$ in general. This comes from the fact that $\bar{u}_0$ is not necessarily smooth on  the boundary of $\om$, which we will denote as $\partial \om$.  In order to tackle this problem, we use the finite-dimensional trace projection given in \cite{unbounded_domain_cadiot}. In particular, such a projection allows to project $\bar{U}_0$ into a subset of Fourier coefficients which represent functions with zero trace on  $\partial \om$.  This will allow to recover smoothness on  $\partial \om$, and hence obtain a smooth extension by zero on $\R^2$.

Recall from Lemma \ref{lem : H classification} that the choices of $\mathcal{H}$ we consider are $\mathbb{Z}_2\times\mathbb{Z}_1, D_2,$ and $ D_4,$, we need to construct a trace operator valid for each of these groups (following \cite{unbounded_domain_cadiot}). To make this less tedious, we proceed in a more general fashion. Recall that we can use the orbit to unfold a sequence $U = (u_n)_{n \in J_{\mathrm{red}}(\mathcal{H})}$ to a full Fourier sequence.  Let us define a function, $\mathcal{K}: \mathbb{C}^{|J_{\mathrm{red}}(\mathcal{H})|} \to \mathbb{C}^{(2N_0+1)^2}$ which unfolds  vectors in $\pi^{N_0} \ell^2_{\mathcal{H}}$ into  vectors in $\pi^{N_0} \ell^2$ representing a sequence in the full Fourier series expansion.
In particular, the input $U = (u_n)_{n \in J_{\mathrm{red}}(\mathcal{H})}$ is represented on the computer as a vector of length $|J_{\mathrm{red}}(\mathcal{H})|$. The output $\mathcal{K}(U) = (u_n)_{n \in \mathbb{Z}^2}$ is represented on the computer as a vector of length $(2N_0 + 1)^2$. 

Now, the trace zero conditions read as follows
\begin{align}
    \partial_{n}^j u|_{ \partial  \om} = 0 \text{ for all}~ j \in \{0,1,2,3\}\label{trace condition : general}
\end{align}
where $\partial_{n}$ is the normal derivative on $\partial  \om$. In fact, since we need $u_0 \in H^l \subset H^4(\R^2)$, we only require the derivatives up to order $3$ to vanish on $\partial \om$, hence the trace conditions \eqref{trace condition : general}.
 These conditions have an equivalent in terms of Fourier coefficients, which we translate as the kernel of a finite-dimensional periodic trace operator $\mathcal{T}$ on Fourier coefficients (cf. Section 4.1 in \cite{unbounded_domain_cadiot}). The projection into the kernel of $\mathcal{T}$ can easily be achieved using Theorem 4.1 from \cite{unbounded_domain_cadiot} for instance. We denote by $U_0^{N_0,full}$ the obtained vector of size $(2N_0+1)^2$ in the kernel of $\mathcal{T}$. Then, we denote $U_0^{full}$ its padding with zeros, that is
\begin{align}
    (U_0^{full})_n \bydef \begin{cases}
        (U_0^{N_0,full})_n & \mathrm{if} ~ |n| \leq N_0 \\
        0 & \mathrm{else}
    \end{cases}.
\end{align}
Moreover, since $U_0^{N_0,full}$ is in the kernel of each $\mathcal{T}$ by construction, we define
\begin{align}
    u_0^{full} \bydef \gamma^\dagger(U_0^{full})
\end{align}
and obtain that $u_0^{full}$ satisfies \eqref{trace condition : general}. In particular, we obtain that $u_0^{full} \in H^l$, but not necessarily in $H^l_{\mathcal{H}}$. In order to obtain this, we use \eqref{v_D_m_fourier} and construct
\begin{align}
    u_0 \bydef \frac{1}{|\mathcal{H}|} \sum_{h \in \mathcal{H}} h \cdot u_0^{full}.\label{u_0 average after trace}
\end{align}
Since $\mathcal{H}$ is a space group, the above average will still yield a Fourier series on $ \om$. Therefore, we have successfully built an approximate solution $u_0 \in H^l_{\mathcal{H}}$ such that $\mathrm{supp}(u_0) \subset \overline{\om}$, associated to its Fourier coefficients representation $U_0 = \pi^{N_0} U_0 \in X^l_{\mathcal{H}}$.
\subsection{When \texorpdfstring{$j = 2,4$}{j24}}
We wish to apply Theorem \ref{th: radii polynomial} as we have $\mathcal{H} = \mathcal{G}$. This means $j = 2$ or $4$ so we have $\mathcal{H} = D_2$ or $D_4$. We now introduce some notation. \begin{align} v_0 \bydef 2\nu_1 u_0 + 3\nu_2 u_0^2,~V_0 \bydef \gamma_{\mathcal{G}}(v_0), ~V_0^N \bydef \pi^{2N} V_0, ~v_0^N = \gamma^\dagger_{\mathcal{G}}(V_0^N).\label{def : v_0}
\end{align}
Moreover, notice that $\mathbb{V}_0 = D\mathbb{G}(u_0)$, the multiplication operator for $v_0$. We define $\mathbb{F}$ as in \eqref{eq : sh localized 2D}. We choose $u_0 \in H^l_{\mathcal{G}}$ an approximate solution to \eqref{eq : sh localized 2D} constructed as in Section \ref{sec : construction of u0 gen}. We will choose $\mathbb{A} : L^2_{D_j} \to H^l_{D_j}$ to be an approximate inverse to $D\mathbb{F}(u_0)$ built as in Section \ref{sec : operator A first}. We now state the values of $\mathcal{Y}_0, \mathcal{Z}_1 >0$ and  $\mathcal{Z}_2 : (0, \infty) \to [0,\infty)$ in this case in the following lemma.
\begin{lemma}\label{th: radii polynomial comp}
Let $\mathbb{A} : L_{D_j}^2 \to H_{D_j}^l$ be a bounded linear operator. Moreover, let $\kappa, Z_1$, and $\mathcal{Z}_u$ be non-negative constants such that
{\footnotesize\begin{align}
    \kappa \bydef \frac{2\sqrt{\mu} + (1 + \mu)(2\pi - 2\arctan(\sqrt{\mu})}{8\mu^{\frac{3}{2}}(1+\mu)},~
    \|I_d - B(I_d + \mathbb{V}_0^NL^{-1})\|_{2} \leq Z_1,~
    \|(\Gamma_{\mathcal{G}}^\dagger(L^{-1}) - \mathbb{L}^{-1})\mathbb{v}_0^N\|_{2} \leq \mathcal{Z}_u\label{def : kappa}
\end{align}}
where $\mathbb{V}_0^N$ and $\mathbb{v}_0^N$ are the multiplication operators for $V_0^N$ and $v_0^N$ defined in \eqref{def : v_0} respectively. Additionally, let $\mathcal{Y}_0, \mathcal{Z}_1$ be non-negative constants and let  $\mathcal{Z}_2 : (0, \infty) \to [0,\infty)$ be a non-negative function  such that
  \begin{align}\label{eq: definition Y0 Z1 Z2 comp}
     \mathcal{Y}_0 &\bydef \sqrt{ |\om|} (\|B^N F(U_0)\|_{2}^2 + \|(\pi^{N_0} - \pi^N) L U_0 + (\pi^{3N_0} - \pi^N) G(U_0)\|_{2}^2)^{\frac{1}{2}} \\
     \mathcal{Z}_1 &\bydef Z_1 + \max\{1,\|B^N\|_{2}\}\left(\mathcal{Z}_u + \frac{1}{\mu} \|V_0 - V_0^N\|_{1}\right)
    \\
    \mathcal{Z}_2(r) &\bydef 3|\nu_2| \frac{\kappa^2}{\mu} \max\{1,\|B^N\|_{2}\}r + \frac{\kappa}{\mu} \max\{2|\nu_1|, (\|\mathbb{W}(B^N)^*\|_{2}^2 + \|W\|_{1}^2)^{\frac{1}{2}}\}
\end{align}  
 where $W \bydef (w_k)_{k \in J_{\mathrm{red}}(D_j)} = (2\nu_1\delta_{k}+6\nu_2u_k)_{k \in J_{\mathrm{red}}(D_j)}$ and $\delta_k$ is the Kronecker symbol. Moreover, $\mathbb{W}$ is the discrete convolution operator associated to $W$. Then, $\mathcal{Y}_0, \mathcal{Z}_1, $ and $\mathcal{Z}_2(r)$ satisfy the estimates stated in Theorem \ref{th: radii polynomial}.
\end{lemma}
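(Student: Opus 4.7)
The plan is to verify the three estimates of Theorem \ref{th: radii polynomial} in turn, using four basic ingredients: (i) the factorization $\mathbb{A}=\mathbb{L}^{-1}\mathbb{B}$, which gives $\|\mathbb{A}g\|_{l}=\|\mathbb{B}g\|_{2}$ and, at the operator level, $\|I_d-\mathbb{A}D\mathbb{F}(u_0)\|_{l}=\|I-\mathbb{B}(I+\mathbb{V}_0\mathbb{L}^{-1})\|_{2}$; (ii) the isometric isomorphism $\sqrt{|\om|}\gamma_{\mathcal{G}}:L^2_{\mathcal{G},\om}\to\ell^2_{\mathcal{G}}$ of Lemma \ref{lem : gamma and Gamma properties}, which converts $L^2$-norms of $\om$-supported functions into $\ell^2_{\mathcal{G}}$-norms of their Fourier coefficients; (iii) the Sobolev embedding $\|u\|_{\infty}\le\kappa\|u\|_{l}$, where $\kappa=\|1/l\|_{2}$ is evaluated explicitly in polar coordinates for $l(\xi)=(1-4\pi^2|\xi|^2)^2+\mu$; and (iv) the elementary bound $\|u\|_{2}\le\mu^{-1}\|u\|_{l}$ coming from $|l|\ge\mu$.

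For $\mathcal{Y}_0$, since $u_0=\gamma^\dagger_{\mathcal{G}}(U_0)$ is $\om$-supported, both $\mathbb{L}u_0$ and $\mathbb{G}(u_0)$ are supported in $\overline{\om}$, so $\mathbb{1}_{\mathbb{R}^m\setminus\om}\mathbb{F}(u_0)=0$ and $\mathbb{B}\mathbb{F}(u_0)=\gamma^\dagger_{\mathcal{G}}(\pi_N+B^N)\gamma_{\mathcal{G}}\mathbb{F}(u_0)$. The Fourier coefficients satisfy $\gamma_{\mathcal{G}}(\mathbb{F}(u_0))=F(U_0)=LU_0+G(U_0)$, with $LU_0=\pi^{N_0}LU_0$ and $G(U_0)=\pi^{3N_0}G(U_0)$. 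Splitting $(\pi_N+B^N)F(U_0)$ into its disjointly supported low-mode part $B^NF(U_0)$ and high-mode part $\pi_NF(U_0)=(\pi^{N_0}-\pi^N)LU_0+(\pi^{3N_0}-\pi^N)G(U_0)$, and applying Parseval via (ii), yields the stated formula for $\mathcal{Y}_0$.

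For $\mathcal{Z}_1$, starting from $\|I-\mathbb{B}(I+\mathbb{V}_0\mathbb{L}^{-1})\|_2$ we telescope through two intermediate operators,
\begin{align}
I-\mathbb{B}(I+\mathbb{V}_0\mathbb{L}^{-1}) &= \bigl[I-\mathbb{B}\bigl(I+\mathbb{V}_0^N\Gamma^\dagger_{\mathcal{G}}(L^{-1})\bigr)\bigr] - \mathbb{B}\mathbb{V}_0^N\bigl(\mathbb{L}^{-1}-\Gamma^\dagger_{\mathcal{G}}(L^{-1})\bigr) - \mathbb{B}(\mathbb{V}_0-\mathbb{V}_0^N)\mathbb{L}^{-1}.
\end{align}
The first piece is supported in $\om$ and, after passing to Fourier coefficients via $\Gamma_{\mathcal{G}}$ and using the isometry of Lemma \ref{lem : gamma and Gamma properties}, equals $\|I-B(I+\mathbb{V}_0^NL^{-1})\|_2\le Z_1$ in $\ell^2_{\mathcal{G}}$. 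The second is bounded by $\|\mathbb{B}\|_2\cdot\|(\mathbb{L}^{-1}-\Gamma^\dagger_{\mathcal{G}}(L^{-1}))\mathbb{v}_0^N\|_2\le\max\{1,\|B^N\|_2\}\mathcal{Z}_u$ using \eqref{eq : equality norm A and BN} and the defining inequality for $\mathcal{Z}_u$. For the third, the multiplication operator $\mathbb{V}_0-\mathbb{V}_0^N$ corresponds in Fourier coefficients to discrete convolution by $V_0-V_0^N$, so Young's inequality \eqref{eq : youngs inequality} combined with $\|\mathbb{L}^{-1}\|_2\le 1/\mu$ gives the factor $\mu^{-1}\|V_0-V_0^N\|_1$, again multiplied by $\|\mathbb{B}\|_2$. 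Summing yields $\mathcal{Z}_1$.

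For $\mathcal{Z}_2(r)$, expanding $D\mathbb{F}(h)-D\mathbb{F}(u_0) = (2\nu_1+6\nu_2u_0)k+3\nu_2k^2$ with $k=h-u_0$ separates the estimate into linear and quadratic pieces in $k$. For the quadratic piece, $\|\mathbb{B}(3\nu_2k^2\phi)\|_2\le 3|\nu_2|\|\mathbb{B}\|_2\|k\|_{\infty}^2\|\phi\|_2\le 3|\nu_2|(\kappa^2/\mu)\max\{1,\|B^N\|_2\}\|k\|_l^2\|\phi\|_l$, which contributes the first term of $\mathcal{Z}_2$ after dividing by $\|k\|_l$. For the linear piece, one decomposes $\mathbb{B}$ along the orthogonal split $L^2=L^2(\om)\oplus L^2(\R^m\setminus\om)$. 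Outside $\om$ the function $w=2\nu_1+6\nu_2u_0$ reduces to the constant $2\nu_1$, giving a bound $2|\nu_1|(\kappa/\mu)\|k\|_l\|\phi\|_l$ via (iii)--(iv). Inside $\om$, after Parseval, multiplication by $w$ becomes convolution by $W$, and applying the identity $(V,\mathbb{W}(B^N)\Psi)_2=(\mathbb{W}^*V,(B^N)\Psi)_2=(V,(B^N)^*\mathbb{W}^*\Psi)_2^*$ (plus the analogous decomposition for the $\pi_N$ block, where Young's inequality yields $\|W\|_1$) produces the factor $(\|\mathbb{W}(B^N)^*\|_2^2+\|W\|_1^2)^{1/2}$. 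The maximum of the two accounts for the orthogonal sum and gives the second term of $\mathcal{Z}_2$.

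The main obstacle is this last estimate: the linear-in-$k$ piece requires simultaneously handling the inside-$\om$ region (where one must exploit the discrete convolution structure and the adjoint of the finite-dimensional block $B^N$ to avoid losing information about cancellations in $\mathbb{B}$) and the outside-$\om$ region (where $k$ and $\phi$ have no support restriction, forcing the use of the continuous Sobolev embedding). Correctly combining these two bounds into a single operator norm via orthogonality is what produces the somewhat asymmetric expression $\max\{2|\nu_1|,(\|\mathbb{W}(B^N)^*\|_2^2+\|W\|_1^2)^{1/2}\}$, and is the most delicate bookkeeping step of the proof.
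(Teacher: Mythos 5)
Your derivation is correct and follows essentially the route the paper delegates to \cite{unbounded_domain_cadiot} and to Lemmas \ref{lem : Z_1 periodic}--\ref{lem : Zu}: the three-term telescoping for $\mathcal{Z}_1$ (exact periodic defect $Z_1$, the $\Gamma^\dagger_{\mathcal{G}}(L^{-1})$ versus $\mathbb{L}^{-1}$ correction $\mathcal{Z}_u$, and the $V_0-V_0^N$ truncation term), the disjoint low/high-mode split giving the Pythagorean form of $\mathcal{Y}_0$, and the $\om$ versus $\R^m\setminus\om$ orthogonality combined with the $(\pi_N,B^N)$ range-orthogonality producing $\max\{2|\nu_1|,(\|\mathbb{W}(B^N)^*\|_2^2+\|W\|_1^2)^{1/2}\}$ in $\mathcal{Z}_2$ are all the intended arguments, with the self-adjointness of $\mathbb{v}_0^N$ and $\mathbb{L}^{-1}-\Gamma^\dagger_{\mathcal{G}}(L^{-1})$ used implicitly exactly where you use it.

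Two small corrections. First, you write $\mathbb{V}_0$, $\mathbb{V}_0^N$ (the paper's discrete-convolution operators on $\ell^2_{\mathcal{G}}$) in several places where the multiplication operators $\mathbb{v}_0$, $\mathbb{v}_0^N$ on $L^2$ are meant; the algebraic identities are unaffected, but the notation should be fixed. Second, the constant in \eqref{def : kappa} is \emph{not} $\|1/l\|_2$: evaluating $\|1/l\|_2^2=\frac{1}{4\pi}\int_{-1}^\infty(v^2+\mu)^{-2}\,dv$ for $l(\xi)=(1-4\pi^2|\xi|^2)^2+\mu$ yields $\frac{\sqrt{\mu}+(1+\mu)(\pi-\arctan\sqrt{\mu})}{8\pi\,\mu^{3/2}(1+\mu)}$, which is $1/(2\pi)$ times the displayed expression; comparing with the numerics of Section \ref{sec : proof of solutions} (e.g.\ $\kappa=2.56$ for $\mu=0.24$, while $\|1/l\|_2\approx1.02$) shows that the display actually defines $\kappa^2$ and that the paper's $\kappa$, inherited from \cite{sh_cadiot}, equals $\sqrt{2\pi}\,\|1/l\|_2$. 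Since your Sobolev step only needs $\kappa\ge\|1/l\|_2$, this does not break the proof — you are proving a tighter bound that monotonically implies the stated one — but the claim ``$\kappa=\|1/l\|_2$'' is false as written.
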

\begin{proof}
To compute $\mathcal{Y}_0$ and $\mathcal{Z}_2$, the result follows those presented in \cite{unbounded_domain_cadiot}. The details for $\mathcal{Z}_1$ will be provided in Lemmas \ref{lem : Z_1 periodic} and \ref{lem : Zu}.
\end{proof}
We will now provide the additional details necessary to compute the bounds $Z_1$ and $\mathcal{Z}_u$. We will begin with $Z_1$, which we state in the following lemma.
\begin{lemma}\label{lem : Z_1 periodic}
Let $M^N = \pi^N + \mathbb{V}_0^NL^{-1}$ and $M = \pi_N + M^N$. Let $Z_1 > 0$ be such that
\begin{align}
    Z_1 \bydef \varphi(Z_{1,1},Z_{1,2},Z_{1,3},Z_{1,4}) 
\end{align}
where $\varphi$ is defined in Lemma 4.2 of \cite{gs_cadiot_blanco} and
\begin{align}
    &Z_{1,1} \bydef \sqrt{\|(\pi^N - B^NM^N) (\pi^N - (M^N)^{*} (B^N)^{*})\|_{2}} ,~Z_{1,3} \bydef \sqrt{\|\pi^N L^{-1} \mathbb{V}_0^N \pi_N \mathbb{V}_0^N L^{-1} \pi^N\|_{2}}\\
    &Z_{1,2} \bydef \max_{n \in J_{\mathrm{red}(\mathcal{G})} \setminus I^N} \frac{1}{|l(\tilde{n})|} \sqrt{\|B^N\mathbb{V}_0^N \pi_N \mathbb{V}_0^N B^N\|_{2}},~Z_{1,4} \bydef \max_{n \in J_{\mathrm{red}}(\mathcal{G}) \setminus I^N} \frac{1}{|l(\tilde{n})|} \|V_0^N\|_{1}.
\end{align}
Then, $\|I_d - B(I_d + \mathbb{V}_0^NL^{-1})\|_{2} \leq Z_1$ and $\|I_d - ADF(U_0)\|_{l} \leq Z_1 + \frac{1}{\mu} \|B\|_{2} \|V_0^N - V_0\|_{1}.$
\end{lemma}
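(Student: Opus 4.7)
The plan is to establish the first estimate $\|I_d - B(I_d + \mathbb{V}_0^N L^{-1})\|_2 \leq Z_1$ by decomposing $I_d - BM$ as a $2 \times 2$ block operator with respect to the orthogonal splitting $\ell^2_{\mathcal{G}} = \pi^N \ell^2_{\mathcal{G}} \oplus \pi_N \ell^2_{\mathcal{G}}$, bounding each of the four blocks in terms of one of the quantities $Z_{1,1}, Z_{1,2}, Z_{1,3}, Z_{1,4}$, and then invoking the function $\varphi$ from Lemma 4.2 of \cite{gs_cadiot_blanco}, whose role is precisely to turn blockwise norm bounds into a norm bound on the full operator.

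First I would write $B = \pi_N + B^N$ (with $B^N = \pi^N B^N \pi^N$) and $M = \pi_N + M^N$, so that
\begin{align}
I_d - BM = (\pi^N - B^N M^N \pi^N) - B^N M^N \pi_N - \pi_N M^N \pi^N - \pi_N M^N \pi_N.
\end{align}
The $\pi^N$-$\pi^N$ block $\pi^N - B^N M^N \pi^N$ is a finite matrix, and its norm is the square root of $\|(\pi^N - B^N M^N)(\pi^N - (M^N)^*(B^N)^*)\|_2$, which is exactly $Z_{1,1}$; this can be computed rigorously with interval arithmetic. For the mixed blocks, I would exploit that coupling between low and high frequencies only enters through the convolution operator $\mathbb{V}_0^N$ inside $M^N = \pi^N + \mathbb{V}_0^N L^{-1}$. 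The $\pi^N$-$\pi_N$ block reduces to $B^N \mathbb{V}_0^N L^{-1} \pi_N$, whose norm is controlled by factoring out $\max_{n \notin I^N}\tfrac{1}{|l(\tilde n)|}$ from $L^{-1}\pi_N$ and bounding the remainder $\sqrt{\|B^N \mathbb{V}_0^N \pi_N \mathbb{V}_0^N (B^N)^*\|_2}$, which gives $Z_{1,2}$ (here I would symmetrize by adjoints again to reduce to a finite computation). The $\pi_N$-$\pi^N$ block $\pi_N \mathbb{V}_0^N L^{-1} \pi^N$ is handled analogously and leads to $Z_{1,3}$.

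The tail-tail block $\pi_N \mathbb{V}_0^N L^{-1} \pi_N$ is not finite-dimensional, so I would estimate it purely analytically: pulling out the diagonal factor $\max_{n \notin I^N} \tfrac{1}{|l(\tilde n)|}$ from $L^{-1}$ on the tail and then using Young's convolution inequality \eqref{eq : youngs inequality} to bound the convolution norm of $\mathbb{V}_0^N$ by $\|V_0^N\|_1$, producing exactly $Z_{1,4}$. Feeding the four block bounds into $\varphi$ (which plays the role of a $2\times 2$ matrix norm majorant) yields $\|I_d - BM\|_2 \leq Z_1$.

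For the second inequality, the key identity is
\begin{align}
I_d - A D F(U_0) = (I_d - BM) - B(\mathbb{V}_0 - \mathbb{V}_0^N) L^{-1},
\end{align}
valid because $A = \mathbb{L}^{-1} B$ (in Fourier coefficients) and $D F(U_0) = L + \mathbb{V}_0$. Using that $\mathbb{L} : H^l_{\mathcal{G}} \to L^2_{\mathcal{G}}$ is an isometric isomorphism, the $H^l_{\mathcal{G}}$-norm of the left-hand side equals the $L^2_{\mathcal{G}}$-norm after conjugation by $\mathbb{L}$; the first term is then bounded by $Z_1$, while the second is bounded by $\|B\|_2 \cdot \|\mathbb{V}_0 - \mathbb{V}_0^N\|_2 \cdot \|L^{-1}\|_2$. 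By Young's inequality the multiplication/convolution operator norm of $\mathbb{V}_0 - \mathbb{V}_0^N$ is at most $\|V_0 - V_0^N\|_1$, and Assumption \ref{ass:A(1)} combined with the explicit symbol of $\mathbb{L}$ for the Swift-Hohenberg equation gives $\|L^{-1}\|_2 \leq 1/\mu$, closing the proof. The main obstacle I anticipate is the careful bookkeeping of which adjoints and which projectors appear in each block so that the symmetrization $\|A\|_2 = \sqrt{\|A A^*\|_2}$ actually reduces each of $Z_{1,1}, Z_{1,2}, Z_{1,3}$ to a finite matrix computation compatible with rigorous numerics, without introducing spurious terms from the infinite tail.
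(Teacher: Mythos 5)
Your proposal mirrors the paper's proof exactly: the same $2\times 2$ block decomposition of $I_d - BM$ with respect to $\pi^N \oplus \pi_N$, the same identification of the four blocks as $\pi^N - B^N M^N \pi^N$, $-B^N\mathbb{V}_0^N L^{-1}\pi_N$, $-\pi_N\mathbb{V}_0^N L^{-1}\pi^N$, $-\pi_N\mathbb{V}_0^N L^{-1}\pi_N$, the same use of adjoint symmetrization and extraction of $\max_{n\notin I^N} 1/|l(\tilde n)|$ to produce $Z_{1,1},\dots,Z_{1,4}$, the same application of $\varphi$, and the same splitting $I_d - B(I_d+\mathbb{V}_0 L^{-1}) = (I_d - BM) - B(\mathbb{V}_0-\mathbb{V}_0^N)L^{-1}$ (after conjugation by $\mathbb{L}$) combined with Young's inequality and $\|L^{-1}\|_2\leq 1/\mu$ for the second estimate. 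The paper simply delegates the last step to Lemma 3.4 of \cite{sh_cadiot}, whereas you carry it out directly; everything else is identical.
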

\begin{proof}
We begin by examining $I - BM$.
\begin{align}
    I - BM = \begin{bmatrix}
        \pi^N (I - BM)\pi^N & \pi^N (I - BM)\pi_N \\
        \pi_N (I - BM)\pi^N & \pi_N (I - BM)\pi_N
    \end{bmatrix} 
    &= \begin{bmatrix}
         \pi^N - B^NM^N & -B^N \mathbb{V}_0^N L^{-1} \pi_N \\ 
        -\pi_N \mathbb{V}_0^N L^{-1}\pi^N & -\pi_N \mathbb{V}_0^N L^{-1} \pi_N
    \end{bmatrix}.\label{step_in_Z1_isolated}
\end{align}
Now, we will use Lemma 4.2 from \cite{gs_cadiot_blanco} on \eqref{step_in_Z1_isolated}. This means we must compute the norm of each of the blocks of \eqref{step_in_Z1_isolated}. We begin with $\pi^N - B^NM^N$.
\begin{align}
     \|\pi^N - B^NM^N\|_{2}^2 &= \|(\pi^N - B^NM^N) (\pi^N - (M^N)^{*} (B^N)^{*})\|_{2} \bydef Z_{1,1}^2.\label{definition_of_Z11_isolated}
\end{align}
Next, we examine $- B^N \mathbb{V}_0^N L^{-1} \pi_N$. Using the properties of the adjoint, we get
\begin{align}
    \nonumber \|-B^N \mathbb{V}_0^N L^{-1} \pi_N\|_{2}^2    = \|\pi_N L^{-1} \mathbb{V}_0^N (B^N)^{*}\|_{2}^2 &\leq \|\pi_N L^{-1}\|_{2}^2 \|\pi_N \mathbb{V}_0^N (B^N)^{*}\|_{2}^2 \\
    &\hspace{-1.5cm}\leq \max_{n \in J_{\mathrm{red}(\mathcal{G})} \setminus I^N} \frac{1}{|l(\tilde{n})|^2} \|B^N\mathbb{V}_0^N \pi_N \mathbb{V}_0^N B^N\|_{2} \bydef Z_{1,2}^2.\label{step_in_Z1_2_isolated}
\end{align}
\par Next, we consider the term $-\pi_N \mathbb{V}_0^N L^{-1} \pi^N$.
\begin{align}
    \|-\pi_N \mathbb{V}_0^N L^{-1} \pi^N\|_{2}^2 
    &= \|\pi^N L^{-1} \mathbb{V}_0^N \pi_N \mathbb{V}_0^N L^{-1} \pi^N\|_{2} \bydef Z_{1,3}^2\label{step_in_Z1_3_isolated}
\end{align}
Lastly, we treat the block $-\pi_N \mathbb{V}_0^N L^{-1} \pi_N$.
\begin{align}
    \|-\pi_N \mathbb{V}_0^N L^{-1} \pi_N\|_{2}^2     = \|\pi_N L^{-1} \mathbb{V}_0^N \pi_N\|_{2}^2 \leq \|\pi_N L^{-1}\|_{2} \|\pi_N \mathbb{V}_0^N\pi_N\|_{2} &\leq \max_{n \in J_{\mathrm{red}}(\mathcal{G}) \setminus I^N} \frac{1}{|l(\tilde{n})|^2} \|\mathbb{V}_0^N\|_{2} \\
    &\hspace{-1.5cm}\leq \max_{n \in J_{\mathrm{red}}(\mathcal{G}) \setminus I^N} \frac{1}{|l(\tilde{n})|} \|V_0^N\|_{1} \bydef Z_{1,4}.\label{step_in_Z1_4_isolated}
\end{align}
With \eqref{step_in_Z1_isolated}, \eqref{step_in_Z1_2_isolated}, \eqref{step_in_Z1_3_isolated}, and \eqref{step_in_Z1_4_isolated} computed, we are able to compute the $Z_1$ bound. The extra term follows from the proof of Lemma 3.4 in \cite{sh_cadiot}.
\end{proof}
We now state the lemma for the $\mathcal{Z}_u$ bound.
\begin{lemma}\label{lem : Zu}
Let $a,b,C_0,C_1(d),C_{12}(d),$ and $C_2(d) > 0$ be  constants defined as follows
\begin{align} 
a &\bydef \frac{\sqrt{-1+\sqrt{1+\mu}}}{2}, ~~ b \bydef \sqrt{2}a - i \frac{\sqrt{\mu}}{2\sqrt{2}a} \\
C_0 &\bydef \sup_{r \in [0,\infty)}  e^{\sqrt{2} ar}\left|\frac{1}{2i\sqrt{\mu}}\bigg(K_0(br) - K_0(\overline{b}r) \bigg)\right| \\
C_1(d) &\bydef 4\left(\frac{2ad+1+e^{-2ad}}{a^2} + e^{-2ad}\left(4d + \frac{e^{-2ad}}{a}\right) + \left(\frac{1+e^{-2ad}}{a} + 2d
    \right)\frac{2e^{-1}+1}{a(1-e^{-ad})}\right) \\
     & \qquad + ~   \frac{4(2e^{-1}+1)^2}{a^2(1-e^{-ad})^2} +\frac{2}{a}\left( \frac{1+e^{-2ad}}{a} + 2d + e^{-2ad}(4d+\frac{e^{-2ad}}{a})+ \frac{2e^{-1}+1}{a(1-e^{-ad})}\right)\\
     C_{12}(d) &\bydef  8\left(2d + \frac{1}{2a}\right) \left(2d + \frac{1+e^{-2ad}}{2a} + \left(2d + \frac{3+e^{-2ad}}{2a}\right)\frac{1}{1-e^{-ad}} +  \frac{4e^{-1} + 1 +e^{-2ad}}{2a(1-e^{-ad})^2}\right)\\
      C_2(d) &\bydef \frac{2}{a} \left[\frac{1+e^{-2ad}}{a} + 2d + e^{-2ad}(4d + \frac{e^{-2ad}}{a}) + \frac{(2e^{-1}+e^{-2ad})}{a(1-e^{-ad})} \right].
\end{align} 
Next, let $E_1, E_2 \in \ell^2_{\mathcal{G}}$ be defined as 
\begin{align}
    E_1 \bydef \gamma_{\mathcal{G}}(\mathbb{1}_{\om}(x) \cosh(2ax_1))~~\text{and}~~ E_2 \bydef \gamma_{\mathcal{G}}(\mathbb{1}_{\om}(x) \cosh(2ax_2)).
\end{align}
Let $v_0^N$ be defined as in \eqref{def : v_0}. Let $v^N_{0,e},$ and $v^N_{0,o}$ be the even and odd parts in the $x_2$ direction of $v_0^N$ respectively. Then, let $V^N_{0,s} \bydef \gamma_{\mathcal{H}}(v^N_{0,s})$ for $s \in \{e,o\}$.
Finally, define $\mathcal{Z}_{u,2,s} > 0$ for $s \in \{e,o\}$
\begin{align}
    \mathcal{Z}_{u,2,s}^2 &\bydef \biggl(\frac{C_0^2 e^{-2ad} |\om|}{a^2} (V_{0,s}^N,V_{0,s}^N * \pi^{4N}(E_1 + E_2))_2  \\
    &+e^{-4ad}C_0^2|\om|(V_{0,s}^N,V^N_{0,s} * [C_1(d) \pi^{4N} E_1+ C_{1,2}(d)\pi^{4N} E_{1,2}+  C_{2}(d)\pi^{4N}E_2])_2 \biggr) .
\end{align}
Following this, let $\mathcal{Z}_{u,1}, \mathcal{Z}_{u,2},\mathcal{Z}_u > 0$ be such that
{\small\begin{align}
    \mathcal{Z}_{u,1} \bydef \sqrt{\frac{C_0^2 e^{-2ad} |\om|}{a^2} (V_0^N,V_0^N * \pi^{4N}(E_1 + E_2))_2},~\mathcal{Z}_{u,2} \bydef \sum_{s \in \{e,o\}}  \mathcal{Z}_{u,2,s},~\mathcal{Z}_u \bydef \sqrt{\mathcal{Z}_{u,1}^2 + \mathcal{Z}_{u,2}^2}.
\end{align}}
Then, it follows that $ \mathcal{Z}_u$ satisfies the estimate in \eqref{def : kappa}.
\end{lemma}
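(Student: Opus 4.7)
The plan is to follow the Green's-function approach already used in \cite{sh_cadiot} and \cite{gs_cadiot_blanco}, adapted to the $\mathcal{G}$-symmetric setting. First, I would write $\mathbb{L}^{-1}$ as convolution against its Green's function $G$. For $\mathbb{L} = (I_d+\Delta)^2 + \mu$, factoring $l(\xi) = \bigl((1 - 4\pi^2|\xi|^2) - i\sqrt{\mu}\bigr)\bigl((1 - 4\pi^2|\xi|^2) + i\sqrt{\mu}\bigr)$ and inverting each factor as a Helmholtz resolvent on $\R^2$ yields $G$ as a constant multiple of $K_0(b|\cdot|) - K_0(\overline{b}|\cdot|)$. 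The quantity inside the absolute value defining $C_0$ is precisely $G(x) e^{\sqrt{2}a|x|}$ after normalization, so $|G(x)| \le C_0 e^{-\sqrt{2}a|x|}$ pointwise, with $\sqrt{2}a$ the imaginary part of $b$.

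Next, I would reinterpret $\Gamma_{\mathcal{G}}^\dagger(L^{-1}) = \gamma_{\mathcal{G}}^\dagger L^{-1} \gamma_{\mathcal{G}}$ as the periodic counterpart of $\mathbb{L}^{-1}$: when applied to a function $u$ supported in $\overline{\om}$, the action is $\mathbb{1}_{\om}$ times convolution against the periodization of $G$ (this is the aliasing identity coming from sampling $1/l$ on the dual lattice $\tilde{n}$). Since $\mathbb{v}_0^N$ multiplies by a function supported in $\overline{\om}$, the remainder kernel of $(\Gamma_{\mathcal{G}}^\dagger(L^{-1}) - \mathbb{L}^{-1}) \mathbb{v}_0^N$ involves only the eight translated copies of $\om$ adjacent to the origin in $\R^2$, and each of them lies at distance at least $2d$ from $\om$ in at least one coordinate. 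This is the origin of the systematic $e^{-2ad}$ factors.

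Then, by duality and Plancherel, the operator norm reduces to a sum of integrals of $|v_0^N(x)|^2 |v_0^N(y)|^2$ weighted by squared tails of $G$ over the four axial wrap-arounds (producing $C_1(d)$ and $C_2(d)$ for the $x_1$ and $x_2$ directions) and the four diagonal ones (producing $C_{12}(d)$). The crude aggregation in both directions yields $\mathcal{Z}_{u,1}$. To refine, I would split $v_0^N = v_{0,e}^N + v_{0,o}^N$ into even and odd parts in $x_2$: the odd part creates a sign cancellation between the two $x_2$-wrap-arounds, which sharpens the constants in the $x_2$ direction and gives the two refined contributions $\mathcal{Z}_{u,2,s}$ for $s \in \{e,o\}$.

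Finally, I would re-express every continuous integral as a discrete inner product in $\ell^2_\mathcal{G}$ via Parseval together with the identities $\gamma_\mathcal{G}(\mathbb{1}_{\om}\cosh(2ax_i)) = E_i$: the exponentials $e^{\pm 2a x_i}$ of the tail envelopes combine cleanly as $\cosh(2ax_i)$ after reflecting the outer cells back onto $\om$, and the projector $\pi^{4N}$ appears because $(v_0^N)^2$ is a trigonometric polynomial of order at most $4N$. The hard part will be the careful bookkeeping of the eight outer regions together with the odd/even cancellations, and in particular verifying that the explicit integrals of the exponential envelopes over these regions collapse to the closed-form expressions for $C_1(d), C_{12}(d), C_2(d)$ stated in the lemma; once these analytic inequalities are in hand, the reassembly of $\mathcal{Z}_u$ from $\mathcal{Z}_{u,1}$ and the $\mathcal{Z}_{u,2,s}$ pieces is purely mechanical and amenable to interval arithmetic.
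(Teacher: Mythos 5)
Your high-level strategy is the same one the paper (implicitly) uses by citing Lemma~3.12 of \cite{sh_cadiot}: factor $l(\xi)$, write $\mathbb{L}^{-1}$ as convolution against a Green's function built from $K_0$, obtain the pointwise envelope $|G(x)|\le C_0 e^{-\sqrt{2}a|x|}$, interpret $\Gamma^\dagger_{\mathcal{G}}(L^{-1})$ as the periodized kernel so that the difference is an aliasing/exterior remainder supported away from the interior of $\om$, and finally convert the resulting integrals into discrete inner products against $E_1,E_2$ via Parseval and the $\cosh$ identity, with $\pi^{4N}$ appearing because $|v_0^N|^2$ is a trigonometric polynomial of order $\le 4N$. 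That matches the reference, and your identification of the parity split as the source of the ``additional factors due to the cases where we do not have even symmetry'' is also correct.

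There is, however, a genuine gap in how you reassemble the pieces. You describe $\mathcal{Z}_{u,1}$ as a ``crude aggregation'' and $\mathcal{Z}_{u,2,s}$ as a ``refinement'' of the same quantity, and then say the reassembly is ``purely mechanical.'' But the lemma states $\mathcal{Z}_u = \sqrt{\mathcal{Z}_{u,1}^2 + \mathcal{Z}_{u,2}^2}$, which is \emph{larger} than either constituent; a refinement of a crude bound would be combined by $\min$, not by a Pythagorean sum. The Pythagorean form signals that the remainder operator $(\Gamma_{\mathcal{G}}^\dagger(L^{-1})-\mathbb{L}^{-1})\mathbb{v}_0^N$ is decomposed into two pieces whose ranges have disjoint (or otherwise $L^2$-orthogonal) supports --- one piece living on $\R^2\setminus\om$ and one on $\om$ (the aliasing contribution) --- and each is bounded by one of $\mathcal{Z}_{u,1}$, $\mathcal{Z}_{u,2}$ separately. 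The even/odd decomposition of $v_0^N$ is then exploited inside the piece governed by $\mathcal{Z}_{u,2}$, where the parity cancellation between the $\pm x_2$ translates actually bites; it is not a wholesale replacement for $\mathcal{Z}_{u,1}$. Relatedly, you assign the constants $C_1(d)$, $C_{12}(d)$, $C_2(d)$ to the axial and diagonal wrap-arounds that produce $\mathcal{Z}_{u,1}$, but in the statement these constants only appear multiplied by $e^{-4ad}$ inside $\mathcal{Z}_{u,2,s}$, while $\mathcal{Z}_{u,1}$ carries only the bare $1/a^2$ factor. Until the decomposition of the operator into these two orthogonal parts is made explicit and matched term-by-term with $\mathcal{Z}_{u,1}$ and $\mathcal{Z}_{u,2,s}$, the ``mechanical reassembly'' step is not actually justified.
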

\begin{proof}
The proof follows using similar steps to Lemma 3.12 of \cite{sh_cadiot} with some additional factors due to the cases where we do not have even symmetry.
\end{proof}
With Lemma \ref{lem : Zu}. available, we can compute the $\mathcal{Z}_u$ bound. The computation is very similar to that performed in \cite{sh_cadiot}. For simplicity, we refer the interested reader to the aforementioned work. We have included details on its computation in the code for the proofs found at \cite{julia_cadiot_blanco_symmetry}.
\subsection{When \texorpdfstring{$j$}{j} is not a prime number nor \texorpdfstring{$4$}{24}}
\label{sec : HleqG H isolates bounds}
In this section, $\mathcal{H}$ isolates the solution. Hence, it follows that $\mathcal{H} = D_{j_0}$ where $j_0 \in \{2,4\}$ and $j_0$ divides $j$. Following the strategy presented in Section \ref{sec : H isolate}, we present the technical computations of the bounds of Theorem \ref{th: radii polynomial s}. One can readily notice that the bounds $\mathcal{Y}_0$ and  $\mathcal{Z}_1$ are exactly the same as their counterpart in Theorem \ref{th: radii polynomial}. Consequently, it remains to describe the computation of the bounds $\mathcal{Y}_s$, $\mathcal{Z}_s$ and $\mathcal{Z}_2$. 
\subsubsection{The Bound \texorpdfstring{$\mathcal{Y}_s$}{Ys}}\label{sec : Y_s isolate}
Before presenting our analysis for the bound $\mathcal{Y}_s$, we introduce notation and preliminary results. Recall that, given $\theta \in \R$, we slightly abuse notation and denote $R_\theta$ as the rotation operator by the angle $\theta$, whether it applies to a vector in $\R^2$, a function, or a sequence of Fourier coefficients. We will define $w_0$ in this section to be. 
\begin{align}\label{def : w0 for dihedral}
    w_0 = \frac{1}{j} \sum_{k = 0}^{j-1}  R_{\frac{2\pi k}{j}} u_0
\end{align}
in the case of $D_j$ symmetry. Notice that we have only added the rotations of $u_0$. It is clear that this is sufficient given that $u_0 \in H^l_{\mathcal{H}}$  which already has the necessary reflections included in the $\mathcal{H}$-symmetry since every case has at least $D_2$ symmetry enforced. 
\par In practice, we want to be able to rigorously control the difference $u_0 - w_0$, where $u_0$ is given in \eqref{def : construction of u0} and $w_0$ is given by \eqref{def : w0 for dihedral}. Using \eqref{def : w0 for dihedral}, quantifying $u_0 - w_0$ amounts to quantifying the difference between $u_0$ and its rotations. For this purpose, we introduce the following result.
\begin{lemma}\label{lem : w_minus_rotation}
Let $W \in \ell^2_{D_{j_0}}$ and define $w \bydef \gamma^{\dagger}_{D_{j_0}}(W)$. Let $\theta \in [0,2\pi)$, then
\begin{align}
    \|w - R_{\theta} w\|_{2} \leq \phi(w,\theta)
\end{align}
where
\begin{align}
    &\phi(w,\theta) \bydef  \sqrt{2|\om| \|W\|_{2}^2 - 2\int_{\om \cap R_{\theta} \om}  {w(y)} w(R_{\theta} y) dy}.
\end{align}
\end{lemma}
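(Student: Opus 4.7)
The plan is a direct computation based on expanding the squared norm and invoking the isometric isomorphism already provided in Lemma \ref{lem : gamma and Gamma properties}. Specifically, I would start by expanding
\[
\|w - R_{\theta} w\|_2^2 = \|w\|_2^2 - 2\,(w,R_\theta w)_2 + \|R_\theta w\|_2^2,
\]
and then use the fact that rotations are isometries on $L^2(\mathbb{R}^2)$ to conclude $\|R_\theta w\|_2 = \|w\|_2$. The first and third terms then combine into $2\|w\|_2^2$.

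Next, since $w = \gamma^\dagger_{D_{j_0}}(W)$ has support in $\overline{\Omega_0}$ and $W \in \ell^2_{D_{j_0}}$, Lemma \ref{lem : gamma and Gamma properties} (applied to the space group $D_{j_0}$) gives the identity $\|w\|_2^2 = |\Omega_0|\,\|W\|_2^2$. This converts the first two terms of the expansion into $2|\Omega_0|\,\|W\|_2^2$, matching the first term under the square root in the definition of $\phi(w,\theta)$.

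For the cross term $(w,R_\theta w)_2 = \int_{\mathbb{R}^2} w(y)\,w(R_\theta y)\,dy$, I would observe that the integrand vanishes unless both $w(y) \neq 0$ and $w(R_\theta y) \neq 0$, i.e., unless $y$ lies in $\Omega_0$ and $R_\theta y \in \Omega_0$. Writing the second condition in terms of the appropriate rotation of $\Omega_0$ restricts the integral to the set $\Omega_0 \cap R_\theta \Omega_0$. Bringing the cross term to the other side and taking the square root yields exactly $\phi(w,\theta)$ (in fact with equality, not merely an inequality, provided $w$ is real; the statement as an inequality leaves room for possible use of $|\cdot|$ on the cross term).

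The main obstacle is essentially cosmetic: there is no analytic difficulty, but one has to be careful with the rotation convention introduced earlier (where $R_\theta u(x) = u(R_\theta x)$) so that the support of $R_\theta w$ correctly intersects $\Omega_0$ in the domain $\Omega_0 \cap R_\theta \Omega_0$ stated in $\phi$. The result is then simply a bookkeeping assembly of the three identities above.
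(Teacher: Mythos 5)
Your proposal is correct and follows essentially the same route as the paper's own proof: expand $\|w - R_\theta w\|_2^2$ into the two "diagonal" terms plus the cross term, use Parseval (Lemma \ref{lem : gamma and Gamma properties}) on the diagonal terms, and localize the cross integral to $\om \cap R_\theta \om$ by a support argument. The only cosmetic difference is that you dispose of the third term by invoking that rotation is an isometry on $L^2$, whereas the paper carries out the change of variable $y = R_\theta x$ explicitly — the same computation in slightly different packaging. You are also right to flag the orientation subtlety ($R_\theta y \in \om$ literally gives $y \in R_{-\theta}\om$, not $y \in R_\theta\om$); neither you nor the paper spells out that these integrals coincide because $w$ is $D_{j_0}$-symmetric and $\om$ is reflection-invariant, but the conclusion is correct.
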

\begin{proof}
The proof can be found in Appendix \ref{apen : w minus rotation proof}.
\end{proof}
With Lemma \ref{lem : w_minus_rotation} available to us, we provide an explicit formula for the $\mathcal{Y}_s$ bound. 
\begin{lemma}\label{lem : Ys Bound}
Let $\mathbb{L}_0 \bydef \Delta - I_d$ and let $l_0$ be its symbol (Fourier transform). Let $\mathcal{Y}_{s,1},\mathcal{Y}_{s,2} > 0$ be such that
\small{\begin{align}
    &\mathcal{Y}_{s,1} \bydef \left\|\frac{1}{l_0}\right\|_{2}\frac{|\nu_1|}{2j^2 }\phi\left(u_0,R_{\frac{2\pi}{j}}\right)\phi\left(\mathbb{L}_0u_0,R_{\frac{2\pi}{j}}\right)\sum_{k,p = 0}^{j-1} \mathcal{R}_{p-k,j}^2  \\
    &\mathcal{Y}_{s,2} \bydef \left\|\frac{1}{l_0}\right\|_{2}\frac{|\nu_2|}{j^3}\|U_0\|_{1} \phi\left(u_0,R_{\frac{2\pi}{j}}\right)\sum_{k,c,p = 0}^{j-1} \mathcal{R}_{c-k,j}\left(\mathcal{R}_{k-p,j}\phi\left(\mathbb{L}_0u_0,R_{\frac{2\pi}{j}}\right) + \mathcal{R}_{k-c,j}\phi\left(\mathbb{L}_0u_0,R_{\frac{2\pi }{j}}\right)\right)
\end{align}}
where $\phi$ is defined in Lemma \ref{lem : w_minus_rotation}, and
\begin{align}
    \mathcal{R}_{k,j} \bydef \sqrt{\left(\sum_{p = 0}^{k-1} \cos(\frac{2\pi p}{j})\right)^2 + \left(\sum_{p = 1}^{k-1} \sin(\frac{2\pi p}{j})\right)^2}. 
\end{align}
Then, defining
\begin{align}
    \mathcal{Y}_s \bydef \max\{1,\|B^N\|_{2}\}(\mathcal{Y}_{s,1} + \mathcal{Y}_{s,2}),
\end{align}
it follows that $\|\mathbb{A}\tilde{\mathbb{G}}(u_0)\|_{2} \leq \mathcal{Y}_s$. 
\end{lemma}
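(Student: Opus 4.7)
The plan is to reduce the claim to an $L^2$ estimate on $\tilde{\mathbb{G}}(u_0)$ by exploiting that every summand of this correction encodes, by construction, a defect between $u_0$ and one of its rotations. Since $\mathbb{A} = \mathbb{L}^{-1}\mathbb{B}$ with $\|\mathbb{B}\|_2 = \max\{1,\|B^N\|_2\}$, we have $\|\mathbb{A}v\|_l \leq \max\{1,\|B^N\|_2\}\|v\|_2$, so it suffices to control $\|\tilde{\mathbb{G}}(u_0)\|_2$. Splitting $\tilde{\mathbb{G}}(u_0) = \nu_1 \tilde{u^2} + \nu_2 \tilde{u^3}$ with $\tilde{u^s} \bydef w_0^s - \frac{1}{j}\sum_k R_{\frac{2\pi k}{j}}(u_0^s)$, writing $a_k := R_{\frac{2\pi k}{j}} u_0$, and symmetrizing in $(k,p)$, the identity $2ab-a^2-b^2 = -(a-b)^2$ gives the clean representation
\[
\tilde{u^2} = -\frac{1}{2j^2}\sum_{k,p=0}^{j-1}\bigl((R_{\frac{2\pi k}{j}}-R_{\frac{2\pi p}{j}})u_0\bigr)^2,
\]
and an analogous triple-sum decomposition of $\tilde{u^3}$ in which each summand carries exactly one factor of the form $(R_{\frac{2\pi c}{j}}-R_{\frac{2\pi k}{j}})u_0$ or $(R_{\frac{2\pi p}{j}}-R_{\frac{2\pi k}{j}})u_0$ alongside two undifferenced factors of the shape $R_{\frac{2\pi l}{j}}u_0$.

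Each product of two $L^2$ functions can then be handled by the Young--Plancherel estimate
\[
\|fg\|_2 = \|\hat{f}\ast \hat{g}\|_2 \leq \|\hat f\|_2 \|\hat g\|_1 \leq \|f\|_2\,\bigl\|1/l_0\bigr\|_2\,\|\mathbb{L}_0 g\|_2,
\]
where the last step is Cauchy--Schwarz applied to $\hat g = (1/l_0)(l_0 \hat g)$. Since $\mathbb{L}_0 = \Delta - I_d$ commutes with every rotation and rotations are $L^2$-isometries, both $\|(R_{\frac{2\pi k}{j}}-R_{\frac{2\pi p}{j}})u_0\|_2$ and its $\mathbb{L}_0$-image reduce to $\|(R_{\frac{2\pi(p-k)}{j}}-I)u_0\|_2$ and $\|(R_{\frac{2\pi(p-k)}{j}}-I)\mathbb{L}_0 u_0\|_2$ respectively. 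The single-step differences (angle $2\pi/j$) are bounded by Lemma 4.1 by $\phi(u_0,R_{\frac{2\pi}{j}})$ and $\phi(\mathbb{L}_0 u_0,R_{\frac{2\pi}{j}})$.

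To produce the multi-step constant $\mathcal{R}_{p-k,j}$, I would telescope
$R_{\frac{2\pi(p-k)}{j}}-I = (R_{\frac{2\pi}{j}}-I)\sum_{l=0}^{p-k-1}R_{\frac{2\pi l}{j}}$
and estimate the operator $\sum_{l=0}^{p-k-1}R_{\frac{2\pi l}{j}}$ via the geometric sum of roots of unity $\sum_{l=0}^{p-k-1}e^{2\pi i l/j}$, acting on the circular-harmonic components of $u_0$ that survive its $\mathcal{H}=D_{j_0}$-symmetry. This yields $\|(R_{\frac{2\pi(p-k)}{j}}-I)u_0\|_2 \leq \mathcal{R}_{p-k,j}\,\phi(u_0,R_{\frac{2\pi}{j}})$ and analogously for $\mathbb{L}_0 u_0$, so summing the product estimate over $(k,p)$ gives $\mathcal{Y}_{s,1}$.

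The cubic case follows the same scheme applied to the triple-sum representation of $\tilde{u^3}$, with the additional step of bounding one of the two undifferenced factors in $L^\infty$ via $\|R_{\frac{2\pi l}{j}}u_0\|_\infty \leq \|U_0\|_1$ (from the compactly supported Fourier series representation of $u_0$), which frees the $L^2$ product estimate for the remaining pair. This produces $\mathcal{Y}_{s,2}$, and finally $\mathcal{Y}_s = \max\{1,\|B^N\|_2\}(\mathcal{Y}_{s,1}+\mathcal{Y}_{s,2})$. The principal obstacle will be the telescoping step producing the sharp constant $\mathcal{R}_{p-k,j}$: a crude triangle inequality over the $p-k$ telescoped pieces gives only the bound $|p-k|$, and extracting the geometric-sum value requires carefully tracking how the rotations act on the circular-harmonic decomposition of the $D_{j_0}$-symmetric $u_0$ so that the partial cancellation in $\sum_{l=0}^{p-k-1}e^{2\pi i l/j}$ is preserved in the $L^2$ norm.
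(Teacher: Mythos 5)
Your proposal follows the paper's proof step by step: reduce to $\|\tilde{\mathbb{G}}(u_0)\|_2$ via $\|\mathbb{A}v\|_l = \|\mathbb{B}v\|_2 \leq \max\{1,\|B^N\|_2\}\|v\|_2$; write $\tilde{\mathbb{G}}_1(u_0) = -\frac{1}{2j^2}\sum_{k,p}((R_{2\pi k/j}-R_{2\pi p/j})u_0)^2$ and the analogous triple sum for the cubic part; split each product via $\|fg\|_2 \leq \|f\|_2\|g\|_\infty$ with $\|g\|_\infty \leq \|1/l_0\|_2\|\mathbb{L}_0 g\|_2$; use rotation invariance of the $L^2$ and $l_0$ norms and Lemma~\ref{lem : w_minus_rotation} on the single-step difference; and bound the undifferenced cubic factor in $L^\infty$ by $\|U_0\|_1$. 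This is exactly the paper's argument.

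The step you single out as ``the principal obstacle'' is also exactly where the paper is terse. The paper asserts $\|\sum_{p=0}^{k-1}R_{2\pi p/j}\|_2 = \mathcal{R}_{k,j}$ ``using properties of matrices and linear algebra,'' i.e.\ the spectral norm of the sum of $2\times 2$ rotation matrices, which is $|\sum_p e^{2\pi i p/j}|$. But the telescoped sum acts as an operator on $L^2(\mathbb{R}^2)$ (rotating the domain of a scalar function), and the $L^2$ operator norm of $\sum_{p=0}^{k-1}R_{2\pi p/j}$ is $k$, achieved on radially symmetric inputs — so some extra structure is needed. Your instinct to pass to circular harmonics is the right way to supply it: $R_{2\pi/j}$ is unitary with spectrum $\{e^{2\pi i m/j}\}_{m\in\mathbb{Z}}$, the input $h = v - R_{2\pi/j}v$ lies in the range of $I - R_{2\pi/j}$ and thus has no $j\,|\,m$ components, and since $v$ is $D_{j_0}$-symmetric the surviving modes satisfy $j_0\,|\,m$. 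What must then be checked is that $\max_{\,j_0|m,\ j\nmid m}\,\big|\sum_{p=0}^{k-1}e^{2\pi i m p/j}\big| \leq \mathcal{R}_{k,j}$ (note the relevant maximum is not at $m=1$, which is excluded for $j_0\geq 2$), and this is the piece neither your sketch nor the paper spells out. Your proposed fix is in the right direction, but to call the lemma proved you would still need to establish that inequality on the geometric sums.
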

\begin{proof}
The proof can be found in Appendix \ref{apen : Y_s isolate proof}.
\end{proof}
\subsubsection{The Bound \texorpdfstring{$\mathcal{Z}_2$}{Z2}}\label{sec : Z2 HleqG H isolates}
Before stating our estimation for the $\mathcal{Z}_2$ bound, we state the two following lemmas.
\begin{lemma}\label{lem : kappa_one}
Let $u,v \in L^2$. Let $\kappa> 0$ be defined as in \eqref{def : kappa}. Then,
\begin{align}
    \|uv\|_{2} \leq \kappa \|u\|_{2} \|v\|_{l} 
\end{align}
\end{lemma}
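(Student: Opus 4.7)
The plan is to reduce the product estimate to a Sobolev-type bound $\|v\|_\infty \le \kappa \|v\|_l$ with an explicit constant, and then combine it with the pointwise Cauchy--Schwarz estimate $\|uv\|_2 \le \|v\|_\infty \|u\|_2$. For this pointwise step I would just note that $|u(x)v(x)|^2 \le |u(x)|^2 \|v\|_\infty^2$ a.e.\ on $\R^2$, so integrating gives the claimed inequality. The entire task then reduces to an $L^\infty$ bound on $v$ in terms of $\|v\|_l = \|\mathbb{L}v\|_2$.

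For that bound I would use Fourier inversion. Assumption \ref{ass:A(1)} ensures $|l(\xi)|>0$ everywhere, so I can write $|\hat v(\xi)| = |l(\xi)\hat v(\xi)|/|l(\xi)|$ and apply Cauchy--Schwarz in $\xi$:
\[
    \|\hat v\|_1 \le \bigg(\int_{\R^2} \frac{d\xi}{|l(\xi)|^2}\bigg)^{\!1/2} \bigg(\int_{\R^2} |l(\xi)|^2|\hat v(\xi)|^2\,d\xi\bigg)^{\!1/2} = \|1/l\|_2 \, \|v\|_l,
\]
the last equality combining Plancherel's identity with the definition $\|v\|_l = \|\mathbb{L}v\|_2$. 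In particular $\hat v \in L^1$, so $v$ is continuous and $|v(x)| \le \|\hat v\|_1$ for every $x \in \R^2$, which yields $\|v\|_\infty \le \|1/l\|_2 \, \|v\|_l$. Combined with the first step, this already gives $\|uv\|_2 \le \|1/l\|_2 \, \|u\|_2 \, \|v\|_l$.

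It remains to control $\|1/l\|_2$ by $\kappa$ via explicit integration. Substituting $l(\xi) = (1 - 4\pi^2|\xi|^2)^2 + \mu$, passing to polar coordinates and setting $s = 4\pi^2 r^2$ reduces $\|1/l\|_2^2$ to a one-dimensional integral of the form $\tfrac{1}{4\pi}\int_0^\infty ds/((1-s)^2+\mu)^2$. Shifting $t = s-1$ and applying the standard antiderivative $\int dt/(t^2+\mu)^2 = \tfrac{t}{2\mu(t^2+\mu)} + \tfrac{1}{2\mu^{3/2}}\arctan(t/\sqrt\mu)$, then using $\arctan(1/\sqrt\mu) = \pi/2 - \arctan\sqrt\mu$, recovers the numerator $2\sqrt\mu + (1+\mu)(2\pi - 2\arctan\sqrt\mu)$ that defines $\kappa$ in \eqref{def : kappa}. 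The whole argument is essentially Fourier inversion, two applications of Cauchy--Schwarz, and an elementary calculus computation; the main obstacle is purely algebraic, namely matching the precise form of $\kappa$ stated in \eqref{def : kappa}.
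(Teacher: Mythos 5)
Your chain of estimates — $\|uv\|_2 \le \|u\|_2\|v\|_\infty \le \|u\|_2\|\hat v\|_1 \le \|1/l\|_2\,\|u\|_2\|v\|_l$ — is exactly the chain the paper uses; the paper simply defers the final identification of the constant to the proof of Lemma 2.1 in \cite{unbounded_domain_cadiot} rather than re-deriving it. So the approach is the same, and the only place where you deviate is in choosing to verify the constant by hand.

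That verification, however, does not close the way you suggest. Carrying your change of variables to the end gives
\[
\left\|\frac{1}{l}\right\|_2^2 = \frac{1}{4\pi}\int_{-1}^{\infty}\frac{dt}{(t^2+\mu)^2}
= \frac{2\sqrt{\mu}+(1+\mu)\bigl(2\pi-2\arctan\sqrt{\mu}\bigr)}{16\pi\,\mu^{3/2}(1+\mu)},
\]
so while the numerator you identify does appear, the resulting quantity is $\|1/l\|_2^2 = \kappa/(2\pi)$, not $\|1/l\|_2=\kappa$ (and not $\|1/l\|_2^2=\kappa^2$ either, if one reads \eqref{def : kappa} as the numerically quoted value $\kappa\approx 2.56$ at $\mu=0.24$). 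In other words, there is a leftover factor of $2\pi$ that your sketch silently drops when you say the calculus ``recovers'' $\kappa$. This is not fatal for the lemma — one only needs $\|1/l\|_2\le\kappa$, and that inequality does hold for the parameters of interest — but your proposed proof as written asserts a stronger match than the computation produces, and so the final step needs to be restated as an inequality (with the discrepancy factor accounted for) or simply referred back to \cite{unbounded_domain_cadiot} as the paper does.
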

\begin{proof}
We begin by following the same steps as Lemma 2.1 of \cite{unbounded_domain_cadiot}. 
\begin{align}
    \nonumber \|uv\|_{2} &\leq \|u\|_{2} \|v\|_{\infty} \leq \|u\|_{2}\|\hat{v}\|_{1} \leq \left\| \frac{1}{l}\right\|_{2} \|u\|_{2} \|v\|_{l}
\end{align}
where the final step follows from the proof of Lemma 2.1 in \cite{unbounded_domain_cadiot}. 
\end{proof}
\begin{lemma}\label{lem : w_0 minus u_0}
Let $\mathbb{K}$ be a linear operator invariant under the symmetry of $D_j$. Let $u_0$ be defined as in Section \ref{sec : construction of u0 gen}. Let $w_0$ be defined by \eqref{def : w0 for dihedral}. Then, it follows that
\begin{align}
    \|\mathbb{K}(w_0 - u_0)\|_{2} \leq \frac{1}{j}\phi\left(\mathbb{K}u_0,R_{\frac{2\pi}{j}}\right) \sum_{k=0}^{j-1} \mathcal{R}_{k,j}.
\end{align}
\end{lemma}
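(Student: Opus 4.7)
The plan is to unwind the definition of $w_0$ and use the $D_j$-invariance of $\mathbb{K}$. Since $\mathbb{K}$ commutes with each rotation $R_{\frac{2\pi k}{j}}$, from \eqref{def : w0 for dihedral} we obtain
\begin{align}
    \mathbb{K}(w_0 - u_0) = \frac{1}{j}\sum_{k=0}^{j-1}\left(R_{\frac{2\pi k}{j}} - I_d\right)\mathbb{K} u_0.
\end{align}
Applying the triangle inequality in $L^2$ reduces the lemma to the pointwise bound
\begin{align}
    \left\|\left(R_{\frac{2\pi k}{j}} - I_d\right)\mathbb{K} u_0\right\|_{2} \leq \mathcal{R}_{k,j}\,\phi\!\left(\mathbb{K} u_0, R_{\frac{2\pi}{j}}\right),\qquad k=0,\ldots,j-1,
\end{align}
which, once established, concludes the proof after summing in $k$ and dividing by $j$.

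For each $k$, I would use the telescoping identity $R_{\frac{2\pi k}{j}} - I_d = \sum_{p=0}^{k-1} R_{\frac{2\pi p}{j}}(R_{\frac{2\pi}{j}} - I_d)$ and set $v = (R_{\frac{2\pi}{j}} - I_d)\mathbb{K} u_0$, so that $(R_{\frac{2\pi k}{j}} - I_d)\mathbb{K} u_0 = \sum_{p=0}^{k-1} R_{\frac{2\pi p}{j}} v$. Decomposing $v$ into angular Fourier modes in polar coordinates, $v(r,\theta) = \sum_{m\in\mathbb{Z}} v_m(r) e^{im\theta}$, each rotation $R_\alpha$ acts on the $m$-th mode by multiplication by $e^{-im\alpha}$, and the very definition of $v$ forces $v_m = 0$ whenever $j$ divides $m$. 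Plancherel's identity in the angular variable then yields
\begin{align}
    \left\|\sum_{p=0}^{k-1} R_{\frac{2\pi p}{j}} v\right\|_{2}^{2} = 2\pi\sum_{m\in\mathbb{Z}} \left|\sum_{p=0}^{k-1} e^{-i 2\pi mp/j}\right|^{2} \|v_m\|_{L^2_r}^{2},\qquad \|v\|_2^{2} = 2\pi\sum_{m\in\mathbb{Z}} \|v_m\|_{L^2_r}^{2},
\end{align}
reducing the pointwise bound to the mode-by-mode trigonometric inequality
\begin{align}
    \left|\sum_{p=0}^{k-1} e^{-i 2\pi mp/j}\right| = \frac{|\sin(\pi mk/j)|}{|\sin(\pi m/j)|} \leq \frac{|\sin(\pi k/j)|}{\sin(\pi/j)} = \mathcal{R}_{k,j} \qquad \text{for all } m \in \{1,\ldots,j-1\}.
\end{align}

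The main obstacle is establishing this last inequality. Rewriting it via the Chebyshev polynomial of the second kind $U_{k-1}(\cos\theta) = \sin(k\theta)/\sin\theta$, the claim becomes $|U_{k-1}(\cos(m\pi/j))| \leq U_{k-1}(\cos(\pi/j))$ for $m\in\{1,\ldots,j-1\}$. The parity relation $U_{k-1}(-y) = (-1)^{k-1}U_{k-1}(y)$ reduces the problem to $m \leq \lfloor j/2\rfloor$, after which I would proceed by induction on $m$: the sine addition formula $\sin(mk\pi/j) = \sin((m-1)k\pi/j)\cos(k\pi/j) + \cos((m-1)k\pi/j)\sin(k\pi/j)$, combined with the elementary fact that $|\cos(k\pi/j)| \leq \cos(\pi/j)$ for all $k \in \{1,\ldots,j-1\}$, delivers the estimate provided one retains the cosine factors rather than crudely bounding them by $1$. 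Summing the resulting pointwise estimate over $k$ and dividing by $j$ then yields the claim of the lemma.
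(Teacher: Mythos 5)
You take a genuinely different — and in one respect more careful — route than the paper. The paper proves Lemma \ref{lem : w_0 minus u_0} in one line: commute $\mathbb{K}$ past the rotations, apply the triangle inequality, and then invoke the bound $\|R_{\frac{2\pi k}{j}}v - v\|_2 \le \mathcal{R}_{k,j}\,\phi(v,R_{\frac{2\pi}{j}})$, which is established (in the proof of Lemma \ref{lem : Ys Bound}) by telescoping and then asserting $\bigl\|\sum_{p=0}^{k-1} R_{\frac{2\pi p}{j}}\bigr\|_2 = \mathcal{R}_{k,j}$ ``using properties of matrices and linear algebra''. Taken as an $L^2(\R^2)$-operator norm this equality is not literally true: applying the sum of pullback operators to a radial function gives norm $k$, not $\mathcal{R}_{k,j}$. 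The sharper constant $\mathcal{R}_{k,j}$ is only achieved because the sum acts on $v - R_{\frac{2\pi}{j}}v$, which has no angular modes with $j \mid m$. Your angular Fourier decomposition makes exactly this restriction explicit, and is the right justification for this step; in that sense your proof patches an imprecision in the paper.

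The gap is in the mode-by-mode trigonometric inequality. Your inductive step expands $\sin(mk\pi/j)$ via the addition formula and, after invoking the inductive hypothesis together with $|\cos(k\pi/j)| \le \cos(\pi/j)$ and $|\sin(k\pi/j)| = \sin(\pi/j)\,\mathcal{R}_{k,j}$, what is left to verify is precisely
\begin{equation}
|\cos((m-1)k\pi/j)| \le \cos((m-1)\pi/j),
\end{equation}
and this is false: for $j=7$, $k=3$, $m=3$ one has $|\cos(6\pi/7)| = \cos(\pi/7) \approx 0.901$ while $\cos(2\pi/7) \approx 0.623$. ``Retaining the cosine factors'' does not save the argument because there is no analogous bound on $|\cos((m-1)k\pi/j)|$ to retain. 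The inequality $\max_{1\le m\le j-1} |\sin(mk\pi/j)|/|\sin(m\pi/j)| = \mathcal{R}_{k,j}$ does hold, but it needs a different proof (e.g.\ rewriting it as $|U_{k-1}(\cos(m\pi/j))| \le U_{k-1}(\cos(\pi/j))$ and analyzing where $|\sin(k\theta)|/\sin\theta$ attains its maximum over the discrete set $\{m\pi/j : 1\le m\le j-1\}$); as written, the induction does not close.
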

\begin{proof}
{\small\begin{align}
    \|\mathbb{K}(w_0 - u_0)\|_{2} = \left\| \frac{1}{j} \sum_{k = 0}^{j-1} R_{\frac{2\pi k}{j}} \mathbb{K}u_0 - \mathbb{K}u_0 \right\|_{2} 
    &\leq \frac{1}{j} \sum_{k = 0}^{j-1} \left\| R_{\frac{2\pi k}{j}} \mathbb{K}u_0 - \mathbb{K}u_0\right\|_{2} \leq \frac{1}{j}\phi\left(\mathbb{K}u_0,R_{\frac{2\pi}{j}}\right) \sum_{k=0}^{j-1} \mathcal{R}_{k,j}.\label{eq : w_0 minus u_0}
\end{align}}
This concludes the proof.
\end{proof}
The result of Lemma \ref{lem : w_0 minus u_0} will be useful in the technical analysis of this paper. It is required to compute both the $\mathcal{Z}_2$ and $\mathcal{Z}_s$ bounds not only when $\mathcal{H}$ isolates the solution, but also in the case where it doesn't. We now state the result for the $\mathcal{Z}_2$ bound.
\begin{lemma}\label{lem : bound Z_2 s}
Let $\kappa>0$ be the constant defined in \eqref{def : kappa}. Moreover, let $r>0$ and let $\mathcal{Z}_2(r) >0$ be such that 
\begin{equation}\label{def : Z2 in lemma}
    \mathcal{Z}_2(r) \geq  \mathcal{Z}_{2,1}(r) + \mathcal{Z}_{2,2}(r) 
\end{equation}
where
\begin{align}
    &\mathcal{Z}_{2,1}(r) \bydef 3|\nu_2|\frac{\kappa^2}{\mu} \max \left\{1,\|B^N\|_{2}\right\} r +  \frac{\kappa}{\mu} \max\left\{ 2|\nu_1|, ~\left(\|\mathbb{W}(B^N)^{\star}\|_{2}^2+\|W\|_1^2\right)^{\frac{1}{2}}\right\} \\ 
    &\mathcal{Z}_{2,2}(r) \bydef \frac{6\kappa^2 |\nu_2|}{j} \max\{1,\|B^N\|_{2}\} \phi\left(u_0, R_{\frac{2\pi }{j}}\right)\sum_{k = 0}^{j-1} \mathcal{R}_{k,j}
\end{align}
 where $W \bydef (w_k)_{k \in J_{\mathrm{red}}(D_{j_0})} = (2\nu_1\delta_{k}+6\nu_2u_k)_{k \in J_{\mathrm{red}}(D_{j_0})}$ and $\delta_k$ is the Kronecker symbol. Moreover, $\mathbb{W}$ is the discrete convolution operator associated to $W$.
Then $ \|\mathbb{A}\left({D}\mathbb{F}(s) - D\mathbb{F}(w_0)\right)\|_l \leq \mathcal{Z}_2(r)r$  for all $s \in \overline{B_r(w_0)} \subset H^l_{D_j}.$
\end{lemma}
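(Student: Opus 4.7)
The plan is to reduce the estimate to the case $\mathcal{H} = \mathcal{G}$ handled by Lemma \ref{th: radii polynomial comp}, modulo a single residual term that quantifies the gap between $w_0$ and $u_0$. Writing $k := s - w_0$ with $\|k\|_l \leq r$, I would first perform the algebraic identity
\begin{align}
D\mathbb{F}(s) - D\mathbb{F}(w_0) &= 2\nu_1 k + 3\nu_2(s^2 - w_0^2) = 2\nu_1 k + 6\nu_2 w_0 k + 3\nu_2 k^2,
\end{align}
and then insert the splitting $w_0 = u_0 + (w_0 - u_0)$ into the middle term to produce
\begin{align}
D\mathbb{F}(s) - D\mathbb{F}(w_0) = \bigl(D\mathbb{F}(u_0+k) - D\mathbb{F}(u_0)\bigr) + 6\nu_2 (w_0 - u_0)\, k.
\end{align}
This rearrangement is the key to the proof: it isolates a term that looks exactly like the one estimated by $\mathcal{Z}_2$ in the case $\mathcal{H} = \mathcal{G}$ (centered at $u_0$), while the remaining piece is a clean bilinear expression in $(w_0 - u_0)$ and $k$.

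For the first part, since $\|(u_0 + k) - u_0\|_l = \|k\|_l \leq r$ and the operator $\mathbb{A}$ is identical to the one used in Section \ref{sec : operator A first}, I would invoke directly the estimate behind $\mathcal{Z}_2$ in Lemma \ref{th: radii polynomial comp} to get $\|\mathbb{A}(D\mathbb{F}(u_0+k) - D\mathbb{F}(u_0))\|_l \leq \mathcal{Z}_{2,1}(r)\|k\|_l$. The verification that this expression is literally the $\mathcal{Z}_2$ computation of the previous section is the content of the fact that $\mathcal{Z}_{2,1}(r)$ in the statement coincides with the $\mathcal{Z}_2(r)$ from the case $\mathcal{H} = \mathcal{G}$.

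For the second part, I would use that $\|\mathbb{A} v\|_l = \|\mathbb{B} v\|_2 \leq \max\{1,\|B^N\|_2\}\, \|v\|_2$ via \eqref{eq : equality norm A and BN}, together with Lemma \ref{lem : kappa_one} to estimate the product
\begin{align}
\|(w_0 - u_0)\, k\|_2 \leq \kappa\, \|w_0 - u_0\|_2\, \|k\|_l.
\end{align}
The $L^2$-norm of $w_0 - u_0$ is then controlled by Lemma \ref{lem : w_0 minus u_0} with $\mathbb{K} = I_d$ (trivially $D_j$-invariant), producing the factor $\tfrac{1}{j}\phi(u_0, R_{2\pi/j}) \sum_{k=0}^{j-1}\mathcal{R}_{k,j}$. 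Multiplying by $6|\nu_2|$ and combining the two contributions yields $\|\mathbb{A}(D\mathbb{F}(s) - D\mathbb{F}(w_0))\|_l \leq (\mathcal{Z}_{2,1}(r) + \mathcal{Z}_{2,2}(r))\, r$, which is exactly the claim.

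The main obstacle is not analytical but structural: one must find the right algebraic decomposition so that the Swift--Hohenberg nonlinearity $2\nu_1 u + 3\nu_2 u^2$ separates cleanly into a piece that matches the pre-computed $\mathcal{Z}_2$ calculation centered at $u_0$ plus a single bilinear residual. Once this identity is in hand, the estimates reduce to a bookkeeping exercise combining the operator bound for $\mathbb{A}$, the multiplier inequality of Lemma \ref{lem : kappa_one}, and the symmetry-based estimate of Lemma \ref{lem : w_0 minus u_0}, all of which have already been established. A minor care is needed to verify that the convolution operator $\mathbb{W}$ and the vector $W$ entering $\mathcal{Z}_{2,1}$ may be taken with respect to $u_0$ rather than $w_0$, which is coherent with the fact that the entire bound is designed around the $\mathcal{H}$-symmetric approximate solution stored numerically.
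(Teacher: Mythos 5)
Your splitting $D\mathbb{F}(s) - D\mathbb{F}(w_0) = \bigl(D\mathbb{F}(u_0+k) - D\mathbb{F}(u_0)\bigr) + 6\nu_2(w_0-u_0)\,k$ with $k := s-w_0$ is algebraically correct and is a compact repackaging of the paper's decomposition (peel off $3\nu_2\mathbb{h}^2$, then insert $w_0 = u_0 + (w_0-u_0)$ in the linear block); both isolate the same residual. Your handling of the first piece by invoking the $\mathcal{Z}_2$-estimate centered at $u_0$, which is exactly $\mathcal{Z}_{2,1}$, is sound.

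The residual estimate, however, has a genuine gap. The term $6\nu_2(w_0-u_0)k$ enters as a \emph{multiplication operator}, so the quantity to control is the operator norm
\begin{equation}
\sup_{v\neq 0}\frac{\big\|\mathbb{A}\bigl(6\nu_2(w_0-u_0)k\,v\bigr)\big\|_l}{\|v\|_l},
\end{equation}
not the $H^l$-norm of $\mathbb{A}$ applied to the fixed function $6\nu_2(w_0-u_0)k$. After pulling out $\|\mathbb{A}\|_{2,l}=\max\{1,\|B^N\|_2\}$ via \eqref{eq : equality norm A and BN}, Lemma \ref{lem : kappa_one} must be applied \emph{twice}: first
\begin{equation}
\|(w_0-u_0)k\,v\|_2 \;\leq\; \kappa\,\|(w_0-u_0)k\|_2\,\|v\|_l
\end{equation}
to dispose of the test function $v$, and only then $\|(w_0-u_0)k\|_2 \leq \kappa\|w_0-u_0\|_2\|k\|_l$. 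Your write-up records only the second application, so it produces a single $\kappa$ where the definition of $\mathcal{Z}_{2,2}$ requires $\kappa^2$; as written, the intermediate bound you claim for the residual does not hold, and the argument does not justify the stated constant. Once the missing factor of $\kappa$ is inserted, the remaining bookkeeping (Lemma \ref{lem : w_0 minus u_0} with $\mathbb{K}=I_d$ to convert $\|w_0-u_0\|_2$ into the $\phi$ and $\mathcal{R}_{k,j}$ quantities) goes through exactly as you describe.
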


\begin{proof}
The proof can be found in Appendix \ref{apen : Z2 isolate}.
\end{proof}
\subsubsection{The Bound \texorpdfstring{$\mathcal{Z}_s$}{Zs}}\label{sec : Zs isolate}
We already have everything we need to compute the $\mathcal{Z}_s$ bound. We state its result as a lemma.
\begin{lemma}\label{lem : Zs bound}
Let $\mathcal{Z}_s > 0$ be such that
\begin{align}
    \mathcal{Z}_s \geq 2\kappa \max\{1,\|B^N\|_{2}\} (|\nu_1| + 3|\nu_2| \|U_0\|_{1})\frac{1}{j} \phi\left(u_0, R_{\frac{2\pi }{j}}\right)\sum_{k = 0}^{j-1} \mathcal{R}_{k,j}
\end{align}
where $\phi$ is defined in Lemma \ref{lem : Ys Bound}.
Then, $\|\mathbb{A}(D\mathbb{G}(w_0) - D\mathbb{G}(u_0))\|_{l} \leq \mathcal{Z}_s$.
\end{lemma}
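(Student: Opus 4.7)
The plan is to reduce the bound on $\|\mathbb{A}(D\mathbb{G}(w_0)-D\mathbb{G}(u_0))\|_l$ to a bound on $\|w_0 - u_0\|_2$, after which one invokes Lemma~\ref{lem : w_0 minus u_0}. First, I would use the fact that $\mathbb{A}:L^2_\mathcal{H}\to H^l_\mathcal{H}$ is an isometric isomorphism composed with $\mathbb{B}$ (cf.~\eqref{eq : equality norm A and BN}), so that
\[
\|\mathbb{A}(D\mathbb{G}(w_0)-D\mathbb{G}(u_0))\|_l \;\le\; \max\{1,\|B^N\|_2\}\,\|D\mathbb{G}(w_0)-D\mathbb{G}(u_0)\|_{H^l\to L^2}.
\]
Then, since $\mathbb{G}(u)=\nu_1 u^2+\nu_2 u^3$, the operator $D\mathbb{G}(w_0)-D\mathbb{G}(u_0)$ is multiplication by $2\nu_1(w_0-u_0)+3\nu_2(w_0^2-u_0^2)$, and Lemma~\ref{lem : kappa_one} yields
\[
\|D\mathbb{G}(w_0)-D\mathbb{G}(u_0)\|_{H^l\to L^2} \;\le\; \kappa\bigl(2|\nu_1|\,\|w_0-u_0\|_2 + 3|\nu_2|\,\|w_0^2-u_0^2\|_2\bigr).
\]

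For the quadratic term I factor $w_0^2-u_0^2=(w_0+u_0)(w_0-u_0)$ and estimate $\|w_0^2-u_0^2\|_2\le \|w_0+u_0\|_\infty\,\|w_0-u_0\|_2$. The key observation here is that since $u_0=\gamma^\dagger_{\mathcal{H}}(U_0)$ and $w_0$ is an average over $D_j$ of rotations of $u_0$, one has $\|u_0\|_\infty\le \|U_0\|_1$ (direct from the Fourier series representation indexed on $J_{\mathrm{red}}(\mathcal{H})$ with the weights $\alpha_n=|\mathrm{orb}(n)|$) and $\|w_0\|_\infty\le \|u_0\|_\infty$, hence $\|w_0+u_0\|_\infty\le 2\|U_0\|_1$. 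Combining the two gives
\[
\|D\mathbb{G}(w_0)-D\mathbb{G}(u_0)\|_{H^l\to L^2} \;\le\; 2\kappa\bigl(|\nu_1|+3|\nu_2|\|U_0\|_1\bigr)\,\|w_0-u_0\|_2.
\]

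Finally, Lemma~\ref{lem : w_0 minus u_0} with $\mathbb{K}=I_d$ (which is trivially $D_j$-invariant) provides
\[
\|w_0-u_0\|_2 \;\le\; \frac{1}{j}\,\phi\!\left(u_0,R_{\frac{2\pi}{j}}\right)\sum_{k=0}^{j-1}\mathcal{R}_{k,j},
\]
and assembling these three inequalities yields the claimed bound on $\mathcal{Z}_s$. The only nontrivial step is the $\|\cdot\|_\infty$ estimate on $w_0+u_0$; everything else is a direct chaining of lemmas already stated. No further rigorous-numerics ingredient is required beyond the quantities already used for $\mathcal{Y}_s$ and $\mathcal{Z}_2$, so the bound is fully computable within the framework of Section~\ref{sec : computing the bounds}.
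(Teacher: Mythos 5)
Your proof is correct and follows essentially the same route as the paper's: reduce to $\max\{1,\|B^N\|_2\}$ times the operator norm of the multiplication operator, apply Lemma~\ref{lem : kappa_one} to pass to an $L^2$-norm of $2\nu_1(w_0-u_0)+3\nu_2(w_0^2-u_0^2)$, factor the quadratic term and bound $\|w_0+u_0\|_\infty\le 2\|U_0\|_1$, then invoke Lemma~\ref{lem : w_0 minus u_0}. The only cosmetic difference is that you apply the triangle inequality before rather than after Lemma~\ref{lem : kappa_one}, which yields the same bound.
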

\begin{proof}
The proof can be found in Appendix \ref{apen : Zs isolate}.
\end{proof}
With the bounds $\mathcal{Y}_0$ and $\mathcal{Z}_1$ computed thanks to Corollary \ref{th: radii polynomial} along with $\mathcal{Y}_s, \mathcal{Z}_2,$ and $\mathcal{Z}_s$ due to lemmas \ref{lem : Ys Bound}, \ref{lem : bound Z_2 s}, and \ref{lem : Zs bound} respectively, we can now perform proofs of localized patterns in the case that $\mathcal{H}$ is a space subgroup of $\mathcal{G}$ that isolates the solution. 
\subsection{When \texorpdfstring{$j$}{j} is a prime number other than \texorpdfstring{$2$}{2}}\label{sec : H nonisolate}
In this section, $\mathcal{H}$ does not isolate the solution meaning that $\mathcal{H}$ is $\mathbb{Z}_2 \times \mathbb{Z}_1$ according to Lemma \ref{lem : H classification}. Providing bounds which satisfy Theorem \ref{th: radii polynomial augmented} will provide us with the solution and symmetry we desire. The bound $\mathcal{Y}_0$ can easily be computed thanks to the theory developed in Section 5 of \cite{unbounded_domain_cadiot} and Lemma \ref{th: radii polynomial comp}. Now, recall the definition of the operator $\mathbb{B} : \mathbb{R} \times L^2_{\mathbb{Z}_2 \times \mathbb{Z}_1} \to \mathbb{R} \times L^2_{\mathbb{Z}_2 \times \mathbb{Z}_1}$ and $B^N : \mathbb{R} \times \ell^2_{\mathbb{Z}_2 \times \mathbb{Z}_1} \to \mathbb{R} \times \ell^2_{\mathbb{Z}_2 \times \mathbb{Z}_1}$ in \eqref{def : the operator B}. Decomposing along the tensor products, $\mathbb{B}$ and $B^N$ can be written as follows
\begin{align}
    \mathbb{B} \bydef \begin{bmatrix}
        \mathbb{b}_{11} & \mathbb{b}_{12}^* \\
        \mathbb{b}_{21} & \mathbb{b}_{22}
    \end{bmatrix},~ B^N \bydef \begin{bmatrix}
        b_{11}^N & (b_{12}^N)^* \\
        b_{21}^N & b_{22}^N
    \end{bmatrix}
\end{align}

where, for instance, $\mathbb{b}_{22} : L^2_{\mathbb{Z}_2 \times \mathbb{Z}_1} \to L^2_{\mathbb{Z}_2 \times \mathbb{Z}_1}$ and $b_{12}^N \in \pi^N\ell^2_{\mathbb{Z}_2 \times \mathbb{Z}_1} \to \R$. In fact, we have that $\mathbb{b}_{22} = \Gamma^\dagger_{\mathbb{Z}_2 \times \mathbb{Z}_1}(b_{22}^N + \pi_N)$. Using Lemma \ref{lem : gamma and Gamma properties}, this implies that 
\[
\|\mathbb{b}_{22}\|_2 = \max\{1,\|b_{22}^N\|_{2}\}.
\]
Similarly, we have that $\mathbb{b}_{12} = \gamma^\dagger_{\mathbb{Z}_2 \times \mathbb{Z}_1}(b_{12}^N)$ which implies that $\|\mathbb{b}_{12}\|_2 = \sqrt{|\om|} \|b_{12}^N\|_2$. 
We now introduce the following quantity:
\begin{align}
    \mathcal{C}_{B} \bydef (\|b_{12}^N\|_{2}^2 + \max\{1,\|b_{22}^N\|_{2}\}^2)^{\frac{1}{2}}.\label{def : C_B}
\end{align}
Observe that $C_{B} \geq \left\| \begin{bmatrix}
    0 & \mathbb{b}_{12}^* \\
    0 & \mathbb{b}_{22}
\end{bmatrix}\right\|_{2}$. Indeed, using Lemma 5.1 of \cite{unbounded_domain_cadiot}, we get
\begin{align}
    \left\| \begin{bmatrix}
    0 & \mathbb{b}_{12}^* \\
    0 & \mathbb{b}_{22}
\end{bmatrix}\right\|_{H_2}=  \left\| \begin{bmatrix}
    0 & ({b}_{12}^N)^* \\
    0 & {b}_{22}^N + \pi_N
\end{bmatrix}\right\|_{X_2} \leq 
(\|b_{12}^N\|_{2}^2 + \max\{1,\|b_{22}^N\|_{2}\}^2)^{\frac{1}{2}}.
\end{align}
Using $C_{B}$, we are able to compute the bound $\mathcal{Y}_s$.
\begin{lemma}
Let $\mathcal{Y}_s > 0$ be defined as 
\begin{align}
    \mathcal{Y}_s \bydef C_{B} (\mathcal{Y}_{s,1} + \mathcal{Y}_{s,2})
\end{align}
where $\mathcal{Y}_{s,1}$ and $\mathcal{Y}_{s,2}$ are defined as in Lemma \ref{lem : Ys Bound}. Then, it follows that $\left\|\mathbb{A} \begin{bmatrix}
    0 \\ \tilde{\mathbb{G}}(u_0)
\end{bmatrix}\right\|_{H_1} \leq \mathcal{Y}_s$.
\end{lemma}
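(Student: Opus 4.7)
The plan is to exploit the block structure of $\mathbb{A}$ in order to reduce the estimate to a direct consequence of the $L^2$-level bound $\|\tilde{\mathbb{G}}(u_0)\|_2 \leq \mathcal{Y}_{s,1}+\mathcal{Y}_{s,2}$ that is already the substance of Lemma \ref{lem : Ys Bound}. Using the factorization $\mathbb{A} = \begin{bmatrix} 1 & 0 \\ 0 & \mathbb{L}^{-1}\end{bmatrix}\mathbb{B}$ together with the definition \eqref{eq : operator B extra equation} of $\mathbb{B}$, a direct computation (in which the vanishing first component eliminates the first column of $\mathbb{B}$) gives
\[
\mathbb{A}\begin{bmatrix} 0 \\ \tilde{\mathbb{G}}(u_0)\end{bmatrix} = \begin{bmatrix} \mathbb{b}_{12}^*\,\tilde{\mathbb{G}}(u_0) \\ \mathbb{L}^{-1}\mathbb{b}_{22}\,\tilde{\mathbb{G}}(u_0)\end{bmatrix}.
\]
Since $\mathbb{L}:H^l \to L^2$ is an isometric isomorphism, the squared $H_1$-norm of the right-hand side equals $|\mathbb{b}_{12}^*\tilde{\mathbb{G}}(u_0)|^2 + \|\mathbb{b}_{22}\tilde{\mathbb{G}}(u_0)\|_2^2$, which is precisely the squared $H_2$-norm of $\begin{bmatrix} 0 & \mathbb{b}_{12}^* \\ 0 & \mathbb{b}_{22}\end{bmatrix}\begin{bmatrix} 0 \\ \tilde{\mathbb{G}}(u_0)\end{bmatrix}$.

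Applying the subordinate operator norm and invoking the bound established just before \eqref{def : C_B}, one obtains
\[
\left\|\mathbb{A}\begin{bmatrix} 0 \\ \tilde{\mathbb{G}}(u_0)\end{bmatrix}\right\|_{H_1} \leq \left\|\begin{bmatrix} 0 & \mathbb{b}_{12}^* \\ 0 & \mathbb{b}_{22}\end{bmatrix}\right\|_{\mathcal{B}(H_2)} \|\tilde{\mathbb{G}}(u_0)\|_2 \leq C_B\,\|\tilde{\mathbb{G}}(u_0)\|_2,
\]
the last inequality being the direct consequence of the transition between $\mathcal{B}_\om(L^2_{\mathbb{Z}_2\times\mathbb{Z}_1})$ and $\mathcal{B}(\ell^2_{\mathbb{Z}_2\times\mathbb{Z}_1})$ supplied by Lemma \ref{lem : gamma and Gamma properties} and Lemma 5.1 of \cite{unbounded_domain_cadiot}. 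It therefore remains to show the $L^2$-estimate $\|\tilde{\mathbb{G}}(u_0)\|_2 \leq \mathcal{Y}_{s,1}+\mathcal{Y}_{s,2}$.

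This last inequality is precisely the intermediate step appearing in the proof of Lemma \ref{lem : Ys Bound} before multiplying by $\|\mathbb{B}\|_2 = \max\{1,\|B^N\|_2\}$: one expands $\tilde{\mathbb{G}}(u_0) = \nu_1\bigl(w_0^2 - \tfrac{1}{j}\sum_g gu_0^2\bigr) + \nu_2\bigl(w_0^3 - \tfrac{1}{j}\sum_g gu_0^3\bigr)$, writes each summand as a combination of products in which one factor is of the form $gu_0 - g'u_0$, and estimates that factor by Lemma \ref{lem : w_minus_rotation} (producing $\phi(u_0,R_{2\pi/j})$) together with the telescoping $R_{2\pi k/j} - I_d = \sum_{p=0}^{k-1}(R_{2\pi(p+1)/j} - R_{2\pi p/j})$ (producing $\mathcal{R}_{k,j}$). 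The remaining factors are paired via the H\"older-type bound of Lemma \ref{lem : kappa_one} applied with $\mathbb{L}_0 = \Delta - I_d$, which is what introduces $\|1/l_0\|_2$. Because this part of the argument operates entirely at the level of $\|\tilde{\mathbb{G}}(u_0)\|_2$ and is insensitive to the choice of maximal square lattice subgroup $\mathcal{H}$, it transfers verbatim to the present setting, and combining the three estimates delivers the claim.

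The computation is largely routine; the only conceptual point to verify is the replacement of the scalar $\max\{1,\|B^N\|_2\}$ appearing in Lemma \ref{lem : Ys Bound} by the combined factor $C_B$. This is forced by the augmented structure: the unfolding parameter produces the additional $(1,2)$-block $\mathbb{b}_{12}^*$, whose contribution to the $\mathbb{R}$-component of the output must be combined with that of $\mathbb{b}_{22}$ through the $\ell^2$-type sum defining $C_B$ in \eqref{def : C_B}.
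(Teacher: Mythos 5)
Your proof is correct and follows essentially the same route as the paper: passing from the $H_1$-norm to the $H_2$-norm via the isometry $\mathbb{L}$, observing that the zero first component kills the first column of $\mathbb{B}$, bounding the remaining block operator by $C_B$ as established just before \eqref{def : C_B}, and finally invoking $\|\tilde{\mathbb{G}}(u_0)\|_2 \leq \mathcal{Y}_{s,1}+\mathcal{Y}_{s,2}$ from the proof of Lemma \ref{lem : Ys Bound}. The paper's proof is the same one-line chain of equalities and inequalities; your version merely spells out the intermediate block computation more explicitly.
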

\begin{proof}
Observe that
\begin{align}
    \left\|\mathbb{A} \begin{bmatrix}
    0 \\ \tilde{\mathbb{G}}(u_0)
\end{bmatrix}\right\|_{H_1} = \left\|\mathbb{B} \begin{bmatrix}
    0 \\ \tilde{\mathbb{G}}(u_0)
\end{bmatrix}\right\|_{H_2} = \left\|\begin{bmatrix}
0 & \mathbb{b}_{12}^* \\
0 & \mathbb{b}_{22}\end{bmatrix}\begin{bmatrix}
    0 \\ \tilde{\mathbb{G}}(u_0)
\end{bmatrix}\right\|_{H_2} \leq C_B \|\tilde{\mathbb{G}}(u_0)\|_{2} \leq C_B (\mathcal{Y}_{s,1} + \mathcal{Y}_{s,2}).
\end{align}
\end{proof}
Let us now compute the bounds $\mathcal{Z}_1, \mathcal{Z}_s,$ and $\mathcal{Z}_2$ with the extra equation present. 
\subsubsection{The Bound \texorpdfstring{$\mathcal{Z}_1$}{Z1}}\label{sec : Z1 nonisolate}
In this section, we study the bound $\mathcal{Z}_1$. We begin by introducing some notation. We define
\begin{align}
    &U_0^N = \pi^N U_0\text{,}~~u_0^N = \gamma_{\mathbb{Z}_2 \times \mathbb{Z}_1}(U_0^N)\text{,}~~\partial_{x_2} U_0 = \gamma_{\mathbb{Z}_2 \times \mathbb{Z}_1}(\partial_{x_2} u_0)\text{, and}~~\partial_{x_2}U_0^N = \gamma_{\mathbb{Z}_2 \times \mathbb{Z}_1}(\partial_{x_2} u_0^N).
\end{align}
Next, let $\mathbb{M}(0,u_0),M(0,U_0)$ be defined as
\begin{align}
    \mathbb{M}(0,u_0) \bydef \begin{bmatrix}
        -1 & \left(\frac{\partial_{x_2}u_0^N}{\rho}\right)^* \mathbb{L} \\
        \frac{\partial_{x_2} u_0^N}{\rho} & \mathbb{v}_0^N
    \end{bmatrix}~~\text{and}~~M(0,U_0) \bydef \begin{bmatrix}
        -1 & \left(\frac{\partial_{x_2}U_0^N}{\rho}\right)^* L \\
        \frac{\partial_{x_2} U_0^N}{\rho} & \mathbb{V}_0^N
    \end{bmatrix}
\end{align}
where $\mathbb{v}_0^N$ and $\mathbb{V}_0^N$ are the multiplication operators for $v_0$ and $V_0$ defined in \eqref{def : v_0} respectively.
 These notations allow to decompose both $D\mathbb{G}(0,u_0)$ and $DG(0,U_0)$ with a truncation of size $N$. 
Also, observe that for $x \in H_1, 
     \|x\|_{H_1} = \left\|\begin{bmatrix}
    1 & 0 \\
    0 & \mathbb{L}
\end{bmatrix} x\right\|_{H_2}.$
\begin{lemma}\label{lem : Z_1_extra}
Let $Z_1,\mathcal{Z}_u> 0$ be such that   \begin{align}
    &Z_1 \geq \left\|I_d - B\left(I_d + M(0,U_0)\begin{bmatrix}
    1 & 0 \\
    0 & L^{-1}
\end{bmatrix}\right)\right\|_{X_2},~\mathcal{Z}_u \geq \|(\Gamma_{\mathbb{Z}_2 \times \mathbb{Z}_1}^\dagger(L^{-1}) - \mathbb{L}^{-1})\mathbb{v}_0^N\|_{2}.
\end{align}
Then, defining 
{\small\begin{align}
    \mathcal{Z}_1 \bydef Z_1 + C_B \mathcal{Z}_u + \|\mathbb{B}\|_{H_2}\varphi\left(0,\frac{\sqrt{|\om|}}{\rho}\|\partial_{x_2} U_0^N - \partial_{x_2} U_0\|_{2}, \frac{\sqrt{|\om|}}{\rho}\|\partial_{x_2} U_0^N - \partial_{x_2} U_0\|_{2}, \frac{1}{\mu} \|V_0^N - V_0\|_{1}\right)
\end{align}}
we have $\|I_d - \mathbb{A}\mathbb{Q}(0,u_0)\|_{H_1} \leq \mathcal{Z}_1$.
\end{lemma}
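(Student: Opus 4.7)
The plan is to exploit the fact that $J := \begin{bmatrix} 1 & 0 \\ 0 & \mathbb{L} \end{bmatrix}$ is an isometric isomorphism from $H_1$ to $H_2$, and that $\mathbb{A} = J^{-1}\mathbb{B}$, so that
\begin{align*}
\|I_d - \mathbb{A}\mathbb{Q}(0,u_0)\|_{H_1} = \|J(I_d - \mathbb{A}\mathbb{Q}(0,u_0))J^{-1}\|_{H_2} = \|I_d - \mathbb{B}\,\mathbb{Q}(0,u_0)J^{-1}\|_{H_2}.
\end{align*}
This moves the computation into $H_2$, where $\mathbb{B}$ acts naturally and where the norms are compatible (via Lemma \ref{lem : gamma and Gamma properties}) with their discrete analogues on $X_2$. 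Next I would split $\mathbb{Q}(0,u_0) = J + \mathbb{M}(0,u_0) + \Delta$, where the error $\Delta$ involves only the truncations $\partial_{x_2}(u_0 - u_0^N)$ and $\mathbb{v}_0 - \mathbb{v}_0^N$, and apply the triangle inequality to obtain
\begin{align*}
\|I_d - \mathbb{B}\mathbb{Q}(0,u_0)J^{-1}\|_{H_2} \leq \|I_d - \mathbb{B}(I_d + \mathbb{M}(0,u_0)J^{-1})\|_{H_2} + \|\mathbb{B}\,\Delta J^{-1}\|_{H_2}.
\end{align*}

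For the truncation error $\|\mathbb{B}\,\Delta J^{-1}\|_{H_2}$, the four blocks of $\Delta J^{-1}$ have explicit bounds: the $(1,1)$ block is zero; the $(1,2)$ and $(2,1)$ blocks have norms $\tfrac{1}{\rho}\|\partial_{x_2}(u_0 - u_0^N)\|_2 = \tfrac{\sqrt{|\om|}}{\rho}\|\partial_{x_2}U_0 - \partial_{x_2}U_0^N\|_2$ by Parseval (since $u_0, u_0^N$ are supported in $\overline{\om}$); and the $(2,2)$ block $(\mathbb{v}_0 - \mathbb{v}_0^N)\mathbb{L}^{-1}$ has norm at most $\tfrac{1}{\mu}\|V_0 - V_0^N\|_1$ using $\|v_0 - v_0^N\|_\infty \leq \|V_0 - V_0^N\|_1$ and $|l(\xi)| \geq \mu$ on $\R^2$. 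Feeding these four numbers into the block-norm function $\varphi$ from Lemma 4.2 of \cite{gs_cadiot_blanco} and factoring out $\|\mathbb{B}\|_{H_2}$ yields exactly the third term in the definition of $\mathcal{Z}_1$.

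For the main term $\|I_d - \mathbb{B}(I_d + \mathbb{M}(0,u_0)J^{-1})\|_{H_2}$, the goal is to identify it with the purely discrete quantity $Z_1 = \|I_d - B(I_d + M(0,U_0)\begin{bmatrix}1 & 0 \\ 0 & L^{-1}\end{bmatrix})\|_{X_2}$ modulo a controllable error. The $(1,1)$ and off-diagonal blocks of $\mathbb{M}(0,u_0)J^{-1}$ involve compactly supported objects ($-1$, $\partial_{x_2}u_0^N/\rho$), so they correspond exactly to their discrete counterparts under the isometries of Lemma \ref{lem : gamma and Gamma properties}. The only mismatch is in the $(2,2)$ block, where the continuous $\mathbb{v}_0^N\mathbb{L}^{-1}$ must be compared to the discrete $\mathbb{V}_0^N L^{-1}$, equivalently to $\Gamma^\dagger_{\mathbb{Z}_2\times\mathbb{Z}_1}(\mathbb{V}_0^N L^{-1}) = \mathbb{v}_0^N\Gamma^\dagger_{\mathbb{Z}_2\times\mathbb{Z}_1}(L^{-1})$ (using that $v_0^N$ is supported in $\om$ so $\mathbb{v}_0^N \in \mathcal{B}_\om(L^2_{\mathbb{Z}_2\times\mathbb{Z}_1})$, together with the multiplicativity of $\Gamma^\dagger$ on operators with range and domain restricted to $\om$). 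The residual is $\mathbb{v}_0^N(\mathbb{L}^{-1} - \Gamma^\dagger_{\mathbb{Z}_2\times\mathbb{Z}_1}(L^{-1}))$, whose operator norm equals $\mathcal{Z}_u$ by self-adjointness (both $\mathbb{L}^{-1}$ and $\Gamma^\dagger(L^{-1})$ are self-adjoint, and $v_0^N$ is real). Since this error lives entirely in the $(2,2)$ slot, applying $\mathbb{B}$ to it only activates its second column, whose norm is precisely $C_B$ by \eqref{def : C_B}, producing the $C_B\mathcal{Z}_u$ contribution.

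The main obstacle I expect is verifying rigorously the identity $\Gamma^\dagger_{\mathbb{Z}_2\times\mathbb{Z}_1}(\mathbb{V}_0^N L^{-1}) = \mathbb{v}_0^N\Gamma^\dagger_{\mathbb{Z}_2\times\mathbb{Z}_1}(L^{-1})$ as operators on $L^2_{\mathbb{Z}_2\times\mathbb{Z}_1}$, because $\Gamma^\dagger$ is not a ring homomorphism in general. It holds here thanks to the compact support of $v_0^N$ in $\overline{\om}$, which ensures that convolution in Fourier coefficients translates faithfully to pointwise multiplication on $\om$. Once this identification is made, chaining it with the isometric correspondence of Lemma \ref{lem : gamma and Gamma properties} between $\mathcal{B}_\om(L^2_{\mathbb{Z}_2\times\mathbb{Z}_1})$ and $\mathcal{B}(\ell^2_{\mathbb{Z}_2\times\mathbb{Z}_1})$ reduces the main term to $Z_1 + C_B\mathcal{Z}_u$, completing the proof.
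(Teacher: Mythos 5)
Your proposal is correct and follows essentially the same route as the paper's proof: you isolate the same decomposition into a discrete $Z_1$ piece, a residual $C_B\mathcal{Z}_u$ piece coming from the $(2,2)$ mismatch $\mathbb{v}_0^N\mathbb{L}^{-1}$ vs.\ $\Gamma^\dagger_{\mathbb{Z}_2\times\mathbb{Z}_1}(\mathbb{V}_0^N L^{-1})$, and a truncation term estimated block-by-block via $\varphi$. The only difference is cosmetic (you pass to the $H_2$ norm before rather than after applying the triangle inequality), and you make explicit a couple of steps — the multiplicativity $\Gamma^\dagger(\mathbb{V}_0^N L^{-1}) = \mathbb{v}_0^N\Gamma^\dagger(L^{-1})$ from compact support and the self-adjointness needed to match $\mathcal{Z}_u$'s stated form — that the paper leaves implicit.
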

\begin{proof}
The proof can be found in Appendix \ref{apen : Z1 derivation proof}.
\end{proof}
Now, we observe  that the bound $Z_1$ can be computed thanks to matrix norms. We provide such a result in the next lemma.
\begin{lemma}\label{lem : Z1_extra}
Let $M^N \bydef \pi^N + M(0,U_0)\begin{bmatrix}
    1 & 0 \\
    0 & L^{-1}
\end{bmatrix}$ and $M \bydef \pi_N + M^N$. Let $Z_1 > 0$ be such that   
\begin{align}
    Z_1 \geq \varphi(Z_{1,1},Z_{1,2},Z_{1,3},Z_{1,4})
\end{align}
where $\varphi$ is defined in Lemma 4.1 of \cite{gs_cadiot_blanco} and
{\small\begin{align}
    &Z_{1,1} \bydef \sqrt{\|(\pi^N - B^NM^N) (\pi^N - (M^N)^{\star} (B^N)^{\star})\|_{2}},~Z_{1,3} \bydef \sqrt{\left\|\pi^NL^{-1}\mathbb{V}_0^N\pi_N\mathbb{V}_0^N L^{-1} \pi^N\right\|_{2}}\\
    &Z_{1,2} \bydef \max_{n \in \mathbb{N}_0 \times \mathbb{Z} \setminus I^N} \frac{1}{|l(\tilde{n})|} \sqrt{\| B^NM(0,U_0)\pi_N M(0,U_0)^{\star}(B^N)^{\star}\|_{X_2}},~Z_{1,4} \bydef \max_{n \in \mathbb{N}_0 \times \mathbb{Z} \setminus I^N} \frac{1}{|l(\tilde{n})|} \|V_0^N\|_{1}.
\end{align}}
Then, $\left\|I - B\left(I_d + M(0,U_0)\begin{bmatrix}
    1 & 0 \\
    0 & L^{-1}
\end{bmatrix}\right)\right\|_{X_2} \leq Z_1$.
\end{lemma}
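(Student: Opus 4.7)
The plan is to mimic the block decomposition used in the proof of Lemma \ref{lem : Z_1 periodic}, but now carried out on the augmented operator on $X_2 = \R \times \ell^2_{\mathbb{Z}_2\times\mathbb{Z}_1}$. Write $I_d - BM$ in the $2\times 2$ block form induced by the projections $\pi^N$ and $\pi_N$ on the $\ell^2$-factor, namely
\begin{align}
I_d - BM = \begin{bmatrix} \pi^N(I_d - BM)\pi^N & \pi^N(I_d - BM)\pi_N \\ \pi_N(I_d - BM)\pi^N & \pi_N(I_d - BM)\pi_N \end{bmatrix}.
\end{align}
Here I will slightly abuse the block notation: on the scalar factor $\R$ there is no projection, but since $B^N$ and the unfolding blocks of $M(0,U_0)$ only involve coefficients in $\pi^N \ell^2_{\mathbb{Z}_2\times\mathbb{Z}_1}$, the scalar row/column is absorbed into the $\pi^N$-block. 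Once this is done, Lemma 4.1 of \cite{gs_cadiot_blanco} (the function $\varphi$) estimates the $X_2$-operator norm of a $2\times 2$ block matrix from the norms of its four blocks, which is precisely the format I am aiming for.

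Next I would compute each of the four block norms in turn. For the $(1,1)$-block, since $B = \pi^N B \pi^N + \pi_N$ and $M^N$ is the finite-rank part of $M$, this block reduces exactly to $\pi^N - B^N M^N$; using $\|K\|_2^2 = \|KK^*\|_2$ yields the definition of $Z_{1,1}$. For the $(1,2)$-block the observation is that $\pi^N(I_d - BM)\pi_N = -B^N M(0,U_0)\begin{bmatrix} 1 & 0 \\ 0 & L^{-1}\end{bmatrix}\pi_N$; bounding $\|L^{-1}\pi_N\|_2 \leq \max_{n\notin I^N} |l(\tilde n)|^{-1}$ and moving the remainder inside a norm-squared (again via the adjoint trick $\|K\|_2^2=\|KK^*\|_2$) produces $Z_{1,2}$. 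The $(2,1)$-block $\pi_N(I_d-BM)\pi^N$ is treated symmetrically: here the scalar column contributes nothing because $\pi_N$ kills the finite-rank vector $\partial_{x_2}U_0^N/\rho$, so only the $\mathbb{V}_0^N L^{-1}$ piece survives, giving the compact expression $\pi^N L^{-1}\mathbb{V}_0^N\pi_N\mathbb{V}_0^N L^{-1}\pi^N$ whose square root is $Z_{1,3}$. Finally, the $(2,2)$-block reduces to $-\pi_N \mathbb{V}_0^N L^{-1}\pi_N$, bounded via Young's inequality \eqref{eq : youngs inequality} by $\|V_0^N\|_1$ times the tail decay of $L^{-1}$, producing $Z_{1,4}$.

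The main obstacle, and the reason this case is genuinely different from Lemma \ref{lem : Z_1 periodic}, is bookkeeping around the augmented scalar row and column: one must verify that the unfolding entries $\pm\partial_{x_2}U_0^N/\rho$ never leak into the tail blocks because they are supported in $I^N$, and that the scalar diagonal entry $-1$ (inside $M(0,U_0)$) is annihilated by $I_d - BM$ up to finite-rank terms absorbed by $B^N$. Once this is checked the four block estimates are routine, and feeding $Z_{1,1},\dots,Z_{1,4}$ into $\varphi$ closes the proof. The extra finite-dimensional quantities inside $Z_{1,1}$ and $Z_{1,2}$ are computable with interval arithmetic since they involve only matrices of size at most $(|I^N|+1)\times(|I^N|+1)$, matching the strategy used in the analogous Lemma \ref{lem : Z_1 periodic}.
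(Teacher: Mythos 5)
Your proposal is correct and follows essentially the same route as the paper: you perform the identical $\pi^N/\pi_N$ block decomposition of $I_d - BM$ on the augmented space $X_2$, observe (as the paper implicitly does) that the unfolding column $\partial_{x_2}U_0^N/\rho$ and the scalar entry $-1$ live entirely in the $\pi^N$-block so that only $\mathbb{V}_0^N L^{-1}$ survives in the tail blocks, and then estimate the four block norms via the adjoint trick and Young's inequality before feeding them into $\varphi$. The one negligible discrepancy is the reference to Lemma 4.1 versus Lemma 4.4 of \cite{gs_cadiot_blanco} for $\varphi$, an inconsistency already present in the paper itself.
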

\begin{proof}
The proof can be found in Appendix \ref{apen : Z1 periodic proof}.
\end{proof}
\subsubsection{The Bound \texorpdfstring{$\mathcal{Z}_s$}{Zs}}
Before providing the result for $\mathcal{Z}_s$, we state the following lemma.

\begin{lemma}\label{lem : kappa partial}
Let $l_{\partial}(\xi) \bydef -2\pi i\xi_2$. In particular, $l_{\partial}$ is the Fourier transform of $\mathbb{L}_{\partial} \bydef \partial_{x_2}$. Let $\mathbb{L}_1 \bydef (I_d + \Delta)^2 + I_d$ and let $l_1$ be its symbol (Fourier transform) $l_1$. Let $\kappa_{\partial} \geq \left\|\frac{l_{\partial}}{l_1}\right\|_{2}$.
Then, 
\begin{align}
    \|\partial_{x_2} (w_0 - u_0)\|_{2} \leq \frac{\kappa_{\partial}}{j} \phi\left(\mathbb{L}_1 u_0, R_{\frac{2\pi}{j}}\right) \sum_{k = 0}^{j-1} \mathcal{R}_{k,j} .
\end{align}
\end{lemma}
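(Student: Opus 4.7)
The proof mirrors Lemma~\ref{lem : w_0 minus u_0}, but has to deal with the fact that $\partial_{x_2}$ is not rotationally invariant. Starting from $w_0 = \frac{1}{j}\sum_{k=0}^{j-1} R_{\frac{2\pi k}{j}} u_0$ and applying the triangle inequality,
\begin{align}
\|\partial_{x_2}(w_0 - u_0)\|_2 \leq \frac{1}{j}\sum_{k=0}^{j-1} \bigl\|\partial_{x_2}\bigl(R_{\frac{2\pi k}{j}} u_0 - u_0\bigr)\bigr\|_2,
\end{align}
so it suffices to control each summand on the right.

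The key observation is that $\mathbb{L}_1 = (I_d+\Delta)^2 + I_d$ depends only on the Laplacian, and therefore commutes with every rotation $R_\theta$. Factoring $\partial_{x_2} = \mathbb{L}_\partial \mathbb{L}_1^{-1}\mathbb{L}_1$ and pushing the rotation past $\mathbb{L}_1$ yields
\begin{align}
\partial_{x_2}\bigl(R_{\frac{2\pi k}{j}} u_0 - u_0\bigr) = \mathbb{L}_\partial\mathbb{L}_1^{-1}\bigl(R_{\frac{2\pi k}{j}}\mathbb{L}_1 u_0 - \mathbb{L}_1 u_0\bigr).
\end{align}
I would then treat $\mathbb{L}_\partial\mathbb{L}_1^{-1}$ as a Fourier multiplier with symbol $l_\partial/l_1$: written as a convolution with the kernel $K = \mathcal{F}^{-1}(l_\partial/l_1)$, Plancherel gives $\|K\|_2 = \|l_\partial/l_1\|_2 \leq \kappa_\partial$, from which one extracts an estimate of the form $\|\mathbb{L}_\partial\mathbb{L}_1^{-1}h\|_2 \leq \kappa_\partial\|h\|_2$ valid for the (compactly supported) function $h = R_{\frac{2\pi k}{j}}\mathbb{L}_1 u_0 - \mathbb{L}_1 u_0$.

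To finish, I would apply the telescoping decomposition $R_{\frac{2\pi k}{j}} - I_d = (R_{\frac{2\pi}{j}} - I_d)\sum_{p=0}^{k-1} R_{\frac{2\pi p}{j}}$ together with Lemma~\ref{lem : w_minus_rotation}, exactly as in the proof of Lemma~\ref{lem : w_0 minus u_0}, obtaining $\|R_{\frac{2\pi k}{j}}\mathbb{L}_1 u_0 - \mathbb{L}_1 u_0\|_2 \leq \mathcal{R}_{k,j}\,\phi(\mathbb{L}_1 u_0, R_{\frac{2\pi}{j}})$, and then sum over $k$. The main obstacle is the multiplier step: the sharp $L^2\to L^2$ operator norm of $\mathbb{L}_\partial\mathbb{L}_1^{-1}$ is $\|l_\partial/l_1\|_\infty$ rather than $\|l_\partial/l_1\|_2$, so extracting a clean bound involving $\kappa_\partial = \|l_\partial/l_1\|_2$ requires a careful Young-inequality argument on the convolution representation that takes advantage of the compact support of $\mathbb{L}_1 u_0$ and of its rotations.
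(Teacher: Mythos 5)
Your plan follows the paper's structure: factor $\partial_{x_2} = (\partial_{x_2}\mathbb{L}_1^{-1})\,\mathbb{L}_1$, exploit that $\mathbb{L}_1$ (a polynomial in $\Delta$) commutes with rotations, and invoke the telescoping estimate packaged in Lemma~\ref{lem : w_0 minus u_0} with $\mathbb{K} = \mathbb{L}_1$. The paper applies the multiplier bound first and then Lemma~\ref{lem : w_0 minus u_0} as a black box; you apply the triangle inequality first and the multiplier bound afterwards. Since $\partial_{x_2}\mathbb{L}_1^{-1}$ is itself a Fourier multiplier commuting with $R_\theta$, the two orderings are equivalent, so structurally you and the paper are doing the same thing.

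The obstacle you flag at the multiplier step is genuine, and the paper's own proof does not resolve it either. The paper asserts $\|l_\partial(\hat w_0 - \hat u_0)\|_2 \le \|l_\partial/l_1\|_2\,\|l_1(\hat w_0 - \hat u_0)\|_2$, which has the form $\|fg\|_2 \le \|f\|_2\|g\|_2$ and is not a valid H\"older inequality; the correct $L^2\to L^2$ norm of the multiplier is $\|l_\partial/l_1\|_\infty$. For this particular pair of symbols the two norms go the \emph{wrong} way: with $t = 2\pi|\xi|$ and $|\xi_2| = |\xi|$ one gets $\|l_\partial/l_1\|_\infty = \max_{t\ge 0} t/((1-t^2)^2+1) \approx 1.05$, while $\|l_\partial/l_1\|_2^2 = \tfrac{1}{8\pi}\int_0^\infty s\,((1-s)^2+1)^{-2}\,ds \approx 0.067$, i.e.\ $\|l_\partial/l_1\|_2 \approx 0.26 < \|l_\partial/l_1\|_\infty$. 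So a $\kappa_\partial$ that only dominates $\|l_\partial/l_1\|_2$, as the hypothesis literally allows, does not bound the operator. The Young-inequality repair you sketch does not close this: with $K = \mathcal{F}^{-1}(l_\partial/l_1) \in L^2$ and $g = \mathbb{L}_1(w_0-u_0)$, Cauchy--Schwarz gives $\|K*g\|_\infty \le \|K\|_2\|g\|_2$ (an $L^\infty$ bound, not what the lemma states), while $\|K*g\|_2 \le \|K\|_2\|g\|_1 \le \|K\|_2\,|\mathrm{supp}(g)|^{1/2}\|g\|_2$ introduces a factor $\sim |\Omega_0|^{1/2}$ that the claimed bound does not carry; and $K$ is not compactly supported, so no finite-measure trick on the output side is available. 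The clean fix is to require $\kappa_\partial \ge \|l_\partial/l_1\|_\infty$ (which is presumably what was intended); with that change, the remainder of your argument, identical to the paper's, goes through unchanged.
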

\begin{proof}
To begin, we use Plancherel's identity and introduce $l_1$.
{\small\begin{align}
    \|\partial_{x_2} (w_0 - u_0)\|_{2} = \|l_{\partial} (\widehat{w}_0 - \widehat{u}_0) \|_{2}  \leq \left\|\frac{l_{\partial}}{l_1}\right\|_{2} \|l_1 (\widehat{w}_0 - \widehat{u}_0)\|_{2} 
    \leq \kappa_{\partial} \|w_0 - u_0\|_{l_1} 
    &\leq \frac{\kappa_{\partial}}{j} \phi\left(\mathbb{L}_1 u_0, R_{\frac{2\pi}{j}}\right) \sum_{k = 0}^{j-1} \mathcal{R}_{k,j}
\end{align}}
where the last step followed from Lemma \ref{lem : w_0 minus u_0} with $\mathbb{K} = \mathbb{L}_1$. This concludes the proof.
\end{proof}
With Lemma \ref{lem : kappa partial} available to us, we are ready to compute the $\mathcal{Z}_s$ bound. We summarize its result in a lemma.
\begin{lemma}\label{lem : Zs_extra}
Let $\kappa_{\partial}$ be computed using Lemma \ref{lem : kappa partial}. Let $\mathcal{Z}_s \geq 0$ be such that
\begin{align}
    \mathcal{Z}_s \bydef \|\mathbb{B}\|_{H_2} \varphi(0,\mathcal{Z}_{s,1},\mathcal{Z}_{s,1},\mathcal{Z}_{s,2})
\end{align}
where
\begin{align}
    \mathcal{Z}_{s,1} \bydef \frac{\kappa_{\partial}}{j\rho} \phi\left(\mathbb{L}_1u_0, R_{\frac{2\pi }{j}}\right)\sum_{k = 0}^{j-1} \mathcal{R}_{k,j},~~\mathcal{Z}_{s,2} \bydef 2\kappa (|\nu_1| + 3|\nu_2| \|U_0\|_{1}) \frac{1}{j} \phi\left(u_0, R_{\frac{2\pi }{m}}\right)\sum_{k = 0}^{j-1} \mathcal{R}_{k,j}.
\end{align}
Then, $\|\mathbb{A}(\mathbb{Q}(0,u_0) - D\mathbb{F}(0,w_0))\|_{H_1} \leq \mathcal{Z}_s$.
\end{lemma}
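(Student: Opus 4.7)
The plan is to rewrite $\mathbb{Q}(0,u_0) - D\mathbb{F}(0,w_0)$ as a $2\times 2$ block operator from $H_1$ to $H_2$, reduce the $\mathbb{A}$-norm to a $\mathbb{B}$-norm via the fact that $\begin{bmatrix}1 & 0\\ 0 & \mathbb{L}^{-1}\end{bmatrix}$ is an isometric isomorphism from $H_2$ to $H_1$, and then apply Lemma 4.1 from \cite{gs_cadiot_blanco} (the $\varphi$ combining function) after estimating each of the four blocks individually.

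First, recalling the definitions of $\mathbb{Q}(0,u_0)$ in \eqref{def : Q} and of $D\mathbb{F}(0,w_0)$, I would write
\begin{align}
\mathbb{Q}(0,u_0) - D\mathbb{F}(0,w_0) \;=\; \begin{bmatrix} 0 & \left(\frac{\partial_{x_2}(u_0-w_0)}{\rho}\right)^{*}\mathbb{L} \\[4pt] \frac{\partial_{x_2}(u_0-w_0)}{\rho} & D\mathbb{G}(u_0) - D\mathbb{G}(w_0) \end{bmatrix}.
\end{align}
Since $\|\mathbb{A} T\|_{H_1} = \|\mathbb{B} T\|_{H_2}$ for any operator $T : H_1 \to H_2$, it suffices to estimate $\|\mathbb{B}(\mathbb{Q}(0,u_0) - D\mathbb{F}(0,w_0))\|_{H_1 \to H_2}$, which by sub-multiplicativity is bounded by $\|\mathbb{B}\|_{H_2}$ times the $H_1 \to H_2$ operator norm of the block matrix above. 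This is exactly the shape to which the function $\varphi$ of Lemma 4.1 in \cite{gs_cadiot_blanco} applies, with the $(1,1)$ block being zero.

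Next, I would estimate the two off-diagonal blocks. For $w \in H^l_{\mathbb{Z}_2\times\mathbb{Z}_1}$, Cauchy-Schwarz and Plancherel give
\begin{align}
\left|\left(\tfrac{\partial_{x_2}(u_0 - w_0)}{\rho},\, \mathbb{L}w\right)_{2}\right| \;\leq\; \frac{\|\partial_{x_2}(u_0 - w_0)\|_{2}}{\rho}\,\|w\|_{l},
\end{align}
and similarly $\|\beta\,\partial_{x_2}(u_0-w_0)/\rho\|_{2} \leq |\beta|\,\|\partial_{x_2}(u_0-w_0)\|_{2}/\rho$. Lemma \ref{lem : kappa partial} then bounds $\|\partial_{x_2}(u_0 - w_0)\|_{2}$ by $\frac{\kappa_\partial}{j}\phi(\mathbb{L}_1 u_0, R_{2\pi/j})\sum_{k=0}^{j-1}\mathcal{R}_{k,j}$, so both off-diagonal blocks have norm at most $\mathcal{Z}_{s,1}$.

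For the $(2,2)$ block, I would reproduce the argument in Lemma \ref{lem : Zs bound} (Appendix \ref{apen : Zs isolate}): writing $D\mathbb{G}(u_0) - D\mathbb{G}(w_0) = 2\nu_1(u_0 - w_0) + 3\nu_2(u_0^2 - w_0^2)$, applying Lemma \ref{lem : kappa_one} to pass a factor of $\kappa$ onto the input $\|w\|_l$, controlling $u_0 + w_0$ in $\ell^1$ via $\|U_0\|_1$, and then invoking Lemma \ref{lem : w_0 minus u_0} (with $\mathbb{K}=I_d$) to obtain $\|u_0 - w_0\|_{2} \leq \frac{1}{j}\phi(u_0,R_{2\pi/j})\sum_{k=0}^{j-1}\mathcal{R}_{k,j}$. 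This yields the $(2,2)$ bound $\mathcal{Z}_{s,2}$. Finally, feeding $(0, \mathcal{Z}_{s,1}, \mathcal{Z}_{s,1}, \mathcal{Z}_{s,2})$ into $\varphi$ and multiplying by $\|\mathbb{B}\|_{H_2}$ gives the claimed $\mathcal{Z}_s$.

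The main obstacle will not be any single estimate (each is a routine Cauchy-Schwarz or an appeal to a lemma already proved), but rather the careful bookkeeping of norms across the product structure: one must verify that the $H_1\to H_2$ block-matrix norm really is controlled by $\varphi$ applied to the $2$-norms of each block (recalling that on $H_1$ the first slot carries the $|\cdot|$ norm and the second the $\|\cdot\|_l$ norm, while on $H_2$ the second slot carries $\|\cdot\|_2$), and that the $\mathbb{L}$ paired with $w$ in the $(1,2)$ block is exactly what converts $\|w\|_l$ into $\|\mathbb{L}w\|_2$ so the argument closes without a spurious factor.
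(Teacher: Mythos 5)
Your proposal follows the same route as the paper's proof: write $\mathbb{Q}(0,u_0) - D\mathbb{F}(0,w_0)$ as a $2\times 2$ block operator, factor out $\|\mathbb{B}\|_{H_2}$, invoke the $\varphi$ combining lemma from \cite{gs_cadiot_blanco}, and bound the off-diagonal blocks via Lemma \ref{lem : kappa partial} and the $(2,2)$ block via Lemma \ref{lem : Zs bound}. The only cosmetic difference is that you carry the mixed $H_1\to H_2$ bookkeeping explicitly (keeping the $\mathbb{L}$ in the $(1,2)$ block), whereas the paper post-composes with $\overline{\mathbb{L}}^{-1}$ first so all blocks become $L^2\to L^2$; these are equivalent.
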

\begin{proof}
The proof can be found in Appendix \ref{apen : Zs nonisolate}.
\end{proof}
\begin{remark}
The choices of $l_0$ and $l_1$ were made for our specific case of Swift Hohenberg. It is possible that there are more optimal choices of these operators that will make the bounds smaller. It would be of interest to find such operators in order to minimize the quantities we compute.
\end{remark}
\subsubsection{The Bound \texorpdfstring{$\mathcal{Z}_2$}{Z2}}
We are now ready to compute the $\mathcal{Z}_2$ bound in the case where we require an extra equation. We state the result as a lemma.
\begin{lemma}\label{lem : Z_2_extra}
Let $\kappa, W, \mathbb{W},\mathcal{Z}_{2,1}(r),$ and $\mathcal{Z}_{2,2}(r)$ be defined as in Lemma \ref{lem : bound Z_2 s}. Let $\mathcal{Z}_{2,3}(r): [0,\infty) \to (0,\infty), \mathcal{Z}_{2,4}(r) : [0,\infty) \to (0,\infty)$ be non-negative functions such that
\begin{align}
    \mathcal{Z}_{2,3}(r) \bydef  \frac{\kappa}{\mu} (\|\mathbb{W}\|_{2}^2 + \|W\|_{1}^2)^{\frac{1}{2}} r ,~~\mathcal{Z}_{2,4}(r) \bydef  \frac{6|\nu_2|  \kappa^2}{j} \phi\left(u_0, R_{\frac{2\pi }{j}}\right)\sum_{k=0}^{j-1} \mathcal{R}_{k,j}.
\end{align}
Let $\mathcal{Z}_2(r) : [0,\infty) \to (0,\infty)$ be a non-negative function such that
\begin{align}
    \mathcal{Z}_2(r) \bydef \mathcal{Z}_{2,1}(r) + \mathcal{Z}_{2,2}(r) + \max(1,\|b_{12}^N\|_{2})(\mathcal{Z}_{2,3}(r)+\mathcal{Z}_{2,4}(r)).
\end{align}  
Then, $\|\mathbb{A}(D\mathbb{F}(h_1,h_2) - D\mathbb{F}(0,w_0))\|_{H_1} \leq \mathcal{Z}_2(r)r$.
\end{lemma}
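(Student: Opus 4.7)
The plan is to exploit the block structure of the linearization. Because the only $w$-dependent entry of $D\mathbb{F}(\beta,w)$ is the multiplication operator $D\mathbb{G}(w)$ in the lower-right corner (the row $\frac{1}{\rho}(\mathbb{L}v,\partial_{x_2}w_0)_2$, the Lagrange multiplier column $\partial_{x_2}w_0/\rho$, and the linear part $\mathbb{L}v$ are all independent of the linearization point), a direct computation yields
\[
D\mathbb{F}(h_1,h_2) - D\mathbb{F}(0,w_0) = \begin{bmatrix} 0 & 0 \\ 0 & D\mathbb{G}(h_2)-D\mathbb{G}(w_0)\end{bmatrix},
\]
so the $\sigma$-slot of the input plays no role, and the operator norm on $H_1$ reduces to a supremum over $v$ with $\|v\|_l\le 1$. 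Setting $T\bydef D\mathbb{G}(h_2)-D\mathbb{G}(w_0)$ and combining $\mathbb{A} = \begin{bmatrix}1 & 0 \\ 0 & \mathbb{L}^{-1}\end{bmatrix}\mathbb{B}$ with the block decomposition $\mathbb{B} = \begin{bmatrix}\mathbb{b}_{11} & \mathbb{b}_{12}^*\\ \mathbb{b}_{21} & \mathbb{b}_{22}\end{bmatrix}$ from the beginning of this section, one obtains $\mathbb{A}(D\mathbb{F}(h_1,h_2)-D\mathbb{F}(0,w_0))(\sigma,v) = (\mathbb{b}_{12}^*Tv,\ \mathbb{L}^{-1}\mathbb{b}_{22}Tv)$. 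Since $\|\mathbb{L}^{-1}\cdot\|_l=\|\cdot\|_2$ and $\sqrt{a^2+b^2}\le a+b$, the $H_1$-norm of the output splits into a bottom contribution $\|\mathbb{b}_{22}Tv\|_2$ and a top contribution $|\mathbb{b}_{12}^*Tv|$, which can be treated independently.

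For the bottom contribution I would rerun the argument of Lemma \ref{lem : bound Z_2 s} essentially verbatim: split $T=(D\mathbb{G}(h_2)-D\mathbb{G}(u_0))+(D\mathbb{G}(u_0)-D\mathbb{G}(w_0))$, expand $D\mathbb{G}(u)=2\nu_1 u+3\nu_2 u^2$ so that the first difference factors through $(h_2-w_0)+(w_0-u_0)$ with coefficient proportional to $W$, and then invoke Lemma \ref{lem : kappa_one} for the $\kappa$-factors and Lemma \ref{lem : w_0 minus u_0} for the $w_0-u_0$ piece, together with $\|\mathbb{b}_{22}\|_{2}=\max\{1,\|b_{22}^N\|_{2}\}\le\max\{1,\|B^N\|_{2}\}$. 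This reproduces exactly $\mathcal{Z}_{2,1}(r)+\mathcal{Z}_{2,2}(r)$, with the linear factor $r$ inherited from $\|h_2-w_0\|_l\le r$. The top contribution is handled in the same spirit, starting from $|\mathbb{b}_{12}^*Tv|\le\|b_{12}^N\|_{2}\|\gamma_{\mathbb{Z}_2\times\mathbb{Z}_1}(Tv)\|_2$ and applying the same decomposition of $T$, but since no $B^N$-composition is available in this block the sharper $\|\mathbb{W}(B^N)^*\|_2$ must be replaced by the plain $\|\mathbb{W}\|_2$; after tracking the Parseval scaling, this produces $\mathcal{Z}_{2,3}(r)+\mathcal{Z}_{2,4}(r)$ multiplied by the common prefactor $\max\{1,\|b_{12}^N\|_{2}\}$.

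The main obstacle is the careful bookkeeping of operator norms across the two structurally different blocks of $\mathbb{B}$: $\mathbb{b}_{22}$ composes with the smoothing $\mathbb{L}^{-1}$ and with a discrete approximate inverse, which lets one absorb a factor of $(B^N)^*$ and sharpen the multiplier bound on $W$, whereas $\mathbb{b}_{12}^*$ is a fixed scalar functional with no such companion, forcing the looser multiplier estimate $\|\mathbb{W}\|_2$. Identifying the correct isometry factors so that the top contribution collapses into the clean prefactor $\max\{1,\|b_{12}^N\|_{2}\}$, and matching the two contributions into the additive form of $\mathcal{Z}_2(r)$, is the technically delicate step.
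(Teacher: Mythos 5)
Your proposal is correct and follows essentially the same route as the paper's proof: exploit the block structure so that $D\mathbb{F}(h_1,h_2)-D\mathbb{F}(0,w_0)$ is nonzero only in the lower-right multiplication block, split the $\mathbb{A}$-image into the $\mathbb{b}_{22}$- and $\mathbb{b}_{12}^*$-contributions, reuse Lemma \ref{lem : bound Z_2 s} for the former (producing $\mathcal{Z}_{2,1}+\mathcal{Z}_{2,2}$), and rerun the same estimate without the $(B^N)^*$ absorption for the latter (producing $\mathcal{Z}_{2,3}+\mathcal{Z}_{2,4}$). The only cosmetic difference is that you pass through $\gamma_{\mathbb{Z}_2\times\mathbb{Z}_1}$ and Parseval for the top block, whereas the paper bounds directly via $\|\mathbb{b}_{12}\|_2\,\|(D\mathbb{f}(h_1,h_2)-D\mathbb{f}(0,w_0))\mathbb{L}^{-1}\|_2$; both yield the stated prefactor.
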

\begin{proof}
The proof can be found in Appendix \ref{apen : Z2 nonisolate}.
\end{proof}
\section{Constructive proofs of existence of dihedral localized patterns}\label{sec : proof of solutions}
In this section, we provide all of our results for existence of dihedral localized solutions of the 2D Swift-Hohenberg PDE \eqref{eq : sh localized 2D}. Note that we rely on the ability to construct sequences with $\mathbb{Z}_2 \times \mathbb{Z}_1, D_2,$ and $D_4$ symmetry. The former two can be constructed directly using RadiiPolynomial.jl \cite{julia_olivier}. For $D_4$, we use D4Fourier.jl \cite{dominic_D_4_julia}. This was previously used to prove the $D_4$ symmetry in \cite{gs_cadiot_blanco}.
\subsection{Proofs when \texorpdfstring{$j = 2,4$}{j24}}
In this section, our contribution is to prove the symmetry of the $D_4$ solution proven in \cite{sh_cadiot}. Furthermore, we provide a proof of existence and symmetry for a $D_2$ solution in \eqref{eq : swift_hohenberg}. We now present the following theorems.
\begin{theorem}[\bf The $D_2$ pattern]
Let $\mu = 0.24, \nu_1 = -1.6, \nu_2 = 1$. Moreover, let $r_0 \bydef 8 \times 10^{-5}$. Then there exists a unique solution $\tilde{u}$ to \eqref{eq : swift_hohenberg} in $\overline{B_{r_0}(u_{D_2})} \subset H^l_{D_2}$ and we have that $\|\tilde{u}-u_{D_2}\|_{l} \leq r_0$. That is, $\tilde{u}$ is at most $r_0$ away from the approximation shown in Figure \ref{fig : D2 pattern}.
\end{theorem}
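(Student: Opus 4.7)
Since $j = 2$, Lemma \ref{lem : H classification} gives $\mathcal{H} = \mathcal{G} = D_2$, so we are squarely in the first case of Section \ref{sec : HisG}: the maximal square lattice space subgroup coincides with $\mathcal{G}$, and consequently the approach reduces to applying Theorem \ref{th: radii polynomial} directly (no averaging and no unfolding parameter are needed). The plan is therefore to (i) construct an explicit $u_{D_2} \in H^l_{D_2}$ compactly supported in $\om$, (ii) build an approximate inverse $\mathbb{A} : L^2_{D_2} \to H^l_{D_2}$ of $D\mathbb{F}(u_{D_2})$, (iii) compute rigorous enclosures of the bounds $\mathcal{Y}_0, \mathcal{Z}_1, \mathcal{Z}_2$ supplied by Lemma \ref{th: radii polynomial comp}, and (iv) verify the radii-polynomial inequalities \eqref{condition radii polynomial} at $r = r_0 = 8 \times 10^{-5}$.

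For step (i), I would follow the procedure detailed at the beginning of Section \ref{sec : computing the bounds}. Using the $D_2$-symmetric ansatz $\bar u_0(x) = \alpha\bigl(\operatorname{sech}(\beta|x|) + \operatorname{sech}(\beta|R_{\pi}x|)\bigr)$ (which collapses to a single sech bump), I would compute its truncated Fourier coefficients on $\om$, symmetrize into $\ell^2_{D_2}$ on $J_{\mathrm{red}}(D_2)$, refine by a Newton iteration on the Galerkin projection $F(U) = 0$, and then apply the finite-dimensional trace projection of \cite{unbounded_domain_cadiot} to land in the kernel of $\mathcal{T}$. The final $D_2$-symmetrization \eqref{u_0 average after trace} produces $U_0 = \pi^{N_0}U_0 \in \ell^2_{D_2}$ with $u_{D_2} \bydef \gamma^\dagger_{D_2}(U_0) \in H^l_{D_2}$. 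For step (ii), I would numerically invert $\pi^N D F(U_0) L^{-1} \pi^N$ in floating point to obtain $B^N$, then assemble $\mathbb{B}$ via \eqref{def : the operator B} and $\mathbb{A} = \mathbb{L}^{-1}\mathbb{B}$ as in \eqref{def : the operator A}; by \eqref{eq : equality norm A and BN}, $\|\mathbb{A}\|_{2,l} = \max\{1,\|B^N\|_2\}$.

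Step (iii) is the technical core and is handled by Lemma \ref{th: radii polynomial comp} together with Lemmas \ref{lem : Z_1 periodic} and \ref{lem : Zu}. Concretely: $\mathcal{Y}_0$ is the $H^l_{D_2}$-norm of $\mathbb{A}\mathbb{F}(u_{D_2})$, computable from a discrete convolution and interval-arithmetic evaluation of $B^N F(U_0)$ plus the tail $(\pi^{3N_0}-\pi^N)G(U_0)$; $\mathcal{Z}_1$ requires rigorous enclosures of the four block operator norms $Z_{1,1},\ldots,Z_{1,4}$ (all of which reduce to finite matrix norms and $\ell^1$-tails of $V_0^N$), combined with the Green's-function-based bound $\mathcal{Z}_u$ of Lemma \ref{lem : Zu}; and $\mathcal{Z}_2$ is the explicit polynomial in $r$ given in Lemma \ref{th: radii polynomial comp}. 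All arithmetic is performed with interval enclosures via \cite{julia_interval}, as implemented in the accompanying repository \cite{julia_cadiot_blanco_symmetry}.

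Step (iv) then consists of substituting $r_0 = 8 \times 10^{-5}$ into \eqref{condition radii polynomial} and checking both inequalities numerically with intervals. Theorem \ref{th: radii polynomial} then delivers a unique zero $\tilde u \in \overline{B_{r_0}(u_{D_2})} \subset H^l_{D_2}$ of $\mathbb{F}$, which by Assumption \ref{ass : L_G invariant} is automatically $D_2$-symmetric. The main obstacle is expected to be making $\mathcal{Z}_1 < 1$: this requires both $N_0$ and $N$ (truncation sizes) and the domain half-width $d$ to be large enough that the tail contributions $\|V_0-V_0^N\|_1/\mu$ and $\mathcal{Z}_u$ (which encodes the mismatch between the periodic inverse $\Gamma^\dagger_{D_2}(L^{-1})$ and the true $\mathbb{L}^{-1}$ on $\R^2$, and decays like $e^{-2ad}$) are small, while the Newton refinement simultaneously keeps $Z_{1,1}$ small. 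Balancing these competing costs to fit the target radius $r_0 = 8 \times 10^{-5}$ is the principal computational challenge.
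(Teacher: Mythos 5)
Your proposal follows essentially the same route as the paper: since $j=2$ forces $\mathcal{H}=\mathcal{G}=D_2$ by Lemma \ref{lem : H classification}, the paper likewise constructs $u_0$ via Section \ref{sec : construction of u0 gen} (with $N_0=150$, $N=100$, $d=85$), builds $\mathbb{A}$ via Section \ref{sec : operator A first}, computes $\mathcal{Y}_0$, $\mathcal{Z}_1$, $\mathcal{Z}_2$ from Lemmas \ref{th: radii polynomial comp}, \ref{lem : Z_1 periodic}, and \ref{lem : Zu} in interval arithmetic, and verifies \eqref{condition radii polynomial} at $r_0 = 8\times 10^{-5}$ via Theorem \ref{th: radii polynomial}. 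The only thing missing is the concrete choice of truncation sizes and the resulting numerical enclosures, which are supplied by the accompanying code.
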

\begin{proof}
Choose $N_0 = 150, N = 100, d = 85$. Then, we perform the full construction described in Section \ref{sec : construction of u0 gen} to build $u_0 = \gamma^\dagger(U_0)$. Then, we define $u_{D_2} \bydef u_0$. Next, we construct $B^N$ using the approach described in Section \ref{sec : operator A first}, compute $\kappa$ using \eqref{def : kappa}, and use the constants defined in Section 3.5.1 of \cite{sh_cadiot}. We find
\begin{align}
    \|B^N\|_{2} \leq 298.1,~~\kappa \bydef 2.56,~~C_0 \bydef 2.825.
 \end{align}
Finally, using \cite{julia_cadiot_blanco_symmetry}, we choose $r_0 \bydef 8 \times 10^{-5}$ and define
\begin{align}
    \mathcal{Y}_0 \bydef 1.63 \times 10^{-5} \text{,}~\mathcal{Z}_{2}(r_0) \bydef 4551   \text{,}~\mathcal{Z}_1 \bydef 0.097.
    \end{align}
and prove that these values satisfy Theorem \ref{th: radii polynomial}. 
\end{proof}
\begin{theorem}[\bf The $D_4$ pattern]
Let $\mu = 0.28, \nu_1 = -1.6, \nu_2 = 1$. Moreover, let $r_0 \bydef 3.1 \times 10^{-5}$. Then there exists a unique solution $\tilde{u}$ to \eqref{eq : swift_hohenberg} in $\overline{B_{r_0}(u_{D_4})} \subset H^l_{D_4}$ and we have that $\|\tilde{u}-u_{D_4}\|_{l} \leq r_0$. That is, $\tilde{u}$ is at most $r_0$ away from the approximation shown in Figure \ref{fig : D4 pattern}.
\end{theorem}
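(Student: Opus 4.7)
Since $\mathcal{G} = D_4$ is a space group on the square lattice, we are in the regime $\mathcal{H} = \mathcal{G}$ treated in Section \ref{sec : HisG}, so the plan is to apply Theorem \ref{th: radii polynomial} directly, exactly as was done in the preceding $D_2$ theorem. The approach is essentially identical: build the $D_4$-symmetric approximate solution $u_{D_4}$ via Fourier series on a well-chosen $\Omega_0 = (-d,d)^2$, build an approximate inverse $\mathbb{A}$ of $D\mathbb{F}(u_{D_4})$, and verify that the three bounds $\mathcal{Y}_0$, $\mathcal{Z}_1$, $\mathcal{Z}_2(r_0)$ provided by Lemma \ref{th: radii polynomial comp} satisfy the radii inequality \eqref{condition radii polynomial} at the specified radius $r_0 = 3.1 \times 10^{-5}$.

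Concretely, I would first fix truncation sizes $N_0, N \in \mathbb{N}$ and a half-width $d>0$ (slightly larger than those of the $D_2$ proof, since the $D_4$ pattern of Figure \ref{fig : D4 pattern} has wider spatial extent), then generate a numerical initial guess via \eqref{initial_guess_for_newton} with $j=4$ and refine it with a Newton iteration on the Galerkin projection. The resulting finite Fourier data is then pushed into the kernel of the trace operator $\mathcal{T}$ as described after Lemma \ref{lem : H classification}, symmetrized via the $D_4$-average \eqref{u_0 average after trace}, and exported as the coefficient vector $U_0$ so that $u_{D_4} \bydef \gamma_{D_4}^\dagger(U_0) \in H^l_{D_4}$. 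The operator $B^N$ is then obtained as a floating-point approximate inverse of $\pi^N DF(U_0) L^{-1} \pi^N$ in the $D_4$ coefficient representation, and $\mathbb{A}$ is defined via \eqref{def : the operator A}.

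Next, using interval arithmetic (via \cite{julia_cadiot_blanco_symmetry} and the $D_4$-symmetric sequence library \cite{dominic_D_4_julia}), I would rigorously evaluate $\kappa$, $C_0$, $\|B^N\|_2$, the four constants $Z_{1,1},\dots,Z_{1,4}$ of Lemma \ref{lem : Z_1 periodic}, and the quantities $\mathcal{Z}_{u,1},\mathcal{Z}_{u,2}$ of Lemma \ref{lem : Zu}. Combining these via Lemma \ref{th: radii polynomial comp} produces verified upper bounds $\mathcal{Y}_0$, $\mathcal{Z}_1$, $\mathcal{Z}_2(r_0)$. The proof concludes by checking that
\begin{equation*}
    \tfrac{1}{2}\mathcal{Z}_2(r_0)r_0^2 - (1-\mathcal{Z}_1)r_0 + \mathcal{Y}_0 < 0 \quad \text{and} \quad \mathcal{Z}_1 + \mathcal{Z}_2(r_0) r_0 < 1,
\end{equation*}
at which point Theorem \ref{th: radii polynomial} yields the unique zero $\tilde{u} \in \overline{B_{r_0}(u_{D_4})} \subset H^l_{D_4}$ of $\mathbb{F}$, which is the desired solution.

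The main obstacle is not conceptual but computational: the $D_4$ representation requires both $N_0$ and $N$ large enough for the defect $\mathcal{Y}_0$ to be small and for $Z_1$ (in particular the tail contribution governed by $Z_{1,2}, Z_{1,4}$ through the factor $1/|l(\tilde n)|$ on indices outside $I^N$) to be strictly less than one. Because $\mu = 0.28$ is small, the linear operator $\mathbb{L}$ is close to being singular, so the constants $\kappa$ and $C_0$ are moderately large, and one must take $d$ large enough that the exponential decay factor $e^{-2ad}$ in $\mathcal{Z}_u$ more than compensates. Once these truncation and domain-size parameters are tuned, all remaining steps are routine rigorous evaluations automated in \cite{julia_cadiot_blanco_symmetry}.
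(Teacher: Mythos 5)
Your proposal takes essentially the same approach as the paper: since $D_4$ is a square-lattice space group, $\mathcal{H}=\mathcal{G}$, and the paper likewise applies Theorem \ref{th: radii polynomial} directly with the bounds of Lemma \ref{th: radii polynomial comp} verified by interval arithmetic, then checks the radii inequality at $r_0 = 3.1\times 10^{-5}$. The only slip is a numerical heuristic, not a gap: the paper's $D_4$ proof actually uses a \emph{smaller} domain and truncation ($d=70$, $N_0=130$, $N=100$) than its $D_2$ proof ($d=85$, $N_0=150$), opposite to your guess, but that does not affect the validity of the method.
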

\begin{proof}
Choose $N_0 = 130, N = 100, d = 70$. Then, we perform the full construction described in Section \ref{sec : construction of u0 gen} to build $u_0 = \gamma^\dagger(U_0)$. Then, we define $u_{D_4} \bydef u_0$. Next, we construct $B^N$ using the approach described in Section \ref{sec : operator A first} compute $\kappa$ using \eqref{def : kappa}, and use the constants defined in Section 3.5.1 of \cite{sh_cadiot}. We find
\begin{align}
    \|B^N\|_{2} \leq 41.07,~~\kappa \bydef 2.275,~~C_0 \bydef 2.6.
 \end{align}
Finally, using \cite{julia_cadiot_blanco_symmetry}, we choose $r_0 \bydef 3.1 \times 10^{-5}$ and define
\begin{align}
    \mathcal{Y}_0 \bydef 9.59 \times 10^{-6} \text{,}~\mathcal{Z}_{2}(r_0) \bydef  462.55 \text{,}~\mathcal{Z}_1 \bydef  0.108.
    \end{align}
and prove that these values satisfy Theorem \ref{th: radii polynomial}. 
\end{proof}
 \begin{figure}[t]
    \centering
    \begin{subfigure}[b]{0.24\textwidth}
        \centering
        \epsfig{figure=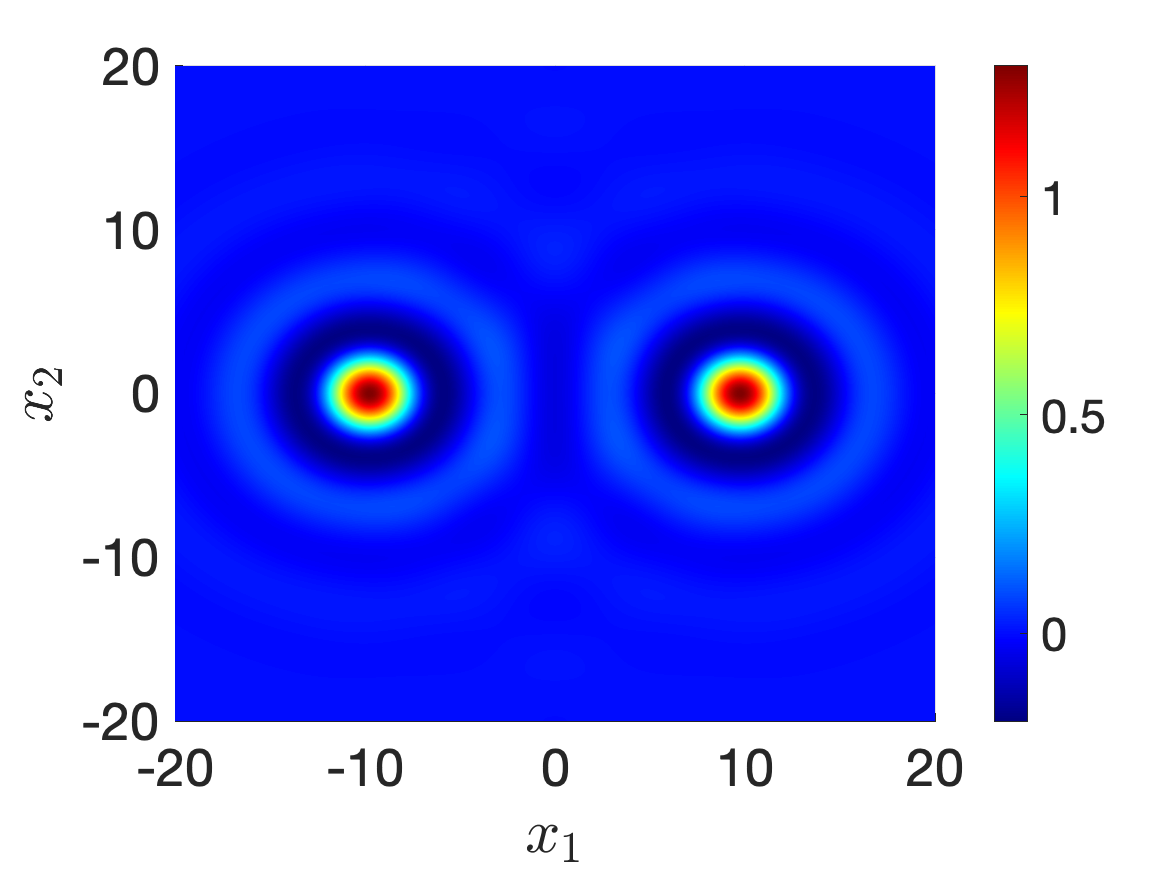, width=\textwidth}
        \caption{$D_2$ approximate solution in the Swift Hohenberg equation}\label{fig : D2 pattern}
    \end{subfigure}
    \hfill
    \begin{subfigure}[b]{0.24\textwidth}
    \centering
        \epsfig{figure=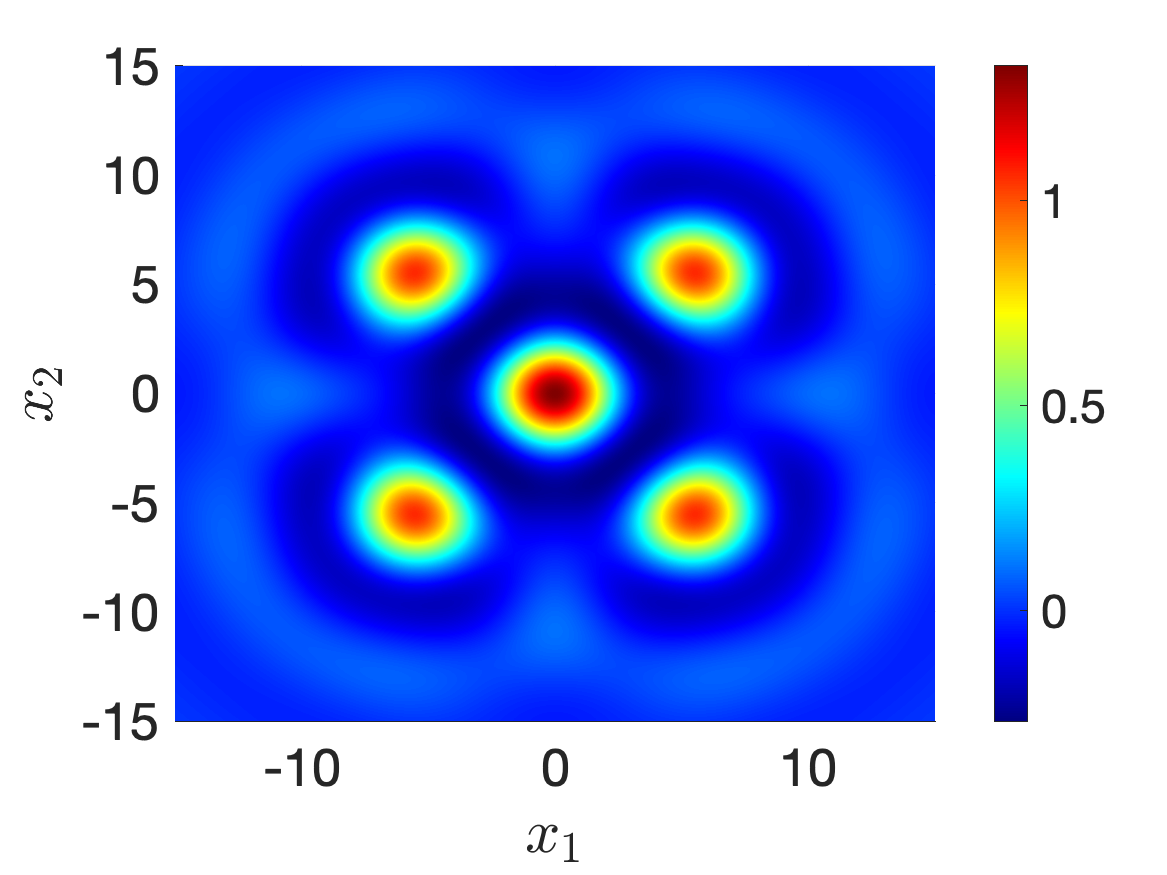, width=\textwidth}
        \caption{$D_4$ approximate solution in the Swift Hohenberg equation}\label{fig : D4 pattern}
    \end{subfigure}
    \hfill
    \begin{subfigure}[b]{0.24\textwidth}
    \centering
        \epsfig{figure=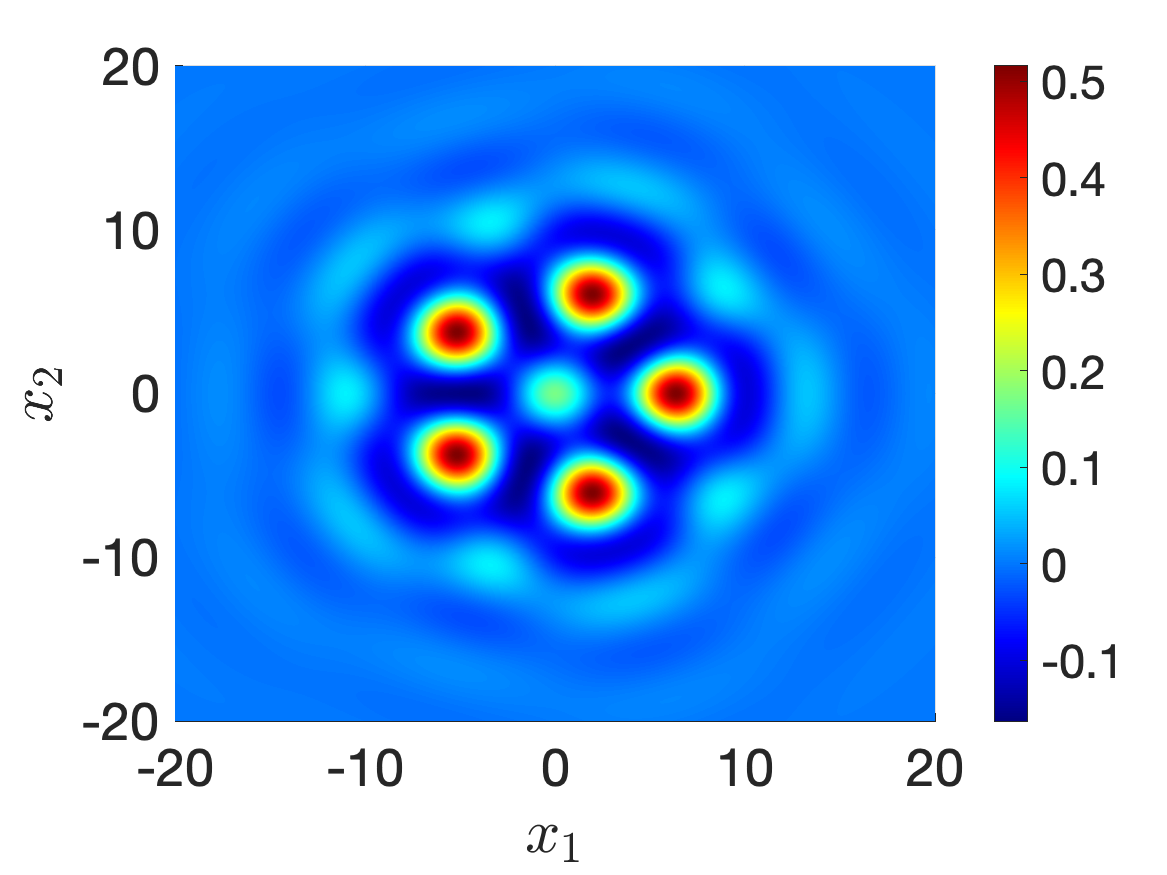, width=\textwidth}
        \caption{$D_5$ approximate solution in the Swift Hohenberg equation}\label{fig : D5 pattern}
    \end{subfigure}
    \\
    \begin{subfigure}[b]{0.24\textwidth}
    \centering
        \epsfig{figure=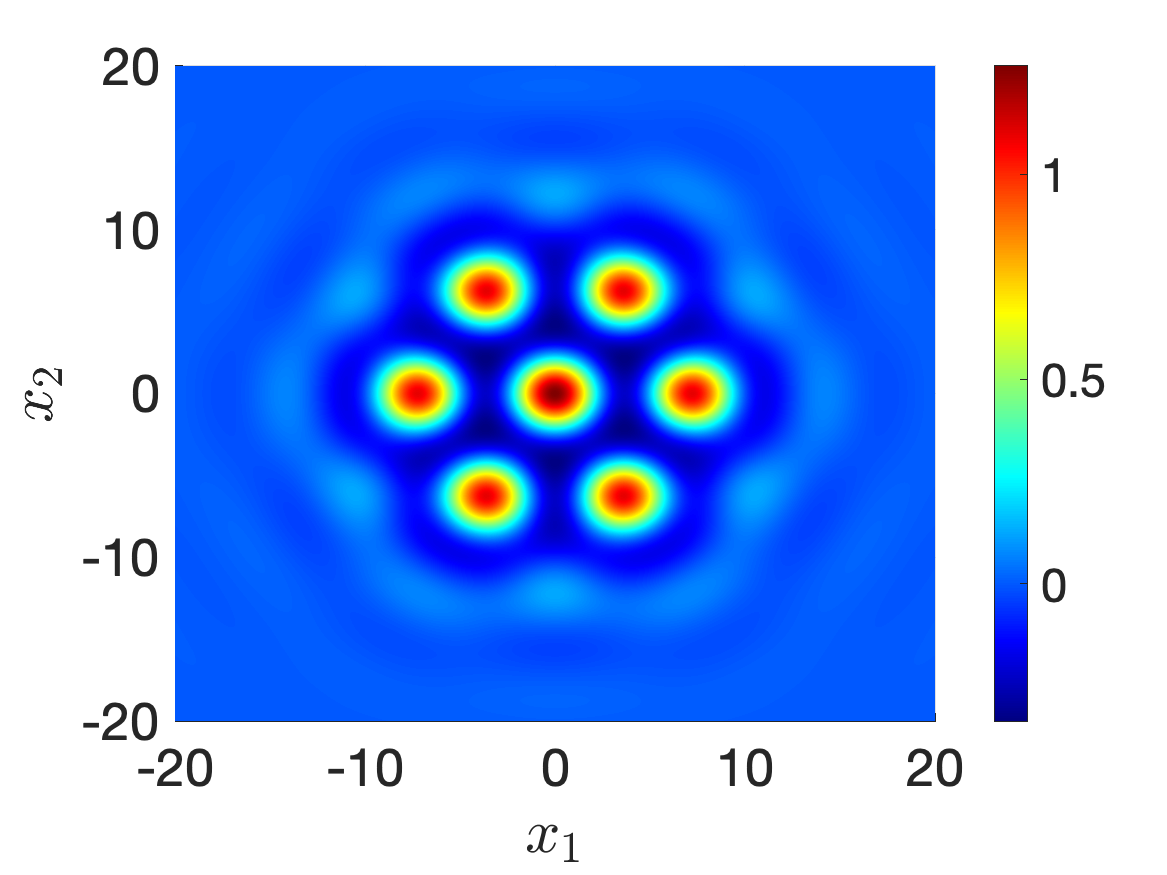, width=\textwidth}
        \caption{$D_6$ approximate solution in the Swift Hohenberg equation}\label{fig : D6 pattern}
    \end{subfigure}
    \hfill
    \begin{subfigure}[b]{0.24\textwidth}
    \centering
        \epsfig{figure=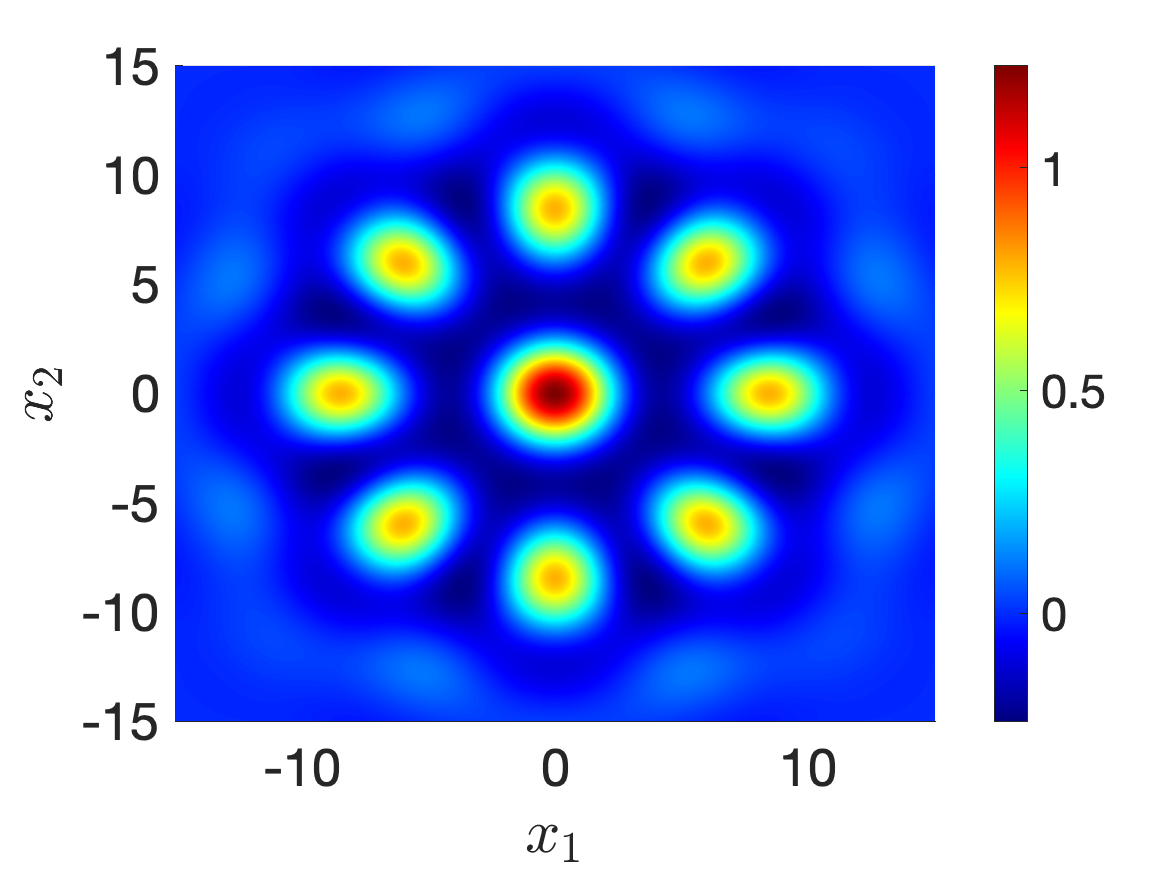, width=\textwidth}
        \caption{ $D_8$ approximate solution in the Swift Hohenberg equation}\label{fig : D8 pattern}
    \end{subfigure}
    \hfill
    \begin{subfigure}[b]{0.24\textwidth}
    \centering
        \epsfig{figure=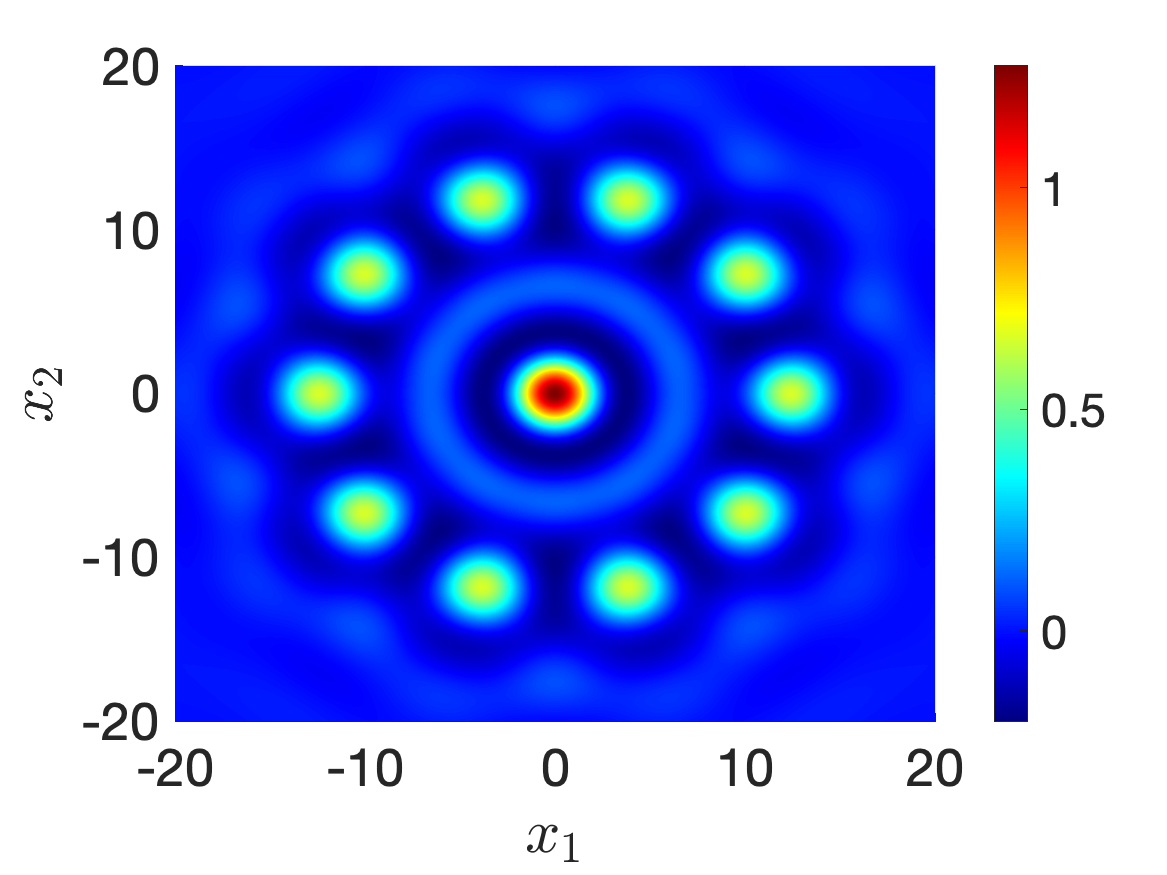, width=\textwidth}
        \caption{$D_{10}$ approximate solution in the Swift Hohenberg equation}\label{fig : D10 pattern}
    \end{subfigure}
    \hfill
    \begin{subfigure}[b]{0.24\textwidth}
    \centering
        \epsfig{figure=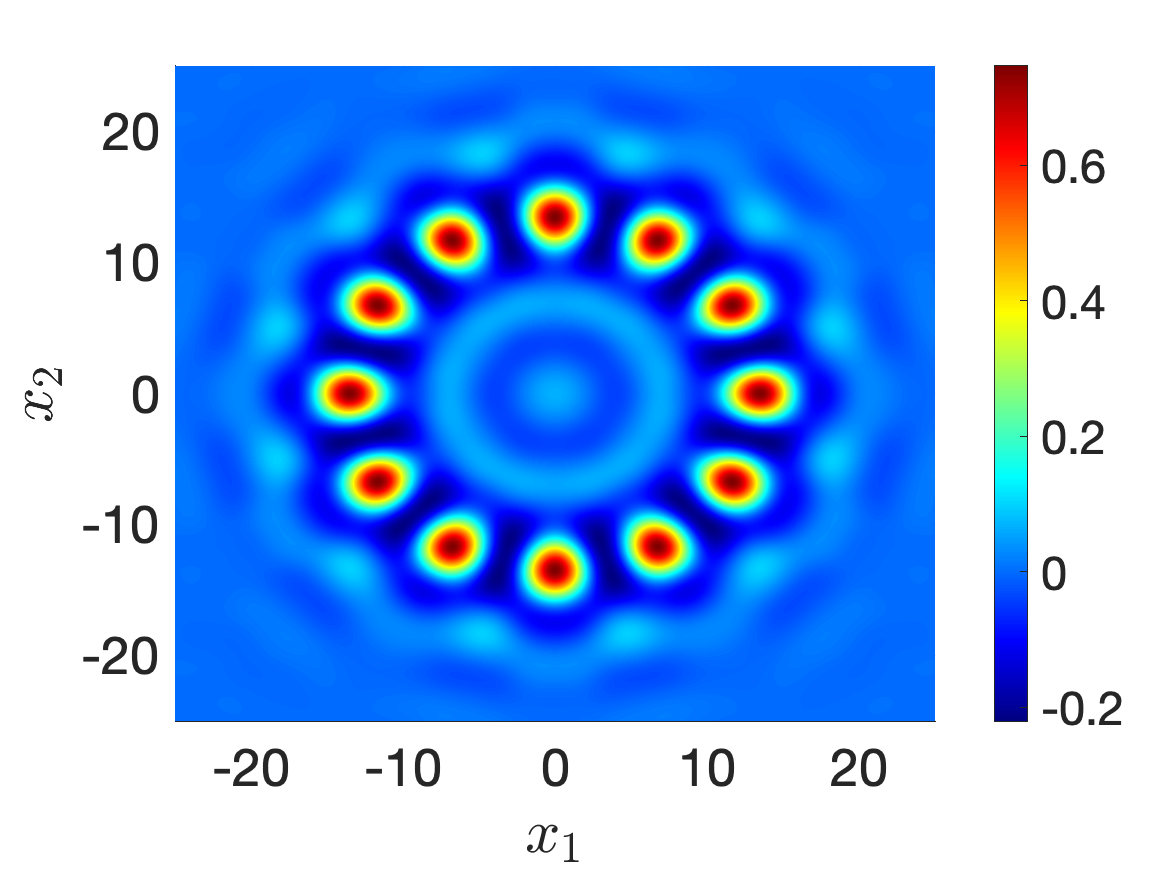, width=\textwidth}
        \caption{ $D_{12}$ approximate solution in the Swift Hohenberg equation}\label{fig : D12 pattern}
    \end{subfigure}
    \caption{Approximations of $D_j$ solutions for the Swift-Hohenberg equation }
\end{figure}
\subsection{Proofs when \texorpdfstring{$j$}{j} is not a prime number nor \texorpdfstring{$4$}{4}}
In this section, we consider the case where $\mathcal{H} = D_{j_0}, j_0 \in \{2,4\}$. We provide a proof of the $D_6$ and $D_8$ symmetry of the patterns proven in \cite{sh_cadiot}. We also provide proofs of patterns with $D_{10}$ and $D_{12}$ symmetry. These are, to the best of our knowledge, the first proofs of these patterns along with their symmetry in 2D for \eqref{eq : swift_hohenberg} without any restrictions on the value of $\mu$.
\begin{theorem}[\bf The $D_6$ pattern]
Let $\mu = 0.32, \nu_1 = -1.6, \nu_2 = 1$. Moreover, let $r_0 \bydef 8 \times 10^{-5}$. Then there exists a unique solution $\tilde{w}$ to \eqref{eq : swift_hohenberg} in $\overline{B_{r_0}(w_{D_6})} \subset H^l_{D_6}$ and we have that $\|\tilde{w}-w_{D_6}\|_{l} \leq r_0$. 
\end{theorem}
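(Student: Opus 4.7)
The plan is to apply Theorem \ref{th: radii polynomial s}, since for $j=6$, Lemma \ref{lem : H classification} tells us that the maximal square lattice space subgroup is $\mathcal{H}=D_2$ (because $6$ is not prime, is not in $\{2,4\}$, and the largest element of $\{2,4\}$ dividing $6$ is $j_0=2$). Thus $\mathcal{H}<\mathcal{G}=D_6$ and $D_2$ already isolates the pattern from translations, placing us exactly in the framework of Section \ref{sec : H isolate} and the bound computations of Section \ref{sec : HleqG H isolates bounds}.

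Concretely, I would first fix numerical parameters $N_0, N \in \mathbb{N}$ and a half-side length $d>0$ (analogous to the $D_2$ and $D_4$ proofs, e.g. $N_0$ around $130$--$170$, $N$ around $100$, and $d$ chosen large enough to localize the pattern while keeping the bounds workable). Using the initial guess \eqref{initial_guess_for_newton} with $j=6$ and suitable parameters $(\alpha,\beta)$, I would run a finite-dimensional Newton iteration on the Galerkin projection of the zero-finding problem in $\ell^2_{D_2}$, apply the finite-dimensional trace projection from Section 4.1 of \cite{unbounded_domain_cadiot} so that the resulting Fourier coefficients $U_0$ lie in the kernel of the trace operator, and then symmetrize via \eqref{u_0 average after trace} to obtain $u_0 = \gamma^\dagger_{D_2}(U_0) \in H^l_{D_2}$ with $\mathrm{supp}(u_0) \subset \overline{\Omega_0}$. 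Then set $w_{D_6} \bydef w_0 \bydef \tfrac{1}{6} \sum_{k=0}^{5} R_{\pi k/3} u_0 \in H^l_{D_6}$ as in \eqref{def : w0 for dihedral}. I would then construct $B^N$ as the numerical inverse of $\pi^N DF(U_0) L^{-1} \pi^N$ and build $\mathbb{A} = \mathbb{L}^{-1}\mathbb{B}$ via \eqref{def : the operator B}, \eqref{def : the operator A}.

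With all approximate objects in place, the bulk of the proof is the rigorous evaluation (via interval arithmetic as implemented in \cite{julia_cadiot_blanco_symmetry}) of the five constants appearing in Theorem \ref{th: radii polynomial s}. The constants $\mathcal{Y}_0$ and $\mathcal{Z}_1$ are computed exactly as in Lemma \ref{th: radii polynomial comp} (using Lemmas \ref{lem : Z_1 periodic} and \ref{lem : Zu}, together with the auxiliary $\kappa$ and $C_0$ given in \eqref{def : kappa}). The symmetry-correction bounds $\mathcal{Y}_s$ and $\mathcal{Z}_s$ come from Lemmas \ref{lem : Ys Bound} and \ref{lem : Zs bound} respectively; both are controlled by the quantity $\phi(u_0, R_{\pi/3})$ from Lemma \ref{lem : w_minus_rotation} that measures the $L^2$-defect between $u_0$ and its $\tfrac{2\pi}{6}$-rotate, together with the cumulative rotation factors $\mathcal{R}_{k,6}$. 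Finally, $\mathcal{Z}_2(r_0)$ is obtained from Lemma \ref{lem : bound Z_2 s}. Given these values, I would check numerically (and rigorously) the two inequalities of \eqref{condition radii polynomial s} for $r = r_0 = 8\times 10^{-5}$, namely
\begin{equation}
\tfrac{1}{2}\mathcal{Z}_2(r_0) r_0^2 - (1-\mathcal{Z}_1-\mathcal{Z}_s)r_0 + \mathcal{Y}_0 + \mathcal{Y}_s < 0, \qquad \mathcal{Z}_2(r_0) r_0 < 1 - \mathcal{Z}_1 - \mathcal{Z}_s,
\end{equation}
and conclude existence of a unique $\tilde{w} \in \overline{B_{r_0}(w_{D_6})} \subset H^l_{D_6}$ with $\mathbb{F}(\tilde{w})=0$, by directly invoking Theorem \ref{th: radii polynomial s}.

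The main obstacle is a practical one: the bound $\phi(u_0, R_{\pi/3})$ measures how close the $D_2$-symmetric approximate solution is to being genuinely $D_6$-symmetric, which drives both $\mathcal{Y}_s$ and $\mathcal{Z}_s$. Because $D_6$ enforces three-fold rotation on top of $D_2$ and this cannot be represented exactly on the square lattice, $u_0-w_0$ is not numerically zero and must be made very small by refining the Newton iterate (enough modes $N_0$, large enough domain $d$ so the boundary trace projection does not distort the six-petal structure). One must also control the intersection integral in $\phi$ over the rotated domain $\Omega_0 \cap R_{\pi/3}\Omega_0$ with rigorous numerics, which is the delicate ingredient absent from the $D_2$ and $D_4$ theorems. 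Once $\phi$ is rigorously small, so are $\mathcal{Y}_s, \mathcal{Z}_s$, and the radii-polynomial inequalities close exactly as in the previous theorems with values of the order $\mathcal{Y}_0 \sim 10^{-5}, \mathcal{Z}_1, \mathcal{Z}_s \ll 1$ and $\mathcal{Z}_2(r_0) r_0 \ll 1$.
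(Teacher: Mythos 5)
Your proposal is correct and follows essentially the same approach as the paper: identifying $\mathcal{H} = D_2$ via Lemma \ref{lem : H classification}, constructing $u_0 \in H^l_{D_2}$ and symmetrizing to $w_{D_6} = w_0 \in H^l_{D_6}$, building $\mathbb{A}$ as in Section \ref{sec : operator A first}, and rigorously verifying the hypotheses of Theorem \ref{th: radii polynomial s} via interval arithmetic with the bounds of Lemmas \ref{th: radii polynomial comp}, \ref{lem : Ys Bound}, \ref{lem : bound Z_2 s}, and \ref{lem : Zs bound}. The paper's actual proof picks the concrete parameters $N_0 = 130$, $N = 70$, $N_1 = 80$, $d = 70$ and reports the resulting bound values, but the structure matches your plan.
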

\begin{proof}
Choose $N_0 = 130, N = 70, N_1=80, d = 70$. Then, we perform the full construction described in Section \ref{sec : construction of w_0} to build $u_0 = \gamma^\dagger(U_0)$ and then $w_0$ using \eqref{v_D_m_fourier}. Then, we define $w_{D_6} \bydef w_0$. Next, we construct $B^N$ using the approach described in Section \ref{sec : operator A first} compute $\kappa$ using \eqref{def : kappa}, and use the constants defined in Section 3.5.1 of \cite{sh_cadiot}. We find
\begin{align}
    \|B^N\|_{2} \leq 24.12,~~\kappa \bydef 2.055
,~~C_0 \bydef 2.41.
 \end{align}
Finally, using \cite{julia_cadiot_blanco_symmetry}, we choose $r_0 \bydef 8 \times 10^{-5}$ and define
\begin{align}
    \mathcal{Y}_0 \bydef 7.6 \times 10^{-6} \text{,}~\mathcal{Z}_{2}(r_0) \bydef 263.71   \text{,}~\mathcal{Z}_1 \bydef 0.254,\mathcal{Y}_s \bydef 2.85 \times 10^{-6},\mathcal{Z}_{22} \bydef 0.1161, \mathcal{Z}_s \bydef 0.1469.
    \end{align}
and prove that these values satisfy Theorem \ref{th: radii polynomial s}. 
\end{proof}
\begin{theorem}[\bf The $D_8$ pattern]
Let $\mu = 0.28, \nu_1 = -1.6, \nu_2 = 1$. Moreover, let $r_0 \bydef 3 \times 10^{-4}$. Then there exists a unique solution $\tilde{w}$ to \eqref{eq : swift_hohenberg} in $\overline{B_{r_0}(w_{D_8})} \subset H^l_{D_8}$ and we have that $\|\tilde{w}-w_{D_8}\|_{l} \leq r_0$.
\end{theorem}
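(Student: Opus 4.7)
The plan is to apply Theorem \ref{th: radii polynomial s} with $\mathcal{G} = D_8$ and $\mathcal{H} = D_4$, which is the maximal square lattice space subgroup of $D_8$ according to Lemma \ref{lem : H classification} (since $8 = 2 \cdot 4$ and $4 \in \{2,4\}$ divides $8$). Since $D_4$ does isolate the solution (there is no continuous translation or rotation symmetry remaining after imposing $D_4$-invariance), this is exactly the setting of Section \ref{sec : H isolate}, and the computation of the bounds is governed by the lemmas of Section \ref{sec : HleqG H isolates bounds}. The structure of the proof will mirror the proof of the $D_6$ pattern above, with the only substantive differences being the choice of numerical parameters $(N_0, N, d)$, the resulting enclosures of $\|B^N\|_2$, $\kappa$, $C_0$, and the concrete values of $\mathcal{Y}_0,\mathcal{Y}_s,\mathcal{Z}_1,\mathcal{Z}_s,\mathcal{Z}_2(r_0)$.

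First I would fix truncation parameters $N_0, N$ and a domain half-width $d$ large enough that $u_0$ is well-represented; given the larger size and richer oscillatory structure of a $D_8$ pattern relative to the $D_6$ one, I would start with $N_0$ around the same order as for $D_6$ (say $N_0 = 130$, $N = 70$, $d = 70$) and adjust upward if the bounds are not small enough. Then I would run the construction of Section \ref{sec : construction of u0 gen}: build an initial guess of the form \eqref{initial_guess_for_newton} with $j=8$, refine it by Newton's method on the Galerkin projection, project into the kernel of the trace operator $\mathcal{T}$, and finally apply the $D_4$-average \eqref{u_0 average after trace} to obtain $u_0 = \gamma_{D_4}^\dagger(U_0) \in H^l_{D_4}$. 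The approximate $D_8$-invariant object is then $w_{D_8} \bydef w_0 = \frac{1}{8}\sum_{k=0}^{7} R_{\frac{2\pi k}{8}} u_0$ as in \eqref{def : w0 for dihedral}; note $u_0$ being $D_4$-symmetric means only the rotation orbit contributes nontrivially.

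Next I would construct $B^N$ as an approximate numerical inverse of $\pi^N DF(U_0) L^{-1} \pi^N$ and build $\mathbb{A}$ via \eqref{def : the operator A}. Using interval arithmetic I would obtain upper bounds on $\|B^N\|_2$, $\kappa$ (from \eqref{def : kappa}), and $C_0$. Then I would compute the bounds:
\begin{itemize}
\item $\mathcal{Y}_0$ and $\mathcal{Z}_1$ from Lemma \ref{th: radii polynomial comp} (with the extra $\mathcal{Z}_u$ correction given by Lemma \ref{lem : Zu});
\item $\mathcal{Y}_s$ from Lemma \ref{lem : Ys Bound} with $j=8$, which requires evaluating the quantities $\phi(u_0, R_{2\pi/8})$ and $\phi(\mathbb{L}_0 u_0, R_{2\pi/8})$ through Lemma \ref{lem : w_minus_rotation};
\item $\mathcal{Z}_s$ from Lemma \ref{lem : Zs bound} using the same rotation-defect estimates;
\item $\mathcal{Z}_2(r_0)$ from Lemma \ref{lem : bound Z_2 s}.
\end{itemize}
Finally I would verify that the radii-polynomial conditions \eqref{condition radii polynomial s} are satisfied at $r = r_0 = 3 \times 10^{-4}$; if so, Theorem \ref{th: radii polynomial s} yields the unique $\tilde{w} \in \overline{B_{r_0}(w_{D_8})} \subset H^l_{D_8}$ with $\mathbb{F}(\tilde{w})=0$ and $\|\tilde{w}-w_{D_8}\|_l \leq r_0$, which is exactly the claim.

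The main obstacle will be quantitative rather than conceptual: because the defect $\|u_0 - w_0\|_l$ enters every symmetry-related bound through $\phi(u_0, R_{2\pi/8})$ and $\phi(\mathbb{L}_0 u_0, R_{2\pi/8})$, the approximate solution $u_0$ must already be very nearly $D_8$-symmetric in order for $\mathcal{Y}_s$ and $\mathcal{Z}_s$ to be small enough that $\mathcal{Z}_1 + \mathcal{Z}_s < 1$. This is controlled during the Newton refinement by an initial guess that is itself a sum of $8$ equally-spaced secant bumps, but requires enough resolution $N_0$ to faithfully represent the finer angular structure, and a sufficiently large domain $d$ so that the tails do not spoil the trace projection. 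Tuning $(N_0, N, d)$ to simultaneously make $\mathcal{Z}_1 + \mathcal{Z}_s$ sit comfortably below $1$ and make $\frac{1}{2}\mathcal{Z}_2(r_0) r_0^2 - (1-\mathcal{Z}_1-\mathcal{Z}_s)r_0 + \mathcal{Y}_0 + \mathcal{Y}_s < 0$ at $r_0 = 3\times 10^{-4}$ is the real work; the larger value of $r_0$ (compared with the $D_6$ case) already anticipates that the symmetry defect is harder to suppress here.
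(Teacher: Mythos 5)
Your proposal is correct and follows essentially the same route as the paper: apply Theorem \ref{th: radii polynomial s} with $\mathcal{H}=D_4$, construct $u_0\in H^l_{D_4}$ by the Newton--trace-projection--average pipeline, form $w_{D_8}=w_0$ by the rotation average, build $B^N$ and $\mathbb{A}$, and verify the bounds of Lemmas \ref{th: radii polynomial comp}, \ref{lem : Ys Bound}, \ref{lem : bound Z_2 s}, and \ref{lem : Zs bound} at $r_0=3\times10^{-4}$. The only deviations are your tentative starting parameters (the paper ends up at $N_0=130$, $N=100$, $N_1=115$, $d=76$), which you already anticipate would need tuning.
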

\begin{proof}
Choose $N_0 = 130, N = 100, N_1 = 115, d = 76$. Then, we perform the full construction described in Section \ref{sec : construction of w_0} to build $u_0 = \gamma^\dagger(U_0)$ and then $w_0$ using \eqref{v_D_m_fourier}. Then, we define $w_{D_8} \bydef w_0$. Next, we construct $B^N$ using the approach described in Section \ref{sec : operator A first} compute $\kappa$ using \eqref{def : kappa}, and use the constants defined in Section 3.5.1 of \cite{sh_cadiot}. We find
\begin{align}
    \|B^N\|_{2} \leq 141.87,~~\kappa \bydef 2.28,~~C_0 \bydef 2.6.
 \end{align}
Finally, using \cite{julia_cadiot_blanco_symmetry}, we choose $r_0 \bydef 3 \times 10^{-4}$ and define
\begin{align}
    \mathcal{Y}_0 \bydef 2.7 \times 10^{-5} \text{,}~\mathcal{Z}_{21}(r_0) \bydef  1298.3 \text{,}~\mathcal{Z}_1 \bydef 0.17 \text{,}~\mathcal{Y}_{s} \bydef 2.2 \times 10^{-4}\text{,}~\mathcal{Z}_{22} \bydef 0.062 \text{,}~\mathcal{Z}_s \bydef 0.07   \end{align}
and prove that these values satisfy Theorem \ref{th: radii polynomial s}. 
\end{proof}
\begin{theorem}[\bf The $D_{10}$ pattern]
   Let $\mu = 0.25, \nu_1 = -1.6, \nu_2 = 1$. Moreover, let $r_0 \bydef 8 \times 10^{-5}$. Then there exists a unique solution $\tilde{w}$ to \eqref{eq : swift_hohenberg} in $\overline{B_{r_0}(w_{D_{10}})} \subset H^l_{D_{10}}$ and we have that $\|\tilde{w}-w_{D_{10}}\|_{l} \leq r_0$.
\end{theorem}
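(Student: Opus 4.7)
The plan is to apply Theorem \ref{th: radii polynomial s}, the Radii-Polynomial Theorem for the regime in which $\mathcal{H}$ is a strict subgroup of $\mathcal{G}$ that still isolates the solution. Since $j=10=2\cdot 5$ is not prime and $4\nmid 10$, Lemma \ref{lem : H classification} identifies the maximal square-lattice space subgroup as $\mathcal{H}=D_2$, which moduls out the natural $SE(2)$ invariance (translations and continuous rotations break under $D_2$). Hence the set-up and bounds of Section \ref{sec : HleqG H isolates bounds} apply verbatim with $\mathcal{G}=D_{10}$ and $\mathcal{H}=D_2$; the approximate solution $w_{D_{10}}$ will be the $D_{10}$-symmetrization of a $D_2$-symmetric approximate solution.

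Concretely, first I would fix truncation sizes $N_0,N,N_1\in\mathbb{N}$ and a half-width $d>0$ (tuned so that the support $\om=(-d,d)^2$ contains essentially all the mass of the pattern). Following Section \ref{sec : construction of u0 gen}, I would take the initial guess \eqref{initial_guess_for_newton} with $j=10$ hyperbolic secants placed on a ring, compute its truncated Fourier coefficients in $\ell^2_{D_2}$, refine them with Newton's method on the Galerkin projection of $F(U)=0$, project the resulting vector into the kernel of the finite-dimensional trace operator of Section \ref{sec : computing the bounds} to secure smoothness across $\partial\om$, and finally apply the $D_2$-average \eqref{u_0 average after trace} to obtain $u_0=\gamma^\dagger_{D_2}(U_0)\in H^l_{D_2}$. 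I would then set
\begin{equation}
w_{D_{10}}\bydef w_0=\frac{1}{10}\sum_{k=0}^{9} R_{\frac{2\pi k}{10}} u_0\in H^l_{D_{10}},
\end{equation}
and build $\mathbb{A}=\mathbb{L}^{-1}\mathbb{B}$ by numerically inverting $\pi^N DF(U_0)L^{-1}\pi^N$ to obtain $B^N$ as in Section \ref{sec : operator A first}. The constants $\kappa$ (from \eqref{def : kappa}) and $C_0$ (from Lemma \ref{lem : Zu}) are evaluated rigorously with interval arithmetic, and $\|B^N\|_2$ is enclosed similarly.

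With these objects in hand, the six bounds required by Theorem \ref{th: radii polynomial s} split in two groups. The bounds $\mathcal{Y}_0$ and $\mathcal{Z}_1$ are the same as in the $\mathcal{H}=\mathcal{G}$ regime and are obtained via Lemma \ref{th: radii polynomial comp}, with the auxiliary pieces $Z_1$ and $\mathcal{Z}_u$ computed as in Lemmas \ref{lem : Z_1 periodic} and \ref{lem : Zu}; here $D_2$-symmetry of $u_0$ is already baked into $U_0$ so the computations are standard matrix-norm enclosures on $\pi^N$-projections. The symmetry-defect bounds $\mathcal{Y}_s$, $\mathcal{Z}_s$, and $\mathcal{Z}_2$ come from Lemmas \ref{lem : Ys Bound}, \ref{lem : Zs bound}, and \ref{lem : bound Z_2 s}, respectively, and all reduce (via Lemma \ref{lem : w_0 minus u_0}) to evaluating the rotation-gap quantity $\phi(u_0,R_{2\pi/10})$ and $\phi(\mathbb{L}_0 u_0,R_{2\pi/10})$ from Lemma \ref{lem : w_minus_rotation}, together with the combinatorial factors $\mathcal{R}_{k,10}$. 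Finally I would choose $r_0=8\times 10^{-5}$, verify numerically (with interval arithmetic) the two inequalities in \eqref{condition radii polynomial s}, and conclude.

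The main obstacle will be the rigorous enclosure of $\phi(u_0,R_{2\pi/10})$ and the analogous quantities with $\mathbb{L}_0 u_0$. Indeed, this reduces to a rigorous integral of $u_0(y)\,u_0(R_{2\pi/10} y)$ over $\om\cap R_{2\pi/10}\om$, where the rotation by $\pi/5$ is genuinely incommensurate with the square lattice supporting the Fourier basis: the integrand cannot be expanded into a closed-form Fourier series on a common rectangular domain, and naive enclosures blow up as $d$ grows. To keep $\mathcal{Y}_s$ at the $10^{-6}$ level (as seen in the $D_6$ and $D_8$ proofs above) one needs $u_0$ extremely accurate on the whole ring of secants and decayed to machine precision by $|x|\approx d$. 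I would expect this to force $N_0$ noticeably larger than in the $D_6$ case and to require a careful choice of $d$ that simultaneously (i) leaves enough room for the ring radius dictated by the Newton-refined approximation, (ii) keeps $\|B^N\|_2$ manageable so that $\mathcal{Z}_1$ stays below $1-\mathcal{Z}_s$, and (iii) keeps the overlap region $\om\cap R_{2\pi/10}\om$ from contributing too large a tail. Once this rotation-defect integral is controlled the remaining computations are routine interval-arithmetic enclosures, and the contraction follows at $r_0=8\times 10^{-5}$.
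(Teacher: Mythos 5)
Your proposal is correct and follows the same route as the paper: identify $\mathcal{H}=D_2$ via Lemma \ref{lem : H classification}, build $u_0\in H^l_{D_2}$ by the trace-projected Fourier/Newton construction, set $w_{D_{10}}=\frac{1}{10}\sum_{k=0}^{9}R_{2\pi k/10}u_0$, assemble $\mathbb{A}$ from $B^N$, and verify the hypotheses of Theorem \ref{th: radii polynomial s} through the bounds of Lemmas \ref{th: radii polynomial comp}, \ref{lem : Ys Bound}, \ref{lem : bound Z_2 s}, and \ref{lem : Zs bound}. The paper's proof is exactly this, instantiated with $N_0=150$, $N=100$, $N_1=120$, $d=85$ and the computer-verified constants $\|B^N\|_2\leq 137.94$, $\kappa=2.49$, $C_0=2.763$, $\mathcal{Y}_0=2.18\times10^{-5}$, $\mathcal{Y}_s=1.02\times10^{-9}$, $\mathcal{Z}_1=0.167$, $\mathcal{Z}_s=9.2\times10^{-4}$, $\mathcal{Z}_{2,1}(r_0)=1908$, $\mathcal{Z}_{2,2}=0.124$.
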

\begin{proof}
Choose $N_0 = 150, N = 100, N_1 = 120, d = 85$. Then, we perform the full construction described in Section \ref{sec : construction of w_0} to build $u_0 = \gamma^\dagger(U_0)$ and then $w_0$ using \eqref{v_D_m_fourier}. Then, we define $w_{D_{10}} \bydef w_0$. Next, we construct $B^N$ using the approach described in Section \ref{sec : operator A first} compute $\kappa$ using \eqref{def : kappa}, and use the constants defined in Section 3.5.1 of \cite{sh_cadiot}. We find
\begin{align}
    \|B^N\|_{2} \leq 137.94,~~\kappa \bydef 2.49,~~C_0 \bydef 2.763.
 \end{align}
Finally, using \cite{julia_cadiot_blanco_symmetry}, we choose $r_0 \bydef 8 \times 10^{-5}$ and define
\begin{align}
    \mathcal{Y}_0 \bydef 2.18 \times 10^{-5} \text{,}~\mathcal{Z}_{21}(r_0) \bydef  1908 \text{,}~\mathcal{Z}_1 \bydef 0.167 \text{,}\mathcal{Y}_{s} \bydef 1.02 \times 10^{-9}\text{,}~\mathcal{Z}_{22} \bydef 0.124 \text{,}~\mathcal{Z}_s \bydef 9.2 \times 10^{-4}   \end{align}
and prove that these values satisfy Theorem \ref{th: radii polynomial s}. 
\end{proof}
\begin{theorem}[\bf The $D_{12}$ pattern]
       Let $\mu = 0.28, \nu_1 = -1.6, \nu_2 = 1$. Moreover, let $r_0 \bydef 7 \times 10^{-5}$. Then there exists a unique solution $\tilde{w}$ to \eqref{eq : swift_hohenberg} in $\overline{B_{r_0}(w_{D_{12}})} \subset H^l_{D_{12}}$ and we have that $\|\tilde{w}-w_{D_{12}}\|_{l} \leq r_0$.
\end{theorem}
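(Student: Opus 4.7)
The plan is to replicate the template established for the $D_6$, $D_8$ and $D_{10}$ theorems, since $j=12$ is neither prime nor in $\{2,4\}$ and its largest divisor in $\{2,4\}$ is $j_0=4$. By Lemma \ref{lem : H classification}, the maximal square lattice space subgroup of $\mathcal{G}=D_{12}$ is $\mathcal{H}=D_4$, so $\mathcal{H}$ isolates the solution and we are in the setting of Section \ref{sec : HleqG H isolates bounds}. Consequently, the relevant Newton--Kantorovich result is Theorem \ref{th: radii polynomial s}, and the bounds we must verify are $\mathcal{Y}_0, \mathcal{Z}_1, \mathcal{Y}_s, \mathcal{Z}_s$ together with the polynomial coefficient $\mathcal{Z}_2(r) = \mathcal{Z}_{2,1}(r)+\mathcal{Z}_{2,2}(r)$.

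First, I would fix numerical parameters $N_0, N, N_1, d$ (guided by the $D_8$/$D_{10}$ proofs, likely $N_0\approx 150$, $N\approx 100$, $N_1\approx 120$, $d\approx 80$; the exact choice is tuned empirically in \cite{julia_cadiot_blanco_symmetry} to make the bounds close and $\mathcal{Z}_1+\mathcal{Z}_s$ comfortably less than $1$). Starting from the ansatz \eqref{initial_guess_for_newton} with $12$ hyperbolic secants placed on a regular ring, I would Newton-refine the Galerkin projection of size $N_0$ to obtain $\bar U_0$, project it into the kernel of the periodic trace operator $\mathcal{T}$ (Section \ref{sec : computing the bounds}), and then apply the $D_4$-average \eqref{u_0 average after trace} to obtain $u_0 = \gamma^\dagger_{D_4}(U_0)\in H^l_{D_4}$. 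Next, $w_0 = \frac{1}{12}\sum_{k=0}^{11} R_{2\pi k/12}\,u_0 \in H^l_{D_{12}}$ is the $D_{12}$-symmetric approximate solution, and we set $w_{D_{12}} \bydef w_0$.

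Next I would build the operator $\mathbb{A}$ via the finite-dimensional matrix $B^N$ from Section \ref{sec : operator A first}, then rigorously compute $\|B^N\|_2$, $\kappa$ (via \eqref{def : kappa}), and $C_0$ through the formulas of Section 3.5.1 of \cite{sh_cadiot}. With these constants in hand, the bound $\mathcal{Y}_0$ follows from Lemma \ref{th: radii polynomial comp}, $\mathcal{Z}_1$ from Lemmas \ref{lem : Z_1 periodic} and \ref{lem : Zu}, $\mathcal{Z}_2(r)$ from Lemma \ref{lem : bound Z_2 s}, while the symmetry-specific bounds $\mathcal{Y}_s$ and $\mathcal{Z}_s$ come from Lemmas \ref{lem : Ys Bound} and \ref{lem : Zs bound} respectively. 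Each of these evaluations involves the constants $\phi(u_0, R_{2\pi/12})$ and $\mathcal{R}_{k,12}$, which measure how close $u_0$ is to being $D_{12}$-symmetric; these must be enclosed via interval arithmetic. A candidate radius $r_0 \bydef 7\times 10^{-5}$, consistent with the theorem statement, would then be checked against \eqref{condition radii polynomial s}.

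The main obstacle I expect is controlling the symmetry defect $\phi(u_0, R_{2\pi/12})$: because $j=12$ forces a rotation by only $30^\circ$ of a $D_4$-symmetric function, the overlap integral in Lemma \ref{lem : w_minus_rotation} is relatively large, inflating both $\mathcal{Y}_s$ and $\mathcal{Z}_s$. Compensating for this requires $u_0$ to be already very close to $D_{12}$-invariant on $\om$, which in turn demands a fine numerical truncation $N_0$ and sufficiently large domain $d$ so that the ring of bumps sits well inside $\om$ and its rotations overlap almost exactly. Once this overlap integral is tight, the remaining estimates are mechanical: one verifies by interval arithmetic in \cite{julia_cadiot_blanco_symmetry} that with the parameters displayed, $\mathcal{Z}_1+\mathcal{Z}_s < 1$ and $\frac12\mathcal{Z}_2(r_0)r_0^2 - (1-\mathcal{Z}_1-\mathcal{Z}_s)r_0 + \mathcal{Y}_0 + \mathcal{Y}_s < 0$ along with $\mathcal{Z}_2(r_0)r_0 < 1-\mathcal{Z}_1-\mathcal{Z}_s$, at which point Theorem \ref{th: radii polynomial s} yields the unique $\tilde w \in \overline{B_{r_0}(w_{D_{12}})}\subset H^l_{D_{12}}$ solving \eqref{eq : sh localized 2D}.
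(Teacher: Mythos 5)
Your proposal follows the paper's proof essentially verbatim: identify $\mathcal{H}=D_4$ via Lemma \ref{lem : H classification}, construct $u_0$ via the trace-projected Newton refinement and the $D_4$-average, form $w_0$ by the $D_{12}$-average, build $\mathbb{A}$ from $B^N$, compute the five bounds from the cited lemmas, and close with Theorem \ref{th: radii polynomial s}. The only differences are the guessed truncation parameters (the paper takes $N_0=180$, $N=110$, $N_1=150$, $d=100$, larger than your estimates, which corroborates your observation that the $30^\circ$ rotation of a $D_4$-symmetric $u_0$ produces a comparatively large symmetry defect --- the paper's $\mathcal{Z}_s\approx 0.35$ bears this out), but you flagged these as empirically tuned, so the argument is the same.
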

\begin{proof}
Choose $N_0 = 180, N = 110, N_1 = 150, d = 100$. Then, we perform the full construction described in Section \ref{sec : construction of w_0} to build $u_0 = \gamma^\dagger(U_0)$ and then $w_0$ using \eqref{v_D_m_fourier}. Then, we define $w_{D_{12}} \bydef w_0$. Next, we construct $B^N$ using the approach described in Section \ref{sec : operator A first}, compute $\kappa$ using \eqref{def : kappa}, and use the constants defined in Section 3.5.1 of \cite{sh_cadiot}. We find
\begin{align}
    \|B^N\|_{2} \leq 1071.71,~~\kappa \bydef 2.28,~~C_0 \bydef 2.6.
 \end{align}
Finally, using \cite{julia_cadiot_blanco_symmetry}, we choose $r_0 \bydef 7 \times 10^{-5}$ and define
\begin{align}
    \mathcal{Y}_0 \bydef 8.31 \times 10^{-6} \text{,}~\mathcal{Z}_{21}(r_0) \bydef  11021 \text{,}~\mathcal{Z}_1 \bydef 0.1311  \text{,}\mathcal{Y}_{s} \bydef 4.9 \times 10^{-8} \text{,}~\mathcal{Z}_{22} \bydef 0.311  \text{,}~\mathcal{Z}_s \bydef 0.3535   \end{align}
and prove that these values satisfy Theorem \ref{th: radii polynomial s}. 
\end{proof}
\subsection{Proofs when \texorpdfstring{$j$}{j} is a prime number other than \texorpdfstring{$2$}{2}}
In this section, we provide an existence proof for a $D_5$-symmetric localized pattern. This symmetry is in fact of particular interest. Indeed, $D_2$, $D_4$, $D_6$ symmetries  are expected because their related $j$-gons tile the plane. In other words, $D_2$, $D_4$ and $D_6$ naturally lead to (spatially periodic) Turing patterns, and, taking the limit as the period tends to infinity, one can expect related localized patterns (similarly as what is achieved in \cite{sh_cadiot}).  

In the case of $D_5$-symmetry, since regular pentagons do not tile the plane, such a reasoning does not hold, and the existence of $D_5$-symmetric localized patterns is still an open question. Their existence was for instance conjectured by the authors of \cite{jason_review_paper,jason_spot_paper, jason_ring_paper}. In the theorem below, we establish the first existence proof of a $D_5$-symmetric localized solution in \eqref{eq : swift_hohenberg}. 
\begin{theorem}[\bf The $D_5$ pattern]
Let $\mu = 0.2, \nu_1 = -1.6, \nu_2 = 1$. Moreover, let $r_0 \bydef 7 \times 10^{-6}$. Then there exists a unique solution $\tilde{x} \bydef (0,\tilde{w})$ to \eqref{f_unfold_augmented} in $\overline{B_{r_0}(x_{D_5})} \subset H_1 \bydef \mathbb{R} \times H^l_{D_5}$ where $x_{D_5} = (0,w_{D_5})$ and we have that $\|\tilde{x}-x_{D_5}\|_{H_1} \leq r_0$. In particular, $\tilde{w}$ is a $D_5$ solution to \eqref{eq : swift_hohenberg}.
\end{theorem}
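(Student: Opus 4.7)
The plan is to apply Theorem \ref{th: radii polynomial augmented}, since $j = 5$ is an odd prime, so by Lemma \ref{lem : H classification} the maximal square lattice space subgroup is $\mathcal{H} = \mathbb{Z}_2 \times \mathbb{Z}_1$, which does not isolate the solution (translation invariance in $x_2$ remains). First, I would fix numerical parameters $N_0, N, d$ and a scaling factor $\rho > 0$, then use the initial guess \eqref{initial_guess_for_newton} with $j = 5$ to seed a Newton iteration on the Galerkin projection $F(U) = 0$, producing a refined vector $\bar{U}_0 \in \pi^{N_0}\ell^2_{\mathbb{Z}_2 \times \mathbb{Z}_1}$. Applying the finite-dimensional trace projection and the $\mathcal{H}$-average \eqref{u_0 average after trace}, I obtain $u_0 = \gamma^\dagger_{\mathbb{Z}_2 \times \mathbb{Z}_1}(U_0) \in H^l_{\mathbb{Z}_2 \times \mathbb{Z}_1}$ with $\mathrm{supp}(u_0) \subset \overline{\om}$. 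Then $w_0 \bydef \frac{1}{5}\sum_{k=0}^{4} R_{\frac{2\pi k}{5}} u_0 \in H^l_{D_5}$ serves as the $D_5$-symmetric approximate solution and $x_0 \bydef (0,w_0) \in H_1$.

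Second, to legitimately invoke Lemma \ref{beta_is_0} (which is needed to conclude that a zero of the augmented system \eqref{f_unfold_augmented} yields an actual solution of \eqref{eq : sh localized 2D}), I would verify rigorously that $w_0 \neq 0$ by evaluating $w_0(b) = \frac{1}{5}\sum_{k=0}^{4} u_0(R_{\frac{2\pi k}{5}} b)$ at a chosen $b \in \bigcup_k R_{\frac{2\pi k}{5}} \om$ using interval arithmetic on the truncated Fourier series. Next, I would construct the finite matrix $B^N$ as an approximate inverse to $\pi^N Q(0,U_0)^{-1} L^{-1}\pi^N$, assemble $\mathbb{B}$ via \eqref{eq : operator B extra equation}, and define $\mathbb{A} = \mathrm{diag}(1,\mathbb{L}^{-1})\mathbb{B}$.

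Third, I would compute the explicit bounds needed by Theorem \ref{th: radii polynomial augmented}:  $\mathcal{Y}_0$ using the augmented analogue of Lemma \ref{th: radii polynomial comp}; $\mathcal{Y}_s$ using the $\mathcal{C}_B$-based estimate with $\mathcal{Y}_{s,1}, \mathcal{Y}_{s,2}$ from Lemma \ref{lem : Ys Bound} specialized to $j=5$; $\mathcal{Z}_1$ via Lemma \ref{lem : Z_1_extra} combined with Lemma \ref{lem : Z1_extra} (matrix norm computation) and Lemma \ref{lem : Zu} for $\mathcal{Z}_u$; $\mathcal{Z}_s$ via Lemma \ref{lem : Zs_extra}; and $\mathcal{Z}_2(r)$ via Lemma \ref{lem : Z_2_extra}. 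All these computations are performed with interval arithmetic via the code in \cite{julia_cadiot_blanco_symmetry}. Finally I would exhibit a specific $r_0 > 0$ (on the order of $10^{-6}$, consistent with the other dihedral proofs) and verify that the resulting numerical values of $\mathcal{Y}_0, \mathcal{Y}_s, \mathcal{Z}_1, \mathcal{Z}_s, \mathcal{Z}_2(r_0)$ satisfy the radii polynomial conditions \eqref{condition radii polynomial s}, yielding a unique $\tilde{x} = (0,\tilde{w}) \in \overline{B_{r_0}(x_{D_5})} \subset H_1$ with $\mathbb{F}(\tilde{x}) = 0$. By Lemma \ref{beta_is_0}, this gives $\mathbb{L}\tilde{w} + \mathbb{G}(\tilde{w}) = 0$ with $\tilde{w} \in H^l_{D_5}$.

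The main obstacle I expect is twofold. First, the computation of $\phi(u_0, R_{\frac{2\pi}{5}})$ and $\phi(\mathbb{L}_0 u_0, R_{\frac{2\pi}{5}})$ appearing in $\mathcal{Y}_s$, $\mathcal{Z}_s$ and $\mathcal{Z}_2$: unlike the rotations by multiples of $\pi/2$ used in the $j \in \{2,4\}$ cases, the angle $2\pi/5$ is not aligned with the square lattice $\om$, so the integral $\int_{\om \cap R_{\theta}\om} u_0(y)\, u_0(R_\theta y)\, dy$ involves a non-trivial intersection of rotated squares and must be enclosed rigorously using interval-arithmetic quadrature on $u_0$ and its rotates. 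Second, $\|B^N\|_2$ and the various matrix norms $Z_{1,1}, \ldots, Z_{1,4}$ are expected to be large (the $D_5$ pattern is close to the bifurcation regime since $\mu = 0.2$ is small and the solution has less symmetry to dampen the near-kernel of $D\mathbb{F}$), so a delicate choice of truncation sizes $N_0, N$ and of the unfolding weight $\rho$ will be required to simultaneously control $\mathcal{Z}_1 + \mathcal{Z}_s < 1$ and $\mathcal{Y}_0 + \mathcal{Y}_s$ small enough for \eqref{condition radii polynomial s} to admit a positive root.
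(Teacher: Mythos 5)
Your proposal is correct and matches the paper's approach: for $j=5$, $\mathcal{H} = \mathbb{Z}_2 \times \mathbb{Z}_1$ does not isolate the solution, so the augmented system \eqref{f_unfold_augmented} with the unfolding parameter must be used and Theorem \ref{th: radii polynomial augmented} applied, together with the bound lemmas of Section \ref{sec : H nonisolate}; the paper's proof indeed builds $u_0$, averages to get $w_0$, verifies condition \eqref{condition radii polynomial s} with the computed bounds, and relies on Lemma \ref{beta_is_0} to translate the zero of $\mathbb{F}$ back to a solution of \eqref{eq : swift_hohenberg}. One small factual note: your anticipated obstacle of a very large $\|B^N\|_2$ does not materialize here (the paper reports $\|B^N\|_{H_2} \leq 28.35$, far below the $D_{12}$ case); the tightness comes instead from $\mathcal{Z}_1 \approx 0.66$, which still leaves enough room for \eqref{condition radii polynomial s} to hold.
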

\begin{proof}
Choose $N_0 = 130, N = 73, N_1 = 100, d = 76$. Then, we perform the full construction described in Section \ref{sec : construction of w_0} to build $u_0 = \gamma^\dagger(U_0)$ and then $w_0$ using \eqref{v_D_m_fourier}. Then, we define $w_{D_5} \bydef w_0$. Following this, since we need to using the unfolding parameter setup, we define $x_{D_5} \bydef (0,w_{D_5})$. Next, we construct $B^N$ using the approach described in Section \ref{sec : operator A first}, compute $\kappa$ using \eqref{def : kappa}, and use the constants defined in Section 3.5.1 of \cite{sh_cadiot}. We find
\begin{align}
    \|B^N\|_{H_2} \leq 28.35,~ \kappa \bydef 2.941,~C_0 \bydef 3.124.
 \end{align}
Finally, using \cite{julia_cadiot_blanco_symmetry}, we choose $r_0 \bydef 7 \times 10^{-6}$ and define
{\small\begin{align}
    \mathcal{Y}_0 \bydef 1.31 \times 10^{-6} \text{,}~\mathcal{Z}_{21}(r_0) \bydef  838.8 \text{,}~\mathcal{Z}_1 \bydef 0.6621 \text{,}\mathcal{Y}_{s} \bydef 9.25 \times 10^{-8}\text{,}~\mathcal{Z}_{22} \bydef 0.012 \text{,}~\mathcal{Z}_s \bydef 0.0953   \end{align}}
and prove that these values satisfy Theorem \ref{th: radii polynomial s}. 
\end{proof}
\section{Conclusion}
In this manuscript, we have provided a method for proving the existence of $\mathcal{G}$-symmetric localized patterns on $\R^m$ when $\mathcal{G}$ is not necessarily a space group. Furthermore, it is applicable to dihedral symmetries, which we illustrated thanks to proof of dihedral patterns in the planar SH PDE. Specifically, we have provided the first proofs of existence of non-space group symmetric solutions.
In fact, this potential for verifying very general symmetries opens the door to further investigations on localized patterns.\\
 For instance, it gives the ability to prove branches of solutions with symmetry. As demonstrated in \cite{whitham_cadiot}, one can perform rigorous proof of a branch of solutions on unbounded domains. Combining the present work and \cite{whitham_cadiot},   one can validate the symmetry of a branch of localized solutions. This has the potential of detecting and validating symmetry breaking bifurcations, as well as the entering and leaving branches. We consider this a future work.

 While our approach provides novel results in the field of pattern formation, it can still be technically  improved. For now, it relies on Lemma \ref{lem : w_0 minus u_0} to compute quantities of the form $\|w - g \cdot w\|_{2}$ for at least one $g \in \mathcal{G}$ in each group orbit. Each of these quantities must be computed using continuous variables since the mixed term does not entertain the usual properties of Fourier series (i.e. Parseval's identity). As a result, each of these computations is far from efficient, and we found that a single one of them takes longer than everything else in the numerical work load. In the case of dihedral symmetry, this is not a huge burden since we can enforce all the reflections using $\mathbb{Z}_2$ symmetry meaning we only have the orbit generating the rotations. For more complicated groups with more orbits, this could become problematic. This will also pose a problem in higher dimensions where the time needed to compute this quantity will be even longer. This is, as of now, the computation limiting us to treat 3D problems. It would be of interest to improve the computation speed of Lemma \ref{lem : w_minus_rotation}, or provide an alternative estimation. 
 \par Another result of interest is the question of symmetries that can tile the plane on the hexagonal lattice. As mentioned in the introduction, we can represent $D_3$ and $D_6$ periodic functions using Fourier series. This is done by introducing a transformation $\mathcal{L}$ which takes us on the hexagonal lattice. However, due to the fact that $\mathcal{L}^{-T} \om$ is not invariant itself under the symmetry of $D_3$ or $D_6$, we were unable to use it as part of our approach. It would be of interest to make our approach compatible with space groups on the hexagonal lattice, making the proofs more memory efficient.

\section{Acknowledgements}
 Matthieu Cadiot was supported by the ANR project CAPPS: ANR-23-CE40-0004-01 and by the FMJH :  ANR-22-EXES-0013.

\appendix
\renewcommand{\theequation}{A.\arabic{equation}}
\setcounter{equation}{0}
\section{Computing the Bounds for Localized Patterns when \texorpdfstring{$j$}{j} is not a prime number nor \texorpdfstring{$4$}{4}}\label{apen : localized s computations}
In this section, we provide the computational details in order to compute the bounds stated in Theorem \ref{th: radii polynomial s} when $\mathcal{H} < \mathcal{G}$ and $\mathcal{H}$ isolates the solution. For dihedral groups, this would be when $\mathcal{H} \in \{D_2, D_4\}$. 
\subsection{Computation of \texorpdfstring{$\mathcal{Y}_S$}{Ys}}
We begin by computing the $\mathcal{Y}_s$ bound provided in Section \ref{sec : Y_s isolate}. More specifically, we prove Lemmas \ref{lem : w_minus_rotation} and \ref{lem : Ys Bound}. We will start with Lemma \ref{lem : w_minus_rotation}.
\subsubsection{Proof of Lemma \ref{lem : w_minus_rotation}}\label{apen : w minus rotation proof}
To begin, since $\mathrm{supp}(w) \subset  {\om}$, observe that.
\begin{align}
    \left\| w - R_{\theta} w\right\|_{2} &= \left\|\mathbb{1}_{\om \cup R_{\theta} \om} \left( w - R_{\theta} w\right)\right\|_{2}.
\end{align}
 Now, observe that
\begin{align}
    \nonumber \|\mathbb{1}_{\om \cup R_{\theta} \om}(w - R_{\theta} w)\|_{2}^2 &= \int_{\om \cup R_{\theta} \om} \left(w(x) - w\left(R_{\theta}x\right)\right)  {\left(w(x) - w\left(R_{\theta}x\right)\right)} dx \\ \nonumber
    &\hspace{-0.5cm}= \int_{\om} w(x) {w(x)} dx - 2\int_{\om \cap R_{\theta} \om} w(x)  {w\left(R_{\theta}x\right)} dx + \int_{ R_{\theta}\om} w\left(R_{\theta}x\right)  {w\left(R_{\theta}x\right)} dx. \end{align}
Observe that the first integral is $|\om|\|W\|_{2}^2$. In the third integral, we make the change of variable $y =  R_{\theta} x$ to undo the rotation. This results in
\begin{align}
    \hspace{-0.5cm}\int_{  R_{\theta}\om} w\left(R_{\theta}x\right)  {w\left(R_{\theta}x\right)} dx = \int_{ \om} w\left(y\right)  {w\left(y\right)} dy = |\om| \|W\|_{2}^2.\label{third_int_result w_minus_rotation}
\end{align}
Putting together \eqref{third_int_result w_minus_rotation} with the above, we obtain    \begin{align}\nonumber
    \|w - R_{\theta} w\|_{2} &= 2|\om| \|W\|_{2}^2 - 2 \int_{\om \cap R_{\theta}\om}  {w(y)} w\left(R_{\theta}y\right) dy \\
    &= \left(2|\om| \|W\|_{2}^2 - 2 \int_{\om \cap  R_{\theta} \om}  {w(y)} w\left(R_{\theta}y\right) dy\right).
\end{align}
\begin{remark}\label{rem : N_1 truncation}
In practice, for a given $\theta$, the quantity $\phi(w,\theta)$ can be evaluated thanks to rigorous numerics (cf. \cite{julia_cadiot_blanco_symmetry}).
In order to decrease the numerical complex quantity of its computation, one can introduce a further truncation. More specifically, let 
\begin{align}
    W^{N_1} \bydef \pi^{N_1} W \text{ and }  w^{N_1} \bydef \gamma_{\mathcal{H}}^\dagger(W^{N_1}).
\end{align}
Then, 
\begin{align}
    \phi(w,R_{\theta}) = \|w - R_{\theta} w\|_{2} &\leq \|w^{N_1} - R_{\theta} w^{N_1}\|_{2} + \|w^{N_1} - w\|_{2} + \|R_{\theta} (w^{N_1} - w)\|_{2} \\
    &= \|w^{N_1} - R_{\theta} w^{N_1}\|_{2} + 2\|w^{N_1} - w\|_{2} \\
    &= \|w^{N_1} - R_{\theta} w^{N_1}\|_{2} + 2\sqrt{|\om||}\|W^{N_1} - W\|_{2} \\
    &= \phi(w^{N_1},R_{\theta}) + 2\sqrt{|\om|}\|W^{N_1} - W\|_{2}.
\end{align}
The defect $\|W^{N_1} - W\|_{2}$ will be small if the decay of the coefficients of $W$ is good enough.
\end{remark}
\subsubsection{Proof of Lemma \ref{lem : Ys Bound}}\label{apen : Y_s isolate proof}
With Lemma \ref{lem : w_minus_rotation} now proven, we are able to prove Lemma \ref{lem : Ys Bound}. We now do so.
\begin{proof}
To begin, Lemma 3.4 of \cite{unbounded_domain_cadiot} gives that
\begin{align}
    \|\mathbb{A}\tilde{\mathbb{G}}(u_0)\|_{l} = \|\mathbb{B}\tilde{\mathbb{G}}(u_0)\|_{2} \leq \|\mathbb{B}\|_{2} \|\tilde{\mathbb{G}}(u_0)\|_{2} = \max\{1,\|B^N\|_{2}\} \|\tilde{\mathbb{G}}(u_0)\|_{2}.
\end{align}
In the case of \eqref{eq : swift_hohenberg}, $\mathbb{G}$ involves a quadratic and cubic term. With this in mind, observe that
\begin{align}
    w_0^2 = \frac{1}{j} \sum_{k = 0}^{j-1} R_{\frac{2\pi k}{j}} u_0^2 +\tilde{\mathbb{G}}_1(u_0),~~w_0^3 = \frac{1}{j} \sum_{k = 0}^{j-1} R_{\frac{2\pi k}{j}} u_0^3 + \tilde{\mathbb{G}}_2(u_0)
\end{align}
where
\begin{align}
    &\tilde{\mathbb{G}}_1(u_0) \bydef - \frac{1}{2j^2} \sum_{k,p = 0}^{j-1} \left(R_{\frac{2\pi k}{j}} u_0 - R_{\frac{2\pi p}{j}} u_0\right)^2 \\
    &\tilde{\mathbb{G}}_2(u_0) \bydef \frac{1}{j^3} \sum_{k,c,p = 0}^{j-1} \left(R_{\frac{2\pi k}{j}} u_0 - R_{\frac{2\pi c}{j}} u_0\right)\left(\left(R_{\frac{2\pi c}{j}} u_0\right) \left(R_{\frac{2\pi p}{j}}u_0\right) - R_{\frac{2\pi k}{j}} u_0^2\right).
\end{align}
Notice that we can bound
\begin{align}
 \|\tilde{\mathbb{G}}(u_0)\|_{2} \leq |\nu_1|\|\tilde{\mathbb{G}}_1(u_0)\|_{2} + |\nu_2|\|\tilde{\mathbb{G}}_2(u_0)\|_{2}.
\end{align}
Let us now compute $\|\tilde{\mathbb{G}}_1(u_0)\|_{2}$. 
{\footnotesize\begin{align} \|\tilde{\mathbb{G}}_1(u_0)\|_{2} &\leq \frac{1}{2j^2} \sum_{k,p = 0}^{j-1} \left\|\left(R_{\frac{2\pi k}{j}} u_0 - R_{\frac{2\pi p}{j}} u_0\right)^2\right\|_{2} \leq \frac{1}{2j^2} \sum_{k,p = 0}^{j-1} \left\|R_{\frac{2\pi k}{j}} u_0 - R_{\frac{2\pi p}{j}} u_0\right\|_{2}\left\|R_{\frac{2\pi k}{j}} u_0 - R_{\frac{2\pi p}{j}} u_0\right\|_{\infty}.\label{step_in_Ys_1}
\end{align}}
Now, we define 
\begin{align}
    \|u\|_{l_0} \bydef \|\mathbb{L}_0 u\|_{2} \text{ for all } u \in L^2(\R^2).\label{def : l0_norm}
\end{align}
Using \eqref{def : l0_norm},
\begin{align}
  \left\|R_{\frac{2\pi k}{j}} u_0 - R_{\frac{2\pi p}{j}} u_0\right\|_{\infty} 
    &\leq \left\|\frac{1}{l_0}\right\|_{2} \left\| \mathbb{L}_0 \left(R_{\frac{2\pi k}{j}} u_0 - R_{\frac{2\pi p}{j}} u_0\right)\right\|_{2} = \left\|\frac{1}{l_0}\right\|_{2} \left\| R_{\frac{2\pi k}{j}} u_0 - R_{\frac{2\pi p}{j}} u_0\right\|_{l_0}\label{estimate_by_l0_norm}
\end{align}
where we used Cauchy Schwarz on the second to last step. Using \eqref{estimate_by_l0_norm}, we return to \eqref{step_in_Ys_1}
\begin{align}
    \|\tilde{\mathbb{G}}_1(u_0)\|_{2} &\leq \left\|\frac{1}{l_0}\right\|_{2}\frac{1}{2j^2} \sum_{k,p=0}^{j-1} \left\| R_{\frac{2\pi k}{j}} u_0 - R_{\frac{2\pi p}{j}} u_0\right\|_{2}\left\| R_{\frac{2\pi k}{j}} u_0 - R_{\frac{2\pi p}{j}} u_0\right\|_{l_0} \\ \nonumber
    &= \left\|\frac{1}{l_0}\right\|_{2}\frac{1}{2j^2} \sum_{k,p=0}^{j-1} \left\| R_{\frac{2\pi k}{j}} \left(u_0 - R_{\frac{2\pi (p-k)}{j}} u_0\right)\right\|_{2}\left\| R_{\frac{2\pi k}{j}} \left(u_0 - R_{\frac{2\pi (p-k)}{j}} u_0\right)\right\|_{l_0} \\ \nonumber
    &= \left\|\frac{1}{l_0}\right\|_{2}\frac{1}{2j^2} \sum_{k,p=0}^{j-1} \left\|u_0 - R_{\frac{2\pi (p-k)}{j}} u_0\right\|_{2}\left\| u_0 - R_{\frac{2\pi (p-k)}{j}} u_0\right\|_{l_0}. 
\end{align}
Now, we must compute $\left\|u_0 - R_{\frac{2\pi (p-k)}{j}} u_0\right\|_{2}$ and $\|u_0 - R_{\frac{2\pi (p-k)}{j}} u_0\|_{l_0}$. We use Lemma \ref{lem : w_minus_rotation} to obtain
\begin{align}
    \|u_0 - R_{\frac{2\pi (p-k)}{j}} u_0\|_{2} \leq \phi\left(u_0,\frac{2\pi (p-k)}{j}\right),~~\|u_0 - R_{\frac{2\pi (p-k)}{j}} u_0\|_{l_0} \leq \phi\left(\mathbb{L}_0 u_0, \frac{2\pi (p-k)}{j}\right).\label{u_0_minus_norms_2_l0}
\end{align}
Using \eqref{u_0_minus_norms_2_l0}, we obtain
\begin{align}
      |\nu_1|\|\tilde{\mathbb{G}}_1(u_0)\|_{2} \leq \left\|\frac{1}{l_0}\right\|_{2}\frac{|\nu_1|}{2j^2 }\sum_{k,p = 0}^{j-1} \phi\left(u_0,R_{\frac{2\pi (p-k)}{j}}\right)\phi\left(\mathbb{L}_0u_0,R_{\frac{2\pi (p-k)}{j}}\right).\label{tilde_G_1_estimated}
\end{align}
\par Now, let us estimate $\|\tilde{\mathbb{G}}_2(u_0)\|_{2}$. We begin with the triangle inequality.
\begin{align}
    \|\tilde{\mathbb{G}}_2(u_0)\|_{2} \leq \frac{1}{j^3} \sum_{k,c,p = 0}^{j-1} \left\|\left(R_{\frac{2\pi k}{j}} u_0 - R_{\frac{2\pi c}{j}} u_0\right)\left(\left(R_{\frac{2\pi c}{m}} u_0\right) \left(R_{\frac{2\pi p}{j}}u_0\right) - R_{\frac{2\pi k}{j}} u_0^2\right)\right\|_{2}.
\end{align}
Let us now focus on the second term inside the norm. We factorize, and obtain 
{\footnotesize\begin{align}
    \nonumber &\left\|\left(R_{\frac{2\pi k}{j}} u_0 - R_{\frac{2\pi c}{j}} u_0\right)\left(\left(R_{\frac{2\pi c}{j}} u_0\right) \left(R_{\frac{2\pi p}{j}}u_0\right) - R_{\frac{2\pi k}{j}} u_0^2\right)\right\|_{2} \\ \nonumber
    &= \left\|\left(R_{\frac{2\pi k}{j}} u_0 - R_{\frac{2\pi c}{j}} u_0\right)\left[\left( R_{\frac{2\pi c}{j}} u_0\right) \left( R_{\frac{2\pi p}{j}} u_0 - R_{\frac{2\pi k}{j}} u_0\right) + \left(R_{\frac{2\pi k}{j}} u_0\right) \left(R_{\frac{2\pi c}{j}} u_0 - R_{\frac{2\pi k}{j}} u_0 \right)\right]\right\|_{2} \\ \nonumber
    &\leq \frac{1}{2\sqrt{\pi}} \left\| R_{\frac{2\pi k}{j}} u_0 - R_{\frac{2\pi c}{j}} u_0\right\|_{2}\left(\left\| R_{\frac{2\pi c}{j}} U_0\right\|_{1} \left\|R_{\frac{2\pi p}{j}} u_0 - R_{\frac{2\pi k}{j}} u_0 \right\|_{l_0} + \left\| R_{\frac{2\pi k}{j}} U_0\right\|_{1} \left\| R_{\frac{2\pi c}{j}} u_0 - R_{\frac{2\pi k}{j}} u_0\right\|_{l_0}\right)  
    \end{align}}
Since $\|R_{\theta} U_0\|_{1} = \|U_0\|_{1}$, we have    
\begin{align} \nonumber
    &= \left\|\frac{1}{l_0}\right\|_{2} \left\| U_0\right\|_{1}\left\| R_{\frac{2\pi k}{j}} u_0 - R_{\frac{2\pi c}{j}} u_0\right\|_{2}\left( \left\|R_{\frac{2\pi p}{j}} u_0 - R_{\frac{2\pi k}{j}} u_0 \right\|_{l_0} + \left\|R_{\frac{2\pi c}{j}} u_0 - R_{\frac{2\pi k}{j}} u_0\right\|_{l_0}\right) \\ \nonumber
    &= \left\|\frac{1}{l_0}\right\|_{2} \left\| U_0\right\|_{1}\left\| u_0 - R_{\frac{2\pi (c-k)}{j}} u_0\right\|_{2}\left( \left\|u_0 - R_{\frac{2\pi (k-p)}{j}} u_0 \right\|_{l_0} + \left\|u_0 - R_{\frac{2\pi (k-c)}{j}} u_0\right\|_{l_0}\right) \\ \nonumber
    &= \left\|\frac{1}{l_0}\right\|_{2} \|U_0\|_{1} \phi\left(u_0,R_{\frac{2\pi(c-k)}{j}}\right)\left(\phi\left(\mathbb{L}_0u_0,R_{\frac{2\pi (k-p)}{j}}\right) + \phi\left(\mathbb{L}_0u_0,R_{\frac{2\pi (k-c)}{j}}\right)\right).
\end{align}
Hence, we obtain
{\small\begin{align}
    \nonumber |\nu_2|\|\tilde{\mathbb{G}}_2(u_0)\|_{2} &\leq \left\|\frac{1}{l_0}\right\|_{2}\frac{|\nu_2|}{j^3}\|U_0\|_{1} \sum_{k,c,p = 0}^{j-1} \phi\left(u_0,R_{\frac{2\pi(c-k)}{j}}\right)\left(\phi\left(\mathbb{L}_0u_0,R_{\frac{2\pi (k-p)}{j}}\right) + \phi\left(\mathbb{L}_0u_0,R_{\frac{2\pi (k-c)}{j}}\right)\right).\label{tilde_G2_estimated}
\end{align}}
With \eqref{tilde_G_1_estimated} and \eqref{tilde_G2_estimated}, we have a result for the $\mathcal{Y}_s$ bound that we could compute directly.
\par The goal of the previous estimations was to re-write the necessary computations in such a way that Lemma \ref{lem : w_minus_rotation} applies. We now take this a step further and aim to only apply the aforementioned lemma once. More specifically, we wish to only compute $\phi(u_0, R_{\frac{2\pi}{j}})$ and $\phi(\mathbb{L}_0 u_0, R_{\frac{2\pi}{j}})$. Indeed, observe that for $v = u_0, \mathbb{L}_0 u_0$
\begin{align}
    \phi(v, R_{\frac{2\pi k}{j}}) = \|v - R_{\frac{2\pi k}{j}} v\|_{2} = \left\|\sum_{p = 0}^{k-1} R_{\frac{2\pi p}{j}} (v - R_{\frac{2\pi }{j}} v)\right\|_{2}   \leq  \left\|\sum_{p = 0}^{k-1} R_{\frac{2\pi p}{j}}\right\|_{2} \phi(v, R_{\frac{2\pi}{j}}).
\end{align}
Therefore, only $\phi(v, R_{\frac{2\pi}{j}})$ must be computed using Lemma \ref{lem : w_minus_rotation}. What remains is to compute $\left\|\sum_{c = 0}^{k-1} R_{\frac{2\pi c}{j}}\right\|_{2}$. This can be done using properties of matrices and linear algebra. This results in
{\small\begin{align}
    \left\| \sum_{p = 0}^{k-1} R_{\frac{2\pi p}{j}}\right\|_{2} = \sqrt{\left(\sum_{p = 0}^{k-1} \cos(\frac{2\pi p}{j})\right)^2 + \left(\sum_{p = 1}^{k-1} \sin(\frac{2\pi p}{j})\right)^2} \bydef \mathcal{R}_{k,j}.
\end{align}}
Therefore, we see that
$\phi(v, R_{\frac{2\pi k}{j}}) \leq \mathcal{R}_{k,j} \phi(v, R_{\frac{2\pi}{j}}).$
We now return to \eqref{tilde_G_1_estimated} and obtain
\begin{align}
    |\nu_1|\|\tilde{\mathbb{G}}_1(u_0)\|_{2} 
    &\leq \left\|\frac{1}{l_0}\right\|_{2}\frac{|\nu_1|}{2j^2 }\phi\left(u_0,R_{\frac{2\pi}{j}}\right)\phi\left(\mathbb{L}_0u_0,R_{\frac{2\pi}{j}}\right)\sum_{k,c = 0}^{j-1} \mathcal{R}_{c-k,j}^2
\end{align}
which is the $\mathcal{Y}_{s,1}$ bound defined in Lemma \ref{lem : Ys Bound}.
Additionally, we use it in \eqref{tilde_G2_estimated} to obtain
{\footnotesize\begin{align}
    &|\nu_2|\|\tilde{\mathbb{G}}_2(u_0)\|_{2} \leq \left\|\frac{1}{l_0}\right\|_{2}\frac{|\nu_2|}{j^3}\|U_0\|_{1} \phi\left(u_0,R_{\frac{2\pi}{j}}\right)\sum_{k,c,p = 0}^{j-1} \mathcal{R}_{c-k,j}\left(\mathcal{R}_{k-p,m}\phi\left(\mathbb{L}_0u_0,R_{\frac{2\pi}{j}}\right) + \mathcal{R}_{k-c,j}\phi\left(\mathbb{L}_0u_0,R_{\frac{2\pi }{j}}\right)\right)
\end{align}}
which is the $\mathcal{Y}_{s,2}$ bound defined in Lemma \ref{lem : Ys Bound}. This concludes the proof.
\end{proof}
\subsection{Computation of \texorpdfstring{$\mathcal{Z}_2$}{Z2}}\label{apen : Z2 isolate}
In this section, we aim to compute the $\mathcal{Z}_2(r)$ bound when $\mathcal{H}$ isolates the solution defined in Section \ref{sec : Z2 HleqG H isolates}. Note that this bound differs from the usual $\mathcal{Z}_2(r)$ when $\mathcal{H} = \mathcal{G}$ as we have $w_0$ in its statement. We must prove Lemma \ref{lem : bound Z_2 s}.

\begin{proof}
Let $s \in \overline{B_r(w_0)}$. Then, $\|\mathbb{A}\left({D}\mathbb{F}(s) - D\mathbb{F}(w_0)\right)\|_l
   = \|\mathbb{B}\left({D}\mathbb{G}(s) - D\mathbb{G}(w_0)\right)\|_{l,2}$.
Now let  $h \bydef s-w_0 \in \overline{B_r(0)} \subset H^l_{D_j}$ (in particular $\|h\|_l \leq r$).  Then we have
\begin{align}
\nonumber D\mathbb{G}(s) - D\mathbb{G}(w_0) &= 2\nu_1(\mathbb{w}_0 + \mathbb{h}) + 3\nu_2(\mathbb{w}_0+\mathbb{h})^2 - 2\nu_1\mathbb{w}_0 -3\nu_2\mathbb{w}_0^2 = 2\nu_1\mathbb{h} + 6\nu_2\mathbb{w}_0\mathbb{h} + 3\nu_2\mathbb{h}^2.
\end{align}
Now, instead of proceeding as in Lemma 3.3 of \cite{sh_cadiot}, we only use Lemma 2.1 from \cite{sh_cadiot} on the quadratic term.
\begin{align}
    \|\mathbb{B}(2\nu_1 \mathbb{h} + 6\nu_2 \mathbb{w}_0 \mathbb{h} + 3\nu_2 \mathbb{h}^2)\|_{l,2} 
    &\leq 3|\nu_2|\|\mathbb{B}\|_{2}\|\mathbb{h}\|_{l,2}^2 + \|\mathbb{B}(2\nu_1I_d + 6\nu_2 \mathbb{w}_0)\mathbb{h}\|_{l,2} \\
    &\leq 3|\nu_2|\frac{\kappa^2}{\mu} \|\mathbb{B}\|_{2} r^2 + \|\mathbb{B}(2\nu_1I_d + 6\nu_2 \mathbb{w}_0)\mathbb{h}\|_{l,2}.\label{step in Z2}
\end{align}
On the second term in \eqref{step in Z2}, we introduce $\mathbb{u}_0$.
\begin{align}
    \nonumber \|\mathbb{B}(2\nu_1I_d + 6\nu_2 \mathbb{w}_0)\mathbb{h}\|_{l,2} &\leq  \|\mathbb{B}(2\nu_1I_d + 6\nu_2 \mathbb{u}_0)\mathbb{h}\|_{l,2} + 6|\nu_2|\|\mathbb{B}(\mathbb{w}_0 - \mathbb{u}_0)\mathbb{h}\|_{l,2} \\ \nonumber
    &\leq\frac{\kappa}{\mu}  \left(\|\mathbb{W}(B^N)^{\star}\|_{2}^2 + \|W\|_{1}^2\right)^{\frac{1}{2}} r+ 6|\nu_2|\|\mathbb{B}\|_{2}\|(\mathbb{w}_0 - \mathbb{u}_0)\mathbb{h}\|_{l,2}
\end{align}
where the last step followed from Lemma 3.3 of \cite{sh_cadiot}. At this step, observe that we have
{\footnotesize\begin{align}
    \nonumber \|\mathbb{A}(D\mathbb{F}(s) - D\mathbb{F}(w_0))\|_{l} 
    &\leq  3|\nu_2|\frac{\kappa^2}{\mu} \|\mathbb{B}\|_{2} r +  \frac{\kappa}{\mu} \max\left\{ 2|\nu_1|, ~\left(\|\mathbb{W}(B^N)^{\star}\|_{2}^2+\|W\|_1^2\right)^{\frac{1}{2}}\right\} + 6|\nu_2|\|\mathbb{B}\|_{2}\|(\mathbb{w}_0 - \mathbb{u}_0)\mathbb{h}\|_{l,2} \\ 
    &\bydef \mathcal{Z}_{2,1}(r)r + 6|\nu_2|\|\mathbb{B}\|_{2}\|(\mathbb{w}_0 - \mathbb{u}_0)\mathbb{h}\|_{l,2}.\label{step in Z2 2}
\end{align}}
Notice that $\mathcal{Z}_{2,1}(r)$ is the same quantity previously referred to as $\mathcal{Z}_2(r)$ in Corollary \ref{th: radii polynomial}. Indeed, the term $6|\nu_2| \|\mathbb{B}\|_{2} \|(\mathbb{w}_0 - \mathbb{u}_0)\mathbb{h}\|_{l,2}$ would be $0$ if $\mathcal{H} = \mathcal{G}$, meaning that it is the $\mathcal{Z}_2(r)$ bound that one would compute when in that case. Let us now examine $6|\nu_2| \|\mathbb{B}\|_{2}\|(\mathbb{w}_0 - \mathbb{u}_0)\mathbb{h}\|_{l,2}$. We use Lemmas \ref{lem : kappa_one} and \ref{lem : w_0 minus u_0} to obtain
\begin{align}
     \|(\mathbb{w}_0 - \mathbb{u}_0)\mathbb{h}\|_{l,2} = \sup_{w \in H^l} \frac{\| (\mathbb{w}_0 - \mathbb{u}_0)\mathbb{h} w\|_{2}}{\|w\|_{l}} &\leq  \kappa^2  \|w_0 - u_0\|_{2} \|h\|_{l} \leq \frac{\kappa^2}{j}\phi\left(u_0, R_{\frac{2\pi}{j}}\right) \sum_{k = 0}^{j-1} \mathcal{R}_{k,j} r.\label{step_in_Z2} 
\end{align}
Returning to \eqref{step in Z2 2}, we obtain
\begin{align}
    6|\nu_2| \|\mathbb{B}\|_{2} \| (\mathbb{w}_0 - \mathbb{u}_0)\mathbb{h}\|_{l,2} \leq \frac{6|\nu|_2 \kappa^2}{j} \max\{1,\|B^N\|_{2}\} \phi\left(u_0, R_{\frac{2\pi }{j}}\right)\sum_{k = 0}^{j-1} \mathcal{R}_{k,j}r \bydef \mathcal{Z}_{2,2}(r)r
\end{align}
as desired. This concludes the computation of the $\mathcal{Z}_2$ bound.
\end{proof}
\subsection{Computation of \texorpdfstring{$\mathcal{Z}_s$}{Zs}}\label{apen : Zs isolate}
We now compute the $\mathcal{Z}_s$ bound defined in Section \ref{sec : Zs isolate}. We present the proof of Lemma \ref{lem : Zs bound}.
\begin{proof}
To begin, observe that
\begin{align}
    \|\mathbb{A}(D\mathbb{G}(w_0) - D\mathbb{G}(u_0))\|_{l} = \|\mathbb{B}(D\mathbb{G}(w_0) - D\mathbb{G}(u_0))\|_{l,2} 
    &\leq \max\{1,\|B^N\|_{2}\}\|D\mathbb{G}(w_0) - D\mathbb{G}(u_0)\|_{l,2} 
\end{align}
Now, observe that
{\footnotesize\begin{align}
    \hspace{-1cm}\|D\mathbb{G}(w_0) - D\mathbb{G}(u_0)\|_{l,2} &=\sup_{h \in H^l} \frac{\|(2\nu_1 (\mathbb{w}_0 - \mathbb{u}_0) + 3\nu_2 (\mathbb{w}_0^2 - \mathbb{u}_0^2))h\|_{2}}{\|h\|_{l}} \leq \kappa \|2\nu_1 (w_0 - u_0) + 3\nu_2 (w_0^2 - u_0^2)\|_{2}\label{eq : first step Zs}
\end{align}}
where the last step followed by Lemma \ref{lem : kappa_one}. Now, using Young's Inequality, observe that
\begin{align}
    \|2\nu_1(w_0 - u_0) + 3\nu_2 (w_0^2 - u_0^2)\|_{2} 
    &\leq  2|\nu_1|\|w_0 - u_0\|_{2} + 3|\nu_2| \|w_0 - u_0\|_{2}\|w_0 + u_0\|_{\infty}.\label{step in Zs}
    \end{align}
Then, using the triangle inequality, the definition of $w_0$ from \eqref{v_D_m_fourier}, and Young's Inequality for sequences, we obtain 
    \begin{align}
    \|w_0 + u_0\|_{\infty} &\leq \|w_0\|_{\infty} + \|u_0\|_{\infty} \leq 2\|u_0\|_{\infty} \leq 2\|U_0\|_{1}\label{step in Zs 2}
    \end{align}
Using \eqref{step in Zs 2}, we return to \eqref{step in Zs} to obtain
{\small\begin{align}
    \hspace{-0.5cm}2|\nu_1|\|w_0 - u_0\|_{2} + 3|\nu_2| \|w_0 - u_0\|_{2}\|w_0 + u_0\|_{\infty}
    &\leq 2(|\nu_1| + 3|\nu_2| \|U_0\|_{1}) \left(\frac{1}{j} \phi\left(u_0, R_{\frac{2\pi}{j}}\right) \sum_{k = 0}^{j-1} \mathcal{R}_{k,j}\right)\label{step_in_Zs}
\end{align}}
where we used Lemma \ref{lem : w_0 minus u_0}. Combining \eqref{step_in_Zs} with \eqref{eq : first step Zs}, we get
\begin{align}
    \|D\mathbb{G}(w_0) - D\mathbb{G}(u_0)\|_{l,2} \leq 2\kappa (|\nu_1| + 3|\nu_2| \|U_1\|_{1}) \frac{1}{j}\phi\left(u_0, R_{\frac{2\pi}{j}}\right) \sum_{k = 0}^{j-1} \mathcal{R}_{k,j}.
\end{align}
Multiplying by $\max\{1,\|B^N\|_{2}\}$, we have completed the proof.
\end{proof}
\renewcommand{\theequation}{B.\arabic{equation}}
\setcounter{equation}{0}
\section{Computing the Bounds for Localized Patterns when \texorpdfstring{$j$}{j} is a prime number other than \texorpdfstring{$2$}{2}}\label{apen : localized s computations not isolate}
In this appendix, we aim to compute the bounds stated in Section \ref{sec : H nonisolate}. In particular, we must compute the $\mathcal{Z}_1, \mathcal{Z}_s,$ and $\mathcal{Z}_2$ bounds. We will begin with the $\mathcal{Z}_1$ bound.
\subsection{Computation of \texorpdfstring{$\mathcal{Z}_1$}{Z1}}\label{apen : Z1 nonisolate}
In this appendix, we will discuss the computation of the $\mathcal{Z}_1$ bound. While this bound is unaffected by our goal to prove the symmetry, it is different from the previous computations presented as we now have an extra equation due to $\mathcal{H}$ not isolating the solution. We provided Lemmas \ref{lem : Z_1_extra} and \ref{lem : Z1_extra} to present these results. We now prove these lemmas. We will begin with Lemma \ref{lem : Z_1_extra}. 
\subsubsection{Proof of Lemma \ref{lem : Z_1_extra}}\label{apen : Z1 derivation proof}
In this appendix, we must prove Lemma \ref{lem : Z_1_extra}. This lemma does not compute a bound in particular, but it derives the $Z_1$ and $\mathcal{Z}_u$ formulas. We now perform the proof.
\begin{proof}
First, let $\overline{\mathbb{L}} \bydef \begin{bmatrix}
    1 & 0 \\
    0 & \mathbb{L}
\end{bmatrix}, \overline{L} \bydef \begin{bmatrix}
    1 & 0 \\
    0 & L
\end{bmatrix},$ and $\mathcal{M}(0,u_0) \bydef \begin{bmatrix}
        -1 & \left(\frac{ \partial_{x_2} u_0}{\rho}\right)^*\mathbb{L} \\
        \frac{ \partial_{x_2} u_0}{\rho} & \mathbb{v}_0^N - \mathbb{v}_0
    \end{bmatrix}$. 
Then,
\begin{align}
    \|I_d - \mathbb{A}\mathbb{Q}(0,u_0)\|_{H_1} &\leq \left\|I_d - \mathbb{A}(\overline{\mathbb{L}} + \mathbb{M}(0,u_0))\right\|_{H_1} + \left\|\mathbb{A}(\mathbb{M}(0,u_0) - \mathcal{M}(0,u_0))\right\|_{H_1} \\
    &\leq \left\|I_d - \mathbb{B}(I_d + \mathbb{M}(0,u_0)\overline{\mathbb{L}^{-1}})\right\|_{H_2} + \|\mathbb{B}\|_{H_2} \|(\mathbb{M}(0,u_0) - \mathcal{M}(0,u_0))\overline{\mathbb{L}}^{-1}\|_{H_2}. 
    \end{align}
For the first term, observe that
{\small\begin{align}
    \left\|I_d - \mathbb{B}(I_d + \mathbb{M}(0,u_0)\overline{\mathbb{L}}^{-1})\right\|_{H_2}\leq \left\|I_d - B(I_d + M(0,U_0)\overline{L}^{-1})\right\|_{X_2} + C_{B} \|(\Gamma_{\mathbb{Z}_2 \times \mathbb{Z}_1}^\dagger(L^{-1}) - \mathbb{L}^{-1})\mathbb{v}_0^N\|_{2},
\end{align}}
which leads to the bounds $Z_1$ and $\mathcal{Z}_u$.
For the second term, observe that
\begin{align}
    \|(\mathbb{M}(0,u_0) - \mathcal{M}(0,u_0))\overline{\mathbb{L}}^{-1}\|_{H_2} \leq \varphi\left(0,\frac{\|\partial_{x_2} u_0^N - \partial_{x_2} u_0\|_2}{\rho}, \frac{\|\partial_{x_2} u_0^N - \partial_{x_2} u_0\|_2}{\rho}, \|(\mathbb{v}_0^N - \mathbb{v}_0)\mathbb{L}^{-1}\|_2\right).
\end{align}
Now, we examine each term individually. Observe that
{\small\begin{align}
    \|\partial_{x_2} u_0^N - \partial_{x_2} u_0\|_{2} =  \sqrt{|\om|}\|\partial_{x_2} U_0^N - \partial_{x_2} U_0\|_{2},~~\|(\mathbb{v}_0^N - \mathbb{v}_0)\mathbb{L}^{-1}\|_{2} \leq \|\mathbb{L}^{-1}\|_{2} \|\mathbb{V}_0^N - \mathbb{V}_0\|_{2} \leq \frac{1}{\mu} \|V_0^N - V_0\|_{1}.
\end{align}}
Hence, we define
{\small\begin{align}
    \mathcal{Z}_1 \bydef Z_1 + C_B\mathcal{Z}_u + \|\mathbb{B}\|_{H_2}\varphi\left(0,\frac{\sqrt{|\om|}}{\rho}\|\partial_{x_2} U_0^N - \partial_{x_2} U_0\|_{2}, \frac{\sqrt{|\om|}}{\rho}\|\partial_{x_2} U_0^N - \partial_{x_2} U_0\|_{2}, \frac{1}{\mu} \|V_0^N - V_0\|_{1}\right).
\end{align}}
 
\end{proof}
\subsubsection{Proof of Lemma \ref{lem : Z1_extra}}\label{apen : Z1 periodic proof}
In this appendix, we prove Lemma \ref{lem : Z1_extra}. In particular, we are computing the $Z_1$ bound when we have an extra equation. We now perform the proof.
\begin{proof}
By choosing $M^N$ as in the statement of Lemma \ref{lem : Z1_extra}, we are in the setup of \cite{gs_cadiot_blanco} for computing $Z_1$ for a system. Observe that
{\small\begin{align}
    I - BM 
    &= \begin{bmatrix}
        \pi^N - B^NM^N & - B^NM\pi_N \\
        -\pi_N M\pi^N & \pi_N - \pi_N M\pi_N
    \end{bmatrix} = \begin{bmatrix}
         \pi^N - B^NM^N & -B^N M(0,U_0) \overline{L}^{-1} \pi_N \\ 
        -\pi_N M(0,U_0) \overline{L}^{-1}\pi^N & -\pi_N M(0,U_0) \overline{L}^{-1} \pi_N
    \end{bmatrix}\label{step_in_Z1}
\end{align}}
where 
$\overline{L}^{-1} \bydef \begin{bmatrix}
    1 & 0 \\
    0 & L^{-1} \end{bmatrix}$.
Now, we will use Lemma 4.4 from \cite{gs_cadiot_blanco} on \eqref{step_in_Z1}. This means we must compute the norm of each of the blocks of \eqref{step_in_Z1}. We begin with $\pi^N - B^NM^N$.
\begin{align}
     \|\pi^N - B^NM^N\|_{X_2}^2 &= \|(\pi^N - B^NM^N) (\pi^N - (M^N)^{\star} (B^N)^{\star})\|_{X_2} \bydef Z_{1,1}^2.\label{definition_of_Z11}
\end{align}
Next, we compute $\|-B^NM(0,U_0)\overline{L}^{-1}\pi_N\|_{X_2}$.
\begin{align}
    \nonumber \|- B^N M(0,U_0) \overline{L}^{-1} \pi_N\|_{X_2}^2 
    &\leq \|\pi_N \overline{L}^{-1}\|_{X_2}^2 \|\pi_N M(0,U_0)^{\star} (B^N)^{\star}\|_{X_2}^2 \\ 
    &= \|\pi_N L^{-1}\|_{2}^2 \| B^NM(0,U_0)\pi_N M(0,U_0)^{\star}(B^N)^{\star}\|_{X_2}\bydef Z_{1,2}^2\label{step_in_Z1_2} \end{align}
\par Next, we will consider $\|-\pi_N M(0,U_0) \overline{L}^{-1} \pi^N\|_{X_2}$.
\begin{align}
    \|-\pi_N M(0,U_0) \overline{L}^{-1} \pi^N\|_{X_2}   
    &= \|\pi_N \mathbb{V}_0^NL^{-1}\pi^N\|_{2} = \sqrt{\left\|\pi^NL^{-1}\mathbb{V}_0^N\pi_N\mathbb{V}_0^N L^{-1} \pi^N\right\|_{2}} \bydef Z_{1,3}^2\label{step_in_Z1_3}
\end{align}
Observe that this is the same $Z_{1,3}$ bound as in Lemma \ref{lem : Z_1 periodic}, particularly in \eqref{step_in_Z1_3_isolated}. Lastly, we will consider $\|-\pi_N M(0,U_0) \overline{L}^{-1} \pi_N\|_{X_2}$.
{\footnotesize\begin{align}
    \|-\pi_N M(0,U_0) \overline{L}^{-1} \pi_N\|_{X_2} 
    \leq \|\pi_N L^{-1}\|_{2} \left\|\pi_N \mathbb{V}_0^N\pi_N\right\|_{2} \leq \|\pi_N L^{-1}\|_{2} \|V_0^N\|_{1} \bydef Z_{1,4}\label{step_in_Z1_4}
\end{align}}
where the last step followed by Young's inequality. Observe that this is the same $Z_{1,4}$ bound as in Lemma \ref{lem : Z_1 periodic}, particularly in \eqref{step_in_Z1_4_isolated}. Combining \eqref{step_in_Z1}, \eqref{step_in_Z1_2}, \eqref{step_in_Z1_3}, and \eqref{step_in_Z1_4}, we conclude.
\end{proof}
\subsection{Computation of \texorpdfstring{$\mathcal{Z}_s$}{Zs}}\label{apen : Zs nonisolate}
In this appendix, we wish to prove Lemma \ref{lem : Zs_extra}. This will provide us with the extra term required to compute the $\mathcal{Z}_s$ bound due to the extra equation. We now prove Lemma \ref{lem : Zs_extra}.
\begin{proof}
We begin from the definition,
{\small\begin{align}
    \|\mathbb{A}(\mathbb{Q}(0,u_0) - D\mathbb{F}(0,w_0))\|_{H_1} 
    &\leq \|\mathbb{B}\|_{H_2} \left\| \begin{bmatrix}
        0 & \left(\frac{\partial_{x_2} (u_0 - w_0)}{\rho}\right)^* \\
        \frac{\partial_{x_2} (u_0 - w_0)}{\rho} & (D\mathbb{G}(0,u_0) - D\mathbb{G}(0,w_0))\mathbb{L}^{-1}
    \end{bmatrix}\right\|_{H_2} \\
    &\hspace{-2cm}\leq \|\mathbb{B}\|_{H_2} \varphi\left(0,\frac{\|\partial_{x_2}(u_0 - w_0)\|_2}{\rho}, \frac{\|\partial_{x_2}(u_0 - w_0)\|_2}{\rho},\|(D\mathbb{G}(0,u_0) - D\mathbb{G}(0,w_0))\mathbb{L}^{-1}\|_2\right),
\end{align}}
where $\varphi$ is defined in Lemma 4.2 of \cite{gs_cadiot_blanco}. The terms $\|\partial_{x_2} (u_0 - w_0)\|_{2},\|(D\mathbb{G}(0,u_0) - D\mathbb{G}(0,w_0))\mathbb{L}^{-1}\|_{2}$ can be bounded using Lemmas \ref{lem : kappa partial} and \ref{lem : Zs bound} respectively. Hence, we obtain
\begin{align}
    &\frac{\|\partial_{x_2} (u_0 - w_0)\|_{2}}{\rho} \leq \frac{\kappa_{\partial}}{j\rho} \phi\left(\mathbb{L}_1u_0, R_{\frac{2\pi }{j}}\right)\sum_{k = 0}^{j-1} \mathcal{R}_{k,j} \bydef \mathcal{Z}_{s,1}\\
    &\|(D\mathbb{G}(0,u_0) - D\mathbb{G}(0,w_0))\mathbb{L}^{-1}\|_{2} \leq 2\kappa (\nu_1 + 3\nu_2 \|U_0\|_{1}) \frac{1}{j} \phi\left(u_0, R_{\frac{2\pi }{j}}\right)\sum_{k = 0}^{j-1} \mathcal{R}_{k,j} \bydef \mathcal{Z}_{s,2}.
\end{align}
This provides the desired result.
\end{proof}
\subsection{Computation of \texorpdfstring{$\mathcal{Z}_2$}{Z2}}\label{apen : Z2 nonisolate}
In this appendix, we will examine the $\mathcal{Z}_2$ bound when $\mathcal{H}$ does not isolate the solution. Here, the primary difference stems from the structure of $\mathbb{B}$. Since $\mathbb{B}$ is now an operator from $\mathbb{R} \times L^2_{\mathbb{Z}_2 \times \mathbb{Z}_1}$ to itself, we must take this into account when performing the computation. This will result in two quantities, both similar to those defined in Lemma \ref{lem : bound Z_2 s}, but slightly different. We now prove Lemma \ref{lem : Z_2_extra}.
\begin{proof}
We start from the definition,
\begin{align}
    \|\mathbb{B}(D\mathbb{F}(h_1,h_2) - D\mathbb{F}(0,w_0))\|_{H_1,H_2}
    &= \left\| \begin{bmatrix}
        0 & \mathbb{b}_{12}^*(D\mathbb{f}(h_1,h_2) - D_u\mathbb{f}(0,w_0))\mathbb{L}^{-1} \\
        0 & \mathbb{b}_{22}(D\mathbb{f}(h_1,h_2) - D_u\mathbb{f}(0,w_0))\mathbb{L}^{-1}
    \end{bmatrix}\right\|_{H_2} \\ 
    &\hspace{-3cm}\leq \|\mathbb{b}_{12}\|_{2} \|D \mathbb{f}(h_1,h_2) - D \mathbb{f}(0,w_0)\|_{l,2} + \|\mathbb{b}_{22}(D \mathbb{f}(h_1,h_2) - D\mathbb{f}(0,w_0))\|_{l,2}.\label{refer step Z2 full}
\end{align}
Observe that, the second term in \eqref{refer step Z2 full} is the same bound that was previously referred to as $\mathcal{Z}_2$ in Lemma \ref{lem : bound Z_2 s}.
Therefore, we can conclude that
{\small\begin{align}
    \|\mathbb{B}(D\mathbb{F}(h_1,h_2) - D\mathbb{F}(0,w_0))\|_{H_1,H_2} \leq \|\mathbb{b}_{12}\|_{2} \|(D\mathbb{f}(h_1,h_2) - D\mathbb{f}(0,w_0))\mathbb{L}^{-1}\|_{2} + (\mathcal{Z}_{2,1}(r) + \mathcal{Z}_{2,2}(r))r.
\end{align}}
For the first term in \eqref{refer step Z2 full}, we follow the steps of Lemma \ref{lem : bound Z_2 s} but replacing $\mathbb{B}$ with $I_d$. 
{\footnotesize\begin{align}
    \|(D\mathbb{f}(h_1,h_2) - D\mathbb{f}(0,w_0))\mathbb{L}^{-1}\|_{2} \leq  \frac{\kappa}{\mu} (\|\mathbb{W}\|_{2}^2 + \|W\|_{1}^2)^{\frac{1}{2}} r + \frac{6|\nu_2|  \kappa^2}{j} \sum_{k=0}^{j-1} \mathcal{R}_{k,j}\phi\left(u_0, R_{\frac{2\pi }{j}}\right) r \bydef (\mathcal{Z}_{2,3}(r) + \mathcal{Z}_{2,4}(r))r.
\end{align}}
Hence, arrive at the desired result
{\small\begin{align}
    \|\mathbb{B}(D\mathbb{F}(h_1,h_2) - D\mathbb{F}(0,w_0))\|_{H_1,H_2} \leq (\mathcal{Z}_{2,1}(r) + \mathcal{Z}_{2,2}(r) + \max(1,\|b_{12}^N\|_{2})(\mathcal{Z}_{2,3}(r))+\mathcal{Z}_{2,4}(r))r \bydef \mathcal{Z}_2(r) r.
\end{align}}
\end{proof}
\nocite{julia_cadiot_Kawahara}
\nocite{julia_cadiot_sh}
\nocite{julia_cadiot_blanco_GS}
\nocite{julia_interval}
\bibliographystyle{abbrv}
\bibliography{biblio}
\end{document}